\DeclareSymbolFont{cyrletters}{OT2}{wncyr}{m}{n}
\DeclareMathSymbol{\Zhe}{\mathalpha}{cyrletters}{"11} 
\DeclareMathOperator{\Tr}{Tr}
\DeclareMathOperator{\Hom}{Hom}
\DeclareMathOperator{\res}{res}
\DeclareMathOperator{\cd}{cd}
\DeclareMathOperator{\coker}{coker}
\newcommand{\Z}{\mathds{Z}}
\newcommand{\Q}{\mathds{Q}}
\newcommand{\p}{\mathfrak{p}}
\newcommand{\cK}{\mathcal{K}}
\newcommand{\cL}{\mathcal{L}}
\newcommand{\cF}{\mathcal{F}}
\newcommand{\Gal}{\textup{Gal}}
\newcommand{\Image}{\textup{Im}}
\newcommand{\Sel}{\textup{Sel}}
\renewcommand{\ss}{\mathrm{ss}}
\newcommand{\ord}{\mathrm{ord}}
\newcommand{\G}{\mathcal G}
\newcommand{\Selz}{\Sel^{\Sigma_0}}
\newcommand{\Hiw}{H^1_{\mathrm{Iw}}}
\newcommand{\Hpm}{H^\pm_{\mathrm{Iw}}}
\newtheorem{lemma}{Lemma}[section] 
\newtheorem{prop}[lemma]{Proposition}
\newtheorem{thm}[lemma]{Theorem} 
\newtheorem*{thm*}{Theorem} 
\newtheorem{cor}[lemma]{Corollary}
\Crefname{prop}{Proposition}{Propositions}
\theoremstyle{definition}
\newtheorem{def1}[lemma]{Definition} 
\newtheorem{rem}[lemma]{Remark}
\begin{document}
\title[Plus/minus Selmer groups in weakly ramified extensions]{On the cohomology of plus/minus Selmer groups of supersingular elliptic curves in weakly ramified base fields}\author[B. Forrás]{Ben Forrás} 
\address[Forrás]{Institut für Theoretische Informatik, Mathematik und Operations Research, Universität der Bundeswehr München, Werner-Heisenberg-Weg 39, 85577 Neubiberg, Germany}
\email{ben.forras@unibw.de} 
\author[K.~Müller]{Katharina Müller}
\address[Müller]{Institut für Theoretische Informatik, Mathematik und Operations Research, Universität der Bundeswehr München, Werner-Heisenberg-Weg 39, 85577 Neubiberg, Germany}
\email{katharina.mueller@unibw.de}
\begin{abstract}
    Let $E/\mathbb{Q}$ be an elliptic curve and let $p\ge 5$ be a prime of good supersingular reduction. We generalize results due to Meng Fai Lim proving Kida's formula and integrality results for characteristic elements of signed Selmer groups along the cyclotomic $\Z_p$-extension of weakly ramified base fields $K/\Q_p$. 
\end{abstract}

\maketitle

\section{Introduction}
The Iwasawa theory of elliptic curves at good supersingular primes with $a_p=0$ was initiated by Kobayashi in his seminal work \cite{Kobayashi}, {where $a_p=1+p-|\widetilde E(\mathbb F_p)|$, and $|\widetilde E(\mathbb F_p)|$ is the the number of points in the reduction of the curve modulo $p$}. Working with the cyclotomic $\Z_p$-extension of $\Q$, he defined plus/minus norm subgroups as well as a corresponding signed Selmer group, and used Coleman theory to prove that it is cotorsion over the Iwasawa algebra. This involved writing down an explicit power series, and using Honda theory to associate with it a formal group law which turns out to be isomorphic to the formal group of the elliptic curve. Crucially, this formal group law has no $p$-torsion over the cyclotomic tower. Moreover, one can explicitly construct a norm coherent sequence of points along the tower generating the corresponding even/odd norm subgroups.

This strategy was generalised, first by Iovita--Pollack \cite{IP} to extensions of $\Q$ where $p$ splits completely, then by B.D.~Kim to extensions of $\Q$ with $p$ unramified \cite{BDK2007,BDK2013}. Torsion properties and finite submodules of the Pontryagin dual of the signed Selmer groups were further studied by Kitajima--Otsuki \cite{KO} and Lei--Lim \cite{LeiLim}, among others.

In {a} recent work, Lim \cite{mengfai} established cohomological triviality of the plus/minus subgroups, studied projectivity of signed Selmer groups, and used these results to deduce a Kida formula as well as to prove an integrality result for characteristic elements for the Pontryagin dual of signed Selmer groups. The aim of this paper is to generalise these results by allowing tame ramification at supersingular primes under certain assumptions. 

Before we state our main results, let us fix some notation. Let $p$ be an odd rational prime, $F/F'/\Q$ number fields with $p$ splitting completely in $F'$, $K/F$ a finite Galois extension. Let $K_\infty/K$ by the cyclotomic $\Z_p$-extension. Let $G\colonequals\Gal(K_\infty/F_\infty)$ and $\Gamma\colonequals\Gal(F_\infty/F)$. Then we have an isomorphism $\G\colonequals\Gal(K_\infty/F)=G\rtimes \Gamma$. {We fix once and for all a lift $\Gamma'$ of $\Gamma$ in $\mathcal{G}$ such that the resitiction induces an isomorphism $\Gamma'\cong \Gamma$. By abuse of notation we denote $\Gamma'$ by $\Gamma$.} Let $\Lambda\colonequals\Z_p\llbracket\Gamma\rrbracket$ and $\Lambda(\G)\colonequals\Z_p\llbracket\G\rrbracket$ be the relevant Iwasawa algebras. Recall that a finite Galois extension of local fields is called weakly ramified if its second ramification group vanishes -- so wild ramification is allowed, but only in the first ramification group. Let $E/F'$ be an elliptic curve satisfying the following assumptions:
\begin{itemize}
    \item[(S1)] $E$ has good reduction at all $p$-adic places of $F'$;
    \item[(S2)] there is a $p$-adic place with supersingular reduction;
    \item[(S3)] each $p$-adic supersingular place $u$
    \begin{enumerate}[label=\roman*)]
        \item has ramification index $e_u(K/F')$ that is not divisible by $p^2-1$ in $K/F'$,
        \item  fulfills the following condition:  $K_u$ is contained in the compositum of an at most weakly ramified extension $\cK'/\Q_p$ {and the cyclotomic $\Z_p$-extension $\Q_{p,\infty}$, such that $\cK'$ does not intersect the cyclotomic extension $\Q_{p,\infty}$},
        \item satisfies $a_u=0$, where $a_u=1+p-|\widetilde E_u(\mathbb F_p)|$ and $\widetilde E_u$ is the reduction of $E$ at $u$.
    \end{enumerate}
\end{itemize}
Furthermore, we fix a finite set of places $\Sigma$ of $F$ subject to certain standard conditions, as well as a subset $\Sigma'\subset \Sigma$ consisting only of some non-$p$-adic places. Let $\vec s$ be a tuple consisting of a sign in $\{+,-\}$ for each $p$-adic supersingular place in $\Sigma$. Analogously to the works cited above, we may define signed Selmer groups $\Sel^{\vec s}(E/K_\infty)$ as well as non-primitive versions $\Sel^{\vec s}_{\Sigma'}(E/K_\infty)$ thereof; for the precise definitions, we refer to \cref{sec:definition-of-signed-Sel,sec:non-primitive-Sel}. 

Our first main result is the following:

\begin{thm*}[Kida formula, \cref{thm:Kida}]
    Assume that $\Sel^{\vec s}(E/K_\infty)$ is $\Lambda$-cotorsion and that $\theta(X^{\vec s}(E/K_\infty))\le 1$. 
    Let $P_1\subset \Sigma'$ be the primes where $E$ has split multiplicative reduction and let $P_2$ be the set of primes in $\Sigma'$ where $E$ has good reduction and $E(K)[p]\neq 0$. Then {the Iwasawa $\lambda$- resp. $\mu$-invariants of the $\Lambda$-modules $X^{\vec s}(E/K_\infty)$ and $X^{\vec s}(E/L_\infty)$ are related as follows:}
    \begin{align*}
        \lambda\left(X^{\vec s}(E/K_\infty)\right) &= [K_\infty:L_\infty]\lambda\left(X^{\vec s}(E/L_\infty)\right)+\sum_{v\in P_1}(e_v-1)+2\sum_{w\in P_2}(e_v-1), \\
        \mu\left(X^{\vec s}(E/K_\infty)\right)&=[K_\infty:L_\infty]\mu\left(X^{\vec s}(E/L_\infty)\right).
    \end{align*}
\end{thm*}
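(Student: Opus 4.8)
The plan is to run the classical Kida-formula argument of Hachimori--Matsuno, in the form adapted to signed Selmer groups by Lim \cite{mengfai}, arranging matters so that the weakly ramified hypotheses enter only through the local analysis at the supersingular primes. First I would reduce to the case $[K_\infty:L_\infty]=p$: the group $\Gal(K_\infty/L_\infty)$ is a finite $p$-group, hence has a composition series with successive quotients of order $p$, and since ramification indices are multiplicative in towers while both asserted formulae are additive in the corresponding contributions, it suffices to treat a cyclic degree-$p$ layer. I will write $H\colonequals\Gal(K_\infty/L_\infty)\cong\Z/p\Z$ for the rest of the argument; the hypotheses imposed on $\Sel^{\vec s}(E/K_\infty)$ descend to $\Sel^{\vec s}(E/L_\infty)$ along the comparison map constructed next, so nothing extra need be assumed over $L_\infty$.

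The engine is the fundamental diagram. Using the non-primitive description of the signed Selmer group from \cref{sec:non-primitive-Sel}, I would compare the defining exact sequence realising $\Sel^{\vec s}_{\Sigma'}(E/L_\infty)$ inside $H^1(G_\Sigma(L_\infty),E[p^\infty])$ with its analogue over $K_\infty$, to which I apply $H$-(co)homology; the snake lemma then bounds the kernel and cokernel of the induced map on Selmer groups in terms of the global groups $H^i(H,E[p^\infty](K_\infty))$ and the local groups $H^i(H_w,\mathcal H^{\vec s}_v(K_\infty))$. The global groups are finite because $E[p^\infty](K_\infty)$ is finite -- at the supersingular places this rests on the absence of $p$-torsion in the relevant formal group over the cyclotomic tower, which is part of the input behind (S1)--(S3).

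At the places $v\nmid p$ the local contributions are computed exactly as by Hachimori--Matsuno: they vanish unless $E$ has split multiplicative reduction at $v$, where $H^0(H_w,\mathcal H_v(K_\infty))$ supplies $\Z_p$-corank $e_v-1$, or $E$ has good reduction with $E(K_v)[p]\neq 0$, supplying corank $2(e_v-1)$. At the places $v\mid p$ -- crucially at the supersingular places, where $\mathcal H^{\vec s}_v$ is built from the plus/minus norm subgroups -- the local terms are cohomologically trivial as $H_w$-modules and hence contribute nothing; this is precisely the cohomological triviality of the plus/minus subgroups proved earlier, and it is here, and only here, that the weak ramification condition (S3)(ii) together with (S3)(i) and (S3)(iii) is consumed. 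Feeding all of this into the fundamental diagram and taking $\Z_p$-coranks yields the relation between $\operatorname{cork}_{\Z_p}\Sel^{\vec s}(E/K_\infty)^H$ and $\operatorname{cork}_{\Z_p}\Sel^{\vec s}(E/L_\infty)$ carrying exactly the asserted correction terms. To pass from $\Sel^{\vec s}(E/K_\infty)^H$ to $\Sel^{\vec s}(E/K_\infty)$ with the clean factor $[K_\infty:L_\infty]$, I would invoke the hypothesis $\theta(X^{\vec s}(E/K_\infty))\le 1$: via the projectivity statements established earlier it forces $X^{\vec s}(E/K_\infty)$ to have no nonzero finite submodule and to be cohomologically trivial over $H$, hence -- in the case $\mu=0$, where it is then $\Z_p$-free -- free over $\Z_p[H]$, so that its $\Z_p$-rank is $[K_\infty:L_\infty]$ times that of its $H$-coinvariants. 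This gives the $\lambda$-formula, and the cotorsionness of $\Sel^{\vec s}(E/L_\infty)$ is read off the same diagram.

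The $\mu$-formula then follows by rerunning the entire argument with $E[p]$-coefficients, which re-expresses $\mu$ as a $\lambda$-type invariant over $\F_p\llbracket\Gamma\rrbracket$: the tame local corrections now live in modules that are free over the relevant coefficient ring and so drop out, leaving $\mu(X^{\vec s}(E/K_\infty))=[K_\infty:L_\infty]\mu(X^{\vec s}(E/L_\infty))$. Granting the earlier cohomological-triviality theorem for the plus/minus subgroups -- the genuinely new input, where all of (S1)--(S3) is used -- I expect the main obstacle to be the bookkeeping in the fundamental diagram: identifying every $H$-(co)homology group that occurs, pinning down the $\Z_p$-coranks of the local terms at the bad primes and at $p$, and verifying that $\theta\le 1$ really does suffice both to upgrade the corank identities to exact equalities and to supply the $\Z_p[H]$-freeness underlying the factor $[K_\infty:L_\infty]$.
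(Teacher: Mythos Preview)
Your broad strategy---reduce to a cyclic degree-$p$ step, set up the fundamental diagram, and compute the local contributions (with \cref{MFL3.9} handling the supersingular primes)---is exactly right, and mirrors the paper. But there is a genuine gap in how you pass from $(X^{\vec s})^H$ or $(X^{\vec s})_H$ to $X^{\vec s}$ itself.

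You write that $\theta\le1$ forces $X^{\vec s}(E/K_\infty)$ to be cohomologically trivial over $H$, and then treat the $\lambda$-formula ``in the case $\mu=0$''. Neither step works as stated. First, the paper never proves cohomological triviality of $X^{\vec s}$ (or of its non-primitive variant) for $H$; what is available is only \emph{finiteness} of $H^i(H,\Sel^{\vec s}_{\Sigma'})$ (\cref{pro-4.6}\ref{4.6.c}), and the projectivity results in \cref{sec:projectivity} require the extra hypothesis on ordinary primes that is not part of the Kida theorem. Second, the hypothesis is $\theta\le1$, not $\mu=0$: when $\mu>0$ the module $X^{\vec s}$ is not finitely generated over $\Z_p$, your ``take $\Z_p$-coranks'' step makes no sense, and $\Z_p[H]$-freeness is unavailable. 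This is precisely the difficulty the Hachimori--Sharifi refinement is designed to overcome, and your outline has not incorporated it.

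The paper's route is as follows. Work first with the \emph{non-primitive} Selmer group (\cref{kida-for-non-primitive}), where the fundamental diagram has an isomorphism in the middle column and a surjection on the right, so $\Sel^{\vec s}_{\Sigma'}(E/L_\infty)\hookrightarrow\Sel^{\vec s}_{\Sigma'}(E/K_\infty)^H$ has finite cokernel. Now the key input is not cohomological triviality but \emph{Herbrand quotients}: one shows $h_H(\Sel^{\vec s}_{\Sigma'}(E/K_\infty))=1$ by computing the Herbrand quotient of each term in the short exact sequence of \cref{pro-4.6}\ref{4.6.a2} (using \cref{supersingular-herbrand} at supersingular places), and separately $h_H\big(X^{\vec s}_{\Sigma'}(p)\big)=1$ together with $\mu=|H|\cdot\mu$ of coinvariants via \cref{hachimorio-sharifi-2}, which is where $\theta\le1$ is actually consumed. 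Setting $Z\colonequals X^{\vec s}_{\Sigma'}/X^{\vec s}_{\Sigma'}(p)$, which \emph{is} $\Z_p$-free regardless of $\mu$, one has $h_H(Z)=1$ and $\lambda(Z)=\lambda(X^{\vec s}_{\Sigma'})$, $\lambda(Z_H)=\lambda(X^{\vec s}_{\Sigma'}(E/L_\infty))$; the standard identity $\lambda(Z)=p\,\lambda(Z_H)-(p-1)v_p(h_H(Z))$ then gives the clean non-primitive formula. Finally the correction terms for $P_1$ and $P_2$ enter only when one passes back to the primitive Selmer group via \cref{pro-4.6}\ref{4.6.a1}, computing $\lambda(J_v)$ and $\mu(J_v)=0$ for $v\in\Sigma'$ as in Hachimori--Matsuno. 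Your $E[p]$-coefficient rerun for $\mu$ is a plausible alternative, but the paper gets $\mu$ directly from \cref{hachimorio-sharifi-2}, and your claim that the local corrections ``drop out'' in the mod-$p$ picture would need independent justification.
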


Here $\theta(X^{\vec s}(E/K_\infty))$ denotes the largest exponent in the standard decomposition of the maximal $p$-power-torsion submodule of $X^{\vec s}(E/K_\infty)$, that is, if $X^{\vec s}(E/K_\infty)[p^\infty]\to\bigoplus_{i\in I} \Lambda/p^{m_i}\Lambda$ is a pseudo-isomorphism, then $\theta(X^{\vec s}(E/K_\infty)[p^\infty])\colonequals\max\{m_i:i\in I\}$.
The proof of this Kida formula is by calculating certain Herbrand quotients: this is a method due to Iwasawa \cite[\S9]{IwasawaRH}. The method was used by Hachimori--Matsuno \cite{HachimoriMatsuno} to prove a Kida formula for elliptic curves with good ordinary reduction and $\mu=0$. The weakening of the $\mu=0$ assumption to $\theta\le 1$ above is along the lines of Hachimori--Sharifi's Kida formula for CM fields \cite{hachimori-sharifi}. Our result is a direct generalisation of Lim's Kida formula \cite[Proposition~5.2]{mengfai}.

We come to our second main result. Let $\Gamma_0$ be an open subgroup of $\Gamma$ that is central in $\G$. Recall that an ordinary $p$-adic place $v$ of $F$ is called non-anomalous if for all places $w$ of $K$ above $v$, we have $p\nmid|\widetilde E(k_w)|$, where $k_w$ is the residue field of the local field $K_w$, and $\widetilde E$ is the reduction of $E$.

\begin{thm*}[Integrality of characteristic elements, \cref{thm:integrality}]
    Suppose that $\Sigma'$ contains all non-$p$-adic places in $\Sigma$ whose inertia degree in $K/F$ is divisible by $p$. Further suppose that $X^{\vec s}(E/K_\infty)$ is $\Lambda$-torsion, and that every ordinary $p$-adic place is either non-anomalous or ramifies tamely in $K/F$.
    Then there exists a characteristic element $\xi_{E,\Sigma'}$ of $X^{\vec s}_{\Sigma'}(E/K_\infty)$. For every {graduated} $\Lambda(\Gamma_0)$-order $\mathfrak M$ of $\mathcal Q(\G)$ containing $\Lambda(\G)$, the characteristic element $\xi_{E,\Sigma'}$ is in the image of the natural map $\mathfrak M\cap \mathcal Q(\G)^\times\to K_1(\mathcal Q(\G))$.
\end{thm*}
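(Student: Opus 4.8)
The plan is to deduce the statement from three structural facts about the $\Lambda(\G)$-module $M\colonequals X^{\vec s}_{\Sigma'}(E/K_\infty)$: that it is $\Lambda(\G)$-torsion, that it has finite projective dimension over $\Lambda(\G)$, and that it has no nonzero pseudo-null $\Lambda(\G)$-submodule. Torsionness is inherited from the hypothesis on the primitive Selmer group: the inclusion $\Sel^{\vec s}(E/K_\infty)\subseteq\Sel^{\vec s}_{\Sigma'}(E/K_\infty)$ dualises to a surjection $M\twoheadrightarrow X^{\vec s}(E/K_\infty)$ whose kernel is a subquotient of a finite direct sum of local cohomology modules $H^1(K_{\infty,w},E[p^\infty])$ over the places of $K_\infty$ above $\Sigma'$, each of which is $\Lambda(\G)$-torsion; and $X^{\vec s}(E/K_\infty)$ is $\Lambda(\G)$-torsion because it is $\Lambda$-torsion by assumption and $\Lambda(\G)$ is module-finite over $\Lambda$. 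Granting the other two facts, the standard dimension theory of $\Lambda(\G)$-modules (of injective dimension $2$) forces $\mathrm{pd}_{\Lambda(\G)}M\le 1$, so there is a square presentation $0\to\Lambda(\G)^n\xrightarrow{\phi}\Lambda(\G)^n\to M\to 0$; the class of $\mathcal Q(\G)\otimes\phi$ in $K_1(\mathcal Q(\G))$ is then a characteristic element $\xi_{E,\Sigma'}$, and it remains to see that it lies in the image of $\mathfrak M\cap\mathcal Q(\G)^\times$.

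The finiteness of $\mathrm{pd}_{\Lambda(\G)}M$ is obtained by descent along an open torsion-free subgroup $\G'\le\G$: since $\Lambda(\G')$ has finite global dimension and $\Lambda(\G)$ is free of finite rank over it, finiteness of the projective dimension is equivalent to cohomological triviality of $M$ over each finite subgroup of $\G$. This is exactly the generalisation to the present setting of Lim's projectivity results for signed Selmer groups \cite{mengfai}, and is where the hypotheses enter: the cohomological triviality of the plus/minus norm subgroups over the local cyclotomic towers, established earlier in the paper under (S3)(i)--(ii), takes care of the supersingular $p$-adic places; the non-anomalous-or-tamely-ramified hypothesis handles the ordinary $p$-adic places, exactly as in the ordinary theory à la Hachimori--Matsuno \cite{HachimoriMatsuno}; and the requirement that $\Sigma'$ contain every place of $\Sigma$ whose inertia degree in $K/F$ is divisible by $p$ ensures that the non-primitive local conditions at the remaining non-$p$-adic places are cohomologically trivial.

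For the absence of a nonzero pseudo-null $\Lambda(\G)$-submodule I would run the Greenberg--Poitou--Tate machinery: there is an exact sequence relating $M$ to the Pontryagin dual of $H^1(G_\Sigma(K_\infty),E[p^\infty])$, to the dual of $H^2(G_\Sigma(K_\infty),E[p^\infty])$, and to a direct sum of local terms over the places above $\Sigma\setminus\Sigma'$, and one checks that no contributing term carries a pseudo-null submodule: the $H^2$-term by the usual weak-Leopoldt argument together with $\cd_p(G_\Sigma(K_\infty))\le 2$, and the local terms by the same case analysis as above, the decisive point again being the structure of the plus/minus norm subgroups at the supersingular places. Here the passage from the primitive to the non-primitive Selmer group is essential: it is precisely the local terms at the bad non-$p$-adic places -- removed by enlarging to $\Sigma'$ -- that would otherwise carry a pseudo-null ($p$-torsion) part, which is why the enlargement of $\Sigma'$ is imposed in the hypotheses. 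Combining all this in the exact sequence yields the claim.

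Finally, in the presentation $0\to\Lambda(\G)^n\xrightarrow{\phi}\Lambda(\G)^n\to M\to 0$ the matrix $\phi$ lies in $M_n(\Lambda(\G))\subseteq M_n(\mathfrak M)$ and is invertible over $\mathcal Q(\G)$; by the standard $K_1$-theory of the maximal $\Lambda(\Gamma_0)$-order $\mathfrak M$ -- writing $\mathcal Q(\G)=\prod_i M_{n_i}(D_i)$ and $\mathfrak M=\prod_i M_{n_i}(\mathfrak D_i)$ with each $\mathfrak D_i$ a maximal order in the division algebra $D_i$ that is semilocal, being module-finite over the complete local ring $\Lambda(\Gamma_0)$, and using reduced norms -- the class of an $\mathfrak M$-integral matrix invertible over $\mathcal Q(\G)$ is represented by an element of $\mathfrak M\cap\mathcal Q(\G)^\times$ (this is the standard step in the formulation of the non-commutative Iwasawa main conjecture; cf. \cite{mengfai}), which gives the conclusion. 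I expect the main obstacle to be the supersingular local input feeding steps two and three: one must show that the plus/minus norm subgroups of $E$ over the local cyclotomic tower remain cohomologically trivial, and that the associated Iwasawa modules are suitably torsion-free, when the base field $K_w$ is merely weakly ramified of the shape allowed by (S3)(ii) and has ramification index not divisible by $p^2-1$ by (S3)(i); this forces one to revisit the construction via Honda theory and Coleman maps of the relevant formal group -- checking in particular that its $p$-torsion still vanishes along the tower and that the norm-coherent points generating the signed conditions can still be produced -- in this wider class of local fields, and is the principal technical advance over \cite{mengfai}. By comparison the homological and $K$-theoretic steps are routine, the only subtlety being that $\G$ may have $p$-torsion, so that $\Lambda(\G)$ is not regular; this is accommodated precisely by combining finite projective dimension with absence of pseudo-null submodules and by the reduction to the maximal order $\mathfrak M$.
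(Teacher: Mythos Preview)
Your overall strategy---establish a free $\Lambda(\G)$-resolution of length~$1$ for $X^{\vec s}_{\Sigma'}(E/K_\infty)$, then invoke the Nichifor--Palvannan dimension-reduction (generalised to semidirect products) to land in $\mathfrak M\cap\mathcal Q(\G)^\times$---is the paper's as well. The packaging differs: rather than separately establishing finite projective dimension and absence of pseudo-null submodules and then combining them via an ``injective dimension~$2$'' argument (which is delicate precisely because $\Lambda(\G)$ is not regular when $G$ has $p$-torsion), the paper applies Greenberg's criterion (\cref{Greenbergs-criterion}) in one stroke: $\Lambda$-torsion, no nonzero \emph{finite} $\Lambda$-submodule, and $H^i(H,\Sel^{\vec s}_{\Sigma'}(E/K_\infty))=0$ for $i=1,2$ and all $H\le G$, the last being read off the non-primitive fundamental diagram (\cref{prop:free-resolution}). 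The ``no finite submodule'' input is obtained not by your Poitou--Tate sketch but via Greenberg's twisting trick (\cref{no-nonzero-finite-submodules} and \cref{pro-4.6}\ref{4.6.d}), which is shorter once one has the cofreeness statement \cref{cokernel-of-psi-cofree}.

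Your expectation about the supersingular local input is, however, off and worth correcting. You anticipate revisiting Honda theory and producing norm-coherent points over weakly ramified bases; the paper explicitly cannot do this---Honda theory requires an unramified base, and there is no apparent way to lift norm-coherent sequences to the ramified setting (see the discussion in the introduction). The replacement is purely module-theoretic: one shows that $\Hpm(\cK,T)$ is free over $\Lambda$ (\cref{hpm-free}) and then over $\Lambda[G]$ for cyclic $G$ of order~$p$ (\cref{cor:Lambda-g}) via a rank comparison using \cref{cor.rank}, and this freeness is what drives the cohomology vanishing in \cref{MFL3.9} needed for Greenberg's criterion. So the genuine technical advance over \cite{mengfai} lies in bypassing Coleman maps and norm-coherent points entirely, not in extending their construction.
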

{Graduated orders constitute a generalisation of maximal orders; the precise definition will be recalled in \cref{sec:integrality}.}
The characteristic elements above are essential for the main conjecture of Coates et al.~\cite{GL2}. An integrality result {for maximal orders} was established by Nichifor--Palvannan \cite[\S5]{NichiforPalvannan} for elliptic curves with good ordinary or split multiplicative reduction admitting a cyclic isogeny of order $p^2$. They also developed an algebraic tool for studying characteristic elements of $\Lambda(\G)$-modules admitting a projective resolution of length~1 in the case when $\G$ is a direct product of $\Gamma$ and a finite group; this method was generalised to semidirect products by the first named author in \cite{W,EpAC}.

In the process of establishing the results above, we make extensive use of the properties of $p$-torsion freeness and cohomological triviality mentioned above, which are easy consequences of our assumptions (S1--S3). 
An important distinction between the present article and preexisting work is that we don't have access to a norm compatible system of points, as these are only known to exist in the unramified setting. While the outline of our proofs rely heavily on the work of Lim in the unramified case, our computation of the relevant cohomology groups is a more laborious endeavour. Indeed, if $\cK/\Q_p$ is a finite unramified extension with cyclotomic $\Z_p$-extension $\cK_\infty$, then the existence of a norm coherent sequence gives rise to a short exact sequence $\widehat E(\cK)\hookrightarrow \widehat E^+(\cK_\infty)\oplus\widehat E^-(\cK_\infty)\twoheadrightarrow \widehat E(\cK_\infty)$, whereas we make no claim of such a sequence existing, and compute the relevant cohomology groups by studying the module-theoretic properties of certain plus/minus Iwasawa cohomology groups. 
Note that there is no apparent way of directly generalising the construction of norm coherent points to the ramified setting. Indeed, using Kobayashi's approach would involve Honda theory, which requires working over an unramified extension. On the other hand, lifting a norm coherent sequence from the cyclotomic tower over an unramified extension to a ramified extension thereof would not preserve the desired norm relations.

We remark that recently, Kataoka has constructed a framework for deriving Kida formul{\ae} via Selmer complexes \cite{kataoka2024kida}. This differs from the present work in its setting: indeed, \S4.5 of op.cit. treats supersingular elliptic curves for abelian extensions only, whereas \S5.2 of op.cit. treats some non-abelian cases, but only for $\lambda$-invariants. It would be interesting to see if the results of the present article can be interpreted in Kataoka's framework.

It is a natural question to ask whether our results could be generalised to modular forms. Indeed, several elements of the theory recalled above, such as Coleman maps, have been constructed for modular forms by Lei--Loeffler--Zerbes \cite{LLZ} using $p$-adic Hodge theory and Wach modules, with the relationship to Kobayashi's signed Selmer groups explained in \cite{LeiZerbes}. 

The paper is structured as follows. \Cref{sec:prelim} consists of a collection of general facts related to Galois cohomology. We treat local cohomology groups in \cref{sec:local-cons}. In \cref{sec:global-cons}, we define signed Selmer groups in the above setting, and study torsion properties as well as finite submodules of their Pontryagin duals. We establish projectivity results in \cref{sec:projectivity}. The Kida formula and the integrality result stated above are proven in \cref{sec:Kida} and \cref{sec:integrality}, respectively. Finally in \cref{sec:cong}, we study the relationship between the Iwasawa invariants of elliptic curves whose $p$-torsion points are isomorphic as Galois modules.

\section*{Acknowledgments}
The authors would like to thank Andreas Nickel for various and extensive comments on a draft version of this paper, as well as Cornelius Greither for helpful remarks. They also thank the referees for insightful comments and useful suggestions.

\section*{Data availability statement}
All relevant data are contained within the manuscript.

\section{Preliminaries on Galois cohomology} \label{sec:prelim}
Let $\mathcal{G}\colonequals G\rtimes \Gamma$, where $\Gamma\cong \Z_p$ and $G$ is a finite group.  {We fix once and for all a lift $\Gamma'$ of $\Gamma$ in $\mathcal{G}$ such that $\Gamma'\cong \Gamma$ by restriction. Let $M$ be a $\Z_p\llbracket \mathcal{G}\rrbracket$-module. Note that the action of $\mathcal{G}$ on $M_G$ factors through $\Gamma$ and coincides with the action of $\Gamma'$. By abuse of notation we will frequently write $\Gamma$ instead of $\Gamma'$.} Let $\Lambda\colonequals\Z_p\llbracket \Gamma\rrbracket$ and $\Lambda(\mathcal{G})\colonequals\Z_p\llbracket\mathcal{G}\rrbracket$. If $G$ is a cyclic group and $M$ is a $G$-module with finite cohomology groups, we write $h_G(M)\colonequals |H^2(G,M)|/|H^1(G,M)|$ for the Herbrand quotient.
\begin{lemma}
\label{hachimori-sharifi-1}
    Assume that $M$ is an $\mathbb{F}_p[G]$-module and that $G$ is a cyclic $p$-group. Assume furthermore that $H^i(G,M)$ is finite for all $i\ge 1$. Then the Herbrand quotient $h_G(M)$ is trivial.
\end{lemma}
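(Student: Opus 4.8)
The plan is to reduce the statement to a direct-sum decomposition of $M$ over the group ring. Write $p^m=|G|$, fix a generator $\sigma$ of $G$, and set $t\colonequals\sigma-1\in\F_p[G]$. Since $\sigma^{p^m}-1=(\sigma-1)^{p^m}$ in characteristic $p$, there is a ring isomorphism $\F_p[G]\cong\F_p[t]/(t^{p^m})$, so $\F_p[G]$ is a commutative artinian principal ideal ring whose ideals form the chain $(1)\supset(t)\supset\cdots\supset(t^{p^m})=0$. A telescoping sum together with the freshman's dream gives $t\cdot N_G=t\sum_{i=0}^{p^m-1}(1+t)^i=(1+t)^{p^m}-1=t^{p^m}$, hence $N_G=t^{p^m-1}$. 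Consequently the Tate cohomology of $G$ with coefficients in any $\F_p[G]$-module $M$ is the cohomology of the $2$-periodic complex
\[\cdots\xrightarrow{\ t^{p^m-1}\ }M\xrightarrow{\ t\ }M\xrightarrow{\ t^{p^m-1}\ }M\xrightarrow{\ t\ }M\xrightarrow{\ t^{p^m-1}\ }\cdots,\]
so that $\widehat H^{2i}(G,M)=\ker(t)/\Image(t^{p^m-1})$ and $\widehat H^{2i+1}(G,M)=\ker(t^{p^m-1})/\Image(t)$.

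The key computation is for the indecomposable modules. If $j=p^m$, then $\F_p[t]/(t^j)=\F_p[G]$ is free, hence $\widehat H^q(G,\F_p[G])=0$ for all $q$. If $1\le j<p^m$, then $t^{p^m-1}=0$ on $\F_p[t]/(t^j)$, and reading off the periodic complex gives $\widehat H^{2i}(G,\F_p[t]/(t^j))\cong\ker(t)=t^{j-1}\F_p[t]/(t^j)\cong\F_p$ and $\widehat H^{2i+1}(G,\F_p[t]/(t^j))\cong\F_p[t]/(t)\cong\F_p$; so $\widehat H^q(G,\F_p[t]/(t^j))\cong\F_p$ for every $q$. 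Now I would invoke the structure theory of modules over an artinian principal ideal ring --- Köthe's theorem, i.e.\ the natural generalisation of Prüfer's theorem on bounded abelian groups --- to write $M\cong\bigoplus_{i\in I}\F_p[t]/(t^{j_i})$ with $1\le j_i\le p^m$. Since Tate cohomology of a finite group commutes with arbitrary direct sums, it follows that $\widehat H^q(G,M)\cong\bigoplus_{i\in S}\F_p$ for all $q$, where $S\colonequals\{i\in I:j_i<p^m\}$.

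To conclude: by hypothesis $H^2(G,M)=\widehat H^2(G,M)\cong\widehat H^0(G,M)\cong\bigoplus_{i\in S}\F_p$ is finite (only the finiteness of $H^2$ is needed), so $S$ is a finite set, and therefore $|H^2(G,M)|=p^{|S|}=|\widehat H^1(G,M)|=|H^1(G,M)|$, which is precisely the assertion $h_G(M)=1$.

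The one genuinely delicate point is the passage to non-finitely generated $M$. For finitely generated $M$ the decomposition into cyclic summands is the elementary structure theorem over the PID $\F_p[t]$ applied to a module annihilated by $t^{p^m}$; in general one needs that $\F_p[t]/(t^{p^m})$ is of finite representation type. A naïve reduction to the finitely generated case by writing $M$ as a filtered union of finitely generated submodules does not obviously work, since the transition maps on Tate cohomology need not be injective and there is no natural comparison map between $\widehat H^0$ and $\widehat H^1$ of a cyclic group; thus the structure theorem appears to be doing essential work.
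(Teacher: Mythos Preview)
Your argument is correct. The identification $\F_p[G]\cong\F_p[t]/(t^{p^m})$ with $N_G=t^{p^m-1}$, the computation $\widehat H^q(G,\F_p[t]/(t^j))\cong\F_p$ for $1\le j<p^m$ and $0$ for $j=p^m$, the decomposition of an arbitrary module into such cyclics, and the passage from finiteness of a single $\widehat H^q$ to $h_G(M)=1$ are all sound. Your remark that only finiteness of $H^2$ is actually used is also correct.

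The paper itself gives no argument: it simply records the statement as a straightforward generalisation of Hachimori--Sharifi's Lemma~2.2 and moves on. So there is no proof in the paper to compare with in detail. What you supply is a fully self-contained proof, and the genuinely nontrivial ingredient you have correctly isolated is the structure theorem (Prüfer--Baer for bounded torsion modules over a PID, or equivalently Köthe's theorem for the artinian principal ideal ring $\F_p[t]/(t^{p^m})$): this is what lets you pass to the non-finitely-generated case, and your diagnosis that a naïve filtered-colimit argument does not work here is accurate, since there is no natural comparison map between $\widehat H^0$ and $\widehat H^1$ that survives the limit. The upshot of your route is that it actually proves the stronger statement that $\widehat H^0(G,M)$ and $\widehat H^1(G,M)$ are isomorphic as $\F_p$-vector spaces for \emph{every} $\F_p[G]$-module $M$, finite or not.
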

\begin{proof}
    This is a straightforward generalization of \cite[lemma 2.2]{hachimori-sharifi}.
\end{proof}
For a finitely generated $\Lambda$-module $M$, let $\theta(M)$ denote the largest exponent in the standard decomposition of the maximal $p$-power-torsion submodule $M[p^\infty]$ of $M$ up to pseudo-isomorphism. In formul\ae: there is a pseudo-isomorphism $M[p^\infty]\to\bigoplus_{i\in I} \Lambda/p^{m_i}\Lambda$, and $\theta(M)=\max\{m_i:i\in I\}$.
\begin{lemma}
\label{hachimorio-sharifi-2}
Assume that $G$ is a cyclic $p$-group.
    Let $M$ be a finitely generated $\Lambda(\G)$-module that is torsion as a $\Lambda$-module. Assume that $H^i(G,M)$ is finitely generated over $\Z_p$ for all $i\ge 1$ and that $\theta(M)\le 1$. Then $h_G(M{[p^\infty]})=1$ and $\mu(M)=\vert G\vert\mu(M_G)$.
\end{lemma}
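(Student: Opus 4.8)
The plan is to reduce to the mod-$p$ situation, where \cref{hachimori-sharifi-1} applies, and then bootstrap to the full $p$-torsion submodule $M(p)$ by dévissage. First I would observe that since $M$ is $\Lambda$-torsion and $\theta(M)\le 1$, the maximal $p$-power-torsion submodule $M(p)$ is pseudo-isomorphic to $\bigoplus_{i\in I}\Lambda/p^{m_i}\Lambda$ with each $m_i\le 1$, hence to a module of the form $N^{\oplus r}$ where $N=\Lambda/p\Lambda$; in particular $M(p)$ differs from an $\mathbb{F}_p[G]$-module only by a finite (pseudo-null) module. Since finite modules contribute trivially to the Herbrand quotient, and the hypothesis that $H^i(G,M)$ is finitely generated over $\Z_p$ together with $\Lambda$-torsionness forces $H^i(G,M(p))$ to be finite, I can pass freely between $M(p)$ and its $\mathbb{F}_p[G]$-core. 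Applying \cref{hachimori-sharifi-1} to that core yields $h_G(M(p))=1$.

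Next I would extract the $\mu$-invariant statement. The key point is that $\mu(M)=\mu(M(p))=\sum_i m_i$ (the $\mu$-invariant only sees the $p$-power torsion), and likewise $\mu$ is additive in short exact sequences and insensitive to finite modules, so $\mu(M_G)=\mu\bigl((M(p))_G\bigr)$. Writing $\mathrm{Nm}\colon M(p)\to M(p)$ for the norm (trace) element of $\Z_p[G]$, I have $H^0(G,M(p))=M(p)^G$ and $\widehat H^0(G,M(p))=M(p)^G/\mathrm{Nm}\,M(p)$, while the coinvariants $M(p)_G$ surject onto $H^0(G,M(p))$ via the norm with kernel and cokernel controlled by $H^1(G,M(p))$ and $\widehat H^0$. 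Since $h_G(M(p))=1$, all these Tate cohomology groups have the same order, so a counting argument (taking $\mu$, i.e.\ the "$\Lambda$-corank of the $p$-torsion" measured as the growth rate of $\Z_p$-coranks along the cyclotomic tower) gives $\mu(M_G)=\mu(M(p)_G)$ and $\mu(M(p))=|G|\,\mu(M(p)^G)$ on the one hand and $\mu(M(p)^G)=\mu(M(p)_G)$ on the other, again because $H^0$ and $H_0$ differ by finite-type-over-$\Z_p$ (indeed finite, after killing the pseudo-null part) pieces. Chaining these equalities yields $\mu(M)=|G|\,\mu(M_G)$.

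The step I expect to be the main obstacle is the bookkeeping in the second paragraph: one must be careful that "passing to the $\mathbb{F}_p[G]$-core" and "killing pseudo-null modules" genuinely do not affect $\mu$ of the coinvariants $M_G$, since taking $G$-coinvariants is right exact but not exact, and a pseudo-null submodule of $M$ can in principle contribute to $\mathrm{Tor}_1^{\Z_p[G]}$ and hence perturb $M_G$ by a finite module — which is harmless for $\mu$ but needs to be said. The cleanest way to handle this is to work throughout with the short exact sequence relating $M[p^\infty]$, the $\mathbb{F}_p[G]$-module $\bigoplus\Lambda/p^{m_i}\Lambda$, and the pseudo-null error term, take $G$-homology of it, and note that every term in the resulting long exact sequence that involves the pseudo-null module is finitely generated over $\Z_p$, hence contributes $0$ to $\mu$. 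With that observation in place the argument is a routine comparison of Herbrand quotients and $\mu$-invariants, exactly as in \cite[Lemma 2.3]{hachimori-sharifi}, of which this is the promised generalization from $G$ cyclic of order $p$ to $G$ cyclic of $p$-power order.
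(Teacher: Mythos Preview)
Your plan for the Herbrand quotient is essentially the paper's: reduce to an $\mathbb{F}_p[G]$-module and invoke \cref{hachimori-sharifi-1}. One correction: the pseudo-isomorphism $M(p)\to\bigoplus\Lambda/p\Lambda$ coming from the structure theorem is only $\Lambda$-linear, not $G$-equivariant, so you cannot speak of the ``$\mathbb{F}_p[G]$-core'' that way. The paper instead uses the natural surjection $M(p)\twoheadrightarrow M(p)/pM(p)$, whose kernel $pM(p)$ is finite precisely because $\theta(M)\le 1$; this quotient \emph{is} a $\Lambda(\mathcal G)$-module annihilated by $p$, and the argument goes through.

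The genuine gap is in the $\mu$-invariant step. You assert that ``a counting argument'' yields $\mu(M(p))=|G|\,\mu(M(p)^G)$, but this equality is the heart of the lemma and does not follow from $h_G(M(p))=1$ alone: the Herbrand quotient being $1$ only says that two \emph{finite} groups have equal order, which carries no information about the $\mu$-invariant of $M(p)$ itself. The paper's actual mechanism is a filtration of $\overline M\colonequals M(p)/pM(p)$ by powers of the augmentation ideal $I_G=(\tau-1)$. Since $G$ is a cyclic $p$-group, one has $(\tau-1)^{|G|-1}=N_G$ in $\mathbb{F}_p[G]$, so the filtration has exactly $|G|$ steps, and multiplication by $(\tau-1)$ gives surjections $I_G^k\overline M/I_G^{k+1}\overline M\twoheadrightarrow I_G^{k+1}\overline M/I_G^{k+2}\overline M$. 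Thus the $\mu$-invariants of the graded pieces are non-increasing, while the first piece is $\overline M_G$ and the last is $N_G\overline M$; finiteness of $\widehat H^{-1}(G,\overline M)$ forces $\mu(\overline M_G)=\mu(N_G\overline M)$, so all $|G|$ graded pieces have $\mu$ equal to $\mu(\overline M_G)$, and summing gives $\mu(M)=|G|\,\mu(M_G)$. Your sketch identifies the right auxiliary facts (that $M_G$, $M^G$, $N_GM$ agree up to finite error) but omits this filtration argument, which is where the factor $|G|$ actually comes from.
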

\begin{proof}
    This is basically \cite[Lemma 2.4]{hachimori-sharifi}, but there it's assumed that the actions of $\Gamma$ and $G$ commute. Consider the short exact sequence
    \[0\to M{[p^\infty]}\to M\to Z\to 0,\]
    where $Z$ is $\Z_p$-free {and finitely generated over $\Z_p$}. Taking cohomology with respect to $G$, and using that $H^i(G,Z)$ is finitely generated over $\Z_p$ for all $i$, we obtain that $H^i(G, M{[p^\infty]})$ is finitely generated over $\Z_p$  for all $i\ge 1$. 
    As $\theta(M)\le 1$, we see that the natural map $M{[p^\infty]}\to M{[p^\infty]}/pM{[p^\infty]}$ is actually a surjective pseudo-isomorphism. Therefore $h_G(M{[p^\infty]})=h_G(M{[p^\infty]}/pM{[p^\infty]})$, and $H^i(G,M{[p^\infty]}/pM{[p^\infty]}))$ is finitely generated over $\Z_p$ for all $i\ge 1$. This implies that $H^i(G,M{[p^\infty]}/pM{[p^\infty]})$ is finite for all $i\ge 1$, and \cref{hachimori-sharifi-1} shows that $h_G(M{[p^\infty]}/pM{[p^\infty]})=1$.

    It remains to show the claim on $\mu$-invariants. As $H^i(G,M{[p^\infty]}/pM{[p^\infty]})$ is finite for all $i\ge 1$, we see that the three modules $(M{[p^\infty]}/pM{[p^\infty]})_G$, $(M{[p^\infty]}/pM{[p^\infty]})^G$ and $N_G(M{[p^\infty]}/pM{[p^\infty]})$ are pseudo-isomorphic as $\Lambda$-modules, where $N_G\colonequals \sum_{g\in G} g {\in\Z_p[G]}$. In particular, they have the same $\mu$-invariant. Let $\tau$ be a generator of $G$, and let $I_G\colonequals\Lambda(\mathcal{G})(\tau-1)$. As $G$ is a normal subgroup of $\G$, the ideal $I_G^k$ is generated by $(\tau-1)^k$. {The $\mu$-invariant can then be expressed as the sum of the $\mu$-invariants of the quotients in the filtration induced by $I_G$. Each $\mu$-invariant in this sum can be estimated from below by the $\mu$-invariant in the last module in the filtration:} 
    \begin{align*}
        \mu(M)&=\mu(M{[p^\infty]}/pM{[p^\infty]})=\sum_{k=0}^{\vert G\vert-1}\mu\big(I_G^k(M{[p^\infty]}/pM{[p^\infty]}\big)\big/I_G^{k+1}\big(M{[p^\infty]}/pM{[p^\infty]}\big)\\
        &\ge \vert G\vert \cdot \mu\!\left(I_G^{\vert G\vert-1}(M{[p^\infty]}/pM{[p^\infty]})\right)=\vert G\vert \cdot \mu\big(N_G(M{[p^\infty]}/pM{[p^\infty]})\big)\\
        &=\vert G\vert \cdot \mu\big((M{[p^\infty]}/pM{[p^\infty]})_G\big)=\vert G\vert \cdot \mu(M_G).
    \end{align*}
    {The last equality follows from the fact that $\theta(M_G)\le 1$. Thus, $\mu(M_G)=\mu(M_G/pM_G)$. Furthermore, $M_G/pM_G\cong M/(\tau-1,p)M\cong (M/pM)_G$. We have an exact sequence
    \[Z^{G}\to (M[p^\infty]/pM[p^\infty])_G\to (M/pM)_G\to (Z/pZ)_G.\]
    The first and last term are finite. Therefore, $\mu((M/pM)_G)=\mu((M[p^\infty]/pM[p^\infty])_G)$.}

    On the other hand, $\big(M{[p^\infty]}/pM{[p^\infty]}\big)\big/\big(I_G(M{[p^\infty]}/pM{[p^\infty]})\big)$ surjects onto $N_G\big(M{[p^\infty]}/pM{[p^\infty]}\big)$ with finite kernel. {Once again writing $\mu(M)$ as a sum, each summand can therefore be estimated from above:}
    \begin{align*}
        \mu(M)&=\mu\big(M{[p^\infty]}/pM{[p^\infty]}\big)=\sum_{k=0}^{\vert G\vert-1} \mu\big(I_G^k(M{[p^\infty]}/pM{[p^\infty]}\big)\big/I_G^{k+1}\big(M{[p^\infty]}/pM{[p^\infty]})\big)\\
        &\le \vert G\vert \cdot \mu\big(N_G(M{[p^\infty]}/pM{[p^\infty]})\big)=\vert G\vert \cdot \mu\big((M{[p^\infty]}/pM{[p^\infty]})_G\big)=\vert G\vert \cdot \mu(M_G).
    \end{align*}
    {We obtain the desired equality.}
\end{proof}

\begin{lemma}
\label{perrin-riou}
    Let $E$ be an elliptic curve over {$\Q_p$} that is supersingular at $p$. Let $\cK/\Q_p$ be a finite extension and $\cK_\infty$ its cyclotomic $\Z_p$-extension, and assume that for all $n\ge0$, the group $E(\cK_n)$ has no $p$-torsion. Then 
    \[H^1({\cK_\infty},E[p^\infty])=E(\cK_\infty)\otimes \Q_p/\Z_p.\]
\end{lemma}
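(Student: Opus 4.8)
The plan is to deduce the statement from the vanishing of a certain module of universal norms, via Kummer theory and local duality.

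First, observe that the hypothesis $E(\cK_n)[p]=0$ for all $n$, together with $\cK_\infty=\bigcup_n\cK_n$ and the finiteness of $E[p]$, forces $E(\cK_\infty)[p^\infty]=0$. Hence the Kummer maps $E(\cK_n)\otimes\Q_p/\Z_p\hookrightarrow H^1(\cK_n,E[p^\infty])$ are injective, and passing to the direct limit over $n$ yields a short exact sequence
\[0\to E(\cK_\infty)\otimes\Q_p/\Z_p\to H^1(\cK_\infty,E[p^\infty])\to H^1(\cK_\infty,E)[p^\infty]\to 0.\]
(All three terms are divisible $p$-torsion groups: by a theorem of Tate, $\cd_p(\cK_\infty)\le 1$ since $\cK_\infty/\cK$ is the cyclotomic $\Z_p$-extension, so $H^2(\cK_\infty,E[p^n])=0$ and multiplication by $p^n$ is surjective on $H^1(\cK_\infty,E[p^\infty])$.) It therefore suffices to prove $H^1(\cK_\infty,E)[p^\infty]=0$.

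Next I would compute this group at finite level and dualise. Writing $H^1(\cK_\infty,E)[p^\infty]=\varinjlim_n H^1(\cK_n,E)[p^\infty]$ along restriction maps, local Tate duality over the $p$-adic field $\cK_n$ identifies $H^1(\cK_n,E)[p^\infty]$ with the Pontryagin dual of the $p$-adic completion of $E(\cK_n)$, and restriction with the dual of the norm maps. Because $E$ has supersingular reduction, $E[p^\infty]=\widehat{E}[p^\infty]$ as Galois modules at the $p$-adic places and $\widetilde{E}$ has order prime to $p$ on the residue fields of the $\cK_n$, so the $p$-adic completion of $E(\cK_n)$ is $\widehat{E}(\mathfrak{m}_{\cK_n})$, which the no-$p$-torsion hypothesis makes $\Z_p$-free of rank $[\cK_n:\Q_p]$. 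Consequently
\[H^1(\cK_\infty,E)[p^\infty]\cong\Bigl(\varprojlim_n\widehat{E}(\mathfrak{m}_{\cK_n})\Bigr)^{\vee},\]
the inverse limit being taken along norm maps, and the whole statement reduces to proving that this module of universal norms vanishes. For this I would apply the formal logarithm: since each $\widehat{E}(\mathfrak{m}_{\cK_n})$ is torsion-free, $\log_{\widehat{E}}$ is injective, and being $\Gal$-equivariant it intertwines the norm maps on points with the field traces $\Tr_{\cK_{n+1}/\cK_n}$; hence $\varprojlim_n\widehat{E}(\mathfrak{m}_{\cK_n})$ embeds into $\varprojlim_n\log_{\widehat{E}}\widehat{E}(\mathfrak{m}_{\cK_n})$. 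Projecting such a compatible sequence to a fixed level $\cK_m$, its $m$-th entry lies in $\log_{\widehat{E}}\bigl(\bigcap_{n\ge m}\Norm_{\cK_n/\cK_m}\widehat{E}(\mathfrak{m}_{\cK_n})\bigr)$, the group of universal norms into $\widehat{E}(\mathfrak{m}_{\cK_m})$ along the cyclotomic tower; this is trivial in the supersingular case --- this is Perrin-Riou's local computation, and is subsumed by the deeply ramified Kummer theory of Coates--Greenberg --- so the limit vanishes, as required.

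The main obstacle is precisely this last point. In the unramified setting one has a norm-coherent system of points, which both produces the expected short exact sequence directly and makes universal norms transparent; here no such system is available, and the vanishing of universal norms must be extracted either by quoting the relevant local Iwasawa theory of Perrin-Riou (resp.\ Coates--Greenberg) or by a direct valuation argument which is only borderline at the crude level --- weighing the estimate $v(\log_{\widehat{E}}x)\ge -O(\log_p e_{\cK_n})$ on $\widehat{E}(\mathfrak{m}_{\cK_n})$ against the growth of the different along the cyclotomic tower --- so that it genuinely uses that $a_p=0$ makes $\widehat{E}$ of height two. Everything else --- the Kummer sequence, the input $\cd_p(\cK_\infty)\le 1$, and the duality bookkeeping --- is formal.
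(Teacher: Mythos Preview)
Your approach coincides with the paper's: reduce via the Kummer sequence to $H^1(\cK_\infty,E)[p^\infty]=0$, dualise via local Tate duality to the inverse limit of points under norm maps, and invoke the vanishing of universal norms in the supersingular case. The paper cites Schneider's theorem directly --- supersingular reduction gives $p$-rank zero on the special fibre, hence $\bigcap_n N_{\cK_n/\cK}E(\cK_n)=0$ --- which makes your formal-logarithm detour and the tentative valuation estimate unnecessary; note also that Coates--Greenberg's deeply ramified theory yields $H^i(\cK_\infty,\widehat E)=0$ for $i\ge1$, which is a related but different statement and not quite the reference you want for universal norms.
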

Later on, we will assume $a_p=0$; note that for $p\ge5$, this already follows from $p\mid a_p$ by the Hasse bound. In our applications of the \namecref{perrin-riou}, the $p$-torsion freeness assumption will be satisfied by \cref{lem:torsion-freeness}.
\begin{proof}
Let $k$ be the residue field of $\cK$. As $E$ is a supersingular elliptic curve, $p\textup{-rank}(E(k))=0$, where the $p$-rank is defined as in \cite{schneider}. It follows from \cite[Theorem 2]{schneider} that the group of universal norms $\bigcap_n N_{\cK_n/\cK}(E(\cK_n))$ is trivial. In particular, local duality \cite[\nopp I.3.4]{milne2006} implies that 
\[H^1(\cK_\infty,E)[p^\infty]=\varinjlim_n H^1({\cK_n},E)[p^\infty]=\textup{Hom}\left(\varprojlim_n E(\cK_n),\Q_p/\Z_p\right)=0,\]
which in turn implies that $H^1({\cK_\infty},E[p^\infty])=E(\cK_\infty)\otimes\Q_p/\Z_p$. 
\end{proof}

Recall that a finitely generated $\Z_p[G]$-module is called strictly quasi-projective if it admits a pseudo-isomorphism $X\to Y$ to a projective $\Z_p[G]$-module $Y$. A finitely generated $\Z_p[G]$-module is called quasi-projective if there exist finitely generated strictly quasi-projective $\Z_p[G]$-modules $X_1$, $X_2$ such that there is an exact sequence $0\to X_1\to X_2\to X\to 0$ of $\Z_p[G]$-modules.

Let $Q$ be a finite cyclic group of order coprime to $p$. Let $\Q_p(\mu_{|Q|})$ denote the field obtained by adjoining all roots of unity of order $|Q|$ to $\Q_p$, and let $\mathcal O_{\Q_p(\mu_{|Q|})}$ denote its ring of integers. For a (necessarily $1$-dimensional) character $\varepsilon$ of $Q$, we have an associated idempotent \[e(\varepsilon)=|Q|^{-1}\sum_{q\in Q} \varepsilon(q^{-1}) q\in \mathcal O_{\Q_p(\mu_{|Q|})}[Q].\] If $X$ is a finitely generated $\Z_p[G]$-module and $Q\le G$ is a finite cyclic subgroup of order coprime to $p$, then the $\varepsilon$-component of $X$ is defined as $X^\varepsilon\colonequals e(\varepsilon) (X\otimes_{\Z_p} \mathcal O_{\Q_p(\mu_{|Q|})})$. We similarly define the $\varepsilon$-component of the Pontryagin dual $S=X^\lor=\Hom(X,\Q_p/\Z_p)$ by $S^\varepsilon\colonequals e(\varepsilon)(S\otimes_{\Z_p}\mathcal O_{\Q_p(\mu_{|Q|})})$.

\begin{prop}
\label{criterion-quasi-projective}
    Let $S$ be a discrete $\Lambda(\mathcal{G})$-module, and let $X$ be its Pontryagin dual. Assume that $X$ is finitely generated and torsion over $\Lambda$, and that $\theta(X)\le 1$. Suppose that for all cyclic subgroups $C=PQ$ of $G$, where $P$ is a $p$-group and $Q$ has order coprime to $p$, the cohomology groups $H^i(P,X)$ are finite for all $i\ge 1$, and $h_P(S^\varepsilon)=1$ for all (one-dimensional) characters $\varepsilon$ of $Q$. Then ${X/X[p]}$ is quasi-projective as $\Z_p[G]$-module.
\end{prop}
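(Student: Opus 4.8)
The plan is to reduce the statement to a cohomological criterion for quasi-projectivity and to verify that criterion by a Herbrand-quotient computation. First I would note that $X/X(p)$ is finitely generated and free over $\Z_p$: since $X$ is finitely generated and torsion over $\Lambda$ and $X(p)=X[p^\infty]$, the quotient $X/X(p)$ is finitely generated and torsion over $\Lambda$ with no nonzero finite submodule, so it embeds with finite cokernel into a finite direct sum of modules $\Lambda/(g)$ with $g$ a distinguished polynomial, and any such sum is finitely generated and free over $\Z_p$. Then I would invoke the following characterisation (cf.\ \cite{NichiforPalvannan,W,EpAC}): a finitely generated $\Z_p$-free $\Z_p[G]$-module $M$ is quasi-projective if and only if $h_P(M^\varepsilon)=1$ for every cyclic subgroup $C=PQ$ of $G$, with $P$ a $p$-group and $|Q|$ prime to $p$, and every character $\varepsilon$ of $Q$. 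Granting this, it suffices to prove $h_P\big((X/X(p))^\varepsilon\big)=1$ for all such $C$ and $\varepsilon$.

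Fix $C=PQ$ and $\varepsilon$, and put $\mathcal O\colonequals\mathcal O_{\Q_p(\mu_{|Q|})}$. Since $|Q|$ is prime to $p$ and $C$ is abelian, the functor $M\mapsto M^\varepsilon$ is exact and commutes with $H^\bullet(P,-)$; applying it to $0\to X(p)\to X\to X/X(p)\to 0$ gives a short exact sequence $0\to X(p)^\varepsilon\to X^\varepsilon\to (X/X(p))^\varepsilon\to 0$ of $\mathcal O[P]$-modules, all of whose terms have finite Tate cohomology $\widehat H^i(P,-)$. Indeed, for $(X/X(p))^\varepsilon$ this is clear as it is an $\mathcal O$-lattice and $P$ is cyclic; for $X^\varepsilon$ it holds because $H^i(P,X^\varepsilon)$ is a direct summand of $H^i(P,X)\otimes_{\Z_p}\mathcal O$, which is finite for $i\ge 1$ by hypothesis, while $2$-periodicity for the cyclic group $P$ handles the remaining degrees; and the long exact Tate sequence then forces $\widehat H^i(P,X(p)^\varepsilon)$ finite for all $i$ as well. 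Hence all three Herbrand quotients are defined, and multiplicativity along the sequence yields $h_P(X^\varepsilon)=h_P(X(p)^\varepsilon)\cdot h_P\big((X/X(p))^\varepsilon\big)$.

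It remains to evaluate the two outer factors. Local Tate duality over $P$ identifies $h_P(S^\varepsilon)$ with $h_P(X^{\varepsilon^{-1}})^{-1}$ for $S=X^\lor$; since $\varepsilon\mapsto\varepsilon^{-1}$ permutes the characters of $Q$, the hypothesis $h_P(S^\varepsilon)=1$ for all $\varepsilon$ gives $h_P(X^\varepsilon)=1$ for all $\varepsilon$. On the other hand $\theta(X)\le 1$ passes to $\theta(X^\varepsilon)\le 1$ by base change of the standard decomposition along $\Z_p\to\mathcal O$, so exactly as in the proof of \cref{hachimorio-sharifi-2} the natural surjection $X(p)^\varepsilon\twoheadrightarrow X(p)^\varepsilon/pX(p)^\varepsilon$ is a pseudo-isomorphism; its target is an $\F_p[P]$-module which, being pseudo-isomorphic to $X(p)^\varepsilon$, has finite $P$-cohomology, so \cref{hachimori-sharifi-1} gives $h_P\big(X(p)^\varepsilon/pX(p)^\varepsilon\big)=1$, and since finite modules have trivial Herbrand quotient we conclude $h_P(X(p)^\varepsilon)=1$. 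Substituting both evaluations into the identity above yields $h_P\big((X/X(p))^\varepsilon\big)=1$; as $C$ and $\varepsilon$ were arbitrary, the criterion applies and $X/X(p)$ is quasi-projective.

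The principal obstacle is the quasi-projectivity criterion invoked at the outset, particularly its validity in the present generality where $\G=G\rtimes\Gamma$ with $G$ possibly non-abelian. If it cannot simply be quoted, I would establish it by passing to $\varepsilon$-components to reduce to $\mathcal O[P]$-lattices with $P$ a cyclic $p$-group; there $\mathcal O[P]$ is local, so quasi-projectivity of a lattice is equivalent to freeness of its span over $\mathrm{Frac}(\mathcal O)[P]$, and a direct computation of the Herbrand quotients of the rational irreducibles shows this is governed exactly by the vanishing $h_{P'}(-)=1$ for all cyclic $p$-subgroups $P'$. An additional point is that freeness of $\Q_p[H]$-modules for a $p$-group $H$ is detected on cyclic subgroups (via vanishing of characters away from the identity), which is what lets the criterion be tested on cyclic $C$ alone. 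Everything else in the argument is routine homological bookkeeping.
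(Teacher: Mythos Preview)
Your proposal is correct and follows essentially the same route as the paper. Both arguments reduce to a Herbrand-quotient criterion for quasi-projectivity of a $\Z_p$-lattice and then verify it by showing the contribution of the $p$-primary piece is trivial via \cref{hachimori-sharifi-1}, using $\theta(X)\le 1$ to reduce to an $\F_p[P]$-module. The organisational difference is that the paper simply cites Greenberg's criterion \cite[Proposition~2.2.1]{Greenberg2011} and isolates the single step in Greenberg's argument requiring commutativity of $G$ and $\Gamma$ --- namely $h_P(S^\varepsilon)=h_P(pS^\varepsilon)$ --- and reproves that step; you instead write out the entire Herbrand-quotient computation for the sequence $0\to X(p)^\varepsilon\to X^\varepsilon\to (X/X(p))^\varepsilon\to 0$ directly. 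Your version is more self-contained; the paper's is shorter but relies on the reader consulting Greenberg.

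Two minor remarks. First, the quasi-projectivity criterion you invoke is precisely \cite[Proposition~2.2.1]{Greenberg2011}, not the references you cite. Second, what you call ``local Tate duality over $P$'' is just Pontryagin duality combined with the standard duality $\widehat H^i(P,M^\vee)\cong\widehat H^{-i-1}(P,M)^\vee$ for finite groups; the identification $h_P(S^\varepsilon)=h_P(X^{\varepsilon^{-1}})^{-1}$ is correct, but the terminology is misleading.
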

\begin{proof}
This is \cite[Proposition 2.2.1]{Greenberg2011} in the case that $G$ commutes with $\Gamma$. The only place where this assumption is used is while proving $h_P(S^\varepsilon)=h_P(pS^\varepsilon)$. We now prove this without assuming that $G$ and $\Gamma$ commute. 

Let $M\colonequals S[p]^\varepsilon$. Then $M$ is a $\mathbb{F}_p[G]$-module and $M^\vee=X^\varepsilon/pX^\varepsilon$. As $\theta(X)\le 1$, the module $pX$ is finitely generated over $\Z_p$. We have a long exact sequence
\[\ldots\to H^i(P,X^\varepsilon)\to H^i(P,X^\varepsilon/pX^\varepsilon)\to H^{i+1}(P,pX^\varepsilon)\to \ldots\]
The first and the last term are finitely generated over $\Z_p$ for all $i\ge 1$. Thus the middle term is finite for all $i\ge 1$. We can now apply \cref{hachimori-sharifi-1} to conclude that $1=h_P(M^\lor)=h_P(M)^{-1}$, which in turn implies that $1=h_P(S^\varepsilon)=h_P(pS^\varepsilon)$ as desired.
\end{proof}

For a local field $\cK$, we write $\mathcal O_\cK$ resp. $m_\cK$ for its ring of integers resp. maximal ideal. If $\cL/\cK$ is a finite Galois extension of local fields, and $\mathfrak F$ is a formal group defined over $\mathcal O_\cK$, we let $\Tr^{\mathfrak F}: \mathfrak F(m_\cL)\to \mathfrak F(m_\cK)$ denote the norm with respect to $\mathfrak{F}$, as defined in \cite[\nopp 2.2.3]{Hazewinkel}. In the following special case, the $\mathfrak F$-norm is surjective:
\begin{prop}
\label{trace surjective}
    Let $\cL/\cK$ be a cyclic extension of local fields, and let $\mathfrak{F}$ be a formal group defined over $\mathcal{O}_\cK$ of height at least $2$. Let $t$ be the last ramification jump (i.e. $G_t\neq 1$ and $G_{t+1}=1$), and assume that $t\le 1$ (i.e. $\cL/\cK$ is at most weakly ramified). Then $\mathfrak{F}(m_\cK)=\textup{Tr}^{\mathfrak{F}}(\mathfrak{F}(m_\cL))$. 
\end{prop}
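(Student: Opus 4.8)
The plan is to prove the stronger assertion that $\mathfrak F(m_\cK/p\,\mathfrak F(m_\cK))$... more precisely, that $\mathfrak F(m_\cL)$ is a cohomologically trivial $\Z_p[G]$-module, where $G\colonequals\Gal(\cL/\cK)$; this gives $\mathfrak F(m_\cK)/\Tr^{\mathfrak F}(\mathfrak F(m_\cL))=\widehat H^0(G,\mathfrak F(m_\cL))=0$, which is the claim. The first step is a reduction via the ramification filtration. Write $G\supseteq G_0\supseteq G_1\supseteq G_2=1$ for the lower-numbering ramification groups of $\cL/\cK$; since $G$ is cyclic and $G_1/G_2$ embeds into the additive group of the residue field, $G_1$ is cyclic and elementary abelian, hence $\lvert G_1\rvert\in\{1,p\}$. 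Interposing the fixed fields $\cL^{G_0}\subseteq\cL^{G_1}$ and using that $\Tr^{\mathfrak F}$ is transitive in towers, it suffices to prove surjectivity of $\Tr^{\mathfrak F}$ for each of the three types of layer that occur: (i) $\cL/\cK$ unramified; (ii) $\cL/\cK$ tamely and totally ramified; (iii) $\cL/\cK$ totally ramified of degree $p$ with a single ramification jump, at $1$ (the jump being inherited by the layer $\cL/\cL^{G_1}$ since ramification groups of subextensions are the intersections with the subgroup).

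In case (ii) the degree $\lvert G\rvert$ is prime to $p$, so $\lvert G\rvert$ acts invertibly on the pro-$p$ group $\mathfrak F(m_\cL)$ and hence $\widehat H^i(G,\mathfrak F(m_\cL))=0$ for all $i$. In case (i) I would dévisse along the filtration $\{\mathfrak F(m_\cL^n)\}_{n\ge1}$: the successive quotients $\mathfrak F(m_\cL^n)/\mathfrak F(m_\cL^{n+1})\cong m_\cL^n/m_\cL^{n+1}$ are free $\F_p[G]$-modules by the normal basis theorem for finite fields, while for $n\gg0$ the formal logarithm identifies $\mathfrak F(m_\cL^n)$ $G$-equivariantly with $m_\cL^n\cong\Ok_\cL$, which is $\Z_p[G]$-free because an unramified extension admits a normal integral basis; since cohomological triviality is closed under extensions, $\mathfrak F(m_\cL)$ is cohomologically trivial, so $\Tr^{\mathfrak F}$ is onto. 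No height hypothesis is used in (i) or (ii).

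Case (iii) is the crux, and the only place the height assumption enters. Here the graded pieces $m_\cL^n/m_\cL^{n+1}\cong k$ carry the trivial $G$-action, so they are not cohomologically trivial and the dévissage of case (i) fails. Since $\Z_p[G]=\Z_p[\Z/p\Z]$ is local, it is enough, by Nakayama, to show that $\mathfrak F(m_\cL)/p\,\mathfrak F(m_\cL)$ is a free $\F_p[\Z/p\Z]$-module; from this the cohomological triviality of $\mathfrak F(m_\cL)$ follows (modulo the easy handling of its finite torsion subgroup). Two valuation computations drive this. First, because $\mathfrak F$ has height $h\ge2$, the reduction of $[p]_{\mathfrak F}$ modulo $p$ has lowest degree $p^h$, so the intermediate coefficients of $[p]_{\mathfrak F}$ are divisible by $p$ and the linear term $pX$ controls its valuation on $\mathfrak F(m_\cL)$ — the unit-coefficient term $X^{p^h}$ intervening only (if at all) at the very bottom of the filtration, when $\cK/\Q_p$ is strongly ramified — so that $p\,\mathfrak F(m_\cL)=[p]_{\mathfrak F}\mathfrak F(m_\cL)$ lies deep in the filtration; by contrast, at height $1$ the $X^p$-term already competes with $pX$ at valuation $1$, and indeed the proposition fails there, since for the multiplicative formal group $\widehat{\mathbb G}_m$ the norm from a wildly ramified extension is not surjective. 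Second, because the unique ramification jump is at $1$, the twisted difference operator $x\mapsto\sigma(x)\ominus_{\mathfrak F}x$ (for $\sigma$ a generator of $G$) raises the $m_\cL$-adic valuation by exactly $1$ on any element of valuation prime to $p$; iterating it on suitable lifts of an $\F_p$-basis of the residue field produces a family of elements whose images exhaust the associated graded of $\mathfrak F(m_\cL)/p\,\mathfrak F(m_\cL)$, and a dimension count forces these elements to be free generators over $\F_p[\Z/p\Z]$.

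The main obstacle is precisely the valuation bookkeeping in case (iii): one must determine the exact effect of $[p]_{\mathfrak F}$ on each step of the ramification filtration — in particular excluding coincidences between the slopes of $pX$ and of $X^{p^h}$ when $e(\cK/\Q_p)$ is large — and must track the leading coefficients produced by iterating $\sigma-1$ carefully enough to certify that the candidate generators are genuinely free over $\F_p[\Z/p\Z]$. This is the laborious local computation that, in the absence of a norm-coherent system of points, takes the place of the explicit arguments available in the unramified setting; the reduction and cases (i) and (ii) are routine by comparison.
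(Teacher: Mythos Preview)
Your route is quite different from the paper's, which is far more direct. After reducing to prime degree and citing Ellerbrock--Nickel for the tame case, the paper handles the wild case $[\cL:\cK]=p$, $t=1$ by showing that the induced maps
\[\alpha_n:\mathfrak F\big(m_\cL^{\psi(n)}\big)\big/\mathfrak F\big(m_\cL^{\psi(n)+1}\big)\longrightarrow\mathfrak F\big(m_\cK^{\,n}\big)\big/\mathfrak F\big(m_\cK^{\,n+1}\big)\]
are surjective for every $n\ge1$ and then invokes a standard filtration lemma (Serre, V\S1, Lemma~2). For $n\ge2$ this is again a citation; for $n=1$ one uses Hazewinkel's congruence $\Tr^{\mathfrak F}(x)\equiv\Tr(x)+\sum_{i\ge1}a_i\,\textup{Nm}(x)^i\pmod{\Tr(x^2)}$, the height-$\ge2$ hypothesis forcing $v_\cK(a_1)\ge1$, so that on the bottom graded piece $\Tr^{\mathfrak F}$ reduces to the additive trace and one only needs $\Tr_{\cL/\cK}(m_\cL)=m_\cK$, which follows at once from Serre's description of the different for weakly ramified extensions. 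No structure of $\mathfrak F(m_\cL)$ as a $G$-module is required.

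Your proposal, by contrast, aims for the stronger assertion of cohomological triviality via $\Z_p[G]$-freeness, and case~(iii) has two concrete gaps. First, the passage from ``$\mathfrak F(m_\cL)/p$ is $\F_p[G]$-free'' to ``$\mathfrak F(m_\cL)$ is cohomologically trivial'' is Serre's criterion (Corps Locaux~IX, Th\'eor\`eme~8), which \emph{requires} the module to be $p$-torsion-free; nothing in the hypotheses rules out $\mathfrak F(m_\cL)[p]\ne0$, and there is no evident reason the torsion submodule should itself be cohomologically trivial, so ``easy handling'' is optimistic. Second, your generator count is off once $e=e(\cK/\Q_p)>1$: the $\Z_p$-rank of $\mathfrak F(m_\cL)$ is $p\,ef$ with $f=f(\cK/\Q_p)$, so a free $\F_p[G]$-basis of $\mathfrak F(m_\cL)/p$ has $ef$ elements, whereas $f$ lifts of a residue-field basis iterated under $\sigma-1$ produce only $pf$ elements, spanning only valuations $1,\dots,p$; but generically $p\cdot\mathfrak F(m_\cL)\subseteq\mathfrak F(m_\cL^{\,ep+1})$, so there are $ep$ layers to fill. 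You would need $e$ families of starting points at well-spaced valuations, and the interaction of $\sigma-1$ with the layers of valuation divisible by $p$ is precisely the delicate point you flag but do not resolve. The paper's filtration-plus-Hazewinkel argument sidesteps both issues entirely.
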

\begin{proof}
    It suffices to consider the case where $[\cL:\cK]=\ell$ is a prime. If $\cL/\cK$ is at most tamely ramified, this is handled in \cite[Proposition~3.10]{EllerbrockNickel}. Thus we will assume $t=1$ for the rest of the proof.
    
    By \cite[Proposition 3.5]{EllerbrockNickel}, for every $n\ge1$ there are well defined maps
    \[\alpha_n\colon \mathfrak{F}\left(m_\cL^{\psi(n)}\right)\Big/\mathfrak{F}\left(m_\cL^{\psi(n)+1}\right)\to \mathfrak{F}\left(m_\cK^n\right)\Big/\mathfrak{F}\left(m_\cK^{n+1}\right)\]
    where $\psi$ is as in \cite[IV§3]{serre-local-fields}.
    It follows from \cite[Corollary 3.6]{EllerbrockNickel} that these maps are isomorphisms for $n\ge t+1$. We want to show that it is also an isomorphism for $n=t=1$; . So assume $t=1$. By \cite[Corollary~2.4.2]{Hazewinkel}, there are coefficients $a_i\in\mathcal O_\cK$ such that
    \[\textup{Tr}^{\mathfrak{F}}(x)\equiv \textup{Tr}(x)+\sum_{i=1}^\infty a_i\textup{Nm}(x)^i\pmod{\textup{Tr}(x^2)}\]
    and the coefficient $a_1$  has positive valuation as $\mathfrak{F}$ has height at least $2$; Hazewinkel's result is valid in the case $\ell=p$, which is the case for $t=1$. 
    It follows that $\alpha_n$ is surjective for $n=1$ if and only if $\textup{Tr}_{\cL/\cK}(m_\cL)=m_\cK$. By \cite[V\S3, Lemma~4]{serre-local-fields}, this is always the case if 
    \[ 2>\frac{2(\ell-1)+1}{\ell}\ge 1,\]
    This chain of inequalities is trivially true and the map $\alpha_n$ is an isomorphism for all $n\ge1$. Now \cite[V\S1, Lemma~2]{serre-local-fields} implies the desired claim for $t=0$ and for $t=1$. 
\end{proof}
\begin{cor}
\label{cor-surjective}
    Keep the assumptions of \cref{trace surjective}. Then $H^i(\Gal(\cL/\cK),\mathfrak{F}(m_\cL))=0$ for all $i>0$. 
\end{cor}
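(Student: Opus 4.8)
The plan is to deduce that $\mathfrak{F}(m_\cL)$ is cohomologically trivial as a module over $\Z_p[\Gal(\cL/\cK)]$. Write $H\colonequals\Gal(\cL/\cK)$, which is cyclic by the hypothesis of \cref{trace surjective}. Since Tate cohomology of a finite cyclic group is $2$-periodic and coincides with ordinary cohomology in positive degrees, it suffices to prove that $\widehat H^0(H,\mathfrak{F}(m_\cL))=0$ and $H^1(H,\mathfrak{F}(m_\cL))=0$.

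For $\widehat H^0$: because $\mathfrak{F}$ is defined over $\mathcal{O}_\cK$, the elements of $H$ act on $\mathfrak{F}(m_\cL)$ by formal group automorphisms, the invariants are $\mathfrak{F}(m_\cL)^H=\mathfrak{F}(m_\cK)$ (as $m_\cL^H=m_\cK$), and the norm element $N_H\colonequals\sum_{\sigma\in H}\sigma\in\Z_p[H]$ acts on $\mathfrak{F}(m_\cL)$ exactly as the $\mathfrak{F}$-norm $\Tr^{\mathfrak{F}}\colon\mathfrak{F}(m_\cL)\to\mathfrak{F}(m_\cK)$. Hence $\widehat H^0(H,\mathfrak{F}(m_\cL))=\mathfrak{F}(m_\cK)/\Tr^{\mathfrak{F}}(\mathfrak{F}(m_\cL))$, which vanishes by \cref{trace surjective}; by $2$-periodicity this already yields $H^i(H,\mathfrak{F}(m_\cL))=0$ for every even $i>0$.

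For $H^1$: since $H$ is finite and $\mathfrak{F}(m_\cL)$ is finitely generated over $\Z_p$, the groups $H^1(H,\mathfrak{F}(m_\cL))$ and $H^2(H,\mathfrak{F}(m_\cL))$ are finite, so the Herbrand quotient $h_H(\mathfrak{F}(m_\cL))=|H^2(H,\mathfrak{F}(m_\cL))|/|H^1(H,\mathfrak{F}(m_\cL))|$ is defined, and by the previous paragraph it now suffices to show $h_H(\mathfrak{F}(m_\cL))=1$. The Herbrand quotient is unchanged under $H$-isomorphism and under passage to a finite-index $H$-submodule. The formal logarithm of $\mathfrak{F}$ converges on $m_\cL^n$ for $n$ large and gives an $H$-equivariant isomorphism $\mathfrak{F}(m_\cL^n)\xrightarrow{\ \sim\ }(m_\cL^n,+)$; as $\mathfrak{F}(m_\cL^n)\subseteq\mathfrak{F}(m_\cL)$ and $m_\cL^n\subseteq\mathcal{O}_\cL$ have finite index, this gives $h_H(\mathfrak{F}(m_\cL))=h_H(\mathcal{O}_\cL)$. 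Finally, the normal basis theorem for $\cL/\cK$ furnishes (after rescaling into $\mathcal{O}_\cL$) an element $\alpha$ such that $\bigoplus_{\sigma\in H}\mathcal{O}_\cK\,\sigma(\alpha)$ is an $\mathcal{O}_\cK[H]$-submodule of $\mathcal{O}_\cL$ of finite index, free of rank one over $\mathcal{O}_\cK[H]$; hence $h_H(\mathcal{O}_\cL)=h_H(\mathcal{O}_\cK[H])$, and the latter equals $1$ because $\mathcal{O}_\cK[H]\cong\mathrm{Ind}_{\{1\}}^H\mathcal{O}_\cK$ is cohomologically trivial by Shapiro's lemma. Combining the two halves yields $H^i(H,\mathfrak{F}(m_\cL))=0$ for all $i>0$.

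I do not anticipate a genuine obstacle here: essentially all the content is in \cref{trace surjective}, which disposes of $\widehat H^0$. The only points requiring a little care are the (tautological) identification of the group-cohomological norm $N_H$ with the $\mathfrak{F}$-norm $\Tr^{\mathfrak{F}}$, and the routine bookkeeping showing $h_H(\mathfrak{F}(m_\cL))=1$ via the formal logarithm and the normal basis theorem.
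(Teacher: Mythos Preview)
Your proof is correct and follows essentially the same route as the paper: show $\widehat H^0=0$ via \cref{trace surjective}, show the Herbrand quotient equals $1$, and conclude by $2$-periodicity. The only difference is that the paper obtains $h_H(\mathfrak{F}(m_\cL))=1$ by citing \cite[Lemma~3.9]{EllerbrockNickel}, whereas you unpack this fact directly via the formal logarithm and the normal basis theorem; your argument is thus slightly more self-contained but otherwise identical in structure.
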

\begin{proof}
 Let $H\colonequals\Gal(\cL/\cK)$, and let $\widehat H^i(H, \mathfrak{F}(m_\cL))$ denote the Tate cohomology groups.
 By \cref{trace surjective}, we know that $\widehat{H}^0(H,\mathfrak{F}(m_\cL))=0$. By \cite[Lemma 3.9]{EllerbrockNickel}, the Herbrand quotient of $\mathfrak{F}(m_\cL)$ vanishes. Thus, $\widehat{H}^i(H,\mathfrak{F}(m_\cL))=0$ for all $i$, which implies the desired claim. 
\end{proof}

{
\begin{rem} \label{remark-Hilbert}
    For determining whether an extension is weakly ramified, one may use Hilbert's formula \cite[IV§1, Prop. 4]{serre-local-fields}. This states that if $\mathcal{L}/\mathcal{K}$ is a Galois extension of local fields, and $v$ denotes the valuation on $L$, then the different $\mathfrak D_{\mathcal L/\cK}$ has valuation
    \[v(\mathfrak D_{L/K})=\sum_{i=0}^\infty (|G_i|-1),\]
    where $G_i$ are the higher ramification groups (in lower numbering). In particular, a totally ramified extension of degree $p$ resp.~$p^2$ is weakly ramified if and only if $v(\mathfrak D_{L/K})=2p-2$ resp.~$2p^2-2$.
\end{rem}
\begin{lemma} \label{lem.weak-ram}
    Let $\mathcal{L}/\mathcal{K}$ and $\mathcal{K}'/\mathcal{K}$ be Galois extensions of local fields of degree $[\mathcal L:\cK]=[\cK':\cK]=p$. Assume that $\mathcal{L}\cap\mathcal{K}'=\mathcal{K}$ and that $\mathcal{L}/\mathcal{K}$ is weakly ramified. Then $\mathcal{L}\mathcal{K}'/\mathcal{K}'$ is weakly ramified.
\end{lemma}
\begin{proof}
    If $\mathcal{L}\mathcal{K'}/\mathcal{K}$ is at most weakly ramified, there is nothing to prove. It remains to treat the case when $\mathcal{K}'\mathcal{L}/\mathcal{K}$ is totally but not weakly ramified. Let $v$ be the valuation on $\mathcal{K}'\mathcal{L}$. As $\mathcal{K}'\mathcal{L}/\mathcal{K}$ is a $(\Z/p\Z)^2$-extension that is not weakly ramified, \cref{remark-Hilbert} yields
    \[2p^2-2\neq v(\mathfrak{D}_{\mathcal{L}\mathcal{K}'/\mathcal{K}})=v(\mathfrak{D}_{\mathcal{L}\mathcal{K}'/\mathcal{L}})+v(\mathfrak{D}_{\mathcal{L}/\mathcal{K}})=v(\mathfrak{D}_{\mathcal{K}'\mathcal{L}/\mathcal{L}})+p(2p-2).\]
    It follows that $v(\mathfrak{D}_{\mathcal{K}'\mathcal{L}/\mathcal{L}})\neq 2p-2$. In particular, $\mathcal{K}'\mathcal{L}/\mathcal{L}$ is not weakly ramified by \cref{remark-Hilbert}. By \cite[Theorem 10.7]{neukirch} and the fact that $\mathcal{L}/\mathcal{K}$ is weakly ramified, we obtain
$$1=G_2(\mathcal{L}/\mathcal{K})=G_2(\mathcal{L}\mathcal{K}'/\mathcal{K})\Gal(\mathcal{L}\mathcal{K}'/\mathcal{L})/\Gal(\mathcal{L}\mathcal{K}'/\mathcal{L}).$$
    This implies $G_2(\mathcal{L}\mathcal{K}'/\mathcal{K})=\Gal(\mathcal{L}\mathcal{K}'/\mathcal{L})$. By \cite[Theorem 10.3]{neukirch} we obtain
    \[G_2(\mathcal{L}\mathcal{K}'/\mathcal{K}')=G_2(\mathcal{L}\mathcal{K}'/\mathcal{K})\cap \Gal(\mathcal{L}\mathcal{K}'/\mathcal{K}')=\Gal(\mathcal{L}\mathcal{K}'/\mathcal{L}) \cap \Gal(\mathcal{L}\mathcal{K}'/\mathcal{K}')=\{0\},\]
    as claimed.
\end{proof}}

\section{Local considerations} \label{sec:local-cons}

Let $E/\Q_p$ be an elliptic curve with good supersingular reduction and $a_p=0$, and let $\widehat{E}$ denote the formal group associated with $E$. Let $\cK/\Q_p$ be a finite extension with ramification index $e(\cK/\Q_p)$ not divisible by $p^2-1$. Let $\cK_\infty$ be the cyclotomic $\Z_p$-extension of $\cK$ and let $\cK_n$ be the intermediate field of degree $p^n$ over $\cK$. 
 
\begin{lemma} \label{lem:torsion-freeness}
    If $p^2-1\nmid e(\cK/\Q_p)$, then for all $n\ge 0$, the group $\widehat E(\cK_n)$ is $p$-torsion free. In particular, $E(\cK_n)$ is also $p$-torsion free.
\end{lemma}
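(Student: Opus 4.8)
The plan is to pin down the valuations of the nonzero $p$-torsion points of $\widehat E$ and compare them against the ramification of the cyclotomic tower. Since $E$ has good supersingular reduction, $\widehat E$ has height $2$, so $[p](T)$ reduces modulo $p$ to a power series of the form $(\text{unit})\cdot T^{p^2}+(\text{higher order terms})$. By Weierstrass preparation, $[p](T)=u(T)\cdot T\cdot Q(T)$ with $u\in\Z_p\llbracket T\rrbracket^\times$ and $Q\in\Z_p[T]$ a distinguished polynomial of degree $p^2-1$, and the nonzero $p$-torsion points of $\widehat E$ are precisely the roots of $Q$. The first step is to observe that the Newton polygon of $Q$ is a single segment of slope $-\tfrac1{p^2-1}$: comparing coefficients of $T$ in $[p](T)=pT+\cdots$ shows the constant term of $Q$ has $p$-adic valuation $1$, while the remaining non-leading coefficients of $Q$ are divisible by $p$, being non-leading coefficients of the distinguished polynomial $T\cdot Q(T)$; hence every interior lattice point of the polygon lies strictly above the segment joining $(0,1)$ to $(p^2-1,0)$. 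It follows that every nonzero $x\in\widehat E[p](\overline{\Q_p})$ satisfies $v_p(x)=\tfrac1{p^2-1}$, where $v_p$ is normalised by $v_p(p)=1$.

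Next I would record the ramification of $\cK_n/\Q_p$. The cyclotomic $\Z_p$-extension $\Q_{p,\infty}/\Q_p$ is totally ramified, hence so is the compositum $\cK_\infty=\cK\cdot\Q_{p,\infty}$ over $\cK$, and therefore so is $\cK_n/\cK$; as $[\cK_n:\cK]=p^n$ by definition, this gives $e(\cK_n/\Q_p)=p^n\,e(\cK/\Q_p)$.

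Now suppose, for contradiction, that $\widehat E(\cK_n)$ contained a nonzero $p$-power-torsion point. Then it contains a nonzero point $x$ of exact order $p$, which lies in $m_{\cK_n}$. By the first step $v_p(x)=\tfrac1{p^2-1}$, whereas the $v_p$-valuation of any nonzero element of $\cK_n$ lies in $\tfrac1{e(\cK_n/\Q_p)}\Z=\tfrac1{p^n e(\cK/\Q_p)}\Z$. Hence $(p^2-1)\mid p^n e(\cK/\Q_p)$, and since $\gcd(p^2-1,p)=1$ this forces $(p^2-1)\mid e(\cK/\Q_p)$, contradicting the hypothesis. Therefore $\widehat E(\cK_n)[p]=0$, so $\widehat E(\cK_n)$ is $p$-torsion free. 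Finally, for $E$ itself the reduction exact sequence $0\to\widehat E(\cK_n)\to E(\cK_n)\to\widetilde E(k_n)\to0$ (valid by good reduction, $k_n$ denoting the residue field of $\cK_n$), together with the vanishing of $\widetilde E(k_n)[p]$ for the supersingular curve $\widetilde E$ over any finite extension of $\mathbb{F}_p$, yields $E(\cK_n)[p^\infty]=0$. The only genuine obstacle is the first step — computing the valuation of the $p$-torsion points of $\widehat E$, which is essentially the input also underlying Kobayashi's argument \cite{Kobayashi}; once it is in hand, everything else is bookkeeping with ramification indices.
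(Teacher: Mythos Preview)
Your proof is correct and takes a genuinely different route from the paper's. The paper passes to the unramified quadratic extension $\cK'$ of $\cK$, invokes \cite[Proposition~8.7]{Kobayashi} to deduce that a single nonzero $p$-torsion point in $\widehat E(\cK'_n)$ forces all of $\widehat E[p]$ to lie there, and then uses that $\Q_{p^2}(\widehat E[p])/\Q_{p^2}$ is totally ramified of degree $p^2-1$ to obtain the contradiction $p^2-1\mid e(\cK'_n/\Q_p)=p^n e(\cK/\Q_p)$. Your argument bypasses both the passage to $\cK'$ and the appeal to Kobayashi: the Newton polygon computation directly gives $v_p(x)=\tfrac{1}{p^2-1}$ for every nonzero $x\in\widehat E[p]$, and comparing with the value group $\tfrac{1}{p^n e(\cK/\Q_p)}\Z$ of $\cK_n$ yields the same divisibility. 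Your approach is more elementary and self-contained (it only needs Weierstrass preparation and that height $2$ means the first unit coefficient of $[p](T)$ occurs at $T^{p^2}$), while the paper's approach is more structural and makes visible the full torsion field $\Q_{p^2}(\widehat E[p])$. Both encode the same ramification phenomenon; yours is simply a shorter path to this particular lemma.
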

\begin{proof}
    Let $\cK'$ be the unramified quadratic extension of $\cK$, and let $\cK'_n\colonequals\cK_n\cK'$. Assume that $\widehat{E}(\cK'_n)[p]\neq 0$. Recall from the proof of \cite[Proposition 8.7]{Kobayashi} that this implies that $\cK'_n$ contains all $p$-torsion points. In particular, $\cK'_n$ contains $\mathcal{L}\colonequals\cK'(\widehat{E}[p])$. 
    \[\begin{tikzcd}[every arrow/.append style={no head}]
                                            &                                             & \cK'_n          \\
                                            & \cL \arrow[ru] \arrow[dd] & \cK_n \arrow[u] \\
    \cK' \arrow[ru, "\substack{\text{dividing} \\p^2-1}" near end, "\text{tot.ram.}"' {anchor=north, rotate=30, yshift=-0.5mm}]  & & \\
    \cK \arrow[u, "2"', "\text{unram.}"] \arrow[rruu, "p^n"' near end, crossing over] & \Q_{p^2}(\widehat{E}[p]) \ar[ld, "\substack{2\cdot(p^2-1)}" near end] & \\
    \Q_p \arrow[u] & & 
    \end{tikzcd}\]
    Let $\Q_{p^2}$ be the unique unramified extension of $\Q_p$ of degree $2$. 
    As $\Q_{p^2}(\widehat E[p])/\Q_{p^2}$  is a totally ramified extension of degree $p^2-1$, it follows that $p^2-1\mid e(\cK'_n/\cK')e(\cK'/\Q_p)$. The first factor is a power of $p$ and the second one is equal to $e(\cK/\Q_p)$. By assumption, $e(\cK/\Q_p)$ is not divisible by $p^2-1$, yielding a contradiction, which shows that indeed $\widehat{E}(\cK_n)[p]=0$. The $p$-torsion freeness of $E(\cK_n)$ follows by the same argument as in \cite[Lemma~3.1]{mengfai}.
\end{proof}

\begin{def1}
    For $n\ge0$, we define the two subgroups of $\widehat E(\cK_n)$:
    \[\widehat{E}^+(\cK_n)\colonequals\left\{x\in \widehat{E}(\cK_n)\mid \Tr_{\cK_n/\cK_{m+1}}(x)\in \widehat{E}(\cK_m), 2\mid m, 0\le m\le n-1\right\}\]
    \[\widehat{E}^-(\cK_n)\colonequals\left\{x\in \widehat{E}(\cK_n)\mid \Tr_{\cK_n/\cK_{m+1}}(x)\in \widehat{E}(\cK_m), 2\nmid m, 0\le m\le n-1\right\}\]
\end{def1}
 
Fix a (non-canonical) topological generator $\gamma$ of $\Gamma$, which gives rise to an isomorphism $\Lambda=\Z_p\llbracket\Gamma\rrbracket\xrightarrow{\sim}\Z_p[[X]]$ given by $\gamma\mapsto 1+X$. For $n\ge0$, let $\Phi_n$ be the $p^n$-th cyclotomic polynomial. We {define}
\[\omega_n^+(X)\colonequals X\prod_{\substack{1\le m\le n\\ m \textup{ even}}}\Phi_m(X+1), \quad \omega_n^-(X)\colonequals X\prod_{\substack{1\le m\le n\\ m\textup{ odd}}}\Phi_m(X+1).\]
Note that $\omega_n^+(X) \omega_n^-(X)=X \omega_n(X)$.

\begin{lemma}
\label{lem:finite index}
    The group $\widehat{E}^+(\cK_n)+\widehat{E}^-(\cK_n)$ has finite index in $\widehat{E}(\cK_n)$.
\end{lemma}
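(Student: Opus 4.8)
The plan is to prove the statement by reducing it, after $\otimes_{\Z_p}\Q_p$, to a fact about modules over the semisimple algebra $\Q_p[\Gal(\cK_n/\cK)]$. Throughout, set $G_n\colonequals\Gal(\cK_n/\cK)$, a cyclic group of order $p^n$ generated by the image $\bar\gamma$ of $\gamma$, and view $\widehat{E}(\cK_n)\colonequals\widehat{E}(m_{\cK_n})$ as a module over $\Z_p[G_n]=\Lambda/(\omega_n)$ via $\gamma\mapsto\bar\gamma$. The formal logarithm identifies $\widehat{E}(\cK_n)$ up to commensurability with $\Ok_{\cK_n}$, so it is finitely generated over $\Z_p$, and $\gamma^{p^n}-1$ annihilates it.

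The first step — and the one I expect to be the main obstacle, mostly because of the parity bookkeeping and the index shift $m\mapsto m+1$ — is to establish the inclusions
\[\widehat{E}(\cK_n)[\omega_n^+(\gamma)]\subseteq\widehat{E}^+(\cK_n),\qquad \widehat{E}(\cK_n)[\omega_n^-(\gamma)]\subseteq\widehat{E}^-(\cK_n),\]
where $M[f(\gamma)]$ denotes the $f(\gamma)$-torsion submodule. Note first that for $y\in\widehat{E}(\cK_n)$ and $0\le m\le n$ one has $y\in\widehat{E}(\cK_m)$ iff $(\gamma^{p^m}-1)y=0$, since $\widehat{E}(\cK_m)=\widehat{E}(\cK_n)^{\Gal(\cK_n/\cK_m)}$. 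Next, for $0\le m\le n-1$ the trace $\Tr_{\cK_n/\cK_{m+1}}$ acts on $\widehat{E}(\cK_n)$ as multiplication by $\prod_{m+2\le j\le n}\Phi_{p^j}(\gamma)$, and a short computation with cyclotomic polynomials gives $(\gamma^{p^m}-1)\prod_{m+2\le j\le n}\Phi_{p^j}(\gamma)=\prod_{0\le i\le n,\,i\ne m+1}\Phi_{p^i}(\gamma)$ (with $\Phi_{p^0}(\gamma)\colonequals\gamma-1$). Since $\omega_n^+(X)=\prod_{0\le i\le n,\,2\mid i}\Phi_{p^i}(X+1)$ and, for even $m$, the index $m+1$ is odd, $\omega_n^+(\gamma)$ divides $\prod_{i\ne m+1}\Phi_{p^i}(\gamma)$ in $\Z_p[G_n]$; hence $\omega_n^+(\gamma)x=0$ forces $(\gamma^{p^m}-1)\Tr_{\cK_n/\cK_{m+1}}(x)=0$, i.e. $\Tr_{\cK_n/\cK_{m+1}}(x)\in\widehat{E}(\cK_m)$ for all even $m$, which is precisely $x\in\widehat{E}^+(\cK_n)$. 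The argument for $\widehat{E}^-$ is identical with the roles of even and odd $m$ exchanged.

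Next I would tensor with $\Q_p$ and put $V\colonequals\widehat{E}(\cK_n)\otimes_{\Z_p}\Q_p$. By flatness, $\widehat{E}(\cK_n)[f(\gamma)]\otimes\Q_p=V[f(\gamma)]$ for every $f\in\Z_p[X]$, so by the previous step $\widehat{E}^\pm(\cK_n)\otimes\Q_p\supseteq V[\omega_n^\pm(\gamma)]$. Now $\Q_p[G_n]=\Q_p[X]/(\omega_n(X))\cong\prod_{i=0}^n\Q_p(\zeta_{p^i})$ by the Chinese Remainder Theorem, the factors $\Phi_{p^i}(X+1)$ being pairwise coprime irreducibles over $\Q_p$; in particular $\Q_p[G_n]$ is semisimple, and for polynomials $f,g\mid\omega_n$ that are products of distinct such factors one has $V[f(\gamma)]+V[g(\gamma)]=V[\,\mathrm{lcm}(f,g)(\gamma)\,]$. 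The only common factor of $\omega_n^+$ and $\omega_n^-$ is $X=\Phi_{p^0}(X+1)$, while $\omega_n^+(X)\omega_n^-(X)=X\omega_n(X)$; hence $\mathrm{lcm}(\omega_n^+,\omega_n^-)=\omega_n$, and since $\omega_n(\gamma)$ kills $V$ we obtain $V[\omega_n^+(\gamma)]+V[\omega_n^-(\gamma)]=V[\omega_n(\gamma)]=V$.

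Combining the last two paragraphs, $(\widehat{E}^+(\cK_n)+\widehat{E}^-(\cK_n))\otimes\Q_p\supseteq V[\omega_n^+(\gamma)]+V[\omega_n^-(\gamma)]=V=\widehat{E}(\cK_n)\otimes\Q_p$, so this inclusion is an equality; hence the finitely generated $\Z_p$-module $\widehat{E}(\cK_n)/(\widehat{E}^+(\cK_n)+\widehat{E}^-(\cK_n))$ is $\Z_p$-torsion, i.e. finite. I would close with the remark that, in contrast to the unramified situation of Kobayashi and Lim, one cannot expect the exact equality $\widehat{E}^+(\cK_n)+\widehat{E}^-(\cK_n)=\widehat{E}(\cK_n)$ from this argument: there is no norm-compatible family of points available, and the layers $\cK_n/\cK_{n-1}$ are in general not weakly ramified, so \cref{cor-surjective} does not apply to them — but finite index is all that is needed in the sequel.
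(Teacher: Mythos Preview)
Your proof is correct and follows essentially the same route as the paper: first establish the inclusions $\widehat{E}(\cK_n)[\omega_n^\pm]\subseteq\widehat{E}^\pm(\cK_n)$ (which the paper states ``by the definitions'' and you verify carefully), then use coprimality of $\omega_n^+$ and $\omega_n^-/X$ to conclude finite index. The only cosmetic difference is that the paper writes down an explicit B\'ezout relation $c=a(X)\omega_n^+(X)+b(X)\omega_n^-(X)/X$ in $\Z_p[X]$ to get $c\cdot\widehat{E}(\cK_n)\subseteq\widehat{E}^+(\cK_n)+\widehat{E}^-(\cK_n)$, whereas you phrase the same fact via the semisimplicity of $\Q_p[G_n]$ and the Chinese Remainder Theorem; these are equivalent formulations of the same argument.
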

\begin{proof}
   By the definitions we have
    \begin{equation} \label{eq:E-pm-omega}
        \widehat{E}(\cK_n)[\omega^\pm_n]\subset \widehat{E}^\pm(\cK_n).
    \end{equation}
   
    As $\omega_n^+(X)$ and $\omega_n^-(X)/X$ are coprime in $\mathbb Z[X]$ and thus in $\mathbb Z_p[X]$, there exists a $p$-adic integer $c\in \mathbb Z_p$ and elements $a(X),b(X)\in \Z_p[X]$ such that 
    \begin{align*}
        c &= a(X)\cdot\omega_n^+(X)+b(X) \cdot\frac{\omega_n^-(X)}{X}. \\
        \intertext{Let $y\in \widehat{E}(\cK_n)$. Multiplying by $c$, we get a decomposition }
        cy &= a(X)\cdot\omega_n^+(X)\cdot y+b(X)\cdot\frac{\omega_n^-(X)}{X}\cdot y.
    \end{align*}
    Write $y^-$ resp. $y^+$ for the first resp. second term on the right hand side. By definition, we have $y^-\in \widehat{E}(\cK_n)[\omega_n^-]$ and $y^+\in \widehat{E}(\cK_n)[\omega_n^+]$. Using \eqref{eq:E-pm-omega}, we obtain
    \[cy\in \widehat{E}^+(\cK_n)+\widehat{E}^-(\cK_n).\]
    As $\widehat{E}(\cK_n)$ has finite rank, the lemma follows.
\end{proof}

\begin{lemma} \label{lem:E-pm-intersection}
    If $p^2-1\nmid e(\cK/\Q_p)$, then $\widehat{E}(\cK)=\widehat{E}^+(\cK_n)\cap \widehat{E}^-(\cK_n)$.
\end{lemma}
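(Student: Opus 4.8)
The plan is to prove the two inclusions separately; the inclusion $\widehat{E}(\cK)\subseteq\widehat{E}^+(\cK_n)\cap\widehat{E}^-(\cK_n)$ is the easy one. Indeed, if $x\in\widehat{E}(\cK)$ then for every $m$ with $0\le m\le n-1$ the element $\Tr_{\cK_n/\cK_{m+1}}(x)$ equals $[\cK_n:\cK_{m+1}]\cdot x=p^{\,n-m-1}x$, which lies in $\widehat{E}(\cK)\subseteq\widehat{E}(\cK_m)$; so $x$ satisfies the defining conditions of both $\widehat{E}^+(\cK_n)$ and $\widehat{E}^-(\cK_n)$.

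For the reverse inclusion, I would first unwind the definitions: if $x\in\widehat{E}^+(\cK_n)\cap\widehat{E}^-(\cK_n)$, then collecting the even-$m$ conditions coming from $\widehat{E}^+$ and the odd-$m$ conditions coming from $\widehat{E}^-$ shows that $\Tr_{\cK_n/\cK_{m+1}}(x)\in\widehat{E}(\cK_m)$ for \emph{every} $m$ with $0\le m\le n-1$. The case $n=0$ being trivial, assume $n\ge1$.

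I then prove by descending induction on $j$ that $x\in\widehat{E}(\cK_j)$ for all $0\le j\le n-1$; the case $j=0$ is the claim. The base case $j=n-1$ is just the $m=n-1$ instance above, which says $x=\Tr_{\cK_n/\cK_n}(x)\in\widehat{E}(\cK_{n-1})$. For the inductive step, suppose $x\in\widehat{E}(\cK_j)$ with $1\le j\le n-1$. Then $\Tr_{\cK_n/\cK_j}(x)=[\cK_n:\cK_j]\cdot x=p^{\,n-j}x$, and the $m=j-1$ condition gives $p^{\,n-j}x\in\widehat{E}(\cK_{j-1})$, i.e.\ $p^{\,n-j}x$ is fixed by $\Gal(\cK_j/\cK_{j-1})$. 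Hence $p^{\,n-j}\bigl(g(x)-x\bigr)=0$ in $\widehat{E}(\cK_j)$ for every $g\in\Gal(\cK_j/\cK_{j-1})$; since $\widehat{E}(\cK_j)$ is $p$-torsion free by \cref{lem:torsion-freeness}, this forces $g(x)=x$, so $x\in\widehat{E}(\cK_{j-1})$. Running the induction down to $j=0$ gives $x\in\widehat{E}(\cK_0)=\widehat{E}(\cK)$.

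The only substantive ingredient is the $p$-torsion freeness of the groups $\widehat{E}(\cK_j)$ furnished by \cref{lem:torsion-freeness} (which is where the hypothesis $p^2-1\nmid e(\cK/\Q_p)$ enters): it is precisely what allows one to upgrade ``$p^{k}x$ is Galois-invariant'' to ``$x$ is Galois-invariant''. Everything else is the elementary fact that the $\widehat{E}$-trace out of a subfield is multiplication by the degree, so I do not anticipate any genuine obstacle.
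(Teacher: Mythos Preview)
Your proof is correct and follows essentially the same approach as the paper's: a descending induction along the cyclotomic tower, using the definitions of $\widehat{E}^{\pm}$ to obtain $p^{n-j}x\in\widehat{E}(\cK_{j-1})$ and then invoking the $p$-torsion freeness of \cref{lem:torsion-freeness} to conclude $x\in\widehat{E}(\cK_{j-1})$. This is precisely Kobayashi's argument that the paper reproduces; your write-up is in fact a bit more explicit, spelling out the easy inclusion $\widehat{E}(\cK)\subseteq\widehat{E}^+(\cK_n)\cap\widehat{E}^-(\cK_n)$ and the base case $j=n-1$, both of which the paper leaves implicit.
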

\begin{proof}
    We may reproduce the first half of the proof in \cite[Proposition~8.12]{Kobayashi}. Indeed, let $x\in \widehat{E}^+(\cK_n)\cap \widehat{E}^-(\cK_n)$. Distinguishing by parity, we show that for all $0\le m\le n-1$, $x\in \widehat E(\cK_m)$ implies $x\in \widehat E(\cK_{m-1})$; since $\widehat E(\cK_{-1})=\widehat E(\cK)$, this clearly implies the assertion. We have $p^{n-m} x = \Tr_{\cK_n/\cK_m}(x) \in \widehat E(\cK_{m-1})$, and thus for all $\sigma\in\Gal(\cK_m/\cK_{m-1})$ we have $p^{n-m} (\sigma(x)-x)=0$, and now \cref{lem:torsion-freeness} shows $\sigma(x)=x$, that is, $x\in \widehat E(\cK_{m-1})$.
\end{proof}
\begin{cor}
\label{ranK-finite-level}
    If $p^2-1\nmid e(\cK/\Q_p)$,  then for all $n\ge 0$ we have
    \[\Z_p\textup{-rank}(\widehat{E}^\pm(\cK_n))=[\cK:\Q_p]\deg(\omega_n^\pm).\]
\end{cor}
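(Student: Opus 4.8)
My plan is to deduce the corollary from \cref{lem:finite index} and \cref{lem:E-pm-intersection} together with a computation of $\Z_p$-ranks. Write $\Gamma_n\colonequals\Gal(\cK_n/\cK)$ and $V\colonequals\widehat E(\cK_n)\otimes_{\Z_p}\Q_p$. Since $\gamma^{p^n}$ acts trivially on $\cK_n$, the module $\widehat E(\cK_n)$ is annihilated by $\omega_n(X)=(1+X)^{p^n}-1$, so $V$ is a module over $\Q_p[X]/(\omega_n)\cong\Q_p[\Gamma_n]$ (via $\gamma\mapsto 1+X$). The crucial input is that $V$ is \emph{free} of rank $[\cK:\Q_p]$ over this ring. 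To see this I would use that the formal logarithm $\lambda_{\widehat E}\in\Q_p\llbracket T\rrbracket$ has coefficients in $\Q_p$ (as $E$ is defined over $\Q_p$), hence is $\Gamma_n$-equivariant, and --- being a local isomorphism of groups --- induces a $\Gamma_n$-equivariant $\Q_p$-linear isomorphism $V\xrightarrow{\ \sim\ }\cK_n$; and that the normal basis theorem for $\cK_n/\cK$ gives $\cK_n\cong\cK[\Gamma_n]\cong\Q_p[\Gamma_n]^{\oplus[\cK:\Q_p]}$ as $\Q_p[\Gamma_n]$-modules, since $\Gamma_n$ acts trivially on $\cK$.

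Next I would compute the $\omega_n^\pm$-torsion of $V$. As already used in the proof of \cref{lem:finite index}, $\omega_n^+$ and $\omega_n^-/X$ are coprime in $\Q_p[X]$, and $\omega_n^+\cdot(\omega_n^-/X)=\omega_n$; so the Chinese remainder theorem gives $\Q_p[X]/(\omega_n)\cong\Q_p[X]/(\omega_n^+)\times\Q_p[X]/(\omega_n^-/X)$, and here $\omega_n^+$ annihilates the first factor and acts invertibly on the second. Hence $\bigl(\Q_p[X]/(\omega_n)\bigr)[\omega_n^+]=\Q_p[X]/(\omega_n^+)$, of $\Q_p$-dimension $\deg(\omega_n^+)$, and by the freeness above $\dim_{\Q_p}V[\omega_n^+]=[\cK:\Q_p]\deg(\omega_n^+)$. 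Since $\widehat E(\cK_n)$ is finitely generated over $\Z_p$ and $-\otimes_{\Z_p}\Q_p$ is exact, this equals $\Z_p\textup{-rank}(\widehat E(\cK_n)[\omega_n^+])$; the same argument gives $\Z_p\textup{-rank}(\widehat E(\cK_n)[\omega_n^-])=[\cK:\Q_p]\deg(\omega_n^-)$.

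To finish I would sandwich the ranks of $\widehat E^\pm(\cK_n)$. On the one hand \eqref{eq:E-pm-omega} gives $\widehat E(\cK_n)[\omega_n^\pm]\subseteq\widehat E^\pm(\cK_n)$, hence $\Z_p\textup{-rank}(\widehat E^\pm(\cK_n))\ge[\cK:\Q_p]\deg(\omega_n^\pm)$. On the other hand, the exact sequence $0\to\widehat E^+(\cK_n)\cap\widehat E^-(\cK_n)\to\widehat E^+(\cK_n)\oplus\widehat E^-(\cK_n)\to\widehat E^+(\cK_n)+\widehat E^-(\cK_n)\to0$ shows that $\Z_p\textup{-rank}(\widehat E^+(\cK_n))+\Z_p\textup{-rank}(\widehat E^-(\cK_n))$ equals $\Z_p\textup{-rank}(\widehat E^+(\cK_n)+\widehat E^-(\cK_n))+\Z_p\textup{-rank}(\widehat E^+(\cK_n)\cap\widehat E^-(\cK_n))$, which by \cref{lem:finite index} and \cref{lem:E-pm-intersection} equals $\Z_p\textup{-rank}(\widehat E(\cK_n))+\Z_p\textup{-rank}(\widehat E(\cK))=(p^n+1)[\cK:\Q_p]$. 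Because $\deg(\omega_n^+)+\deg(\omega_n^-)=\deg(\omega_n^+\omega_n^-)=\deg(X\omega_n)=p^n+1$, the two lower bounds already sum to $(p^n+1)[\cK:\Q_p]$, forcing both to be equalities, which is the claim.

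The genuinely substantive step is the freeness of $V$ over $\Q_p[\Gamma_n]$; everything else is rank bookkeeping, but that input is what pins down the actual value $[\cK:\Q_p]\deg(\omega_n^\pm)$ of the rank. It is also precisely here that allowing $\cK/\Q_p$ to be ramified causes no trouble: the normal basis theorem holds for any finite Galois extension, and the formal logarithm transports it to $\widehat E$.
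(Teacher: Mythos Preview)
Your proof is correct and takes a genuinely different route from the paper's. Both arguments finish the same way: \cref{lem:finite index} and \cref{lem:E-pm-intersection} force
\[
\Z_p\textup{-rank}\big(\widehat E^+(\cK_n)\big)+\Z_p\textup{-rank}\big(\widehat E^-(\cK_n)\big)=(p^n+1)[\cK:\Q_p],
\]
and then one needs to pin down each summand. The paper does this by induction on $n$: for even $n$ one has $\widehat E^-(\cK_n)=\widehat E^-(\cK_{n-1})$ (an easy consequence of the definition together with the $p$-torsion-freeness of \cref{lem:torsion-freeness}), so the minus rank is known by the inductive hypothesis, and the plus rank drops out of the sum. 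You instead produce the lower bound $\Z_p\textup{-rank}(\widehat E^\pm(\cK_n))\ge[\cK:\Q_p]\deg(\omega_n^\pm)$ \emph{directly}, by identifying $\widehat E(\cK_n)\otimes\Q_p$ with $\Q_p[\Gamma_n]^{[\cK:\Q_p]}$ via the formal logarithm and the normal basis theorem, and reading off the $\omega_n^\pm$-torsion of a free module; the sum constraint then makes both lower bounds equalities. Your approach is more conceptual in that it explains where the number $[\cK:\Q_p]\deg(\omega_n^\pm)$ comes from (it is the dimension of the relevant isotypic piece of the regular representation), and the lower-bound half holds without any hypothesis on $e(\cK/\Q_p)$; the paper's induction is slightly more elementary in that it avoids invoking the $\Q_p[\Gamma_n]$-module structure and only needs the scalar ranks $\Z_p\textup{-rank}(\widehat E(\cK_n))=[\cK_n:\Q_p]$. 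One minor remark: for the minus sign, the ``same argument'' you invoke should use the factorisation $\omega_n=\omega_n^-\cdot(\omega_n^+/X)$ rather than $\omega_n=\omega_n^+\cdot(\omega_n^-/X)$, since $\omega_n^-$ and $\omega_n^+$ share the factor $X$; the conclusion is unaffected.
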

\begin{proof}
    We prove the claim by induction on $n$. If $n=0$, we have $\widehat{E}^-(\cK)=\widehat{E}(\cK)=\widehat{E}^+(\cK)$, which is of rank $[\cK:\Q_p]$. Let us now assume that the claim is proved for $n-1$ and that $n$ is even (the case of odd $n$ can be treated similarly). In this case $\widehat{E}^-(\cK_n)=\widehat{E}^-(\cK_{n-1})$ and $\omega_n^-=\omega_{n-1}^-$. Thus, for the minus sign there is nothing to prove. By \cref{lem:finite index,lem:E-pm-intersection}, we have
    \begin{align*}\Z_p\textup{-rank}\left(\widehat{E}^+(\cK_n)\right)&=[\cK_n:\Q_p]-\Z_p\textup{-rank}\left(\widehat E^-(\cK_n)\right)+\Z_p\textup{-rank}\left(\widehat{E}(\cK)\right)\\
    &=[\cK:\Q_p](p^n-\deg(\omega_n^-)+1)=[\cK:\Q_p]\deg(\omega_n^+),\end{align*}
    which proves the claim.
\end{proof}

\subsection{Corank computations}

Assume that $\cK/\Q_p$ is Galois, at most weakly ramified and that $\cK\cap \Q_{p,\infty}=\Q_p$. Let $\mathcal{F}\subset \cK$ be a subfield $G=\Gal(\cK_\infty/\mathcal{F}_\infty)$. 
Then $\Gal(\cK_\infty/\mathcal{F})=\Gamma\times G$. Let $\Lambda=\Z_p[[\Gamma]]$.
\begin{lemma}
\label{lower-bound}
    If $p^2-1\nmid e(\cK/\Q_p)$, then
    \[\Lambda\textup{-corank}(E^\pm(\cK_\infty)\otimes \Q_p/\Z_p)\ge [\cK:\Q_p].\]
\end{lemma}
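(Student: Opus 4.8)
The plan is to leverage \cref{ranK-finite-level} together with control theory to pass from finite levels to the limit. First I would fix a sign, say $+$ (the $-$ case is identical), and recall that by \cref{ranK-finite-level} we have $\Z_p\textup{-rank}(\widehat E^+(\cK_n)) = [\cK:\Q_p]\deg(\omega_n^+)$ for all $n\ge 0$, and $\deg(\omega_n^+) = 1 + \sum_{1\le m\le n,\ m\textup{ even}} (p^m - p^{m-1}) = 1 + (p^n-1)\cdot\frac{p-1}{p+1}\cdot(\text{correction})$; the precise value is not important — what matters is that $\deg(\omega_n^+)$ grows like $\varphi$-of-the-omega-polynomials and has the same degree as $\omega_n^+$ as a polynomial in $X$. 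Since $E^+(\cK_n)$ differs from $\widehat E^+(\cK_n)$ only by a finitely generated $\Z_p$-module of bounded rank (the formal group sits inside $E$ with finite-index, controlled quotient), the same asymptotics hold for $\Z_p\textup{-rank}(E^+(\cK_n))$.

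Next I would relate these $\Z_p$-ranks to the $\Lambda$-corank of $E^+(\cK_\infty)\otimes\Q_p/\Z_p$. Write $A^+ \colonequals E^+(\cK_\infty)\otimes\Q_p/\Z_p$, a discrete $\Lambda$-module, and let $X^+$ be its Pontryagin dual, a finitely generated $\Lambda$-module. The key input is a control-type statement: the natural maps $E^+(\cK_n)\otimes\Q_p/\Z_p \to A^+[\omega_n]$ (or the analogous Galois-descent maps $A^+{}^{\Gamma_n} $ where $\Gamma_n = \Gal(\cK_\infty/\cK_n)$) have kernel and cokernel of bounded $\Z_p$-corank — indeed of bounded order, using $p$-torsion freeness from \cref{lem:torsion-freeness} and the fact that $H^1$ of the procyclic group $\Gamma_n$ acting on $E^+(\cK_\infty)$ is controlled. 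Granting this, $\Z_p\textup{-corank}(A^+{}^{\Gamma_n})$ agrees with $\Z_p\textup{-rank}(E^+(\cK_n))$ up to a bounded error. On the dual side, $\Z_p\textup{-rank}\big((X^+/\omega_n X^+)\big)$ therefore grows like $[\cK:\Q_p]\deg(\omega_n^+)$. For a finitely generated $\Lambda$-module $X^+$ of $\Lambda$-rank $r$, one has $\Z_p\textup{-rank}(X^+/\omega_n X^+) = r\cdot\deg(\omega_n) + O(1) = r\cdot p^n + O(1)$, whereas $\deg(\omega_n^+)\sim \tfrac{1}{2}p^n$ (asymptotically, alternating cyclotomic degrees). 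Hence $r \ge \tfrac{1}{2}[\cK:\Q_p]$ is too weak; the correct bookkeeping is to compare against $\deg(\omega_n^+)$ directly, using that $\omega_n^+ \mid \omega_n$ and $\Z_p\textup{-rank}(X^+/\omega_n^+ X^+) \ge \Z_p\textup{-rank}(X^+/\omega_n X^+)$ is the wrong direction — so instead I would argue that $X^+$ is naturally a module over $\Lambda/(\text{something})$ or more robustly: since $\Z_p\textup{-rank}(X^+/\omega_n^+X^+)\ge [\cK:\Q_p]\deg(\omega_n^+)$ for all $n$ and $\deg\omega_n^+\to\infty$, the $\Lambda$-rank of $X^+$ is at least $[\cK:\Q_p]$, because a module of $\Lambda$-rank $<[\cK:\Q_p]$ would have $\Z_p\textup{-rank}(X^+/fX^+)\le (\text{rank})\cdot\deg(f) + C$ for any distinguished $f$, contradicting the lower bound with $f=\omega_n^+$ once $n$ is large (here one uses $\deg\omega_n^+ = \deg$ of the distinguished polynomial $\omega_n^+$, and that multiplication by $\omega_n^+$ on a rank-$\rho$ module has cokernel of $\Z_p$-rank $\le \rho\deg\omega_n^+ + C$).

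I would structure the write-up as: (1) cite \cref{ranK-finite-level} for the finite-level ranks; (2) state and prove a control lemma bounding the kernel/cokernel of $E^+(\cK_n)\otimes\Q_p/\Z_p \to A^+[\omega_n^+]$ using \cref{lem:torsion-freeness} and \cref{cor-surjective} (the latter giving vanishing of the relevant local $H^i$ for the weakly ramified layers, killing the error terms uniformly); (3) translate to the dual module and conclude via the elementary bound on $\Z_p\textup{-rank}(X^+/\omega_n^+X^+)$ for a finitely generated $\Lambda$-module. The main obstacle I anticipate is step (2): establishing that the descent error is \emph{uniformly bounded} in $n$, since without a norm-coherent system of points we cannot invoke the usual Coleman-map machinery. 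I expect this to follow from \cref{cor-surjective} applied to the extensions $\cK_{n}/\cK_{n-1}$ inside the weakly ramified tower (these are cyclic of degree $p$, and $\widehat E$ has height $2$ since $a_p=0$ forces supersingularity), which forces $H^i(\Gal(\cK_n/\cK_{n-1}), \widehat E(m_{\cK_n}))=0$ and hence makes the relevant inverse/direct limits behave, giving the bounded-error control statement. Once that is in hand, the rank comparison is purely formal.
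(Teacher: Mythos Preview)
Your three-step outline (finite-level rank $\to$ inject into the limit $\to$ corank comparison) is exactly the paper's argument, but you overcomplicate step~(2) and thereby introduce an error. For the \emph{lower} bound you need only the \emph{injection}
\[
E^\pm(\cK_n)\otimes\Q_p/\Z_p\hookrightarrow \big(E^\pm(\cK_\infty)\otimes\Q_p/\Z_p\big)[\omega_n^\pm],
\]
not a control theorem with bounded cokernel. Injectivity is immediate from $E(\cK_\infty)[p]=0$ (\cref{lem:torsion-freeness}); the image lands in the $\omega_n^\pm$-torsion because $\omega_n^\pm$ annihilates $\widehat E^\pm(\cK_n)$: each factor $\Phi_m(\gamma)$ acts as $\Tr_{\cK_m/\cK_{m-1}}$, and the defining trace conditions for $\widehat E^\pm$ together with $p$-torsion freeness (as in the proof of \cref{lem:E-pm-intersection}) force the element down into $\widehat E(\cK)$, where $\gamma-1$ kills it. With this injection your step~(3) is already the whole proof: the target has $\Z_p$-corank at most $r\cdot\deg(\omega_n^\pm)+O(1)$ with $r=\Lambda\text{-corank}$, the source has $\Z_p$-corank exactly $[\cK:\Q_p]\deg(\omega_n^\pm)$, and $\deg(\omega_n^\pm)\to\infty$ forces $r\ge[\cK:\Q_p]$. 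Note that you must compare against $\omega_n^\pm$, not $\omega_n$, since $\deg(\omega_n^\pm)/\deg(\omega_n)$ does not tend to $1$; your waffling between the two should be resolved in favour of $\omega_n^\pm$.

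The ``main obstacle'' you anticipate is therefore a non-obstacle, and your proposed resolution of it would fail anyway: \cref{cor-surjective} requires the extension to be at most weakly ramified (last ramification jump $\le1$), whereas the cyclotomic layers $\cK_n/\cK_{n-1}$ are wildly ramified with ramification growing unboundedly in $n$. That lemma concerns the horizontal direction $\cK/\Q_p$, not the vertical cyclotomic tower, so it cannot be invoked here. Drop the cokernel discussion entirely and the proof becomes three lines.
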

\begin{proof}
By \cref{ranK-finite-level} we have 
    \[\Z_p\textup{-corank}(E^\pm(\cK_n)\otimes \Q_p/\Z_p)=\left(1+\sum_{\substack{k=1\\ k\textup{ even/odd}}}^{p^{n}}\varphi(p^k)\right)[\cK:\Q_p].\]
    As $E(\cK_\infty)[p]=\{0\}$, there is a natural embedding
    \[E^\pm(\cK_n)\otimes \Q_p/\Z_p \hookrightarrow (E^\pm(\cK_\infty)\otimes \Q_p/\Z_p)[\omega^\pm_n].\]

    As $\deg(\omega^\pm_{n})=1+\sum_{k=1, k\textup{ even/odd}}^{p^n}\varphi(p^k)$, we obtain
    \[\Lambda\textup{-corank}(E^\pm(\cK_\infty)\otimes \Q_p/\Z_p)\ge [\cK:\Q_p]. \qedhere\]
\end{proof}

\begin{cor}
\label{cor.rank} If $p^2-1\nmid e(\cK/\Q_p)$, then
    \[\Lambda\textup{-corank}(E^\pm(\cK_\infty)\otimes \Q_p/\Z_p)= [\cK:\Q_p].\]
\end{cor}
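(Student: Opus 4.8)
The plan is to upgrade the lower bound of \cref{lower-bound} to an equality by handling both signs at once and comparing with the full formal group over $\cK_\infty$. In the unramified setting one has a norm-coherent system of points producing a short exact sequence $\widehat E(\cK)\hookrightarrow\widehat E^{+}(\cK_\infty)\oplus\widehat E^{-}(\cK_\infty)\twoheadrightarrow\widehat E(\cK_\infty)$; here we do not, so the analogous map fails to be surjective and one must track its cokernel. Concretely, passing to the limit in \cref{lem:finite index,lem:E-pm-intersection} --- the groups $\widehat E^{\pm}(\cK_n)$ form increasing chains, since by \cref{lem:torsion-freeness} a trace-compatibility condition imposed at level $n$ persists at higher levels --- one gets $\widehat E^{+}(\cK_\infty)\cap\widehat E^{-}(\cK_\infty)=\widehat E(\cK)$, and, as each $\widehat E^{+}(\cK_n)+\widehat E^{-}(\cK_n)$ has finite index in the pro-$p$ group $\widehat E(\cK_n)$, the cokernel $Q\colonequals\widehat E(\cK_\infty)\big/\big(\widehat E^{+}(\cK_\infty)+\widehat E^{-}(\cK_\infty)\big)$ is a direct limit of finite $p$-groups. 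Since $\widehat E(\cK)$, $\widehat E^{+}(\cK_\infty)\oplus\widehat E^{-}(\cK_\infty)$ and $\widehat E^{+}(\cK_\infty)+\widehat E^{-}(\cK_\infty)$ are $p$-torsion free, the two exact sequences
\begin{gather*}
0\to\widehat E(\cK)\to\widehat E^{+}(\cK_\infty)\oplus\widehat E^{-}(\cK_\infty)\to\widehat E^{+}(\cK_\infty)+\widehat E^{-}(\cK_\infty)\to0,\\
0\to\widehat E^{+}(\cK_\infty)+\widehat E^{-}(\cK_\infty)\to\widehat E(\cK_\infty)\to Q\to0
\end{gather*}
stay exact after tensoring the torsion-free terms with $\Q_p/\Z_p$, while $\mathrm{Tor}_1^{\Z}(Q,\Q_p/\Z_p)=Q$ and $Q\otimes\Q_p/\Z_p=0$. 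As $\widehat E(\cK)\otimes\Q_p/\Z_p\cong(\Q_p/\Z_p)^{[\cK:\Q_p]}$ carries the trivial $\Gamma$-action and is thus $\Lambda$-cotorsion, and since $\widehat E^{\pm}(\cK_n)$, $\widehat E(\cK_n)$ have finite index in $E^{\pm}(\cK_n)$, $E(\cK_n)$ (so hats may be dropped without changing $\Lambda$-coranks), the sequences give
\begin{align*}
\Lambda\textup{-corank}\big(E^{+}(\cK_\infty)\otimes\Q_p/\Z_p\big)&+\Lambda\textup{-corank}\big(E^{-}(\cK_\infty)\otimes\Q_p/\Z_p\big)\\
&=\Lambda\textup{-corank}\big(\widehat E(\cK_\infty)\otimes\Q_p/\Z_p\big)+\Lambda\textup{-corank}(Q).
\end{align*}

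The second ingredient is $\Lambda\textup{-corank}(\widehat E(\cK_\infty)\otimes\Q_p/\Z_p)\le 2[\cK:\Q_p]$. As $\widehat E(\cK_n)$ has finite index in $E(\cK_n)$, it is enough to bound $\Lambda\textup{-corank}(E(\cK_\infty)\otimes\Q_p/\Z_p)$, and the Kummer sequence embeds this group into $H^1(\cK_\infty,E[p^\infty])$, whose $\Lambda$-corank is $2[\cK:\Q_p]$. Indeed, by \cref{lem:torsion-freeness} one has $E(\cK_n)[p^\infty]=0$, so $H^0(\cK_n,E[p^m])=0$; local Tate duality and the Weil pairing isomorphism $E[p^m]\cong\Hom(E[p^m],\mu_{p^m})$ give $H^2(\cK_n,E[p^m])^\vee=E(\cK_n)[p^m]=0$; the local Euler characteristic formula then yields $H^1(\cK_n,E[p^\infty])\cong(\Q_p/\Z_p)^{2[\cK_n:\Q_p]}$, which by inflation--restriction (using $H^0(\cK_\infty,E[p^\infty])=0$) equals $H^1(\cK_\infty,E[p^\infty])^{\Gamma_n}$; hence the Pontryagin dual $Y$ of $H^1(\cK_\infty,E[p^\infty])$ is finitely generated over $\Lambda$ with $Y/\omega_n Y\cong\Z_p^{2p^n[\cK:\Q_p]}$, which forces $\mathrm{rank}_\Lambda Y=2[\cK:\Q_p]$. (Alternatively this is the standard computation that $\Hiw(\cK_\infty,T_pE)$ is finitely generated of rank $2[\cK:\Q_p]$ over $\Lambda$.)

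Granting finally that $Q$ is $\Lambda$-cotorsion, the corollary drops out: the displayed identity, together with $\Lambda\textup{-corank}(E^{\pm}(\cK_\infty)\otimes\Q_p/\Z_p)\ge[\cK:\Q_p]$ from \cref{lower-bound} and the inequality $\Lambda\textup{-corank}(\widehat E(\cK_\infty)\otimes\Q_p/\Z_p)\le 2[\cK:\Q_p]$, pins the sum $\Lambda\textup{-corank}(E^{+}(\cK_\infty)\otimes\Q_p/\Z_p)+\Lambda\textup{-corank}(E^{-}(\cK_\infty)\otimes\Q_p/\Z_p)$ to $2[\cK:\Q_p]$, forcing each summand to be $[\cK:\Q_p]$. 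I expect the main obstacle to be precisely the $\Lambda$-cotorsionness of $Q$: the index of $\widehat E^{+}(\cK_n)+\widehat E^{-}(\cK_n)$ in $\widehat E(\cK_n)$ grows with $n$ (it is governed by resultants of the cyclotomic factors of $\omega_n^{\pm}$), so $Q$ is genuinely infinite, and to show it is nonetheless cotorsion I would bound the $\Z_p$-corank of $Q^{\Gamma_n}$ uniformly in $n$ via the exact sequence
\[0\to\widehat E(\cK_n)\big/\big(\widehat E^{+}(\cK_n)+\widehat E^{-}(\cK_n)\big)\to Q^{\Gamma_n}\to H^1\big(\Gamma_n,\,\widehat E^{+}(\cK_\infty)+\widehat E^{-}(\cK_\infty)\big)\to\cdots,\]
controlling the $H^1$-term by the cohomological triviality of the formal group $\widehat E$ in weakly ramified cyclic extensions (\cref{cor-surjective}) --- this is where the weak ramification and height at least $2$ hypotheses of (S3) are used. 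The remaining steps --- the exact sequences, the $\mathrm{Tor}$ bookkeeping, the Euler characteristic computation and the hat/no-hat comparison --- are routine.
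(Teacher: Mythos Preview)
Your overall strategy---combining the lower bound of \cref{lower-bound} with an upper bound coming from the full group $\widehat E(\cK_\infty)\otimes\Q_p/\Z_p$---matches the paper's, and you have correctly isolated the remaining obstacle as the $\Lambda$-cotorsionness of $Q$. However, your proposed method for closing this gap does not work. \Cref{cor-surjective} concerns $H^i(\Gal(\cL/\cK),\widehat E(m_\cL))$ for a \emph{finite weakly ramified} extension $\cL/\cK$; the group $\Gamma_n=\Gal(\cK_\infty/\cK_n)$ appearing in your exact sequence is the Galois group of the cyclotomic tower, whose layers $\cK_m/\cK_n$ are deeply (not weakly) ramified, and the coefficient module $\widehat E^{+}(\cK_\infty)+\widehat E^{-}(\cK_\infty)$ is not of the form $\widehat E(m_\cL)$ in any case. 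So the appeal to \cref{cor-surjective} is misplaced, and indeed the weak-ramification hypothesis on $\cK/\Q_p$ plays no role in this corollary.

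The paper sidesteps $Q$ entirely by passing to the Tate-dual side: it introduces $H_\pm(\cK_n,T)\subset H^1(\cK_n,T)$, the orthogonal complements of $\widehat E^\pm(\cK_n)\otimes\Q_p/\Z_p$. Since each $Q_n$ is finite, tensoring with $\Q_p/\Z_p$ already gives $\widehat E^{+}(\cK_n)\otimes\Q_p/\Z_p+\widehat E^{-}(\cK_n)\otimes\Q_p/\Z_p=\widehat E(\cK_n)\otimes\Q_p/\Z_p$ at every finite level; dualising, $H_{+}(\cK_n,T)\cap H_{-}(\cK_n,T)$ is the orthogonal complement of $\widehat E(\cK_n)\otimes\Q_p/\Z_p$, and after $\varprojlim$ this is the complement of $\widehat E(\cK_\infty)\otimes\Q_p/\Z_p=H^1(\cK_\infty,E[p^\infty])$ (\cref{perrin-riou}), hence zero. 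On the other side, the intersection $(\widehat E^{+}(\cK_n)\otimes\Q_p/\Z_p)\cap(\widehat E^{-}(\cK_n)\otimes\Q_p/\Z_p)$ is annihilated by $X$ (it is governed by $\widehat E(\cK)\otimes\Q_p/\Z_p$, which carries the trivial $\Gamma$-action), so $\varprojlim H_{+}+\varprojlim H_{-}$ has full $\Lambda$-rank in $\varprojlim H^1(\cK_n,T)$. These two facts give $\Lambda\textup{-rank}(\varprojlim H_{+})+\Lambda\textup{-rank}(\varprojlim H_{-})=2[\cK:\Q_p]$, and duality plus the lower bound finishes. In your language this immediately yields $\Lambda\textup{-corank}(Q)=0$, with no need to analyse $Q^{\Gamma_n}$ or invoke any cohomological triviality along the cyclotomic direction.
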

In the proof, we will use the following plus/minus subgroups of $H^1(\cK_n, T)$; the definition is analogous to \cite[Definition~6.1]{Kobayashi}.
\begin{def1}
    Let $H_{\pm}(\cK_n,T)\subset H^1(\cK_n, T)$ be the orthogonal complement of $\widehat{E}^\pm(\cK_n)\otimes \Q_p/\Z_p$ under the Tate pairing {$H^1(\cK_n, E[p^\infty])\times H^1(\cK_n, T)\to \Q_p/\Z_p$}, where $T=T_pE$ is the Tate module. 
\end{def1}

\begin{proof}[Proof of \cref{cor.rank}]
    We know that 
    \[\Lambda\textup{-corank}(E(\cK_\infty)\otimes \Q_p/\Z_p)= \Lambda\textup{-corank}(H^1(\cK_\infty,E[p^\infty]))=2[\cK:\Q_p],\]
    where the first equality is \cref{perrin-riou}, and the second one is due to Greenberg, see \cite[94]{Greenberg1999} or \cite[Proposition 1]{Greenberg1989-padic}.
    Note that $\widehat{E}^\pm(\cK_\infty)\otimes \Q_p/\Z_p=E^\pm(\cK_\infty)\otimes \Q_p/\Z_p$ as $E(\cK_\infty)[p]=\{0\}$ by \cref{lem:torsion-freeness}.

By \cref{lem:finite index} we see that $\widehat{E}^+(\cK_n)\otimes \Q_p/\Z_p+\widehat{E}^-(\cK_n)\otimes \Q_p/\Z_p=\widehat{E}(\cK_n)\otimes \Q_p/\Z_p$. Therefore, $H_+(\cK_n,T)\cap H_-(\cK_n,T)$ is the orthogonal complement of $\widehat{E}(\cK_n)\otimes \Q_p/\Z_p$. By definition, 
\[\varprojlim_nH_+(\cK_n,T)\cap \varprojlim_n H_-(\cK_n,T)=\varprojlim_n(H_+(\cK_n,T)\cap H_-(\cK_n,T))\]
is the orthogonal complement of $\widehat{E}(\cK_\infty)\otimes \Q_p/\Z_p$, which is in turn equal to $H^1(\cK_\infty,E[p^\infty])$ by \cref{perrin-riou}. Thus, 
\begin{equation}\label{eq_intersection}\varprojlim_nH_+(\cK_n,T)\cap \varprojlim_n H_-(\cK_n,T)=0.\end{equation}

By \cref{lem:finite index} we have the following exact sequence
\[\widehat{E}(\cK)\otimes \Q_p/\Z_p\to \widehat{E}^+(\cK_n)\otimes \Q_p/\Z_p\oplus \widehat{E}^-(\cK_n)\otimes \Q_p/\Z_p\to \widehat{E}(\cK_n)\otimes \Q_p/\Z_p\to 0,\]
which shows that 
\[\left(\widehat{E}^+(\cK_n)\otimes \Q_p/\Z_p\right)\cap \left(\widehat{E}^-(\cK_n)\otimes \Q_p/\Z_p\right)\]
is annihilated by the variable $X$ of $\Z_p[[X]]$. This implies that 
\[\Lambda\textup{-rank}\Big(\varprojlim_nH_+(\cK_n,T)+\varprojlim_nH_-(\cK_n,T)\Big)=\Lambda\textup{-rank}\Big(\varprojlim_nH^1(\cK_n,T)\Big).\]
Using \eqref{eq_intersection}, we obtain
\[\Lambda\textup{-rank}\Big(\varprojlim_nH_+(\cK_n,T)\Big)+\Lambda\textup{-rank}\Big(\varprojlim_n H_-(\cK_n,T)\Big)=\Lambda\textup{-rank}\Big(\varprojlim_n H^1(\cK_n,T)\Big)=2[\cK:\Q_p].\]
Without loss of generality we can assume that 
\[\Lambda\textup{-rank}\Big(\varprojlim_nH_+(\cK_n,T)\Big)\ge \Lambda\textup{-rank}\Big(\varprojlim_n H_-(\cK_n,T)\Big).\]

Therefore,
\begin{align*}
[\cK:\Q_p]&\le \Lambda\textup{-rank}\Big(\varprojlim_nH_+(\cK_n,T)\Big)\\
&=2 [\cK:\Q_p]-\Lambda\textup{-corank}\Big(\widehat{E}^+(\cK_\infty)\otimes \Q_p/\Z_p\Big)\le [\cK:\Q_p]. \end{align*}
As $\varprojlim_n H_+(\cK_n,T)$ is the Tate dual of $\widehat{E}^+(\cK_\infty)\otimes \Q_p/\Z_p$, we obtain 
\begin{equation}
    \label{eq:ranks} [\cK:\Q_p]=\Lambda\textup{-corank}\Big(\widehat{E}^+(\cK_\infty)\otimes \Q_p/\Z_p\Big)=\Lambda\textup{-rank}\Big(\varprojlim_nH_+(\cK_n,T)\Big).
\end{equation}
As a consequence we obtain $\Lambda\textup{-rank}(\varprojlim_nH_-(\cK_n,T))=[\cK:\Q_p]$ and
\begin{align*}[\cK:\Q_p]&= \Lambda\textup{-rank}\Big(\varprojlim_nH_-(\cK_n,T)\Big)\\&= 2[\cK:\Q_p]-\Lambda\textup{-corank}\Big(\widehat{E}^-(\cK_\infty)\otimes \Q_p/\Z_p\Big)\le [\cK:\Q_p], \end{align*}
which completes the proof.
\end{proof}

\subsection{Freeness of Iwasawa cohomology}
In this subsection, we study the Iwasawa cohomology groups $\Hiw(\cK,T)\colonequals\varprojlim H^1(\cK_n,T)$ and $\Hpm(\cK,T)\colonequals\varprojlim H_\pm(\cK_n,T)$ as modules over $\Lambda$ and $\Lambda[G]$.
\begin{lemma}
\label{free}
 The module  $\Hiw(\cK,T)=\varprojlim H^1(\cK_n,T)$ is $\Lambda$-free.
\end{lemma}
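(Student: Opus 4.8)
The plan is to show that $\Hiw(\cK,T)=\varprojlim_n H^1(\cK_n,T)$ is $\Lambda$-free by combining a projective-dimension argument with the torsion-freeness already established. Since $\Lambda\cong\Z_p[[X]]$ is a regular local ring of Krull dimension $2$, a finitely generated $\Lambda$-module is free if and only if it has projective dimension $0$, and by the Auslander--Buchsbaum formula this amounts to showing that $\Hiw(\cK,T)$ has depth $2$, equivalently that it is torsion-free and that $X$ (or equivalently $\gamma-1$) acts as a non-zero-divisor on it with torsion-free quotient. So first I would recall the standard facts about $\Hiw(\cK,T)$: by Tate local duality and the control theory for $H^1$, there is no issue of it being finitely generated, and its $\Lambda$-rank is $[\cK:\Q_p]\cdot[\Q_p:\Q_p]$ times the appropriate factor — in fact \cref{cor.rank} and the surrounding discussion already pin down the rank as $2[\cK:\Q_p]$.

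The key input is that $H^2(\cK_n,T)=0$ for all $n$: indeed $H^2(\cK_n,T)$ is Pontryagin dual to $H^0(\cK_n,E[p^\infty])=E(\cK_n)[p^\infty]$, which vanishes by \cref{lem:torsion-freeness}. This vanishing feeds into the Poitou--Tate / descent machinery: the natural descent map furnishes exact sequences relating $\Hiw(\cK,T)/(\omega_n)$ (or the coinvariants under $\Gal(\cK_n/\cK)$) to $H^1(\cK_n,T)$, with error terms controlled by $H^0$ and $H^2$ of the finite layers. Concretely, I would use the fact that $\varprojlim H^i(\cK_n,T)$ sits in the $\Lambda$-adic Poitou--Tate formalism so that $\Hiw^1$ has no non-trivial finite $\Lambda$-submodule (this is where the absence of $p$-torsion in $E(\cK_n)$, equivalently $H^0(\cK_\infty,E[p^\infty])$ being trivial, is used) and, dually, is torsion-free. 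Then I would check that $\Hiw(\cK,T)\otimes_\Lambda\Lambda/(\gamma-1)$ maps to $H^1(\cK,T)$ with kernel and cokernel controlled by the (vanishing) $H^0$ and $H^2$ terms, so the quotient is itself torsion-free over $\Z_p$; since $\Z_p=\Lambda/(\gamma-1)$ is a DVR, this makes the quotient $\Z_p$-free. Having a non-zero-divisor $\gamma-1$ with $\Z_p$-free quotient, and $\Hiw$ itself $\Z_p$-free (no $p$-torsion), gives depth $2$, hence projective dimension $0$, hence freeness.

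An alternative and perhaps cleaner route, which I would pursue in parallel, is the criterion that a finitely generated $\Lambda$-module $M$ is free provided $M$ is torsion-free and $M/(\gamma-1)M$ is $\Z_p$-free: this is because torsion-freeness over $\Lambda$ plus $\gamma-1$ regular reduces us to showing $M$ is reflexive, and the $\Z_p$-freeness of the quotient upgrades reflexive to free over the $2$-dimensional regular local ring $\Lambda$. In either presentation the two things to verify are: (i) $\Hiw(\cK,T)$ is $\Lambda$-torsion-free — here one notes that any torsion submodule would have to be pseudo-null, but a nonzero pseudo-null submodule of an inverse limit of $H^1$'s would contradict the vanishing of $\varprojlim H^0(\cK_n, E[p^\infty])$ via the structure of the Poitou--Tate exact sequence; and (ii) $\Hiw(\cK,T)/(\gamma-1)$ is $\Z_p$-free — here one uses $H^2(\cK_n,T)=0$ together with the finiteness of $H^0(\cK_n,T/p^m)$ to run a limit argument showing the quotient injects into $H^1(\cK,T)$ with $\Z_p$-free image.

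The main obstacle I expect is item (ii): carefully controlling the descent from the infinite level to the finite level, i.e.\ identifying $\Hiw(\cK,T)_\Gamma$ (or its image in $H^1(\cK,T)$) and showing the relevant $\varprojlim^1$ terms and the error terms coming from $H^0$ vanish or are torsion-free. This is exactly the kind of computation that is standard in the unramified case but, as the introduction emphasises, must here be redone without a norm-coherent system of points. Once torsion-freeness and the $\Z_p$-freeness of the $\Gamma$-quotient are in hand, the passage to $\Lambda$-freeness is the formal Auslander--Buchsbaum step and requires no further work.
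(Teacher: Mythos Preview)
Your proposal is essentially correct and arrives at the same core step as the paper: showing that the $\Gamma$-coinvariants $\Hiw(\cK,T)_\Gamma$ are $\Z_p$-free. The paper does this in one line by Tate duality, identifying $\Hiw(\cK,T)_\Gamma$ with the Pontryagin dual of $H^1(\cK_\infty,E[p^\infty])^\Gamma=H^1(\cK,E[p^\infty])$ (inflation--restriction, using $E(\cK_\infty)[p^\infty]=0$), which is divisible and hence has $\Z_p$-free dual. Your route via $H^2(\cK_n,T)=0$ and the descent isomorphism $\Hiw(\cK,T)_\Gamma\hookrightarrow H^1(\cK,T)$ is just the Tate-dual formulation of the same computation.

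Where the two diverge is in the final step. The paper does \emph{not} separately verify $\Lambda$-torsion-freeness; instead it invokes the rank equality $\Lambda\text{-}\mathrm{rank}\,\Hiw(\cK,T)=\Z_p\text{-}\mathrm{rank}\,\Hiw(\cK,T)_\Gamma$ (both equal $2[\cK:\Q_p]$, from Greenberg's local corank computation), so that a Nakayama lift $\Lambda^r\twoheadrightarrow\Hiw(\cK,T)$ has torsion kernel inside a free module, hence zero kernel. Your item~(i) is therefore unnecessary, and your justification for it is not right as stated: a $\Lambda$-torsion submodule need not be pseudo-null (e.g.\ $\Lambda/p$). If you wanted torsion-freeness directly, the clean argument is that $\Hiw(\cK,T)_{\Lambda\text{-tors}}\cong T^{G_{\cK_\infty}}$, which vanishes since $E(\cK_\infty)[p]=0$; but given the rank equality this whole detour can be dropped.
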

\begin{proof} Note that $(\varprojlim H^1(\cK_n,T))_\Gamma$ is the dual of $H^1(\cK_\infty,E[p^\infty])^\Gamma=H^1(\cK,E[p^\infty])$ under the Tate pairing. Thus, $(\varprojlim H^1(\cK_n,T))_\Gamma$ is $\Z_p$-free. As 
\[\Lambda\textup{-corank}(H^1(\cK_\infty,E[p^\infty])=[K:\Q_p]=\Z_p\textup{-corank}(H^1(\cK,E[p^\infty])),\]
the claim follows.
\end{proof}

\begin{lemma}
\label{hpm-free} If $p^2-1\nmid e(\cK/\Q_p)$, then
    $\Hpm(\cK,T)=\varprojlim H_\pm(\cK_n,T)$ is $\Lambda$-free.\footnote{We thank Andreas Nickel for pointing out the proof of this fact given below.}
\end{lemma}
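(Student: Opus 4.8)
The plan is to show that $\Hpm(\cK,T)$ is $\Lambda$-free by combining the fact that it is a submodule of the $\Lambda$-free module $\Hiw(\cK,T)$ (\cref{free}) with a control-theoretic computation of the $\Gamma$-coinvariants. Since $\Lambda=\Z_p\llbracket\Gamma\rrbracket$ is a regular local ring of dimension $2$, a finitely generated $\Lambda$-module is free if and only if it has no nonzero finite submodule and its $\Gamma$-coinvariants are $\Z_p$-free of the expected rank; equivalently, a finitely generated torsion-free $\Lambda$-module $M$ with $M_\Gamma$ torsion-free over $\Z_p$ is free. First I would record that $\Hpm(\cK,T)$ is $\Lambda$-torsion-free: it embeds into $\Hiw(\cK,T)$, which is $\Lambda$-free by \cref{free}, being the inverse limit of the $H_\pm(\cK_n,T)$ inside the $H^1(\cK_n,T)$. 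So the whole content is the behaviour of the coinvariants.

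The key step is to identify $(\Hpm(\cK,T))_\Gamma$ with something manifestly $\Z_p$-free. By local Tate duality, $\Hpm(\cK,T)=\varprojlim_n H_\pm(\cK_n,T)$ is the Pontryagin/Tate dual of $\widehat E^\pm(\cK_\infty)\otimes\Q_p/\Z_p$, and under this duality the $\Gamma$-coinvariants of $\Hpm(\cK,T)$ are dual to the $\Gamma$-invariants $(\widehat E^\pm(\cK_\infty)\otimes\Q_p/\Z_p)^\Gamma$. Now I would use the control sequence: because $E(\cK_\infty)[p]=0$ by \cref{lem:torsion-freeness}, the natural map $\widehat E^\pm(\cK)\otimes\Q_p/\Z_p\to(\widehat E^\pm(\cK_\infty)\otimes\Q_p/\Z_p)^\Gamma$ has to be analysed. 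Since $\widehat E^\pm(\cK)=\widehat E(\cK)$ and $\widehat E(\cK)$ has $\Z_p$-rank $[\cK:\Q_p]$, while by \cref{cor.rank} the $\Lambda$-corank of $\widehat E^\pm(\cK_\infty)\otimes\Q_p/\Z_p$ is exactly $[\cK:\Q_p]$, the invariants $(\widehat E^\pm(\cK_\infty)\otimes\Q_p/\Z_p)^\Gamma$ have $\Z_p$-corank equal to $[\cK:\Q_p]$ plus the $\Z_p$-corank of the finite $\Lambda$-torsion contribution; using \eqref{eq:ranks} and the fact that $\widehat E^\pm(\cK_\infty)\otimes\Q_p/\Z_p$ has no proper finite-index $\Lambda$-submodule issues (it is cofinitely generated and the relevant $\omega_n^\pm$-torsion is controlled), one sees that the invariants are $\Z_p$-divisible of the right corank, hence their Pontryagin dual — which is $(\Hpm(\cK,T))_\Gamma$ — is $\Z_p$-free of rank $[\cK:\Q_p]=\Lambda\textup{-rank}(\Hpm(\cK,T))$.

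Having $\Z_p$-freeness of the coinvariants of the expected rank, together with $\Lambda$-torsion-freeness, I would conclude freeness by the standard criterion (e.g. via Nakayama: a minimal generating set of size $r=\operatorname{rank}_{\Z_p}M_\Gamma$ gives a surjection $\Lambda^r\twoheadrightarrow M$ whose kernel $R$ satisfies $R_\Gamma\hookrightarrow\Z_p^r$ is killed by rank considerations, forcing $R=0$). The main obstacle I anticipate is the second step: pinning down that $(\widehat E^\pm(\cK_\infty)\otimes\Q_p/\Z_p)^\Gamma$ is exactly $\Z_p$-divisible with no extra $\Z_p$-torsion coming from the $\omega_n^\pm$-filtration — in other words, ruling out a finite $\Gamma$-invariant subgroup. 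This is precisely where one needs the corank equality of \cref{cor.rank} and the $p$-torsion freeness of \cref{lem:torsion-freeness}, rather than any norm-compatible system of points; and this is the step that replaces, in the weakly ramified setting, the short exact sequence $\widehat E(\cK)\hookrightarrow\widehat E^+(\cK_\infty)\oplus\widehat E^-(\cK_\infty)\twoheadrightarrow\widehat E(\cK_\infty)$ available in the unramified case.
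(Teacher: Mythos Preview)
Your overall strategy --- show $\Hpm(\cK,T)$ is $\Lambda$-torsion-free (as a submodule of the free module $\Hiw(\cK,T)$) and then prove that $(\Hpm(\cK,T))_\Gamma$ is $\Z_p$-free of the correct rank --- is the same as the paper's, which invokes \cite[\nopp 5.3.19(ii)]{NSW}. However, the central duality step is misidentified, and this breaks the argument.

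By definition $H_\pm(\cK_n,T)$ is the \emph{orthogonal complement} of $\widehat E^\pm(\cK_n)\otimes\Q_p/\Z_p$ under the local Tate pairing. Hence the Tate/Pontryagin dual of $\Hpm(\cK,T)$ is the quotient $H^1(\cK_\infty,E[p^\infty])/(\widehat E^\pm(\cK_\infty)\otimes\Q_p/\Z_p)$, \emph{not} $\widehat E^\pm(\cK_\infty)\otimes\Q_p/\Z_p$ itself; the latter is dual to the quotient $\Hiw(\cK,T)/\Hpm(\cK,T)$. So your identification of $(\Hpm(\cK,T))_\Gamma$ with the dual of $(\widehat E^\pm(\cK_\infty)\otimes\Q_p/\Z_p)^\Gamma$ is off, and the subsequent control argument analyses the wrong module. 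With the correct duality you would need to show that $\big(H^1(\cK_\infty,E[p^\infty])/(\widehat E^\pm(\cK_\infty)\otimes\Q_p/\Z_p)\big)^\Gamma$ is $\Z_p$-divisible; in the paper this is \cref{lem:subersingular-local-term-div}, which is deduced \emph{from} the lemma you are trying to prove, so going that way directly risks circularity.

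The paper sidesteps this by applying the snake lemma to the tautological sequence $0\to\Hpm(\cK,T)\to\Hiw(\cK,T)\to\Hiw(\cK,T)/\Hpm(\cK,T)\to 0$, which yields an exact sequence
\[
0\to\big(\Hiw(\cK,T)/\Hpm(\cK,T)\big)^\Gamma\to(\Hpm(\cK,T))_\Gamma\to(\Hiw(\cK,T))_\Gamma.
\]
The right-hand term is $\Z_p$-free by \cref{free}, and the left-hand term is dual to $(\widehat E^\pm(\cK_\infty)\otimes\Q_p/\Z_p)_\Gamma$, which one identifies with $\widehat E^\pm(\cK)\otimes\Z_p$ --- the very object you were aiming for, but attached here to the \emph{quotient} rather than to $\Hpm$ itself. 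This gives $\Z_p$-freeness of $(\Hpm(\cK,T))_\Gamma$ without any appeal to coranks or to ruling out extra finite $\Gamma$-invariants.
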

\begin{proof}  According to \cite[\nopp 5.3.19(ii)]{NSW} it suffices to show that $\Hpm(\cK,T)^\Gamma=0$ and that $\Hpm(\cK,T)_\Gamma$ is $\Z_p$-free. 
    By \cref{free} and the result just cited, we see that $\Hpm(\cK,T)^\Gamma \subset \Hiw(\cK,T)^\Gamma=0$. It remains to prove that $\Hpm(\cK,T)_\Gamma$ is $\Z_p$-free. 

    Consider the tautological exact sequence
    \[0\to \Hpm(\cK,T) \to \Hiw(\cK,T)\to \Hiw(\cK,T)\big/\Hpm(\cK,T) \to 0. \]
   The snake lemma gives us
   \[0\to \left(\Hiw(\cK,T)/\Hpm(\cK,T)\right)^\Gamma\to \Hpm(\cK,T)_\Gamma\to \Hiw(\cK,T)_\Gamma.\]
   The last term is $\Z_p$-free by \cref{free}. It remains to show that $(\Hiw(\cK,T)/\Hpm(\cK,T))^\Gamma$ is $\Z_p$-free. Note that the {dual with respect to the Tate pairing agrees with the Pontryagin dual}. To prove the desired freeness, consider the following chain of equalities:
   \begin{align*}
       \left(\left(\Hiw(\cK,T)\big/\Hpm(\cK,T)\right)^\Gamma\right)^\vee
       &=\left(\left(\Hiw(\cK,T)\big/\Hpm(\cK,T)\right)^\vee\right)_\Gamma\\
       &=\left(\left(\varprojlim_n H^1(\cK_n,T)\big/H_\pm(\cK_n,T)\right)^\vee\right)_\Gamma\\
       &=\left(\varinjlim_n \widehat E^\pm(\cK_n,T)\otimes \Q_p/\Z_p\right)_\Gamma
       =\left(\widehat E^\pm(\cK_\infty)\otimes\Q_p/\Z_p\right)_\Gamma.
   \end{align*}
   Thus, 
   \begin{align*} \left(\Hiw(\cK,T)\big/\Hpm(\cK,T)\right)^\Gamma
        &=\left(\left(\widehat E^\pm(\cK_\infty)\otimes \Q_p/\Z_p\right)_\Gamma\right)^\vee \\
        &=\left(\left(E^\pm(\cK_\infty)\otimes \Q_p/\Z_p\right)_\Gamma\right)^\vee
        =\left(\widehat E^\pm(\cK_\infty)^\iota\otimes \Z_p\right)^\Gamma,
    \end{align*}
    where $\iota$ means that elements $\gamma\in \Gamma$ act via $\gamma^{-1}$ on $\widehat E^\pm(\cK_\infty)\otimes \Z_p$. We obtain 
    \[\left(\widehat E^\pm(\cK_\infty)^\iota\otimes \Z_p\right)^\Gamma = \widehat E^\pm(\cK)\otimes \Z_p,\]
    which is clearly $\Z_p$-free.
\end{proof}
\begin{rem}
    By definition, $(\widehat{E}^\pm (\cK_\infty)\otimes \Q_p/\Z_p)^\lor=\Hiw(\cK,T)/\Hpm(\cK,T)$. By \cref{free,hpm-free}, both modules $\Hiw(\cK,T)$ and $\Hpm(\cK,T)$ are $\Lambda$-free. This, however, does not imply that the quotient is $\Lambda$-free as well, as one can easily see from the following example.

    Consider the following two $\Lambda$-homomorphisms
    \[\psi\colon \Lambda\to \Lambda^2, \quad 1\mapsto (p,-T)\]
    and
    \[\phi\colon \Lambda^2\mapsto (p,T)\Lambda, \quad (1,0)\mapsto T,\quad (0,1)\mapsto p.\]
    It is easy to see that $\ker(\phi)=\Lambda(p,-T)=\textup{Im} (\psi)$. Thus we obtain a short exact sequence
    \[0\to \Lambda\to \Lambda^2\to (p,T)\Lambda\to 0\]
    The first two modules are clearly free over $\Lambda$, while the third one is not.
\end{rem}
\begin{cor}
    \label{lem:subersingular-local-term-div} If $p^2-1\nmid e(\cK/\Q_p)$, then
 $\left(\frac{H^1(\mathcal{K}_\infty,E[p^\infty])}{E^\pm(\mathcal{K}_\infty)\otimes \Q_p/\Z_p}\right)^{\Gamma_n}$ is $\Z_p$-divisible for all $n\ge 0$.  
\end{cor}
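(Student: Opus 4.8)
The plan is to pass to Pontryagin duals. Set $Q\colonequals H^1(\cK_\infty,E[p^\infty])/(E^\pm(\cK_\infty)\otimes\Q_p/\Z_p)$. I expect that $Q^\vee$ is exactly the $\Lambda$-free module $\Hpm(\cK,T)$ of \cref{hpm-free}, and that the $\Z_p$-divisibility of $Q^{\Gamma_n}$ then drops out of the fact that the $\Gamma_n$-coinvariants of a $\Lambda$-free module are $\Z_p$-free.

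First I would identify $Q^\vee$. Dualise the tautological exact sequence $0\to\Hpm(\cK,T)\to\Hiw(\cK,T)\to\Hiw(\cK,T)/\Hpm(\cK,T)\to0$. By local Tate duality in the limit one has $\Hiw(\cK,T)^\vee=H^1(\cK_\infty,E[p^\infty])$, and by the remark following \cref{hpm-free} the quotient satisfies $(\Hiw(\cK,T)/\Hpm(\cK,T))^\vee\cong\widehat E^\pm(\cK_\infty)\otimes\Q_p/\Z_p$. Since $E(\cK_\infty)[p]=0$ by \cref{lem:torsion-freeness}, the latter equals $E^\pm(\cK_\infty)\otimes\Q_p/\Z_p$, as already observed in the proof of \cref{cor.rank}. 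Dualising therefore yields a short exact sequence $0\to E^\pm(\cK_\infty)\otimes\Q_p/\Z_p\to H^1(\cK_\infty,E[p^\infty])\to\Hpm(\cK,T)^\vee\to0$, so that $\Hpm(\cK,T)^\vee\cong Q$ and hence $Q^\vee\cong\Hpm(\cK,T)$, which is $\Lambda$-free by \cref{hpm-free}.

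Next I would take $\Gamma_n$-invariants. For a discrete $\Gamma$-module $M$ one has $(M^{\Gamma_n})^\vee\cong(M^\vee)_{\Gamma_n}$ as $\Z_p$-modules, whence
\[(Q^{\Gamma_n})^\vee\cong(Q^\vee)_{\Gamma_n}\cong\Hpm(\cK,T)_{\Gamma_n}.\]
Because $\Hpm(\cK,T)$ is free over $\Lambda$, its coinvariants $\Hpm(\cK,T)_{\Gamma_n}$ are free over $\Lambda_{\Gamma_n}\cong\Z_p[\Gamma/\Gamma_n]$, which in turn is free of rank $p^n$ over $\Z_p$; hence $(Q^{\Gamma_n})^\vee$ is a free $\Z_p$-module, in particular $\Z_p$-torsion-free. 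Finally, $Q^{\Gamma_n}$ is cofinitely generated over $\Z_p$ (its dual is finitely generated over $\Lambda_{\Gamma_n}$, hence over $\Z_p$), and a cofinitely generated $\Z_p$-module is $\Z_p$-divisible precisely when its Pontryagin dual is $\Z_p$-torsion-free. This proves the claim.

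The only point requiring care is the duality bookkeeping in the first step: one must check that the orthogonal-complement definition of $H_\pm(\cK_n,T)$, together with the interchange of the inverse limit over $n$ with Pontryagin duality, really identifies $\Hpm(\cK,T)^\vee$ with the specific quotient $Q$ appearing in the statement. This is exactly what is packaged by \cref{hpm-free} and the remark immediately following it, so no genuinely new computation is required; if one prefers, it can be spelled out directly from the perfectness of the local Tate pairing at each finite level together with the exactness of the direct limit.
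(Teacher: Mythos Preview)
Your proposal is correct and follows essentially the same approach as the paper: the paper's proof simply asserts that it suffices to show $\Hpm(\cK,T)/\omega_n$ is $\Z_p$-free and then invokes the $\Lambda$-freeness of $\Hpm(\cK,T)$ from \cref{hpm-free}, whereas you have unpacked the duality identification $Q^\vee\cong\Hpm(\cK,T)$ and the passage from $\Z_p$-freeness of the dual to $\Z_p$-divisibility that the paper leaves implicit.
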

\begin{proof}
    It suffices to prove that $\varprojlim H_\pm (\cK_n,T)/\omega_n$ is $\Z_p$-free. By \cref{hpm-free} we know that $\varprojlim H_\pm (\cK_n,T)$ is $\Lambda$-free. Thus, taking $\Gamma_n$-coinvariants results in a $\Z_p$-free module. 
\end{proof}

\begin{lemma} \label{lem:LambdaG-freeness-rank} Let $G$ be a cyclic group of order $p$ that commutes with $\Gamma$. 
    Let $M$ be a finitely generated $\Lambda[G]$-module that is free as a $\Lambda$-module. {Let $M_{\Gamma,G}$ denote the module of coinvariants under $\Gamma$ and $G$. Suppose that $M_{\Gamma,G}$ is free as a $\Z_p$-module and $\Z_p\textup{-rank}(M_{\Gamma,G})\cdot |G| = \Lambda\textup{-rank}(M)$.\footnote{The authors thank Eva Brenner for pointing out a missing condition in an earlier version of this statement.}} Then $M$ is $\Lambda[G]$-free.
\end{lemma}
\begin{proof}
    Since $M$ is free as a $\Lambda$-module, we may choose a $\Z_p$-basis $x_1,\ldots,x_r$ of $M_{\Gamma,G}$, where $r=\Z_p\textup{-rank}(M_{\Gamma,G})$. Applying Nakayama's lemma, we obtain that these generate $M$ as a $\Lambda[G]$-module: $\langle x_1,\ldots,x_r\rangle_{\Lambda[G]}=M$. We have $\Lambda\textup{-rank}(\langle x_1,\ldots,x_r\rangle_{\Lambda[G]})\le r\cdot |G| = \Lambda\textup{-rank}(M)$ by the assumption. It follows that there can be no $\Lambda[G]$-relations between the generators $x_1,\ldots,x_r$, so they form a $\Lambda[G]$-basis of $M$.
\end{proof}

\begin{cor}\label{cor:Lambda-g} Let $G\subset\Gal(\cK/\Q_p)$ be cyclic of order $p$.
    If $p^2-1\nmid e(\cK/\Q_p)$, and if $\cK/(\cK)^G$ is  at most weakly ramified, then $\Hpm(\cK,T)$ is a free $\Lambda[G]$-module.
\end{cor}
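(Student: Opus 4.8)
The plan is to apply \cref{lem:LambdaG-freeness-rank} to $M = \Hpm(\cK,T)$ with the given group $G$ of order $p$. First I would verify the two hypotheses of that lemma. The $\Lambda$-freeness of $M$ is exactly \cref{hpm-free}, which applies since $p^2-1\nmid e(\cK/\Q_p)$. Its $\Lambda$-rank is $[\cK:\Q_p]$ by \eqref{eq:ranks} in the proof of \cref{cor.rank} (both for the plus and the minus sign). So it remains to compute $\Z_p\textup{-rank}(M_{\Gamma,G})$ and check that it equals $[\cK:\Q_p]/|G| = [(\cK)^G:\Q_p]$. Writing $\cF\colonequals(\cK)^G$, the module $M_{\Gamma,G} = (\Hpm(\cK,T))_{\Gamma,G}$ should, by the argument in the proof of \cref{hpm-free} applied over $\cF$ in place of $\cK$, be identified (up to finite groups, hence up to $\Z_p$-rank) with $\widehat E^\pm(\cF)\otimes\Z_p$, which has $\Z_p$-rank $[\cF:\Q_p]$. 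This is where the weak ramification hypothesis on $\cK/\cF$ enters: it guarantees via \cref{cor-surjective} (applied to the formal group $\widehat E$, which has height $2$ at a supersingular prime) that $H^i(G,\widehat E(m_{\cK_n}))=0$ for $i>0$, so that taking $G$-coinvariants of the local points behaves well and the Tate/Pontryagin dual computation transporting $(\Hpm(\cK,T))_\Gamma$ to $\widehat E^\pm(\cK_\infty)\otimes\Z_p$ descends compatibly to the $\cF$-level.

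More precisely, I would argue as follows. By \cref{free} and \cref{hpm-free}, $\Hpm(\cK,T)$ is $\Lambda$-free, and the computation in the proof of \cref{hpm-free} shows $(\Hpm(\cK,T))_\Gamma \cong (\widehat E^\pm(\cK_\infty)^\iota\otimes\Z_p)$ — more precisely that its Pontryagin dual is $(\widehat E^\pm(\cK_\infty)\otimes\Q_p/\Z_p)_\Gamma$. Now taking further $G$-coinvariants, $M_{\Gamma,G} = ((\Hpm(\cK,T))_\Gamma)_G$, and dualising, $(M_{\Gamma,G})^\vee = ((\widehat E^\pm(\cK_\infty)\otimes\Q_p/\Z_p)_\Gamma)^G = (\widehat E^\pm(\cK_\infty)\otimes\Q_p/\Z_p)^{\Gamma\times G}$, using that $\Gamma$ and $G$ commute inside $\Gal(\cK_\infty/\Q_p)$ so that the two limits may be interchanged. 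The fixed submodule $(\widehat E^\pm(\cK_\infty)\otimes\Q_p/\Z_p)^{\Gamma\times G}$ equals $(\widehat E^\pm(\cK_\infty)\otimes\Q_p/\Z_p)^{\Gal(\cK_\infty/\cF)}$; I would show, exactly as in the last two lines of the proof of \cref{hpm-free}, that this is $\widehat E^\pm(\cF)\otimes\Q_p/\Z_p$, a cofree $\Z_p$-module of corank $[\cF:\Q_p] = [\cK:\Q_p]/p$. Hence $\Z_p\textup{-rank}(M_{\Gamma,G}) = [\cK:\Q_p]/p$, and multiplying by $|G|=p$ recovers $\Lambda\textup{-rank}(M) = [\cK:\Q_p]$, so \cref{lem:LambdaG-freeness-rank} applies and gives $\Lambda[G]$-freeness of $\Hpm(\cK,T)$.

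The main obstacle I anticipate is justifying that the identification $(\widehat E^\pm(\cK_\infty)\otimes\Q_p/\Z_p)^{\Gal(\cK_\infty/\cF)} = \widehat E^\pm(\cF)\otimes\Q_p/\Z_p$ holds in the \emph{weakly ramified} (not merely unramified) setting — the point being that one needs $\widehat E^\pm(\cK_\infty)\otimes\Q_p/\Z_p$ to have no spurious $G$-cohomology at finite levels, which is precisely where the hypothesis that $\cK/\cF$ is at most weakly ramified is used, via \cref{cor-surjective} applied to the supersingular formal group $\widehat E$ (height $\ge 2$). A subtlety is that \cref{cor-surjective} is stated for the finite extension $\cK_n/\cF_n$ of local fields and one must check the weak ramification of $\cK/\cF$ propagates up the cyclotomic tower to $\cK_n/\cF_n$ — but this is immediate since $\cK_n/\cF_n$ is a subextension obtained by base change along the (linearly disjoint, pro-$p$, unramified-away-from-$p$) cyclotomic tower, so the ramification filtration in the upper numbering is unchanged. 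Once this cohomology vanishing is in hand, the $G$-coinvariants and $G$-invariants of the relevant modules are pseudo-isomorphic, the rank count goes through, and \cref{lem:LambdaG-freeness-rank} finishes the proof.
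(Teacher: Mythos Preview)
Your overall strategy---apply \cref{lem:LambdaG-freeness-rank} to $M=\Hpm(\cK,T)$, using \cref{hpm-free} for $\Lambda$-freeness and \eqref{eq:ranks} for the $\Lambda$-rank---is exactly the paper's approach. The problem lies in how you verify the rank of $M_{\Gamma,G}$.

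You have misidentified the Pontryagin dual of $\Hpm(\cK,T)$. What the proof of \cref{hpm-free} (and the Remark immediately following it) actually computes is
\[
\big(\Hiw(\cK,T)/\Hpm(\cK,T)\big)^\vee \;=\; \widehat E^\pm(\cK_\infty)\otimes\Q_p/\Z_p,
\]
not $(\Hpm(\cK,T))^\vee$. The correct Tate dual of $\Hpm(\cK,T)$ is the \emph{quotient} $H^1(\cK_\infty,E[p^\infty])\big/(\widehat E^\pm(\cK_\infty)\otimes\Q_p/\Z_p)$; this is precisely how the paper uses it in the proof of \cref{MFL3.9}. So your claimed identification $(\Hpm(\cK,T))_\Gamma \cong \widehat E^\pm(\cK_\infty)^\iota\otimes\Z_p$ is false (note also that the right-hand side has infinite $\Z_p$-rank, while $(\Hpm)_\Gamma$ is finitely generated over $\Z_p$). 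Your subsequent line $(M_{\Gamma,G})^\vee=((\widehat E^\pm\otimes\Q_p/\Z_p)_\Gamma)^G=(\widehat E^\pm\otimes\Q_p/\Z_p)^{\Gamma\times G}$ compounds this with a coinvariants/invariants mix-up.

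Even if you switch to the correct dual, your proposed route---taking $(\Gamma\times G)$-invariants at the $\cK_\infty$-level---runs into trouble: you would need a $G$-descent statement for $H^1(\cK_\infty,E[p^\infty])/(\widehat E^\pm\otimes\Q_p/\Z_p)$, which is exactly \cref{MFL3.9}, and that proposition is proved \emph{using} \cref{cor:Lambda-g}. The paper avoids this circularity by first taking $\Gamma$-coinvariants to land at the bottom level $\cK$, where $\widehat E^\pm(\cK)=\widehat E(\cK)$, and then computing the $G$-invariants of $H^1(\cK,E[p^\infty])/(E(\cK)\otimes\Q_p/\Z_p)$ via the cohomological triviality of $\widehat E(\cK)$ over $G$ (\cref{lem:cohomology-base-field}, which is where the weak-ramification hypothesis genuinely enters). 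This identifies the dual of $M_{\Gamma,G}$ with $H^1(\cK^G,E[p^\infty])/(E(\cK^G)\otimes\Q_p/\Z_p)$, of $\Z_p$-corank $[\cK^G:\Q_p]$, and the rank condition follows.
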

\begin{proof}
    This follows by applying \cref{lem:LambdaG-freeness-rank} to $M\colonequals \Hpm(\cK,T)$. The module $\Hpm(\cK,T)$ is $\Lambda$-free by \cref{hpm-free}. To verify the rank condition, recall that $\Hpm(\cK,T)_{\Gamma,G} = H_\pm(\cK,T)_G$ is, by definition, the Tate dual of $H^1(\cK^G,E[p^\infty]) / (\widehat E^\pm(\cK^G)\otimes \Q_p/\Z_p)$. We show that this quotient is $\Z_p[G]$-cofree: for this, first consider the tautological exact sequence
    \[0\to E(\cK)\otimes \Q_p/\Z_p\to H^1(\cK,E[p^\infty])\to \frac{ H^1(\cK,E[p^\infty])}{E(\cK)\otimes \Q_p/\Z_p}\to0.\]
    We take $G$-invariants. {By Corollary \ref{cor-surjective}}, we have that $E(\cK)$ is $\Z_p[G]$-free, hence the first term in the resulting long exact sequence is $E(\cK^G)\otimes\Q_p/\Z_p$. The inflation-restriction sequence shows that the second term is $H^1(\cK^G, E[p^\infty])$. The fourth term is $H^1(G,E(\cK)\otimes\Q_p/\Z_p)=0$. Hence the third term is the quotient of the first two, that is, the long sequence of $G$-invariants becomes:
    \[0\to E(\cK^G)\otimes\Q_p/\Z_p \to H^1(\cK^G, E[p^\infty]) \to \frac{H^1(\cK^G, E[p^\infty])}{E(\cK^G)\otimes\Q_p/\Z_p} \to 0\]
    The middle term is divisible, hence $\Z_p$-cofree, and thus so is the quotient, as claimed.
    The rank condition now follows from \eqref{eq:ranks}.
\end{proof}

\subsection{Computation of cohomology groups}
In this subsection, we compute the cohomology groups which will be relevant for studying signed Selmer groups. Most of our computations follow along the lines of Lim's work \cite{mengfai}, with the crucial exception of \cref{MFL3.9}, the proof of which relies on the freeness of Iwasawa cohomology.

From now on, we assume that $\cK/\Q_p$ is {a Galois extension}.
Let $\mathcal{F}\subset \cK$ be a subfield such that $\cK$ is Galois over $\cF$. We set $G\colonequals\Gal(\cK_\infty/\cF_\infty)$. Then $G$ is canonically isomorphic to a subgroup of $\Gal(\cK/\cF)$, and it makes sense to consider the Galois action of (subgroups of) $G$ on $\widehat{E}(\cK)$.

Recall that a finite extension of local fields is called weakly ramified if the second ramification group vanishes. 
\begin{rem}
     The assumption $\Q_{p,\infty}\cap \cK=\Q_p$ ensures that there is a well-defined action of $H$ on $\cK_n$. If $\cK/\Q_p$ is tamely ramified, the condition $\Q_{p,\infty}\cap \cK=\Q_p$ is trivially satisfied. Cohomological triviality in the tamely ramified case was also established in \cite[Proposition~3.10]{EllerbrockNickel}.
 \end{rem}
\begin{lemma} Assume that $\cK/\Q_p$ is at most weakly ramified and that $\Q_{p,\infty}\cap \cK=\Q_p$. 
    \label{lem:cohomology-base-field} For each subgroup $H$ of  $\Gal(\cK/\Q_p)$ and each $i\ge 1$ we have
\[H^i(H,\widehat{E}(\cK_n))=\{0\}.\]
\end{lemma}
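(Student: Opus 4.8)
The plan is to reduce the vanishing of $H^i(H,\widehat E(\cK_n))$ for all subgroups $H\le\Gal(\cK/\Q_p)$ and all $i\ge1$ to the surjectivity of the formal group norm established in \cref{trace surjective}, via \cref{cor-surjective}. The key observation is that $\widehat E$ is the formal group of a supersingular elliptic curve, so it has height $2$ over $\Z_p$; in particular its height is at least $2$ over $\mathcal O_{\cK^H}$ as well. Thus, to apply \cref{trace surjective} to the extension $\cK/\cK^H$ (or to intermediate steps), the only thing that needs checking is that $\cK_n/\cK_n^H$ is at most weakly ramified and cyclic in a suitable sense.

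First I would handle the case where $H$ is cyclic. Since $\cK_n = \cK\cdot\Q_{p,n}$ and $\Q_{p,n}/\Q_p$ is totally ramified with $\Q_{p,\infty}\cap\cK=\Q_p$, the extension $\cK_n/\Q_p$ has Galois group $G\times\Gal(\Q_{p,n}/\Q_p)$, and $H$ (viewed inside $\Gal(\cK_n/\Q_p)$ via the first factor) has fixed field $\cK_n^H = (\cK^H)\cdot\Q_{p,n}$, so $\cK_n/\cK_n^H$ is Galois with group $H$, cyclic by assumption. The crucial point is the ramification: I claim $\cK_n/\cK_n^H$ is at most weakly ramified. This follows because $\cK/\cK^H$ is at most weakly ramified (being a subextension of the weakly ramified $\cK/\Q_p$ — one must invoke the standard fact, e.g.\ from \cite{serre-local-fields} or \cite{EllerbrockNickel}, that subextensions and, with the assumption $\Q_{p,\infty}\cap\cK=\Q_p$, the base change by the tamely-related-at-worst cyclotomic tower, preserve weak ramification; more precisely, $\Q_{p,n}/\Q_p$ is totally tamely ramified of degree prime to — wait, no, $p^n$ — so one argues instead that since $\cK_n/\cK$ is pro-$p$ and linearly disjoint over $\Q_p$ from everything, the higher ramification of $\cK_n/\cK_n^H$ in the lower numbering agrees with that of $\cK/\cK^H$ after the appropriate Herbrand shift, and weak ramification — $G_2=1$ — is preserved). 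Granting this, \cref{cor-surjective} gives $H^i(H,\widehat E(m_{\cK_n}))=H^i(H,\widehat E(\cK_n))=0$ for all $i>0$.

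For general $H$, I would proceed by induction on $|H|$ using a composition series. Pick a normal subgroup $H'\trianglelefteq H$ with $H/H'$ cyclic (e.g.\ of prime order; such $H'$ exists since $H$ is a finite subquotient of a profinite group, hence any finite group admits such). The Hochschild–Serre spectral sequence
\[H^p(H/H', H^q(H', \widehat E(\cK_n)))\Longrightarrow H^{p+q}(H,\widehat E(\cK_n))\]
has $H^q(H',\widehat E(\cK_n))=0$ for $q\ge1$ by the inductive hypothesis, so it collapses to give $H^i(H,\widehat E(\cK_n)) = H^i(H/H', \widehat E(\cK_n)^{H'}) = H^i(H/H', \widehat E(\cK_n^{H'}))$. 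Now $\cK_n^{H'}/\Q_p$ is again Galois, at most weakly ramified (subextension of a weakly ramified extension), and satisfies $\Q_{p,\infty}\cap\cK_n^{H'}=\Q_p$; moreover $H/H'$ is cyclic and acts as $\Gal((\cK_n^{H'})/(\cK_n^{H'})^{H/H'})$. So the cyclic case applied to this smaller field kills $H^i(H/H',\widehat E(\cK_n^{H'}))$ for $i\ge1$, completing the induction.

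The main obstacle is the ramification bookkeeping in the cyclic case: one must be careful that passing from $\cK/\Q_p$ to the subextension $\cK/\cK^H$ and then base-changing up to $\cK_n/\cK_n^H$ does not create a nontrivial second ramification group. Weak ramification is preserved under subextensions $\cK/\cK^H$ by the standard behaviour of higher ramification groups in the lower numbering for subgroups (\cite[IV]{serre-local-fields}); and since $\cK_n/\cK$ is totally ramified pro-$p$ while $\cK/\cK^H$ and the cyclotomic tower are "independent", one checks — using that $E(\cK_n)$ is $p$-torsion free by \cref{lem:torsion-freeness}, or more directly the explicit structure of $\Gal(\cK_n/\Q_p)$ as a direct product — that the ramification filtration of $\cK_n/\cK_n^H$ is controlled by that of $\cK/\cK^H$. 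This is where I expect to spend the most care; everything else is a formal dévissage plus \cref{cor-surjective}.
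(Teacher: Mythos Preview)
Your approach is essentially the paper's: reduce to cyclic $H$ and invoke \cref{cor-surjective}, with you spelling out the Hochschild--Serre d\'evissage where the paper just says ``$\Gal(\cK/\Q_p)$ is a semidirect product of cyclic groups, so it suffices to treat cyclic $H$''. One simplification the paper makes and you do not: it disposes of the case $p\nmid|H|$ separately (by the Ellerbrock--Nickel reference, or simply because $\widehat E(\cK_n)$ is a $\Z_p$-module so prime-to-$p$ cohomology vanishes), leaving only $|H|=p$ for \cref{cor-surjective}. Doing this would let you drop the attempt to verify weak ramification of $\cK_n/\cK_n^H$ for arbitrary cyclic $H$.

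Two concrete issues in your write-up. First, in the inductive step you assert ``$\cK_n^{H'}/\Q_p$ is \ldots at most weakly ramified \ldots and satisfies $\Q_{p,\infty}\cap\cK_n^{H'}=\Q_p$'': both claims are false for $n\ge1$ since $\cK_n^{H'}\supset\Q_{p,n}$. What you need (and presumably mean) is that $\cK^{H'}/\Q_p$ is weakly ramified and disjoint from $\Q_{p,\infty}$, so the lemma applies with $\cK^{H'}$ in place of $\cK$; then $\cK_n^{H'}=(\cK^{H'})_n$ and the cyclic case finishes the job. Second, your sketch that the ramification filtration of $\cK_n/\cK_n^H$ ``agrees with that of $\cK/\cK^H$ after the appropriate Herbrand shift'' is not right: for instance, when $\cK/\Q_p$ is a totally ramified $\Z/p$-extension with break $1$, the compositum with $\Q_{p,n}$ has residue degree $p$ and $\cK_n/\cK_n^H$ becomes \emph{unramified}, not weakly ramified with the same break. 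The paper is equally silent on this verification, simply citing \cref{cor-surjective}; a clean way to fill the gap is to use $H_i(\cK_n/\cK_n^H)=H\cap G_i(\cK_n/\Q_p)$ and check that $G_2(\cK_n/\Q_p)$ lies in the $\Gal(\Q_{p,n}/\Q_p)$-factor of the product (because in upper numbering $G^2(\cK/\Q_p)=1$), hence meets $H$ trivially.
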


\begin{proof} Note that $\Gal(\cK/\Q_p)$ is the semidirect product of cyclic groups. It therefore suffices to consider the case that $H$ is cyclic. If the order of $H$ is coprime to $p$, the claim follows from \cite[Proposition 3.10]{EllerbrockNickel}. It remains the case that $\vert H\vert=p$. {By Lemma \ref{lem.weak-ram} the extension $\mathcal{K}_n/ \mathcal{K}_n^H$ is weakly ramified.}
As $E$ is supersingular and therefore $\widehat{E}$ is of height $2$, this follows from \cref{cor-surjective}. 
\end{proof}
Using \cref{lem:cohomology-base-field} instead of \cite[Lemma 3.2]{mengfai}, we can work along the lines of \cite[proof of Proposition 3.4]{mengfai} to prove the following:
\begin{lemma}
    \label{lem:hi-finite} Assume that $\cK/\Q_p$ is at most weakly ramified, and that $\Q_{p,\infty}\cap \cK=\Q_p$ and $p^2-1\nmid e(\cK/\Q_p)$.
    For every  subgroup of $\Gal(\cK/\Q_p)$ we have
    \[H^i(H,\widehat{E}(\cK_n)\otimes \Q_p/\Z_p)=\begin{cases}
        \widehat{E}(\cK_n^H)\otimes \Q_p/\Z_p \quad &i=0,\\
        0\quad &i>0. \qed
    \end{cases}\]
\end{lemma}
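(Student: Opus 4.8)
The plan is to deduce the statement formally from the cohomological triviality of $\widehat E(\cK_n)$ recorded in \cref{lem:cohomology-base-field}, by comparing $\widehat E(\cK_n)$ with its rational envelope. As a preliminary remark, $\widehat E(\cK_n)$ is a $\Z_p$-module which is $\Z_p$-torsion free: by \cref{lem:torsion-freeness} it has no $p$-torsion, and a $\Z_p$-module without $p$-torsion is torsion free. Moreover, just as in \cref{lem:cohomology-base-field}, the hypothesis $\Q_{p,\infty}\cap\cK=\Q_p$ guarantees that any subgroup $H\le\Gal(\cK/\Q_p)$ lifts canonically to $\Gal(\cK_n/\Q_{p,n})\le\Gal(\cK_n/\Q_p)$ and so acts on $\cK_n$ with fixed field $\cK_n^H=(\cK^H)_n$; in particular $\widehat E(\cK_n)^H=\widehat E(\cK_n^H)$, so the asserted formula makes sense.

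Next I would apply $H$-cohomology to the tautological short exact sequence of $\Z_p[H]$-modules
\[0\longrightarrow \widehat E(\cK_n)\longrightarrow \widehat E(\cK_n)\otimes_{\Z_p}\Q_p\longrightarrow \widehat E(\cK_n)\otimes_{\Z_p}\Q_p/\Z_p\longrightarrow 0,\]
which is exact because $\widehat E(\cK_n)$ is $\Z_p$-torsion free. The middle term is a $\Q_p$-vector space, hence uniquely divisible and therefore cohomologically trivial: $H^i(H,\widehat E(\cK_n)\otimes\Q_p)=0$ for all $i\ge1$, while torsion freeness lets one identify its $H$-invariants with $\widehat E(\cK_n^H)\otimes\Q_p$. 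For $i=0$ the resulting exact sequence
\[0\to\widehat E(\cK_n^H)\to\widehat E(\cK_n^H)\otimes\Q_p\to\bigl(\widehat E(\cK_n)\otimes\Q_p/\Z_p\bigr)^H\to H^1(H,\widehat E(\cK_n))\]
combined with the vanishing $H^1(H,\widehat E(\cK_n))=0$ from \cref{lem:cohomology-base-field} identifies $(\widehat E(\cK_n)\otimes\Q_p/\Z_p)^H$ with $\coker\bigl(\widehat E(\cK_n^H)\hookrightarrow\widehat E(\cK_n^H)\otimes\Q_p\bigr)$, which is $\widehat E(\cK_n^H)\otimes\Q_p/\Z_p$. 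For $i\ge1$ the relevant segment of the long exact sequence is
\[H^i(H,\widehat E(\cK_n)\otimes\Q_p)\longrightarrow H^i(H,\widehat E(\cK_n)\otimes\Q_p/\Z_p)\longrightarrow H^{i+1}(H,\widehat E(\cK_n)),\]
whose left-hand term vanishes by unique divisibility and whose right-hand term vanishes by \cref{lem:cohomology-base-field}; hence the middle term vanishes, as claimed.

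I do not expect a serious obstacle. The substantive input is already isolated in \cref{lem:cohomology-base-field}, which in turn rests on \cref{cor-surjective} and ultimately on the surjectivity of the formal-group trace in weakly ramified extensions (\cref{trace surjective}); the present lemma is a purely formal consequence. The only points requiring any care are the bookkeeping that makes $\cK_n^H$ well defined and yields $\widehat E(\cK_n)^H=\widehat E(\cK_n^H)$ under the assumption $\Q_{p,\infty}\cap\cK=\Q_p$, and the (immediate) verification that $\widehat E(\cK_n)$ is $\Z_p$-torsion free so that the comparison sequence with $\widehat E(\cK_n)\otimes\Q_p$ is genuinely short exact.
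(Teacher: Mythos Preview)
Your proposal is correct and is essentially the argument the paper has in mind: the paper merely points to \cite[proof of Proposition~3.4]{mengfai} with \cref{lem:cohomology-base-field} substituted in, and the detailed proof you wrote out is exactly that argument (and coincides with the proof the paper spells out for the infinite-level analogue in \cref{lem:hi}). The bookkeeping you flag---torsion-freeness of $\widehat E(\cK_n)$ and the well-definedness of the $H$-action on $\cK_n$---is handled correctly.
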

\[\begin{tikzcd}[every arrow/.append style={no head}]
                                           &                                 & \cK_\infty \arrow[d, "H"]         \\
                                           & \cK_n \arrow[ru] \arrow[d, "H"] & \cK_\infty^H \arrow[d] \arrow[ld] \\
\cK \arrow[d, "H"'] \arrow[ru, "\Z/p^n\Z"] & \cK_n^H \arrow[d]               & {\Q_{p,\infty}}                   \\
\cK^H \arrow[d] \arrow[ru]                 & {\Q_{p,n}} \arrow[ru]           &                                   \\
\Q_p \arrow[ru, "\Z/p^n\Z"']               &                                 &                                  
\end{tikzcd}\]
The infinite level version of \cref{lem:hi-finite} is the following:
\begin{lemma}\label{lem:hi}  If $p^2-1\nmid e(\cK/ \Q_p)$, then for every subgroup $H$ of {$\Gal(\cK/\Q_p)$} we have
    \[H^i(H,\widehat{E}(\cK_\infty)\otimes \Q_p/\Z_p)=\begin{cases}
        \widehat{E}(\cK_\infty^H)\otimes \Q_p/\Z_p \quad &i=0,\\
        0\quad &i>0.
    \end{cases}\] 
\end{lemma}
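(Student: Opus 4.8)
The plan is to deduce \cref{lem:hi} from the finite-level statement \cref{lem:hi-finite} by passing to the direct limit over $n$. The key observation is that $\widehat E(\cK_\infty)\otimes\Q_p/\Z_p = \varinjlim_n \widehat E(\cK_n)\otimes\Q_p/\Z_p$, and that Galois cohomology of a finite group $H$ commutes with direct limits of discrete $H$-modules. Concretely, for each $n$ and each subgroup $H\subset G$, there is a well-defined action of $H$ on $\cK_n$ (this needs the hypothesis that $\Q_{p,\infty}\cap\cK=\Q_p$, which we are carrying from the running assumptions of the subsection — I would point out that the statement implicitly presupposes this, as in \cref{lem:hi-finite}), and the transition maps $\widehat E(\cK_n)\otimes\Q_p/\Z_p\hookrightarrow\widehat E(\cK_{n+1})\otimes\Q_p/\Z_p$ are $H$-equivariant. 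Hence
\[H^i\bigl(H,\widehat E(\cK_\infty)\otimes\Q_p/\Z_p\bigr)=H^i\bigl(H,\varinjlim_n \widehat E(\cK_n)\otimes\Q_p/\Z_p\bigr)=\varinjlim_n H^i\bigl(H,\widehat E(\cK_n)\otimes\Q_p/\Z_p\bigr).\]

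For $i>0$, each term in the colimit vanishes by \cref{lem:hi-finite}, so the colimit vanishes. For $i=0$, each term is $\widehat E(\cK_n^H)\otimes\Q_p/\Z_p$ by \cref{lem:hi-finite}, and I would check that under the transition maps these assemble to $\varinjlim_n \widehat E(\cK_n^H)\otimes\Q_p/\Z_p = \widehat E(\cK_\infty^H)\otimes\Q_p/\Z_p$; indeed $\cK_\infty^H=\bigcup_n \cK_n^H$ since $H$ is finite and $\cK_\infty=\bigcup_n\cK_n$, and the diagram relating $\cK_n$, $\cK_n^H$, $\cK_\infty$, $\cK_\infty^H$ drawn after \cref{lem:hi-finite} makes the compatibility transparent. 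This gives exactly the claimed formula.

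The one genuine point requiring care is the commutation of $H^i(H,-)$ with filtered colimits. For a \emph{finite} group $H$ this is standard: cohomology can be computed with the standard (inhomogeneous) cochain complex $\mathrm{Map}(H^{\bullet},-)$, and since $H^\bullet$ is a finite set, $\mathrm{Map}(H^\bullet,-)$ commutes with filtered colimits, as does the (exact) passage to cohomology of a complex. I would simply cite this, e.g.\ \cite[Proposition~1.5.1]{NSW}. Thus the only real work is bookkeeping of the transition maps and noting that the hypotheses of \cref{lem:hi-finite} (weak ramification, $\Q_{p,\infty}\cap\cK=\Q_p$, and $p^2-1\nmid e(\cK/\Q_p)$) are in force. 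I do not anticipate any serious obstacle here; this is a routine limiting argument and the proof will be short.
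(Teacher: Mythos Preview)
Your limiting argument is sound, but it proves a strictly weaker statement than the one in the paper, and it does so via a genuinely different route.

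The paper does \emph{not} deduce \cref{lem:hi} from \cref{lem:hi-finite}. Instead it argues directly at the infinite level: using $p$-torsion freeness (\cref{lem:torsion-freeness}) one has the short exact sequence
\[0\to \widehat E(\cK_\infty)\to \widehat E(\cK_\infty)\otimes\Q_p\to \widehat E(\cK_\infty)\otimes\Q_p/\Z_p\to 0,\]
and then Coates--Greenberg \cite[Theorem~3.1]{coates-greenberg} gives $H^i(H,\widehat E(\cK_\infty))=0$ for $i>0$. Since $\widehat E(\cK_\infty)\otimes\Q_p$ is a $\Q_p$-vector space and $H$ is finite, its higher cohomology vanishes too, and the long exact sequence yields both claims at once.

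The payoff is that the paper's proof needs only $p^2-1\nmid e(\cK/\Q_p)$ (plus the standing Galois assumption), whereas your argument imports the extra hypotheses of \cref{lem:hi-finite}: weak ramification and $\cK\cap\Q_{p,\infty}=\Q_p$. Contrary to what you write, these are \emph{not} running assumptions in the subsection containing \cref{lem:hi}; they are reintroduced explicitly in the later statements (\cref{MFL3.9}, \cref{supersingular-herbrand}) that need them. So the lemma as stated is genuinely more general than what your proof gives, even if in the applications the stronger hypotheses are always available.

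One minor point: the transition maps $\widehat E(\cK_n)\otimes\Q_p/\Z_p\to\widehat E(\cK_{n+1})\otimes\Q_p/\Z_p$ need not be injective (the cokernel $\widehat E(\cK_{n+1})/\widehat E(\cK_n)$ may have $p$-torsion), so your ``$\hookrightarrow$'' is unjustified---but this is harmless, since the colimit argument does not use injectivity.
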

\begin{proof} The proof follows along the lines of \cite[Proposition 3.6]{mengfai}. We restate it here for the convenience of the reader.
      By \cref{lem:torsion-freeness}, we have a short exact sequence
     \[0\to \widehat{E}(\cK_\infty)\to \widehat{E}(\cK_\infty)\otimes \Q_p\to \widehat{E}(\cK_\infty)\otimes \Q_p/\Z_p\to 0\]
     By \cite[Theorem 3.1]{coates-greenberg} we have $H^i(H,\widehat{E}(\cK_\infty))=0$ for all $i>0$. Thus, we obtain
     \[0\to \widehat{E}(\cK_\infty^H)\to \widehat{E}(\cK_\infty^H)\otimes \Q_p\to (\widehat{E}(\cK_\infty)\otimes \Q_p/\Z_p)^H\to 0\]
     and
     \[H^i(H,\widehat{E}(\cK_\infty)\otimes \Q_p)\cong H^i(H,\widehat{E}(\cK_\infty)\otimes \Q_p/\Z_p)\]
     for $i\ge 1$. The claim for $i=0$ follows from the exact sequence. As $H$ is a finite group and $\widehat{E}(\cK_\infty)\otimes \Q_p$ is torsion free, we see that $H^i(H, \widehat{E}(\cK_\infty)\otimes\Q_p)=0$ for all $i>0$, which implies the second claim by the above isomorphism. 
\end{proof}

\begin{prop} \label{MFL3.9}
    Assume that $\cK/\Q_p$ is at most weakly ramified, that $\Q_{p,\infty}\cap \cK=\Q_p$ 
    and $p^2-1\nmid e(\cK/\Q_p)$.
    For every subgroup $H$ of {$\Gal(\cK/\Q_p)$} we have
    \[H^i\left(H,\frac{H^1(\cK_\infty,E[p^\infty])}{\widehat{E}^\pm(\cK_\infty)\otimes \Q_p/\Z_p}\right)=\begin{cases}
        \frac{H^1(\cK_\infty^H,E[p^\infty])}{\widehat{E}^\pm(\cK_\infty^H)\otimes \Q_p/\Z_p} \quad &i=0 ,\\
        0\quad &i>0.
    \end{cases} \]
\end{prop}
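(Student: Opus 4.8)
The natural approach is to feed the short exact sequence
\[
0 \to \widehat E^\pm(\cK_\infty)\otimes \Q_p/\Z_p \to H^1(\cK_\infty, E[p^\infty]) \to \frac{H^1(\cK_\infty, E[p^\infty])}{\widehat E^\pm(\cK_\infty)\otimes \Q_p/\Z_p} \to 0
\]
into the long exact sequence of $H$-cohomology and to bring in what we already know about the first two terms. For the middle term, \cref{lem:cohomology-base-field}-style arguments are not directly available, but \cref{lem:hi} tells us that $H^i(H, \widehat E(\cK_\infty)\otimes \Q_p/\Z_p)$ vanishes for $i>0$; combining this with \cref{perrin-riou} (which identifies $H^1(\cK_\infty, E[p^\infty])$ with $E(\cK_\infty)\otimes \Q_p/\Z_p$, and likewise over $\cK_\infty^H$, the $p$-torsion-freeness hypothesis being supplied by \cref{lem:torsion-freeness}) shows that $H^i(H, H^1(\cK_\infty, E[p^\infty]))=0$ for $i>0$ and equals $H^1(\cK_\infty^H, E[p^\infty])$ for $i=0$. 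So from the long exact sequence, for $i\ge 2$ we immediately get the vanishing of $H^i$ of the quotient, and the remaining work is concentrated at $i=0,1$, where the sequence reads
\[
0 \to (\widehat E^\pm(\cK_\infty)\otimes \Q_p/\Z_p)^H \to H^1(\cK_\infty^H, E[p^\infty]) \to \left(\tfrac{H^1(\cK_\infty, E[p^\infty])}{\widehat E^\pm(\cK_\infty)\otimes \Q_p/\Z_p}\right)^H \to H^1(H, \widehat E^\pm(\cK_\infty)\otimes \Q_p/\Z_p) \to 0.
\]

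The crux is therefore to compute $H^i(H, \widehat E^\pm(\cK_\infty)\otimes \Q_p/\Z_p)$. Here is where the freeness results of the previous subsection enter, as the introductory sentence to the subsection advertises. Dualizing, $(\widehat E^\pm(\cK_\infty)\otimes \Q_p/\Z_p)^\vee = \Hiw(\cK,T)/\Hpm(\cK,T)$, so it suffices to understand the $H$-homology of $\Hiw(\cK,T)/\Hpm(\cK,T)$. I would argue that this module is $\Z_p[H]$-free, or at least cohomologically trivial for $H$: by \cref{free} and \cref{hpm-free} both $\Hiw(\cK,T)$ and $\Hpm(\cK,T)$ are $\Lambda$-free, and I expect that under the weak-ramification hypothesis one can upgrade this (as in \cref{cor:Lambda-g}, via \cref{lem:LambdaG-freeness-rank} applied to the relevant subquotients, or directly via \cref{cor-surjective} and \cref{lem:cohomology-base-field}) to $\Lambda[H]$-freeness of both, whence the quotient $\Hiw(\cK,T)/\Hpm(\cK,T)$ has no higher $H$-homology — the point being that a short exact sequence of $\Lambda[H]$-free modules need not have free quotient (as the remark after \cref{hpm-free} warns), but it is still $H$-cohomologically trivial because $\Lambda[H]$ is, up to the subtlety that freeness of a submodule quotient of free modules over this ring requires care. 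Concretely, from $0\to \Hpm \to \Hiw \to \Hiw/\Hpm \to 0$ with $\Hpm, \Hiw$ being $H$-induced, the long exact homology sequence gives $\widehat H^i(H, \Hiw/\Hpm) \cong \widehat H^{i+1}(H, \Hpm) = 0$; so $\Hiw/\Hpm$ is $\Z_p[H]$-cohomologically trivial, and translating back via Pontryagin duality, $\widehat E^\pm(\cK_\infty)\otimes \Q_p/\Z_p$ is $H$-cohomologically trivial too.

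Granting this, the long exact sequence collapses: $H^i(H, \widehat E^\pm(\cK_\infty)\otimes\Q_p/\Z_p)=0$ for $i>0$ forces $H^i$ of the quotient to vanish for all $i>0$ (not just $i\ge 2$), and for $i=0$ we are left with
\[
0 \to (\widehat E^\pm(\cK_\infty)\otimes \Q_p/\Z_p)^H \to H^1(\cK_\infty^H, E[p^\infty]) \to \left(\tfrac{H^1(\cK_\infty, E[p^\infty])}{\widehat E^\pm(\cK_\infty)\otimes \Q_p/\Z_p}\right)^H \to 0.
\]
To finish, I must identify $(\widehat E^\pm(\cK_\infty)\otimes \Q_p/\Z_p)^H$ with $\widehat E^\pm(\cK_\infty^H)\otimes \Q_p/\Z_p$; this is again a dualization exercise: $((\widehat E^\pm(\cK_\infty)\otimes \Q_p/\Z_p)^H)^\vee = (\Hiw(\cK,T)/\Hpm(\cK,T))_H$, and using the $\Lambda[H]$-freeness to commute $H$-coinvariants with the quotient, together with the identification of $\Hiw(\cK,T)_H$ and $\Hpm(\cK,T)_H$ with the corresponding Iwasawa cohomology over $\cK^H$ (this uses that $H$ is acting compatibly and that $\cK^G$ or rather $\cK^H$ sits appropriately, the field-theoretic bookkeeping being exactly that of \cref{cor:Lambda-g}), yields $\Hiw(\cK^H,T)/\Hpm(\cK^H, T)$, whose dual is $\widehat E^\pm(\cK_\infty^H)\otimes\Q_p/\Z_p$. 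Comparing with the displayed short exact sequence then gives the desired value of $H^0$. The main obstacle I anticipate is precisely the cohomological triviality of $\widehat E^\pm(\cK_\infty)\otimes \Q_p/\Z_p$ under $H$: establishing the $\Lambda[H]$-freeness (or CT-ness) of $\Hpm(\cK,T)$ for general $H$, not just $H$ of order $p$ as in \cref{cor:Lambda-g}, may require decomposing $H$ as a semidirect product of a $p$-group and a prime-to-$p$ cyclic group and treating the $\varepsilon$-components separately, in the spirit of \cref{criterion-quasi-projective}; alternatively one reduces to the order-$p$ case already handled and patches together, using that higher cohomology can be checked on $p$-subgroups.
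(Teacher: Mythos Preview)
Your approach is workable but more circuitous than the paper's. The key observation you miss is that $\Hpm(\cK,T)$ is \emph{itself} the Pontryagin/Tate dual of the quotient $\frac{H^1(\cK_\infty,E[p^\infty])}{\widehat E^\pm(\cK_\infty)\otimes\Q_p/\Z_p}$: recall that $H_\pm(\cK_n,T)$ is defined as the orthogonal complement of $\widehat E^\pm(\cK_n)\otimes\Q_p/\Z_p$ under the Tate pairing, so it is dual to $H^1(\cK_n,E[p^\infty])/(\widehat E^\pm(\cK_n)\otimes\Q_p/\Z_p)$, and one passes to the limit. Consequently, once \cref{cor:Lambda-g} gives $\Lambda[H]$-freeness of $\Hpm$ for $H$ cyclic of order $p$, cohomological triviality of the quotient is immediate---no short exact sequence, no separate treatment of $\Hiw$, and no analysis of $\Hiw/\Hpm$ is needed for $i>0$. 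For $i=0$ the paper then uses periodicity for cyclic $H$: from $\widehat H^0=H^2=0$ the invariants coincide with the trace image, and one checks directly that the trace of the numerator lands in $H^1(\cK_\infty^H,E[p^\infty])$ and the trace of the denominator in $\widehat E^\pm(\cK_\infty^H)\otimes\Q_p/\Z_p$.

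Your route through the short exact sequence and the dual $\Hiw/\Hpm$ of $\widehat E^\pm(\cK_\infty)\otimes\Q_p/\Z_p$ can be made to work (the same rank argument as in \cref{cor:Lambda-g} shows $\Hiw$ is $\Lambda[H]$-free too), but note that your proposed $i=0$ step requires identifying $(\Hpm(\cK,T))_H$ with $\Hpm(\cK^H,T)$; since $\Hpm$ is dual to the very quotient whose invariants you are computing, this is essentially the $i=0$ statement in disguise, so you would need a direct argument (say at finite level via compatibility of orthogonal complements with corestriction) to avoid circularity. Both approaches reduce to $H$ cyclic of order $p$ and then invoke solvability of $\Gal(\cK/\Q_p)$ to treat general $H$; here you and the paper agree.
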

\begin{proof}
    We will first assume that $H$ is cyclic of prime order $\ell$. If $\ell$ is coprime to $p$, there is nothing to prove. Thus, we will assume that $H$ is cyclic of order $p$. By \cref{cor:Lambda-g}, the module $\Hpm(\cK,T)$ is $\Lambda[H]$-free. In particular, 
    \[H^i(H,\Hpm(\cK,T))=0\quad i>0.\]
    As $\Hpm(\cK,T)$ is the Tate dual of $\frac{H^1(\cK_\infty,E[p^\infty])}{\widehat{E}^\pm(\cK_\infty)\otimes \Q_p/\Z_p}$, the claim for $i>0$ follows. It remains to prove the claim for $i=0$. As $H$ is cyclic, we obtain \[\widehat{H}^0\left(H,\frac{H^1(\cK_\infty,E[p^\infty])}{\widehat{E}^\pm(\cK_\infty)\otimes \Q_p/\Z_p}\right)=H^2\left(H,\frac{H^1(\cK_\infty,E[p^\infty])}{\widehat{E}^\pm(\cK_\infty)\otimes \Q_p/\Z_p}\right)=0,\]
    where $\widehat H^0$ denotes the $0$th Tate cohomology group.
    In particular,
    \[\left(\frac{H^1(\cK_\infty,E[p^\infty])}{\widehat{E}^\pm(\cK_\infty)\otimes \Q_p/\Z_p}\right)^H=\textup{Tr}_H\left(\left(\frac{H^1(\cK_\infty,E[p^\infty])}{\widehat{E}^\pm(\cK_\infty)\otimes \Q_p/\Z_p}\right)\right)\subset \frac{H^1(\cK_\infty^H,E[p^\infty])}{\widehat{E}^\pm(\cK_\infty^H)\otimes \Q_p/\Z_p}. \]
    The last inclusion follows from the fact that \[\textup{Tr}_H (H^1(\cK_\infty,E[p^\infty]))\subset H^1(\cK_\infty,E[p^\infty])^H=H^1(\cK_\infty^H,E[p^\infty])\] and \[\textup{Tr}_H(\widehat{E}^\pm(\cK_\infty)\otimes \Q_p/\Z_p)\subset \widehat{E}^\pm(\cK_\infty)^H\otimes \Q_p/\Z_p=\widehat{E}^\pm(\cK^H_\infty)\otimes \Q_p/\Z_p.\]
    This completes the proof for cyclic groups of prime order. 

    For the general case, note that $\Gal(\cK/\Q_p)$ is solvable and that each subfield of $\cK$ is again at most weakly ramified and such that $\Q_{p,\infty}\cap \cK=\Q_p$. It therefore suffices to prove the claim for cyclic groups.
    \end{proof}
\begin{cor} \label{supersingular-herbrand}
    Assume that $\cK/\Q_p$ is at most weakly ramified, and that $\Q_{p,\infty}\cap \cK=\Q_p$ and $p^2-1\nmid e(\cK/\Q_p)$.
    For every cyclic subgroup $C=PQ$ of {$\Gal(\cK/\Q_p)$} and every character $\varepsilon$ of $Q$ we have
    \[h_P\left(\left(\frac{H^1(\cK_\infty,E[p^\infty])}{\widehat{E}^\pm(\cK_\infty)\otimes \Q_p/\Z_p}\right)^\varepsilon\right)=1. \qed\]
\end{cor}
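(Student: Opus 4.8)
The plan is to deduce the statement directly and formally from \cref{MFL3.9}; the genuine work has already been done there. Fix a cyclic subgroup $C=PQ$ of $G$ with $P$ a $p$-group and $|Q|$ coprime to $p$, together with a character $\varepsilon$ of $Q$. Write $M\colonequals\frac{H^1(\cK_\infty,E[p^\infty])}{\widehat{E}^\pm(\cK_\infty)\otimes\Q_p/\Z_p}$, regarded as a discrete $G$-module, and set $\mathcal O\colonequals\mathcal O_{\Q_p(\mu_{|Q|})}$. The first point is purely group-theoretic: since $C$ is cyclic, it is the internal direct product of its Sylow subgroups, so $C=P\times Q$ and the $P$- and $Q$-actions on $M$ commute. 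Hence the idempotent $e(\varepsilon)\in\mathcal O[Q]$ commutes with the $\Z_p[P]$-action on $M\otimes_{\Z_p}\mathcal O$, which realizes $M^\varepsilon=e(\varepsilon)(M\otimes_{\Z_p}\mathcal O)$ as a $\Z_p[P]$-module direct summand of $M\otimes_{\Z_p}\mathcal O$.

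Next I would pass to $P$-cohomology. As $\mathcal O$ is free of finite rank over $\Z_p$ and $P$ acts trivially on that factor, $M\otimes_{\Z_p}\mathcal O$ is a finite direct sum of copies of $M$ as a $\Z_p[P]$-module, so $H^i(P,M\otimes_{\Z_p}\mathcal O)\cong H^i(P,M)\otimes_{\Z_p}\mathcal O$ for every $i$. The hypotheses on $\cK$ in the Corollary are exactly those of \cref{MFL3.9}, which applied with the subgroup $H=P\le G$ gives $H^i(P,M)=0$ for all $i\ge1$. Cutting out the direct summand defined by $e(\varepsilon)$, we conclude $H^i(P,M^\varepsilon)=0$ for all $i\ge1$. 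In particular $H^1(P,M^\varepsilon)$ and $H^2(P,M^\varepsilon)$ are finite (indeed zero), so the Herbrand quotient $h_P(M^\varepsilon)$ is defined and equals $|H^2(P,M^\varepsilon)|/|H^1(P,M^\varepsilon)|=1$.

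I do not expect any real obstacle here: the argument is entirely formal once \cref{MFL3.9} is available. The only step deserving a moment's care is the interaction between the $\varepsilon$-component functor and $P$-cohomology, namely checking that $e(\varepsilon)$ commutes with the $P$-action so that $(-)^\varepsilon$ is exact and compatible with restriction to $P$; this is precisely where one uses that the cyclic group $C$ splits as $P\times Q$.
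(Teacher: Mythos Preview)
Your proposal is correct and follows exactly the approach the paper intends: the paper marks this corollary as immediate from \cref{MFL3.9} (note the \verb|\qed| inside the statement), and you have spelled out precisely the formal deduction---using that $C=P\times Q$, that $e(\varepsilon)$ commutes with the $P$-action, and that $(-)\otimes_{\Z_p}\mathcal O$ and $(-)^\varepsilon$ are exact direct-summand functors on $\Z_p[P]$-modules---needed to pass from $H^i(P,M)=0$ for $i\ge1$ to $h_P(M^\varepsilon)=1$.
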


\subsection{\texorpdfstring{$p$}{p}-primary part}

\begin{lemma}
\label{ss-p-primary}
    If $p^2-1\nmid e(\cK/\Q_p)$, then there is a natural isomorphism
    \[\frac{H^1(\cK_\infty,E[p])}{\widehat{E}^\pm(\cK_\infty)/p\widehat{E}^\pm(\cK_\infty)}\longrightarrow \frac{H^1(\cK_\infty,E[p^\infty])}{\widehat{E}^\pm (\cK_\infty)\otimes \Q_p/\Z_p}[p].\]
\end{lemma}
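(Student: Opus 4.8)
The strategy is to exhibit the desired injection by comparing two short exact sequences, one coming from multiplication by $p$ on the coefficient modules and one from the plus/minus filtration. Start from the multiplication-by-$p$ sequence $0\to E[p]\to E[p^\infty]\xrightarrow{p} E[p^\infty]\to 0$ of $\Gal(\overline{\cK}/\cK_\infty)$-modules. Taking $H^*(\cK_\infty,-)$ yields a short exact sequence
\[0\to H^1(\cK_\infty,E[p^\infty])/p\,H^1(\cK_\infty,E[p^\infty])\to H^1(\cK_\infty,E[p])\to H^2(\cK_\infty,E[p^\infty])[p]\to 0,\]
but since $\cK_\infty$ has $p$-cohomological dimension $1$ (as $\cK_n$ are local fields and we pass to the limit, or equivalently because $E[p^\infty]$ is $p$-divisible and $H^2$ of a local field with divisible coefficients vanishes), the last term is zero, giving an isomorphism $H^1(\cK_\infty,E[p])\cong H^1(\cK_\infty,E[p^\infty])[p]\oplus\ldots$ — more precisely, the Kummer-type sequence $0\to H^1(\cK_\infty,E[p^\infty])[p]\to H^1(\cK_\infty,E[p])\to H^1(\cK_\infty,E[p^\infty])\xrightarrow{p}H^1(\cK_\infty,E[p^\infty])$, together with $p$-torsion-freeness of $E(\cK_\infty)$ from \cref{lem:torsion-freeness} (which kills $H^0(\cK_\infty,E[p^\infty])/p$), shows that $H^1(\cK_\infty,E[p])\hookrightarrow H^1(\cK_\infty,E[p^\infty])[p]$ is in fact an isomorphism onto $H^1(\cK_\infty,E[p^\infty])[p]$.

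Next I would do the analogous bookkeeping for $\widehat E^\pm$. By \cref{hpm-free}, the Iwasawa cohomology $\Hpm(\cK,T)$ is $\Lambda$-free, hence in particular $\Z_p$-torsion-free; dualizing, $\widehat E^\pm(\cK_\infty)\otimes\Q_p/\Z_p$ is the Pontryagin (= Tate) dual of a $\Z_p$-free module, and $\widehat E^\pm(\cK_\infty)$ itself is $\Z_p$-torsion-free by \cref{lem:torsion-freeness}. This gives $0\to \widehat E^\pm(\cK_\infty)/p\to \widehat E^\pm(\cK_\infty)\otimes\Q_p/\Z_p \xrightarrow{p} \widehat E^\pm(\cK_\infty)\otimes\Q_p/\Z_p$, identifying $\widehat E^\pm(\cK_\infty)/p\,\widehat E^\pm(\cK_\infty)$ with $(\widehat E^\pm(\cK_\infty)\otimes\Q_p/\Z_p)[p]$. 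The inclusion $\widehat E^\pm(\cK_\infty)\otimes\Q_p/\Z_p\hookrightarrow H^1(\cK_\infty,E[p^\infty])$ is compatible with these identifications, so on $p$-torsion subgroups it induces $\widehat E^\pm(\cK_\infty)/p \hookrightarrow H^1(\cK_\infty,E[p^\infty])[p] = H^1(\cK_\infty,E[p])$.

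Finally, assemble: apply the snake lemma to the commutative diagram whose rows are the multiplication-by-$p$ sequences for $\widehat E^\pm(\cK_\infty)\otimes\Q_p/\Z_p$ and for $H^1(\cK_\infty,E[p^\infty])$, and whose columns are the inclusions; the cokernel of the left vertical map injects into the cokernel of the middle vertical map, which is exactly the asserted map
\[\frac{H^1(\cK_\infty,E[p])}{\widehat E^\pm(\cK_\infty)/p\,\widehat E^\pm(\cK_\infty)}\longrightarrow \frac{H^1(\cK_\infty,E[p^\infty])}{\widehat E^\pm(\cK_\infty)\otimes\Q_p/\Z_p}.\]
Injectivity drops out of the snake lemma provided the relevant connecting map vanishes, which follows once more from the $p$-torsion-freeness statements feeding into the diagram. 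The main obstacle — and the only place the hypothesis $p^2-1\nmid e(\cK/\Q_p)$ is genuinely used — is ensuring the $\Z_p$-torsion-freeness of $\widehat E^\pm(\cK_\infty)$ and of $H^1(\cK_\infty,E[p^\infty])$, i.e. that there is no hidden $p$-torsion contaminating the identifications of $p$-torsion subgroups with mod-$p$ quotients; this is exactly what \cref{lem:torsion-freeness,hpm-free} were set up to provide, so the argument is essentially a diagram chase once those inputs are in hand.
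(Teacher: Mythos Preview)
Your approach is essentially the paper's: both arguments reduce to identifying $\widehat E^\pm(\cK_\infty)/p$ with $(\widehat E^\pm(\cK_\infty)\otimes\Q_p/\Z_p)[p]$ and $H^1(\cK_\infty,E[p])$ with $H^1(\cK_\infty,E[p^\infty])[p]$ using $E(\cK_\infty)[p]=0$, and then a diagram chase. The paper writes the tautological two-row diagram and observes that the left vertical map is an isomorphism and the middle one is injective, which is exactly what your snake-lemma formulation amounts to.

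Two corrections, though. First, your initial displayed exact sequence is wrong: the long exact sequence for $0\to E[p]\to E[p^\infty]\xrightarrow{p}E[p^\infty]\to 0$ gives $0\to H^0(\cK_\infty,E[p^\infty])/p\to H^1(\cK_\infty,E[p])\to H^1(\cK_\infty,E[p^\infty])[p]\to 0$, not the sequence with $H^1/p$ and $H^2[p]$ you wrote (that one has $H^2(\cK_\infty,E[p])$ in the middle). Your subsequent ``more precisely'' line also has the first arrow reversed; the map goes from $H^1(\cK_\infty,E[p])$ into $H^1(\cK_\infty,E[p^\infty])$, with image equal to the $p$-torsion. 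You do land on the right statement in the end, but the intermediate displays should be cleaned up. Second, you do not need \cref{hpm-free} here at all: the $p$-torsion-freeness of $\widehat E^\pm(\cK_\infty)$ is immediate from $\widehat E^\pm(\cK_\infty)\subset\widehat E(\cK_\infty)$ together with \cref{lem:torsion-freeness}, and that is the only input the paper uses.
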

\begin{proof}
    Consider the following commutative diagram; the first row is tautologically exact, with the vertical arrows being the natural maps. 
    \[\begin{tikzcd}
    0 \arrow[r] & \widehat{E}^\pm(\cK_\infty)/p\widehat{E}^\pm(\cK_\infty)\arrow[r] \arrow[d] & {H^1(\cK_{\infty}, E[p])} \arrow[r] \arrow[d] &  \frac{H^1(\cK_\infty,E[p])}{\widehat{E}^\pm(\cK_\infty)/p\widehat{E}^\pm(\cK_\infty)}\arrow[d]\arrow[r]&0 \\
    0 \arrow[r] & ( \widehat{E}^\pm(\cK_\infty)\otimes \Q_p/\Z_p)[p]\arrow[r]         & {H^1(\cK_\infty, E[p^\infty])}[p] \arrow[r]                 & \frac{H^1(\cK_\infty,E[p^\infty])}{\widehat{E}^\pm (\cK_\infty)\otimes \Q_p/\Z_p}[p]\arrow[r]&0
    \end{tikzcd}\]
    {The second row is exact as $\widehat{E}^\pm \otimes \Q_p/\Z_p$ is $\Z_p$-divisible }
    The left most and the middle vertical map are isomorphisms as $E(\cK_\infty)[p]=0$ by \cref{lem:torsion-freeness}. Thus, the right vertical map is an isomorphism.
\end{proof}

\section{Global considerations} \label{sec:global-cons}
We fix the following: $p$ is a rational prime, $F/F'/\Q$ are number fields with $p$ completely split in $F'$, $K/F$ is a finite Galois extension, and $E/F'$ is an elliptic curve. We assume that 
\begin{itemize}
    \item[(S1)] $E$ has good reduction at all $p$-adic places of $F'$;
    \item[(S2)] there is a $p$-adic place with supersingular reduction;
    \item[(S3)] each $p$-adic supersingular place $u$
    \begin{enumerate}[label=\roman*)]
        \item has ramification index $e_u(K/F')$ that is not divisible by $p^2-1$ in $K/F'$,
        \item  fulfills the following condition:  $K_u$ is contained in the compositum of an at most weakly ramified extension $\cK'/\Q_p$ and the cyclotomic extension $\Q_{p,\infty}$, {where $\cK'\cap \Q_{p,\infty}=\Q_p$.}
        \item satisfies $a_u=0$.
    \end{enumerate}
\end{itemize}
Conditions (S3.i) and (S3.ii) are weaker than those put in place by \cite[\S4]{mengfai}: indeed, there the condition is $e_u(K/F')=1$, whereas we require $e_u(K/F')$ not to be divisible by $p^2-1$ and that wild ramification only come from the cyclotomic $\Z_p$-extension.

\begin{rem}
    Let $E/\Q$ be an elliptic curve and $p\ge 5$ a supersingular prime. Let $s,l$ be a non-negative integers and let $\alpha_1,\dots,\alpha_s$ be integers that are independent in $\Q^\times/(\Q^\times)^{p^l}$. Assume that the $\alpha_i$ are $p^l$-th powers in $\Q_p$. Let $K=\Q(\zeta_{p^l},\alpha_1^{1/p^l},\dots,\alpha_s^{1/p^l})$.  Then $E$, $p$ and $K$ satisfy the above conditions with $F=F'=\Q$.
\end{rem}

Fix the following finite sets of places of $F$. 
We write $\Sigma^\ord$ resp. $\Sigma^\ss$ resp. $\Sigma^{\mathrm{bad}}$ for the set of places of $F$ at which $E$ has ordinary resp. supersingular resp. bad reduction. Let $\Sigma_p$ resp. $\Sigma_\infty$ be the set of $p$-adic resp. infinite places of $F$. Let $\Sigma$ be a finite set of places of $F$ satisfying
\begin{equation} \label{eq:Sigma-conditions}
    \Sigma \supseteq \Sigma_p \cup \Sigma_{\mathrm{ram}(F/F')} \cup \Sigma_{\mathrm{ram}(K/F)} \cup \Sigma^{\mathrm{bad}} \cup \Sigma_\infty,
\end{equation}
and let $\Sigma_1 \colonequals \Sigma-\Sigma_p$ be the subset of non-$p$-adic places. 
We decompose $\Sigma_p$ into the disjoint sets $\Sigma^{\textup{ord}}_p$ of ordinary places and $\Sigma^{\ss}_p$ of supersingular places. For an extension $\widetilde F/F$, let $\Sigma(\widetilde F)$ denote the places of $\widetilde F$ above those in $\Sigma$.

Let $K_\infty/K$ by the cyclotomic $\Z_p$-extension, and let $K/L$ be a subextension of $K/F$. If $L_\infty=LF_\infty$ is the cyclotomic $\Z_p$-extension of $L$ and $L_m/L$ is the unique degree $p^m$ extension of $L$ in $L_\infty$, then there is an $m\ge0$ such that $K\cap L_\infty = L_m$.
Write $H_L\colonequals \Gal(K_\infty/L_\infty)$ and $\Gamma_L\colonequals\Gal(L_\infty/L)$. Let $G=\Gal(K_\infty/F_\infty)$ and denote $\Gamma_F$ by $\Gamma$. Then we have an isomorphism $\G\colonequals\Gal(K_\infty/F)=G\rtimes \Gamma$. {We fix once and for all a lift $\Gamma'$ of $\Gamma$ in $\mathcal{G}$ such that the restriction induces an isomorphism $\Gamma'\cong \Gamma$. By abuse of notation we will denote $\Gamma'$ by $\Gamma$ in the following. } Let $\Lambda\colonequals\Z_p\llbracket\Gamma_K\rrbracket$ and $\Lambda(\G)\colonequals\Z_p\llbracket\G\rrbracket$ be the relevant Iwasawa algebras.

\[\begin{tikzcd}[every arrow/.append style={no head}, every label/.append style = {font = \small}]
{\phantom K}  \arrow[dddd, start anchor=north, end anchor=south, no head, xshift=-2em, decorate, decoration={brace,mirror}, "\G" left=3pt] & &  & K_\infty \arrow[rd, "H_L"'] \arrow[rrdd, "G", bend left] & & \\
K \arrow[rrru] \arrow[dd] \arrow[rd] & & & & L_\infty \arrow[rd] & \\
& L_m \arrow[rrru] & & & & F_\infty \\
L \arrow[ru, "p^m"'] \arrow[d] & & & & & \\
F \arrow[rrrrruu, "\Gamma=\Gamma_F"'] \arrow[d] & &  & & & \\
F' \arrow[d] & & & & & \\
\Q & & & & &
\end{tikzcd}\]
{Note that conditions (S2) and (S3) imply that $H^0(G_\Sigma(K_\infty,E[p^\infty])=0$ by Lemma \ref{lem:torsion-freeness}. We will frequently use this fact without further mentioning it.}
\subsection{Definition of signed Selmer groups} \label{sec:definition-of-signed-Sel}
For each supersingular place $v\in \Sigma_p^\ss$, fix a sign $s_v\in \{+,-\}$, thus defining a vector $\vec s\in \{\pm\}^{\Sigma_p^\ss}$. Let $v(L_n)$ denote the set of primes lying above $v$ in $L_n$; note that there may be more than one such prime. For $u\in v(L_n)$, let $s_u\colonequals s_v$: this defines a vector $\vec s(L_n)\in \{\pm\}^{\Sigma_p^\ss(L_n)}$.

Let $L_{n,\Sigma}/L_n$ be the maximal $\Sigma$-ramified extension of $L_n$. Note that the assumption $\Sigma\supseteq \Sigma_p \cup \Sigma_{\mathrm{ram}(K/F)}$ implies $L_\Sigma=K_\Sigma$ and $L_{\infty,\Sigma}=K_{\infty,\Sigma}$. Let the signed Selmer group $\Sel^{\vec s}(E/L_n)$ be defined as the kernel of the following natural global-to-local map:
\begin{align*}
    H^1\left(L_{n,\Sigma}/L_n, E[p^\infty]\right) &\to \bigoplus_{u\in \Sigma_p^\ord(L_n)} \frac{H^1(L_{n,u}, E[p^\infty])}{E(L_{n,u})\otimes \Q_p/\Z_p)} \,\oplus \\
    &\oplus\bigoplus_{u\in \Sigma_p^\ss(L_n)} \frac{H^1(L_{n,u}, E[p^\infty])}{E^{s_u}(L_{n,u}) \otimes\Q_p/\Z_p} \oplus\bigoplus_{u\in \Sigma_1(L_n)} H^1(L_{n,u}, E[p^\infty])
\end{align*}
Let $\Sel^{\vec s}(E/L_\infty)\colonequals\varinjlim_n \Sel^{\vec s}(E/L_n)$. For $n\le \infty$ and $u\in \Sigma(L_n)$, we introduce the notation
\begin{equation} \label{eq:def-B}
    {B_{n,u}} \colonequals \begin{cases}
    E(L_{n,u})\otimes \Q_p/\Z_p & u\in \Sigma_p^\ord(L_n) \\
    E^{s_u}(L_{n,u}) \otimes\Q_p/\Z_p & u\in \Sigma_p^\ss(L_n) \\
    0 & u\in \Sigma_1(L_n)
    \end{cases}
\end{equation}
and let $J_u(E/L_n)\colonequals H^1(L_{n,u}, E[p^\infty]) / {B_{n,u}}$, so that
\begin{equation} \label{eq:Sel-def-ker}
    \Sel^{\vec s}(E/L_n) = \ker\left(H^1\left(L_{n,\Sigma}/L_n, E[p^\infty]\right) \to \bigoplus_{u\in \Sigma(L_n)} J_u(E/L_n)\right).
\end{equation}
Note that for $u\in\Sigma_p^\ord(L_\infty)$, we have 
\begin{equation} \label{eq:J-ordinary}
    J_u(E/L_\infty)\simeq H^1(L_{\infty,u}, E)[p^\infty]
\end{equation}

\begin{lemma} \label{MFL4.1}
    The restriction map $H^1(K_{\infty,\Sigma}/L_\infty,E[p^\infty])\to H^1(K_{\infty,\Sigma}/K_\infty,E[p^\infty])$ induces a map $\Sel^{\vec s}(E/L_\infty) \to \Sel^{\vec s}(E/K_\infty)^{H_L}$, which has trivial kernel and finite cokernel.
\end{lemma}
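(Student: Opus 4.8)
The plan is to run the standard control-theorem diagram chase; the genuinely new inputs are the vanishing $E(K_\infty)[p^\infty]=0$, which yields both the triviality of the kernel and the fact that the comparison map on global $H^1$'s is an isomorphism, together with \cref{MFL3.9}, which makes the comparison of the local terms at supersingular places an isomorphism rather than merely a map with finite kernel. Observe first that $H_L=\Gal(K_\infty/L_\infty)$ is finite, as $K\cap L_\infty=L_m$ for some $m\ge0$. I would then write down the commutative diagram whose top row is the defining sequence \eqref{eq:Sel-def-ker} for $\Sel^{\vec s}(E/L_\infty)$ and whose bottom row is the sequence of $H_L$-invariants of \eqref{eq:Sel-def-ker} for $\Sel^{\vec s}(E/K_\infty)$; this bottom row is exact, $\Sel^{\vec s}(E/K_\infty)^{H_L}$ being the kernel of the local map restricted to the invariants. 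The vertical maps are induced by restriction along $\Gal(K_{\infty,\Sigma}/K_\infty)\trianglelefteq\Gal(K_{\infty,\Sigma}/L_\infty)$ globally and along $\Gal(K_{\infty,w}/L_{\infty,u})$ on each local term; write $f$, $g$, $h$ for the vertical maps on the Selmer groups, on $H^1(K_{\infty,\Sigma}/-,E[p^\infty])$, and on $\bigoplus_u J_u$ respectively. The five-term inflation-restriction sequence identifies $\ker g$ with $H^1(H_L,E(K_\infty)[p^\infty])$ and embeds $\coker g$ into $H^2(H_L,E(K_\infty)[p^\infty])$.

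The second step is to show $E(K_\infty)[p^\infty]=0$. By (S2) there is a $p$-adic place $v_0$ of $F'$ with supersingular reduction; fixing a place $v$ of $K_\infty$ above it yields an embedding $E(K_\infty)\hookrightarrow E(K_{\infty,v})$. The formal group $\widehat E(K_{\infty,v})$ is $p$-torsion free by \cref{lem:torsion-freeness}, using (S3.i) and that $p$ splits completely in $F'$; and the reduction $\widetilde E(k_{K_{\infty,v}})$ has no $p$-torsion because $\widetilde E$ is supersingular. The reduction exact sequence then forces $E(K_{\infty,v})[p^\infty]=0$, hence $E(K_\infty)[p^\infty]=0$. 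Consequently $\ker g=\coker g=0$, so $g$ is an isomorphism; and since $\ker f\subseteq\ker g$, the induced map $\Sel^{\vec s}(E/L_\infty)\to\Sel^{\vec s}(E/K_\infty)^{H_L}$ has trivial kernel.

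For the finiteness of the cokernel, the snake lemma applied to the diagram (with $g$ an isomorphism) identifies $\coker f$ with a subgroup of $\ker h$. Since $L_\infty/L$ is a $\Z_p$-extension, the set $\Sigma(L_\infty)$ is finite, and the usual identification $\big(\bigoplus_{w\mid u}J_w(E/K_\infty)\big)^{H_L}\cong J_w(E/K_\infty)^{H_{L,w}}$ (for a fixed $w\mid u$, with $H_{L,w}=\Gal(K_{\infty,w}/L_{\infty,u})$ finite) gives $\ker h=\bigoplus_{u\in\Sigma(L_\infty)}\ker\big(J_u(E/L_\infty)\to J_w(E/K_\infty)^{H_{L,w}}\big)$. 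I would then treat the three types of place separately. For $u\in\Sigma_1(L_\infty)$, inflation-restriction identifies the kernel with $H^1(H_{L,w},E(K_{\infty,w})[p^\infty])$, which is finite since $H_{L,w}$ is finite and $E(K_{\infty,w})[p^\infty]$ is cofinitely generated over $\Z_p$. For $u$ ordinary $p$-adic, \eqref{eq:J-ordinary} identifies the kernel with $H^1(H_{L,w},E(K_{\infty,w}))[p^\infty]$, finite by the standard estimates for the Galois cohomology of $E$ over $p$-adic fields. For $u$ supersingular $p$-adic, \cref{MFL3.9} -- applied to the local field $K_{\infty,w}$, which by (S3.i)--(S3.ii) is the cyclotomic $\Z_p$-extension of a weakly ramified $\mathcal K/\Q_p$ with $\mathcal K\cap\Q_{p,\infty}=\Q_p$ and $p^2-1\nmid e(\mathcal K/\Q_p)$, and to the subgroup $H_{L,w}$ of $\Gal(\mathcal K_\infty/\Q_{p,\infty})$ (note $L_{\infty,u}\supseteq\Q_{p,\infty}$ since $u\mid p$) -- shows that the map on the $J$'s is an isomorphism, so that kernel vanishes. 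Hence $\ker h$, and therefore $\coker f$, is finite.

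The step I expect to cost the most care is invoking \cref{MFL3.9} at the supersingular places: one must verify that the completion $K_{\infty,w}$ genuinely has the shape required there -- in particular that passing from $K_{v_0}$ to $K_{v_0}\cdot\Q_{p,\infty}$ leaves the prime-to-$p$ part of the ramification index unchanged, so that (S3.i) is inherited by $\mathcal K$, and that $L_{\infty,u}$ is exactly the fixed field of $H_{L,w}$ inside $K_{\infty,w}$. A lesser technical point is the finiteness of $H^1(H_{L,w},E(K_{\infty,w}))$ at ordinary $p$-adic places when $K/F$ is wildly ramified there; this is standard control-theory input, but it relies on the structure theory of formal groups over non-deeply-ramified $p$-adic fields rather than being purely formal.
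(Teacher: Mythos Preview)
Your proposal is correct and follows essentially the same approach as the paper: both set up the fundamental diagram, use $E(K_\infty)[p^\infty]=0$ (via \cref{lem:torsion-freeness} at a supersingular place) to make the middle restriction map an isomorphism, invoke \cref{MFL3.9} to make $\ell_u$ an isomorphism at supersingular primes, and cite the finite-kernel input of Hachimori--Matsuno at the remaining places before applying the snake lemma. Your write-up is more explicit about the local finiteness at non-supersingular places and about the shape of $K_{\infty,w}$ needed to invoke \cref{MFL3.9}, whereas the paper simply cites \cite[Lemma~3.3]{HachimoriMatsuno} and \cref{MFL3.9} directly; one small slip is your remark about ``non-deeply-ramified'' fields at ordinary $p$-adic places, since $K_{\infty,w}$ is in fact deeply ramified there---the relevant finiteness input comes from Coates--Greenberg/Hachimori--Matsuno rather than from avoiding deep ramification.
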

\begin{proof}
    The idea is the same as in the proof of \cite[Lemma~3.3]{HachimoriMatsuno}.
    Reformulating \eqref{eq:Sel-def-ker}, we find that there is a commutative diagram with exact rows:
    \[\begin{tikzcd}
    0 \arrow[r] & \Sel^{\vec s}(E/L_\infty) \arrow[r] \arrow[d] & {H^1(K_{\infty,\Sigma}/L_\infty, E[p^\infty])} \arrow[r] \arrow[d, "\res"] & \displaystyle\bigoplus_{u\in\Sigma(L_\infty)} J_u(E/L_\infty) \arrow[d, "\ell=\oplus_u \ell_u"] \\
    0 \arrow[r] & \Sel^{\vec s}(E/K_\infty)^{H_L} \arrow[r]         & {H^1(K_{\infty,\Sigma}/K_\infty, E[p^\infty])^{H_L}} \arrow[r]                 & \displaystyle\bigoplus_{w\in\Sigma(K_\infty)} J_w(E/K_\infty)^{H_L}
    \end{tikzcd}\]
    This is referred to as the `fundamental diagram' in \cite[\S6.2]{LeiZerbes}.
    Consider the restriction map. By \cref{lem:torsion-freeness}, we have $E(K_{\infty,w})[p^\infty]=0$ for all $w\in \Sigma^{\ss}_p(K_\infty)$. Writing $G_\Sigma(K_\infty)\colonequals\Gal(K_{\infty,\Sigma}/K_\infty)$, the inflation--restriction exact sequence implies that 
    \[\ker(\res)=H^1\left(H_L,E[p^\infty]^{G_\Sigma(K_\infty)}\right)=0\quad \coker(\res)=H^2\left(H_L,E[p^\infty]^{G_\Sigma(K_\infty)}\right)=0.\]
    In particular, $\res$ is an isomorphism.  
    We turn to {the map} $\ell$. Local considerations show that $\ell_u$ is an isomorphism for all supersingular primes above $p$: indeed, \cref{MFL3.9} with $i=0$ shows $J_u(E/L_\infty)=J_w(E/K_\infty)^{H_L}$. For non-supersingular places, the map $\ell_u$ has finite kernel: for $u\in\Sigma_p^\ord(L_\infty)$, this is the last sentence of \cite[Lemma~3.3]{HachimoriMatsuno} together with the observation \eqref{eq:J-ordinary}, and for $u\in \Sigma_1(L_\infty)$, this is shown in loc.cit. Modding out by the cokernels on the right in both rows, the assertion follows by invoking the snake lemma.
\end{proof}

\subsection{Torsion properties of \texorpdfstring{$X^{\vec s}(E/K_\infty)$}{X\^s(E/K\_infty)}}
We want to generalise a well-known characterisation of being $\Lambda$-cotorsion for signed Selmer groups, see \cite[Proposition~4.2]{mengfai}, \cite[Proposition~2.8]{LeiLim}.

Let $n\ge0$ be fixed.
Let $T=T_pE$ denote the Tate module, and write $V= T_p\otimes\Q_p$ and $T^*(1)=\Hom_{\Z_p}(T,\Z_p(1))$. Noting that $V/T\simeq E[p^\infty]$, the Cassels--Poitou--Tate exact sequence \cite[Theorem~1.5]{CoatesSujathaGCEC} reads as follows:
\begin{align}
\label{eq:perrin-riou} \nonumber
    0&\to \Sel^{\vec s}(E/K_n)\to H^1(G_\Sigma(K_n) , E[p^\infty])\to \bigoplus_{w\in\Sigma(K_n)} J_w(E/K_n) \to \\
    &\to H^1_A(K_n, T^*(1))^\lor \to H^2(G_\Sigma(K_n), E[p^\infty]) \to \bigoplus_{w\in \Sigma(K_n)} H^2(K_{n,w}, E[p^\infty]) \to \\\nonumber
    &\to H^0(K_n, T^*(1))^\lor \to 0
\end{align}
The group $H^1_A(K_n, T^*(1))\subseteq H^1(G_\Sigma(K_n), T^*(1))$ consists of cocycles whose restrictions at $w$ satisfy local conditions $A_w\subseteq H^1(K_{n,w},T^*(1))$, where ${A_{n,w}}$ is the orthogonal complement of ${B_{n,w}}$ under local Tate duality, {where $B_{n,w}$ is as in \eqref{eq:def-B}}. As explained in the paragraph preceding $(4)$ in op.cit., Tate duality gives an isomorphism $H^2(K_{n,w},E[p^\infty])\simeq \big(\varprojlim_{m} H^0(K_{n,w}, E[p^m])\big)^\lor=0$ for all $w{\in} \Sigma(K_n)$. Going up the tower, it follows that for any place $w\in\Sigma(K_\infty)$, we have
\begin{equation} \label{eq:h2trivial}
    H^2(K_{\infty,w},E[p^\infty])=0.
\end{equation}
{This last assertion can also be seen through more direct means.\footnote{The authors thank the referee for pointing this out.} Letting $v$ denote the place beneath $w$ in $K$, the field $K_{\infty,w}$ is an extension of the local field $K_v$ of degree $p^\infty$, and therefore the absolute Galois group of $K_{\infty,w}$ has $p$-cohomological dimension $1$ by \cite[Theorem~7.1.8.(i)]{NSW}.}

The following is a generalization of \cite[Proposition 2.8]{LeiLim}.
\begin{prop} \label{Sel-H1-J-exact-sequence}
    The module $X^{\vec s}(E/K_\infty)$ is $\Lambda$-torsion if and only if $H^2(G_\Sigma(K_\infty),E[p^\infty])=0$ and the following sequence is exact:
    \begin{equation} \label{eq:Sel-H1-J-ses}
        0\to \Sel^{\vec s}(E/K_\infty)\to H^1(G_\Sigma(K_\infty),E[p^\infty])\to \bigoplus_{w\in \Sigma(K_\infty)}J_w(E/K_\infty)\to 0.
    \end{equation}
\end{prop}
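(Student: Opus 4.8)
The plan is to use the Cassels--Poitou--Tate sequence \eqref{eq:perrin-riou} at finite level together with a limit argument, mimicking the strategy in \cite[Proposition~2.8]{LeiLim} and \cite[Proposition~4.2]{mengfai}. First I would pass to the limit over $n$ in \eqref{eq:perrin-riou}. Since direct limits are exact, and since $H^2(K_{\infty,w},E[p^\infty])=0$ for all $w\in\Sigma(K_\infty)$ by \eqref{eq:h2trivial}, the terms $\bigoplus_w H^2(K_{n,w},E[p^\infty])$ vanish in the limit; likewise $H^0(K_n,T^*(1))^\lor$ and the terms involving $H^1_A$ can be analysed as in loc.cit. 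The upshot is an exact sequence
\[
0\to \Sel^{\vec s}(E/K_\infty)\to H^1(G_\Sigma(K_\infty),E[p^\infty])\to \bigoplus_{w\in\Sigma(K_\infty)} J_w(E/K_\infty)\to \mathcal C \to H^2(G_\Sigma(K_\infty),E[p^\infty])\to 0,
\]
where $\mathcal C=\varinjlim_n H^1_A(K_n,T^*(1))^\lor$ is (the Pontryagin dual of) a certain compact Iwasawa-cohomology-type module, namely $\big(\varprojlim_n H^1_A(K_n,T^*(1))\big)^\lor$ after identifying the dual of a direct limit with the inverse limit of duals.

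The heart of the argument is then to compute the $\Lambda$-corank of $H^1(G_\Sigma(K_\infty),E[p^\infty])$ and of $\bigoplus_w J_w(E/K_\infty)$ and to relate them. For the global term, the global Euler characteristic formula (Greenberg) gives $\Lambda$-corank of $H^1(G_\Sigma(K_\infty),E[p^\infty])$ equal to $[K:\Q]$ plus the $\Lambda$-corank of $H^2$; for the local terms, one uses the corank computations of the previous section: at supersingular $w$ the module $J_w(E/K_\infty)=H^1(K_{\infty,w},E[p^\infty])/(\widehat E^{s_w}(K_{\infty,w})\otimes\Q_p/\Z_p)$ has $\Lambda$-corank $[K_w:\Q_p]$ by \eqref{eq:ranks} applied at each supersingular place (after descending along the unramified-plus-cyclotomic decomposition provided by (S3.ii)), at ordinary $p$-adic $w$ one has $J_w\simeq H^1(K_{\infty,w},E)[p^\infty]$ by \eqref{eq:J-ordinary} which is $\Lambda$-cotorsion, and at $w\in\Sigma_1(K_\infty)$ the term $J_w=H^1(K_{\infty,w},E[p^\infty])$ is $\Lambda$-cotorsion. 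Summing, $\bigoplus_w J_w(E/K_\infty)$ has $\Lambda$-corank $\sum_{v\in\Sigma_p(F)}[K_v:\Q_p]$. Since $p$ splits completely in $F'\subseteq F$ and $E$ has good reduction at all $p$-adic places, this sum equals $[K:\Q]$ (using that the global-to-local degree sum at $p$ equals $[K:\Q]$ because $p$ is totally split in $F$, hence in particular the $p$-adic completions of $F$ are all $\Q_p$). Therefore the map $H^1(G_\Sigma(K_\infty),E[p^\infty])\to \bigoplus_w J_w(E/K_\infty)$ has source and target of $\Lambda$-corank differing exactly by the $\Lambda$-corank of $H^2(G_\Sigma(K_\infty),E[p^\infty])$.

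With the corank bookkeeping in hand, the equivalence follows by a standard argument. If $X^{\vec s}(E/K_\infty)$ is $\Lambda$-torsion, then $\Sel^{\vec s}(E/K_\infty)$ has zero $\Lambda$-corank, so the image of the global-to-local map has full corank $[K:\Q]$ equal to the corank of $\bigoplus_w J_w(E/K_\infty)$; combined with the fact that the cokernel of this map injects (via the CPT sequence) into $H^2(G_\Sigma(K_\infty),E[p^\infty])$ and that $H^2(G_\Sigma(K_\infty),E[p^\infty])$ is $\Lambda$-cotorsion (it is the Pontryagin dual of $\varprojlim H^0(K_n,T^*(1))$-type data, cf. loc.cit., hence even finitely generated over $\Z_p$), a corank count forces both $H^2(G_\Sigma(K_\infty),E[p^\infty])=0$ and surjectivity of the global-to-local map, i.e. exactness of \eqref{eq:Sel-H1-J-ses}. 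Conversely, if \eqref{eq:Sel-H1-J-ses} is exact and $H^2(G_\Sigma(K_\infty),E[p^\infty])=0$, then $\Sel^{\vec s}(E/K_\infty)$ has $\Lambda$-corank equal to $\operatorname{corank} H^1(G_\Sigma(K_\infty),E[p^\infty])-\operatorname{corank}\bigoplus_w J_w(E/K_\infty)=0$, so $X^{\vec s}(E/K_\infty)$ is $\Lambda$-torsion. The main obstacle I anticipate is the careful handling of the middle terms of the CPT sequence in the limit --- in particular verifying that the contribution of $H^1_A(K_n,T^*(1))^\lor$ does not interfere with the corank count, and confirming that the local corank computations at supersingular primes genuinely descend from the auxiliary field $K'$ of (S3.ii) via \cref{cor.rank} and \eqref{eq:ranks}; everything else is a bookkeeping exercise with Euler characteristics.
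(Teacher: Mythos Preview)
Your overall strategy coincides with the paper's: take the direct limit of the Cassels--Poitou--Tate sequence, compute $\Lambda$-coranks, and deduce the equivalence. The converse direction is fine. The forward direction, however, has two genuine gaps.

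First, your claim that $J_w(E/K_\infty)$ is $\Lambda$-cotorsion at good ordinary $p$-adic places is false: in fact it has $\Lambda$-corank $[K^\Gamma_{\infty,w}:\Q_p]$ there (see \cite[\S4]{HachimoriMatsuno}, or compute directly that $E(K_{\infty,w})\otimes\Q_p/\Z_p$ has corank $[K^\Gamma_{\infty,w}:\Q_p]$ inside $H^1(K_{\infty,w},E[p^\infty])$ of corank $2[K^\Gamma_{\infty,w}:\Q_p]$). You need this positive contribution for the total corank of $\bigoplus_w J_w$ to equal the right-hand side of Greenberg's Euler characteristic formula \cite[Proposition~3]{Greenberg1989-padic}; your next sentence asserts the correct total $\sum_{v\mid p}[K_v:\Q_p]$, but it does not follow from the corank assignments you listed.

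Second, and more importantly, a corank count alone cannot force either $\mathcal C=(\varprojlim_n H^1_A(K_n,T^*(1)))^\lor$ or $H^2(G_\Sigma(K_\infty),E[p^\infty])$ to \emph{vanish}; it only forces their $\Lambda$-coranks to be zero. Two further inputs are required, and the paper uses both: (i) the module $\varprojlim_n H^1_A(K_n,T^*(1))$ sits inside the $\Lambda$-torsion-free module $\varprojlim_n H^1(G_\Sigma(K_n),T)$ (cf.\ \cite[Lemma~2.6]{LeiLim}), hence vanishes once its rank is zero; and (ii) one invokes \cite[Proposition~4]{Greenberg1989-padic} (weak Leopoldt) to pass from ``$H^2$ has corank zero'' to ``$H^2=0$''. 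Your justification that $H^2$ is ``the Pontryagin dual of $\varprojlim H^0(K_n,T^*(1))$-type data, hence finitely generated over $\Z_p$'' is not correct, and in any case cotorsionness would not give vanishing. Note also that in the limit sequence the cokernel of the global-to-local map is $\ker(\mathcal C\to H^2)\subseteq\mathcal C$, not a submodule of $H^2$ as you wrote.
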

\begin{proof}
Taking direct limits in \eqref{eq:perrin-riou} and using \eqref{eq:h2trivial}, we obtain an exact sequence:
    \begin{align*}
    0&\to \Sel^{\vec s}(E/K_\infty)\to H^1(G_\Sigma(K_\infty) , E[p^\infty])\to \bigoplus_{w\in\Sigma(K_\infty))} J_w(E/K_\infty) \to \\
    &\to  \Big(\varprojlim_n H^1_A(K_n, T^*(1))\Big)^\lor \to H^2(G_\Sigma(K_\infty), E[p^\infty]) \to  0.
\end{align*}
The $\Lambda$-module $\varprojlim_n H^1(G_\Sigma(K_n), T)$ is torsion-free by \cite[Lemma~2.6]{LeiLim}. As the module $\varprojlim_n H^1_A(K_n, T^*(1))$ is a submodule of $\varprojlim_n H^1(G_\Sigma(K_n), T)$ by definition, it does not contain any non-zero $\Lambda$-torsion submodule. Thus, $\varprojlim_n H^1_A(K_n, T^*(1))=0$ if {and} only if $\Lambda\textup{-corank}\left(\varprojlim_n H^1_A(K_n, T^*(1))\right)^\lor=0$.

Fix a lift of $\Gamma$ to $\Gal(K_\infty/F)$, and let $K_\infty^\Gamma$ denote its fixed field (a subfield of $K$).Note that this lift can be chosen to be $\Gamma_K$. By abuse of notation we will write $\Gamma$ for $\Gamma_K$ in the following. By \cite[Proposition~3]{Greenberg1989-padic} we have
\[\Lambda\textup{-corank}\left(H^1(G_\Sigma(K_\infty),E[p^\infty])\right)-\Lambda\textup{-corank}\left(H^2(G_\Sigma(K_\infty),E[p^\infty]\right)=[K^\Gamma_\infty: \Q] \]
If $w$ is a place coprime to $p$, \cite[Proposition 2]{Greenberg1989-padic} implies that $H^1(K_{\infty,w},E[p^\infty])$ is of $\Lambda$-corank zero. If $w\mid p$ and $p$ is an ordinary prime \cite[section 4]{HachimoriMatsuno} implies that $J_w(E/K_\infty)$ is of $\Lambda$-corank $[K^\Gamma_{\infty,w}:\Q_p]$. For $w\mid p$ a supersingular prime, \cref{cor.rank} implies that $J_w(E/K_\infty)$ has $\Lambda$-corank $[K^\Gamma_{\infty,w}:\Q_p]$. Thus we obtain
\[\Lambda\textup{-corank}\left(H^1(G_\Sigma(K_\infty),E[p^\infty)\right)-\Lambda\textup{-corank}\left(H^2(G_\Sigma(K_\infty),E[p^\infty]\right)=\Lambda\textup{-corank}\left(\bigoplus_{w\in \Sigma(K_\infty)}J_w(E/K_\infty)\right).\]

 It now follows that $\Sel^{\vec s}(E/K_\infty)$ is $\Lambda$-cotorsion if an only if $\Lambda\textup{-corank}(H^2(G_\Sigma(K_\infty),E))=\Lambda\textup{-corank}((\varprojlim_n H^1_A(K_n, T^*(1)))^\lor)=0$. By \cite[proposition 4]{Greenberg1989-padic} this is the case if and only if both modules vanish which is in turn equivalent to the vanishing of $H^2(G_\Sigma(K_\infty),E[p^\infty])$ and the validity of the short exact sequence in the statement of the proposition. 
\end{proof}

\begin{prop} \label{Hi-H-Sel-finite}
    If $X^{\vec s}(E/K_\infty)$ is $\Lambda$-torsion, then $H^i(H, \Sel^{\vec s}(E/K_\infty))$ is finite for all $H\le G$ and $i\ge1$.
\end{prop}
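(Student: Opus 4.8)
The plan is to use the short exact sequence \eqref{eq:Sel-H1-J-ses} provided by \cref{Sel-H1-J-exact-sequence}, together with the vanishing $H^2(G_\Sigma(K_\infty),E[p^\infty])=0$, and to take $H$-cohomology of this sequence. This reduces the claim to showing that $H^i(H,H^1(G_\Sigma(K_\infty),E[p^\infty]))$ and $H^i(H,\bigoplus_w J_w(E/K_\infty))$ are finite for all $i\ge1$. Throughout, it suffices to treat $H$ cyclic of prime order: since $G$ is solvable (being an extension of a subgroup of a semidirect product of cyclic groups) and every subfield is still at most weakly ramified with trivial intersection with $\Q_{p,\infty}$ at the relevant places, one dévisses along a composition series using the Hochschild--Serre spectral sequence, finiteness being preserved under extensions.

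First I would handle the global term $H^1(G_\Sigma(K_\infty),E[p^\infty])$. By \cref{Sel-H1-J-exact-sequence}, $H^2(G_\Sigma(K_\infty),E[p^\infty])=0$, and one also has $\operatorname{cd}_p G_\Sigma(K_\infty)\le 2$, so $H^j(G_\Sigma(K_\infty),E[p^\infty])=0$ for $j\ge2$; hence $E[p^\infty]$ has a two-term structure that lets one compare $H$-cohomology of $H^1(G_\Sigma(K_\infty),E[p^\infty])$ with cohomology of $G_\Sigma(L_\infty)$ via the Hochschild--Serre spectral sequence for $1\to G_\Sigma(K_\infty)\to G_\Sigma(L_\infty)\to H\to1$, where $L_\infty=K_\infty^H$. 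The differentials and edge maps involve $H^i(H,E[p^\infty]^{G_\Sigma(K_\infty)})=H^i(H,E(L_\infty)[p^\infty])$, which vanishes for $i\ge1$ because $E(K_\infty)[p^\infty]=0$ at the supersingular places by \cref{lem:torsion-freeness} — more precisely one uses that $E[p^\infty]^{G_\Sigma(K_\infty)}$ is finite (indeed $0$) so its $H$-cohomology is finite. This bounds the global term.

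Next, the local terms: for $w\in\Sigma_1(K_\infty)$, the groups $H^1(K_{\infty,w},E[p^\infty])$ are cofinitely generated over $\Z_p$ with $\Lambda$-corank zero, so they are cofinitely generated over $\Z_p$, and the cohomology of a finite group acting on such a module is finite. For $w\in\Sigma_p^\ord(K_\infty)$, one has $J_w\simeq H^1(K_{\infty,w},E)[p^\infty]$ by \eqref{eq:J-ordinary}, and the relevant finiteness of $H^i(H,\cdot)$ is exactly the local input used in \cite{HachimoriMatsuno} and \cite{mengfai} in the ordinary case; it follows from cofinite generation over $\Z_p$ combined with the structure of $E(K_{\infty,w})\otimes\Q_p/\Z_p$. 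For $w\in\Sigma_p^\ss(K_\infty)$, the crucial point is \cref{MFL3.9}: it gives $H^i(H,J_w(E/K_\infty))=0$ for $i\ge1$ outright (after passing to the local extension $K_{\infty,w}/(K_{\infty,w})^H$, which is at most weakly ramified and meets $\Q_{p,\infty}$ trivially by (S3.ii), with ramification index not divisible by $p^2-1$ by (S3.i)). Summing over the finitely many places in $\Sigma(K_\infty)$ gives finiteness of $H^i(H,\bigoplus_w J_w(E/K_\infty))$.

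Finally, assembling: the long exact $H$-cohomology sequence attached to \eqref{eq:Sel-H1-J-ses} reads
\[
\cdots\to H^{i-1}\!\Big(H,\bigoplus_w J_w\Big)\to H^i(H,\Sel^{\vec s}(E/K_\infty))\to H^i(H,H^1(G_\Sigma(K_\infty),E[p^\infty]))\to\cdots,
\]
and since the outer terms are finite for all $i\ge1$ (using also the $i=1$ case where one needs finiteness of $H^0(H,\bigoplus_w J_w)/\operatorname{image}$, which is controlled by the already-established cokernel statements, e.g. via \cref{MFL4.1} at the supersingular places), the middle term $H^i(H,\Sel^{\vec s}(E/K_\infty))$ is finite for all $i\ge1$. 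The main obstacle is the bookkeeping in the $i=1$ spot and the dévissage to cyclic groups of prime order — everything genuinely new is packaged in \cref{MFL3.9}, so the work here is organisational rather than substantive.
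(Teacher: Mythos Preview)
Your proposal is correct and follows essentially the same route as the paper: take $H$-cohomology of the short exact sequence \eqref{eq:Sel-H1-J-ses}, control the global term via Hochschild--Serre (using $H^0(G_\Sigma(K_\infty),E[p^\infty])=0$ from \cref{lem:torsion-freeness} and $H^2(G_\Sigma(K_\infty),E[p^\infty])=0$ from \cref{Sel-H1-J-exact-sequence}), and control the local terms via \cref{MFL3.9} at supersingular places and \cite{HachimoriMatsuno} elsewhere; the $i=1$ step is handled by comparing with the top row over $L_\infty$ exactly as you indicate. Two small remarks: the paper actually obtains \emph{vanishing} (not just finiteness) of $H^i(H,H^1(G_\Sigma(K_\infty),E[p^\infty]))$ for $i\ge1$, and for this the spectral sequence argument also needs $H^2(G_\Sigma(L_\infty),E[p^\infty])=0$, which one gets by first descending the torsion property to $L_\infty$ via \cref{MFL4.1}; and your d\'evissage to cyclic subgroups of prime order is unnecessary, since \cref{MFL3.9} and the cited results from \cite{HachimoriMatsuno} are already stated for arbitrary subgroups $H\le G$.
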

\begin{proof}
    We follow \cite[Proposition~4.4]{mengfai}. As before, let $H_L\le G$ be a finite subgroup with fixed field $L_\infty$. Consider the commutative diagram in the proof of \cref{MFL4.1}: the first row becomes a short exact sequence due to \cref{Sel-H1-J-exact-sequence}, and the second row can be extended to a long exact sequence:
    \[
    \begin{tikzcd}[font=\small, column sep=1em, row sep=1em]
    0 \arrow[r] & \Sel^{\vec s}(E/L_\infty) \arrow[r] \arrow[d]  & {H^1(G_\Sigma(L_\infty),E[p^\infty])} \arrow[r] \arrow[d, "\res"]  & \displaystyle\bigoplus_{u\in\Sigma(L_\infty)}J_u(E/L_\infty) \arrow[r] \arrow[d, "\ell=\oplus\ell_u"] & 0 \\
    0 \arrow[r] & \Sel^{\vec s}(E/K_\infty) \arrow[r] & {H^1(G_\Sigma(K_\infty),E[p^\infty])^{H_L}} \arrow[r]  \ar[draw=none]{d}[name=X, anchor=center]{}  & \left(\displaystyle\bigoplus_{w\in\Sigma(K_\infty)}J_w(E/K_\infty)\right)^{H_L}  \ar[rounded corners,
            to path={ -- ([xshift=4ex]\tikztostart.east)
                      |- (X.center) \tikztonodes
                      -| ([xshift=-4ex]\tikztotarget.west)
                      -- (\tikztotarget)}]{dll}[at end]{}  \\
    & {H^1\left(H_L,\Sel^{\vec s}(E/K_\infty)\right)} \arrow[r] & {H^1\left(H_L,H^1(G_\Sigma(K_\infty),E[p^\infty])\right)} \arrow[r] & H^1\left(H_L,\displaystyle\bigoplus_{w\in\Sigma(K_\infty)}J_w(E/K_\infty)\right) \ar[r]  &  \cdots
    \end{tikzcd}
    \]

    We compute the middle terms in the long exact sequence. For $i=0$, we have seen in the proof of \cref{MFL4.1} that $\res$ is an isomorphism. For $i\ge1$, we use the degeneration of the Hochschild--Serre spectral sequence. For this, we make the following observations. Firstly, \cref{Sel-H1-J-exact-sequence} implies $H^2(G_\Sigma(K_\infty),E[p^\infty])=0$.
    Furthermore, it follows from \cref{MFL4.1} that $X^{\vec s}(E/L_\infty)$ is torsion whenever $X^{\vec s}(E/K_\infty)$ is, and so \cref{Sel-H1-J-exact-sequence} applies with $L_\infty$ in place of $K_\infty$ as well, showing that $H^2(G_\Sigma(L_\infty),E[p^\infty])=0$.
    We have $H^0(G_\Sigma(K_\infty),E[p^\infty])=E(K_{\infty,\Sigma})[p^\infty]=0$ and $H^0(G_\Sigma(L_\infty),E[p^\infty])=0$: this is because locally at supersingular primes $w\in\Sigma^\ss_p(K_\infty)$ -- which exist by our assumption (S2) -- we have $E(K_{\infty,w})[p^\infty]=0$ due to \cref{lem:torsion-freeness}. The vanishing of these cohomology groups together shows degeneration, and we conclude that
    \begin{equation} \label{eq:Hi-H1-GSigma-Epinfty}
        H^i(H_L,H^1(G_\Sigma(K_\infty), E[p^\infty]))=\begin{cases}
        H^1(G_\Sigma(L_\infty), E[p^\infty]) & i=0; \\
        0 & i\ge1.
    \end{cases}
    \end{equation}

    We turn to the rightmost terms in the long exact sequence. At supersingular primes $w$, \cref{MFL3.9} shows that $\ell_u$ is surjective and $H^i(H_L, J_w(E/K_\infty))=0$ for $i\ge1$. For ordinary primes and at places away from $p$, we have that $\coker\ell_u$ and $H^i(H_L, J_w(E/K_\infty))$ are all finite by \cite[\S4]{HachimoriMatsuno}.

    The assertion about finiteness of the leftmost terms in the sequence follows.
\end{proof}
\subsection{Finite submodules of \texorpdfstring{$X^{\vec s}(E/K_\infty)$}{X\^s(E/K\_infty)}} \label{sec:finite-submodules}
For the remainder of this subsection, we assume that $\Sel^{\vec s}(E/K_\infty)$ is $\Lambda$-cotorsion.
 \begin{def1}
     Let $\Sigma_0=\Sigma_1\cup\{v\mid p, E\textup{ is ordinary at } v\}$ and let $M$ be a $G_{K}$-module. We define 
     \[\Selz(M/K_n)=\ker\left(H^1(G_\Sigma(K_n),M[p^\infty])\to \bigoplus_{v\in \Sigma_0(K_n)}H^1(K_{n,v},M)[p^\infty]\right)\]
 \end{def1}
 \begin{lemma}
 \label{corank-zero-selmer}
     Assume that $\Sel^{\vec s}(E/K_\infty)$ is $\Lambda$-cotorsion. Then we have 
     \[\Lambda\textup{-corank}(\Selz(E/K_\infty))=\Lambda\textup{-corank}\left(\bigoplus_{v\in \Sigma(K_\infty)\setminus \Sigma_0(K_\infty)}J_v(E/K_\infty)\right)\]
 \end{lemma}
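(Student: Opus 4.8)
The plan is to exhibit a short exact sequence relating $\Sel^{\vec s}(E/K_\infty)$ and $\Selz(E/K_\infty)$ whose third term is $\bigoplus_{v\in\Sigma(K_\infty)\setminus\Sigma_0(K_\infty)}J_v(E/K_\infty)$, and then to compare $\Lambda$-coranks. First I would record two bookkeeping facts. Since $\Sigma\supseteq\Sigma_p$, we have $\Sigma_0=\Sigma\setminus\Sigma_p^\ss$, so that $\Sigma(K_\infty)\setminus\Sigma_0(K_\infty)=\Sigma_p^\ss(K_\infty)$. Moreover, for every $v\in\Sigma_0(K_\infty)$ the local term $H^1(K_{\infty,v},E)[p^\infty]$ appearing in the definition of $\Selz$ coincides with $J_v(E/K_\infty)$: the natural map $H^1(K_{\infty,v},E[p^\infty])\to H^1(K_{\infty,v},E)[p^\infty]$ has kernel $E(K_{\infty,v})\otimes\Q_p/\Z_p$, which equals $B_v$ for ordinary $p$-adic $v$ (this is \eqref{eq:J-ordinary}) and equals $0=B_v$ for $v\in\Sigma_1(K_\infty)$ (at a finite place $v\nmid p$ each $E(K_{n,v})$ is an extension of a finite group by a pro-$\ell$ group with $\ell\ne p$, so $E(K_{n,v})\otimes\Q_p/\Z_p=0$ and one passes to the direct limit; at archimedean places all groups in sight vanish as $p$ is odd). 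Hence, writing $\rho\colon H^1(G_\Sigma(K_\infty),E[p^\infty])\to\bigoplus_{v\in\Sigma(K_\infty)}J_v(E/K_\infty)$ for the localization map and $\rho_0$, $\rho_{\mathrm{ss}}$ for its components indexed by $\Sigma_0(K_\infty)$ and $\Sigma_p^\ss(K_\infty)$ respectively, the definitions become $\Sel^{\vec s}(E/K_\infty)=\ker(\rho)$ and $\Selz(E/K_\infty)=\ker(\rho_0)$.

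Next I would invoke \cref{Sel-H1-J-exact-sequence}: the hypothesis that $\Sel^{\vec s}(E/K_\infty)$ is $\Lambda$-cotorsion, equivalently that $X^{\vec s}(E/K_\infty)$ is $\Lambda$-torsion, makes \eqref{eq:Sel-H1-J-ses} exact, so $\rho$ is surjective. Restricting $\rho_{\mathrm{ss}}$ to $\Selz(E/K_\infty)=\ker(\rho_0)$, its kernel is $\ker(\rho_0)\cap\ker(\rho_{\mathrm{ss}})=\ker(\rho)=\Sel^{\vec s}(E/K_\infty)$, and it is surjective: given $(x_v)_v\in\bigoplus_{v\in\Sigma_p^\ss(K_\infty)}J_v(E/K_\infty)$, surjectivity of $\rho$ provides a class mapping to $0$ in every $\Sigma_0$-component and to $x_v$ in every $\Sigma_p^\ss$-component, and such a class lies in $\ker(\rho_0)=\Selz(E/K_\infty)$. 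This yields a short exact sequence
\[0\to\Sel^{\vec s}(E/K_\infty)\to\Selz(E/K_\infty)\xrightarrow{\;\rho_{\mathrm{ss}}\;}\bigoplus_{v\in\Sigma_p^\ss(K_\infty)}J_v(E/K_\infty)\to 0.\]

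Finally I would dualize this to a short exact sequence of finitely generated $\Lambda$-modules and use additivity of $\Lambda$-rank together with $\Lambda\textup{-corank}(\Sel^{\vec s}(E/K_\infty))=0$ to conclude
\[\Lambda\textup{-corank}(\Selz(E/K_\infty))=\Lambda\textup{-corank}\Big(\bigoplus_{v\in\Sigma_p^\ss(K_\infty)}J_v(E/K_\infty)\Big)=\Lambda\textup{-corank}\Big(\bigoplus_{v\in\Sigma(K_\infty)\setminus\Sigma_0(K_\infty)}J_v(E/K_\infty)\Big).\]
(If desired, \cref{cor.rank} makes the right-hand corank explicit as $\sum_{v\in\Sigma_p^\ss(K_\infty)}[K^\Gamma_{\infty,v}:\Q_p]$.) The argument is essentially formal once \cref{Sel-H1-J-exact-sequence} is in hand; the only step that requires genuine care is the matching of local conditions in the first paragraph — that is, verifying that the conditions distinguishing $\Sel^{\vec s}$ from $\Selz$ lie precisely at the supersingular $p$-adic places — so I expect that, and not the homological algebra, to be the main (modest) obstacle.
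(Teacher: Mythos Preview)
Your proof is correct, and the local-condition matching in your first paragraph is handled carefully. Your route, however, differs from the paper's. You invoke \cref{Sel-H1-J-exact-sequence} to get surjectivity of the full localization map $\rho$, and from this you deduce directly that
\[0\to\Sel^{\vec s}(E/K_\infty)\to\Selz(E/K_\infty)\to\bigoplus_{v\in\Sigma_p^\ss(K_\infty)}J_v(E/K_\infty)\to 0\]
is short exact, whence the corank equality follows by additivity. The paper instead writes down only the left-exact ``tautological'' sequence (without asserting surjectivity on the right), uses it for the upper bound, and then obtains the matching lower bound by a separate corank count: from \cref{Sel-H1-J-exact-sequence} one has $H^2(G_\Sigma(K_\infty),E[p^\infty])=0$, so Greenberg's Euler-characteristic formula gives $\Lambda\text{-corank}\,H^1(G_\Sigma(K_\infty),E[p^\infty])=[K_\infty^\Gamma:\Q]$, and subtracting the corank of the $\Sigma_0$-local terms yields exactly the corank of the $\Sigma_p^\ss$-local terms. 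Your argument is the more direct of the two; the paper's has the side benefit of making the numerical value $[K_\infty^\Gamma:\Q]$ explicit along the way, which is reused in the subsequent choice of the twist $t$. Both arguments ultimately rest on \cref{Sel-H1-J-exact-sequence}: you use the exactness of \eqref{eq:Sel-H1-J-ses}, the paper uses the vanishing of $H^2$.
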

\begin{proof}
    We have a tautological exact sequence
    \[0\to \Sel^{\vec s}(E/K_\infty)\to \Selz(E/K_\infty)\to\bigoplus_{v\in \Sigma(K_\infty)\setminus \Sigma_0(K_\infty)}J_v(E/K_\infty). \]
    As $\Sel^{\vec s}(E/K_\infty)$ is $\Lambda$-cotorsion by assumption, we obtain
    \begin{equation}
        \label{upper-bound-selz}
         \Lambda\textup{-corank}(\Selz(E/K_\infty))\le \Lambda\textup{-corank}\left(\bigoplus_{v\in \Sigma(K_\infty)\setminus \Sigma_0(K_\infty)}J_v(E/K_\infty)\right).
    \end{equation}
    By \cref{Sel-H1-J-exact-sequence} we know that $H^2(G_\Sigma(K_\infty),E[p^\infty])=0$. Thus, \cite[Proposition 3]{Greenberg1989-padic} implies
    \[\Lambda\textup{-corank}(H^1(G_\Sigma(K_\infty),E[p^\infty])=[K_\infty^\Gamma:\Q]. \]
    Therefore, 
    \begin{multline}
        \label{lower-bound-selz}
        \Lambda\textup{-corank}(\Selz(E/K_\infty))\ge [K_\infty^\Gamma:\Q]-\Lambda\textup{-corank}\left(\bigoplus_{v\in \Sigma_0(K_\infty)}J_v(E/K_\infty)\right)\\=\Lambda\textup{-corank}\left(\bigoplus_{v\in \Sigma(K_\infty)\setminus \Sigma_0(K_\infty)}J_v(E/K_\infty)\right).
    \end{multline}
    Combining \eqref{upper-bound-selz} with \eqref{lower-bound-selz} gives the desired claim. 
\end{proof}
Fix an isomorphism $\kappa:\Gamma\simeq 1+p\Z_p$. For any $\Gamma$-module $M$, let us write $M(t)\colonequals M\otimes\kappa^t$ for the $\Gamma$-module with $\kappa^t$-twisted $\Gamma$-action. We have $\Sel^{\vec s}(E(t)/K_\infty)=\Sel^{\vec s}(E/K_\infty)(t)$; see \cite[89]{Greenberg1999} and \cite[\S6.2]{Rubin}.
Let $d\colonequals\Lambda\textup{-corank}\left(\bigoplus_{v\in \Sigma(K_\infty)\setminus \Sigma_0(K_\infty)}J_v(E/K_\infty)\right)$. We now choose $t$ such that the following conditions are satisfied for all $n$.
\begin{align}
\label{eq1}
    \Z_p\textup{-corank}\left((\Selz(E/K_\infty)\otimes \kappa^t)^{\Gamma_n}\right) &= dp^n\\
    \Z_p\textup{-corank}\left((\Sel^{\vec s}(E/K_\infty)\otimes \kappa^t)^{\Gamma_n}\right) &= 0\\
    \Z_p\textup{-corank}\left(\left(\bigoplus_{v\in \Sigma_0(K_\infty)}J_v(E/K_\infty)\otimes \kappa^t\right)^{\Gamma_n}\right) &= ([K_\infty^\Gamma:\Q]-d)p^n\\\label{rank-h1}
    \Z_p\textup{-corank}\left(H^1(G_\Sigma(K_\infty),E[p^\infty])\otimes \kappa^t)^{\Gamma_n}\right)&=[K_\infty^\Gamma:\Q]p^n.
\end{align}
{The compact $\Lambda$-module $(\Sel^{\Sigma_0}(E/K_\infty))^\vee$ is pseudo-isomorphic to $\Lambda^d\oplus W$, where $W$ is a torsion $\Lambda$-module. For all but finitely many $t$ the quotients $(W(\kappa^t))_{\Gamma_n}$ are finite for all $n$. Thus all but finitely many choices of $t$ satisfy \eqref{eq1}. Similarly, the other conditions are satisfied for all but finitely many $t$.  }

\begin{lemma} \label{lem:finiite-cokernel-selz}We have an exact sequence
\[0\to \Selz(E(t)/K_n)\to H^1(G_\Sigma(K_n),E(t)[p^\infty])\xrightarrow{\varphi_n} \bigoplus_{v\in \Sigma_0(K_n)} J_v(E(t)/K_{n,v})\]
where the rightmost map has finite cokernel.
\end{lemma}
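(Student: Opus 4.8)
The plan is to reduce the assertion to a count of $\Z_p$-coranks. Exactness of the displayed sequence at its first two terms is a tautology: $\Selz(E(t)/K_n)$ is \emph{by definition} the kernel of $\varphi_n$, and the first map is the inclusion. So the only real content is the finiteness of $C_n\colonequals\coker(\varphi_n)$. First I would record that the target $\bigoplus_{v\in\Sigma_0(K_n)}J_v(E(t)/K_{n,v})$ is cofinitely generated over $\Z_p$: the summands at non-$p$-adic places are finite (their residue characteristic is $\neq p$, so one argues via local Euler characteristics), while those at ordinary $p$-adic places are cofinitely generated of $\Z_p$-corank $[K_{n,v}:\Q_p]$. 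Being a quotient of this module, $C_n$ is then cofinitely generated over $\Z_p$, so it suffices to show that $\Z_p\textup{-corank}(C_n)=0$.

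The statement expands to the four-term exact sequence $0\to\Selz(E(t)/K_n)\to H^1(G_\Sigma(K_n),E(t)[p^\infty])\xrightarrow{\varphi_n}\bigoplus_{v\in\Sigma_0(K_n)}J_v(E(t)/K_{n,v})\to C_n\to0$, and I would compute the $\Z_p$-coranks of its three left-hand terms, exploiting that $t$ was chosen so that the $\Gamma_n$-invariants of the associated $K_\infty$-modules have exactly the coranks recorded in the displays preceding the lemma (and $\Selz(E(t)/K_\infty)=\Selz(E/K_\infty)(t)$, etc.). Since $E(K_{\infty,\Sigma})[p^\infty]=0$ by \cref{lem:torsion-freeness}, inflation--restriction identifies $H^1(G_\Sigma(K_n),E(t)[p^\infty])$ with $H^1(G_\Sigma(K_\infty),E(t)[p^\infty])^{\Gamma_n}$, of $\Z_p$-corank $[K_\infty^\Gamma:\Q]p^n$ by \eqref{rank-h1}; the same vanishing makes the natural map $\Selz(E(t)/K_n)\to\Selz(E(t)/K_\infty)^{\Gamma_n}$ injective (as in the proof of \cref{MFL4.1}), whence $\Z_p\textup{-corank}(\Selz(E(t)/K_n))\le\Z_p\textup{-corank}(\Selz(E(t)/K_\infty)^{\Gamma_n})=dp^n$, the right-hand side by the choice of $t$. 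Finally, $\bigoplus_{v\in\Sigma_0(K_n)}J_v(E(t)/K_{n,v})$ has $\Z_p$-corank $\sum_{v\mid p\textup{ ord.}}[K_{n,v}:\Q_p]$, and this equals $([K_\infty^\Gamma:\Q]-d)p^n$ --- either directly, via local Euler characteristics and the elementary bookkeeping $\sum_{v\mid w}[K_{n,v}:\Q_p]=p^n[K_w:\Q_p]$, or by the local descent computations of \cite[\S4]{HachimoriMatsuno} comparing with the $\Gamma_n$-invariants of the corresponding $K_\infty$-module.

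Additivity of $\Z_p$-corank along the four-term exact sequence then gives $\Z_p\textup{-corank}(C_n)=\Z_p\textup{-corank}(\Selz(E(t)/K_n))-[K_\infty^\Gamma:\Q]p^n+([K_\infty^\Gamma:\Q]-d)p^n\le dp^n-dp^n=0$, so $\Z_p\textup{-corank}(C_n)=0$; being cofinitely generated over $\Z_p$, $C_n$ is therefore finite. The step I expect to be the main obstacle is pinning down these coranks at finite level with no error term: this rests on the injectivity of the descent maps (a formal consequence of the $p$-torsion freeness of \cref{lem:torsion-freeness} via Hochschild--Serre), on the genericity of the twist $t$ (so that the corank identities for the $\Gamma_n$-invariants hold on the nose rather than only up to a bounded quantity), and on the local analysis at the ordinary and the ramified bad primes. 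The supersingular primes play no role here, since they lie outside $\Sigma_0$; in particular \cref{MFL3.9} is not needed for this lemma.
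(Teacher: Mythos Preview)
Your proof is correct and follows essentially the same approach as the paper: both use inflation--restriction (via $E(K_\infty)[p^\infty]=0$) to identify $H^1(G_\Sigma(K_n),E(t)[p^\infty])$ with the $\Gamma_n$-invariants at infinite level, bound $\Z_p\textup{-corank}(\Selz(E(t)/K_n))$ above by $dp^n$ through the resulting embedding into $\Selz(E(t)/K_\infty)^{\Gamma_n}$, and then conclude by the additivity of $\Z_p$-coranks. Your write-up is in fact more explicit than the paper's on the step identifying the corank of the target $\bigoplus_{v\in\Sigma_0(K_n)}J_v$ at finite level with $([K_\infty^\Gamma:\Q]-d)p^n$, which the paper invokes tacitly from the third display condition on $t$.
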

\begin{proof}
    Note that $H^0(G_\Sigma(K_\infty),E(t)[p^\infty])=H^0(G_\Sigma(K_\infty),E[p^\infty])=0$. Thus, the inflation--restriction exact sequence implies that we have an isomorphism
    \[H^1(G_\Sigma(K_n),E(t)[p^\infty])\cong H^1(G_\Sigma(K_\infty),E(t)[p^\infty])^{\Gamma_n}.\]
    It follows that we have a natural embedding $\Selz(E(t)/K_n)\to {\Selz(E(t)/K_\infty)^{\Gamma_n}}$, which in turn implies
    \[\Z_p\textup{-corank}\left(\Selz(E(t)/K_n)\right)\le dp^n.\]
   we obtain
    \begin{align*}
       \Z_p\textup{-corank}(\coker(\varphi_n))\le ([K_\infty^\Gamma:\Q]-d)p^n-[K_\infty^\Gamma:\Q]p^n+dp^n=0.
    \end{align*}
    This is only possible if $\coker(\varphi_n)$ is finite. 
\end{proof}
Consider now the natural maps
\begin{align*}
    \alpha_n\colon H^1(G_\Sigma(K_n),E(t)[p^\infty])&\to \bigoplus_{v\in \Sigma_0(K_n)}H^1(K_{n,v},E(t)[p^\infty])\\
    \beta_n \colon H^1(G_\Sigma(K_n),{T(-t)})&\to \bigoplus_{v\in \Sigma_0(K_n)}H^1(K_{n,v},{T(-t)}),
\end{align*}
where $T$ is the Tate module of $E[p^\infty]$. Note that the images of $\alpha$ and $\beta$ are orthogonal complements of each other by global Poitou--Tate duality. Let $G_{\alpha_n} \subset \prod_{v\in \Sigma_0(K_n)}H^1(K_{n,v},E(t)[p^\infty])$ be the smallest subgroup containing $\textup{Im}(\alpha_n)$ and $\prod_{v\in \Sigma_0(K_n)}E(t)(K_{n,v})\otimes \Q_p/\Z_p$. Let $G_{\beta_n}\subset \prod_{v\in \Sigma_0(K_n)}H^1(K_{n,v},{T(-t)})$ be the orthogonal complement of $G_{\alpha_n}$.
\begin{lemma}
\label{lem:Gbeta1}
    $G_{\beta_n}$ lies in $\textup{Im}(\beta_n)$, and $\beta_n^{-1}(G_{\beta_n})$ is finite.
\end{lemma}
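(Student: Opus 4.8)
The first assertion is purely formal: since $\mathrm{Im}(\alpha_n)\subseteq G_{\alpha_n}$, passing to orthogonal complements under the perfect local Tate pairing reverses the inclusion, so $G_{\beta_n}=G_{\alpha_n}^\perp\subseteq\mathrm{Im}(\alpha_n)^\perp=\mathrm{Im}(\beta_n)$, the last equality being the Poitou--Tate orthogonality recalled just before the statement. For the second assertion, the plan is to observe first that $\beta_n^{-1}(G_{\beta_n})$ is a $\Z_p$-submodule of $H^1(G_\Sigma(K_n),T_{-t})$, which is finitely generated over $\Z_p$; hence it is enough to show that $\beta_n^{-1}(G_{\beta_n})$ has trivial $\Z_p$-rank. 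Using the exact sequence
\[0\to\ker\beta_n\to\beta_n^{-1}(G_{\beta_n})\to G_{\beta_n}\to0,\]
which is exact on the right exactly because of the first assertion, it suffices to prove that both $G_{\beta_n}$ and $\ker\beta_n$ are finite.

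For the finiteness of $G_{\beta_n}$ I would use \cref{lem:finiite-cokernel-selz}: the map $\varphi_n$ there factors as $\alpha_n$ followed by the projection modulo the Kummer images $\prod_v B_v$, and it has finite cokernel, so $G_{\alpha_n}$ has finite index in the ambient module of local cohomology classes; the orthogonal complement of a finite-index subgroup under a perfect pairing of (co)finitely generated $\Z_p$-modules is finite, whence $G_{\beta_n}$ is finite. For the finiteness of $\ker\beta_n$ I would invoke global duality: $\ker\beta_n$ is, up to a finite-index modification coming from the local conditions at the non-$p$-adic and ordinary places, the Tate--Shafarevich-type group for $T_{-t}$, which is Pontryagin-dual to $\ker\bigl(H^2(G_\Sigma(K_n),E(t)[p^\infty])\to\bigoplus_w H^2(K_{n,w},E(t)[p^\infty])\bigr)$; since $H^2(K_{n,w},E(t)[p^\infty])=0$ for every $w$ by \eqref{eq:h2trivial}, this coincides with $H^2(G_\Sigma(K_n),E(t)[p^\infty])$. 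Finally, that group is finite: the standing hypothesis that $\Sel^{\vec s}(E/K_\infty)$ is $\Lambda$-cotorsion gives $H^2(G_\Sigma(K_\infty),E[p^\infty])=0$ by \cref{Sel-H1-J-exact-sequence}, hence also after twisting; feeding this into the Hochschild--Serre spectral sequence for $\Gamma_n$ identifies $H^2(G_\Sigma(K_n),E(t)[p^\infty])$ with the $\Gamma_n$-coinvariants of $H^1(G_\Sigma(K_\infty),E(t)[p^\infty])$, and since the Pontryagin dual of the latter is finitely generated over $\Lambda$, killing it by $\omega_n$ leaves only a finite $\Z_p$-module, so $H^2(G_\Sigma(K_n),E(t)[p^\infty])$ is finite.

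The step I expect to be the main obstacle is the global-duality bookkeeping in the analysis of $\ker\beta_n$: one must match the local conditions cutting out $G_{\beta_n}$ at the ordinary $p$-adic, supersingular, and non-$p$-adic places against the Poitou--Tate sequence, and verify that the corank data actually available---\cref{cor.rank} for the supersingular local terms, the twisting normalisations such as \eqref{rank-h1}, and Greenberg's corank formula used in \cref{Sel-H1-J-exact-sequence}---combine so that the resulting $\Z_p$-rank is zero. Particular care will be needed at the places in $\Sigma_0\setminus\Sigma_p$ where $E$ has split multiplicative reduction or anomalous good reduction, since there the local $H^0$ of $T_{-t}$ (equivalently, the local corank of $J_v(E/K_n)$) need not vanish and must be tracked through the Euler-characteristic computation.
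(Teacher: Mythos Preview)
Your overall route coincides with the paper's: the inclusion $G_{\beta_n}\subseteq\mathrm{Im}(\beta_n)$ via orthogonal complements, the finiteness of $G_{\beta_n}$ from \cref{lem:finiite-cokernel-selz}, and the reduction of the finiteness of $\ker\beta_n$ to that of $H^2(G_\Sigma(K_n),E(t)[p^\infty])$ via Poitou--Tate duality are exactly what the paper does.

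There is, however, a genuine gap in your argument for the finiteness of $H^2(G_\Sigma(K_n),E(t)[p^\infty])$. Your Hochschild--Serre identification of this group with the $\Gamma_n$-coinvariants of $M\colonequals H^1(G_\Sigma(K_\infty),E(t)[p^\infty])$ is correct, but the next step is not: you assert that because $M^\vee$ is finitely generated over $\Lambda$, ``killing it by $\omega_n$ leaves only a finite $\Z_p$-module''. This is false in general. Dualising, $(M_{\Gamma_n})^\vee=(M^\vee)^{\Gamma_n}=(M^\vee)[\omega_n]$, and for a finitely generated $\Lambda$-module $X$ the $\omega_n$-torsion $X[\omega_n]$ is only finitely generated over $\Z_p$, not finite; it is finite precisely when the $\Lambda$-torsion submodule of $X$ has characteristic ideal coprime to $\omega_n$. (For a trivial counterexample, take $X=\Lambda/\omega_n$.) This coprimality is \emph{not} automatic and is exactly where the specific choice of the twist $t$ enters. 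The paper deduces finiteness by invoking the corank condition \eqref{rank-h1} together with Greenberg's Euler-characteristic formula \cite[equation~(29)]{Greenberg1989-padic}; your argument must likewise appeal to the carefully chosen $t$, not merely to finite generation over $\Lambda$.

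Finally, the obstacle you anticipate in your last paragraph---matching local conditions at the various reduction types---is not actually an issue here. The map $\beta_n$ is simply the restriction to $\bigoplus_{v\in\Sigma_0(K_n)}H^1(K_{n,v},T_{-t})$ with no extra local conditions imposed, and Poitou--Tate duality identifies its kernel directly with the kernel of the corresponding global-to-local map on $H^2$, as the paper states.
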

\begin{proof}
    The orthogonal complement of $G_{\alpha_n}$ is contained in the orthogonal complement of $\textup{Im}(\alpha_n)$, which is $\textup{Im}(\beta_n)$. This proves the first claim. By \cref{lem:finiite-cokernel-selz}, $G_{\alpha_n}$ has finite index in the direct product $\prod_{v\in \Sigma_0(K_n)}H^1(K_{n,v},E(t)[p^\infty])$. Thus, $G_{\beta_n}$ is finite. By global Poitou--Tate duality, the kernel of $\beta_n$ is isomorphic to
    \[\ker\left (H^2(G_\Sigma(K_n),E(t)[p^\infty])\to \bigoplus_{v\in \Sigma_0(K_n)}H^2(K_{n,v},E(t)[p^\infty])\right).\]
    By \eqref{rank-h1} and \cite[equation (29)]{Greenberg1989-padic}, 
    \begin{equation} \label{eq:H2-finite}
        |H^2(G_\Sigma(K_n),E(t)[p^\infty])|<\infty.
    \end{equation}
    Thus, the kernel of $\beta_n$ is finite and we obtain that $\beta_n^{-1}(G_{\beta_n})$ is finite. 
\end{proof}
\begin{lemma}
\label{lem:Gbeta2}
    $\beta_n^{-1}(G_{\beta_n})=0$.
\end{lemma}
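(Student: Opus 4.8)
The plan is as follows. By \cref{lem:Gbeta1} the group $\beta_n^{-1}(G_{\beta_n})$ is finite, and by construction it is a $\Z_p$-submodule of $H^1(G_\Sigma(K_n),T_{-t})$. Hence it suffices to prove that this global Galois cohomology group is $\Z_p$-torsion-free: a finite submodule of a torsion-free module can only be zero.

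To establish the torsion-freeness, I would feed the tautological short exact sequence $0\to T_{-t}\to V_{-t}\to (V/T)_{-t}\to 0$ of $G_\Sigma(K_n)$-modules into the long exact cohomology sequence. Since $H^1(G_\Sigma(K_n),V_{-t})$ is a $\Q_p$-vector space, hence $\Z_p$-torsion-free, the torsion submodule of $H^1(G_\Sigma(K_n),T_{-t})$ is a quotient of $H^0(G_\Sigma(K_n),(V/T)_{-t})=(E[p^\infty]\otimes\kappa^{-t})^{G_\Sigma(K_n)}$. So the whole statement reduces to showing that this $H^0$ vanishes.

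This vanishing is the only substantial point, and it is where the hypotheses enter. Because $\kappa$ takes values in $1+p\Z_p$, so does $\kappa^t$; in particular $\kappa^t\equiv 1\pmod p$, so the twist is trivial on $p$-torsion and $(E[p]\otimes\kappa^{-t})^{G_\Sigma(K_n)}=E[p]^{G_\Sigma(K_n)}=E(K_n)[p]$. Fixing a supersingular place $w\mid p$ of $K_n$ (which exists by (S2)) and restricting gives $E(K_n)[p]\hookrightarrow E(K_{n,w})[p]$, and the latter is trivial by \cref{lem:torsion-freeness}: indeed $e(K_{n,w}/\Q_p)$ is a power of $p$ times $e_u(K/F')$ for the place $u$ of $F'$ below $w$ — using that $p$ splits completely in $F'$ — which is not divisible by $p^2-1$ by (S3.i). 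Finally, if $(E[p^\infty]\otimes\kappa^{-t})^{G_\Sigma(K_n)}$ contained a nonzero element $x$ of order $p^m$ with $m\ge1$, then $p^{m-1}x$ would be a nonzero $G_\Sigma(K_n)$-invariant element of $E[p]\otimes\kappa^{-t}$, contradicting the previous step; hence this group is trivial.

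Combining these, $H^1(G_\Sigma(K_n),T_{-t})$ is $\Z_p$-torsion-free and therefore $\beta_n^{-1}(G_{\beta_n})=0$. I expect the only friction to come from the bookkeeping with the twist and with the ramification index in the reduction to \cref{lem:torsion-freeness}; the cohomological part is purely formal.
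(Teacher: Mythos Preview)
Your proposal is correct and follows essentially the same approach as the paper: both use \cref{lem:Gbeta1} to reduce to showing that $H^1(G_\Sigma(K_n),T_{-t})$ is $\Z_p$-torsion-free, and both deduce this from the long exact sequence attached to $0\to T_{-t}\to T_{-t}\otimes\Q_p\to E_{-t}[p^\infty]\to 0$ together with the vanishing of $H^0(G_\Sigma(K_n),E_{-t}[p^\infty])$. The only minor difference is in how this last vanishing is established: the paper untwists by passing to $G_\Sigma(K_\infty)$ (where $\kappa^{-t}$ acts trivially) and invokes $H^0(G_\Sigma(K_\infty),E[p^\infty])=0$, whereas you untwist on the $p$-torsion level using $\kappa\equiv 1\pmod p$ and then argue that a nonzero invariant in $E_{-t}[p^\infty]$ would yield one in $E[p]$; both routes end at \cref{lem:torsion-freeness} for a supersingular place.
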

\begin{proof}
    By \cref{lem:Gbeta1} we know that $\beta_n^{-1}(G_{\beta_n})\subset H^1(G_\Sigma(K_n),{T(t)})_{\textup{tors}}$. We repeat a standard argument from \cite{GreenbergVatsal}: Consider the tautological exact sequence
    \[0\to {T(-t)}\to {T(-t)}\otimes \Q_p\to {E(-t)}[p^\infty]\to 0.\]
    Taking $G_\Sigma(K_n)$ cohomology and using that $H^0(G_\Sigma(K_n),{E(-t)}[p^\infty])\subset H^0(G_{\Sigma},E[p^\infty])=0$, we see that $H^1(G_\Sigma(K_n),{T(-t)})$ embeds into $H^1(G_\Sigma(K_n),{T(-t)}\otimes \Q_p)$, which is torsion-free. Therefore, $\beta_n^{-1}(G_{\beta_n})$ has to be trivial. 
\end{proof}
\begin{cor}
\label{cor:phi_n-surjective}
    The map $\varphi_n$ {defined in \cref{lem:finiite-cokernel-selz}} is surjective.
\end{cor}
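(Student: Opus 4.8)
The plan is to read off the surjectivity of $\varphi_n$ from the triviality of the local subgroup $G_{\beta_n}$. The first step is to combine \cref{lem:Gbeta1,lem:Gbeta2}: the former gives $G_{\beta_n}\subseteq \textup{Im}(\beta_n)$, while the latter gives $\beta_n^{-1}(G_{\beta_n})=0$. Since $\beta_n$ carries $\beta_n^{-1}(G_{\beta_n})$ onto $G_{\beta_n}\cap\textup{Im}(\beta_n)=G_{\beta_n}$, these two facts together force $G_{\beta_n}=0$.

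The second step is to pass through Poitou--Tate duality. Recall that $G_{\beta_n}$ is by definition the orthogonal complement of $G_{\alpha_n}$ under the perfect local Tate pairing between the discrete group $\bigoplus_{v\in\Sigma_0(K_n)}H^1(K_{n,v},E(t)[p^\infty])$ and its Pontryagin dual $\bigoplus_{v\in\Sigma_0(K_n)}H^1(K_{n,v},T_{-t})$. Since the orthogonal complement of a subgroup $A$ of a discrete module $M$ is canonically $(M/A)^\vee$, the vanishing of $G_{\beta_n}$ is equivalent to
\[G_{\alpha_n}=\bigoplus_{v\in\Sigma_0(K_n)}H^1(K_{n,v},E(t)[p^\infty]).\]
By the definition of $G_{\alpha_n}$ as the smallest subgroup containing $\textup{Im}(\alpha_n)$ together with $\prod_{v\in\Sigma_0(K_n)}\bigl(E(t)(K_{n,v})\otimes\Q_p/\Z_p\bigr)$, this reads
\[\textup{Im}(\alpha_n)+\prod_{v\in\Sigma_0(K_n)}\bigl(E(t)(K_{n,v})\otimes\Q_p/\Z_p\bigr)=\bigoplus_{v\in\Sigma_0(K_n)}H^1(K_{n,v},E(t)[p^\infty]).\]

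For the last step, I would note that the map $\varphi_n$ of \cref{lem:finiite-cokernel-selz} is the composite of $\alpha_n$ with the natural quotient map $q\colon \bigoplus_{v\in\Sigma_0(K_n)}H^1(K_{n,v},E(t)[p^\infty])\to\bigoplus_{v\in\Sigma_0(K_n)}J_v(E(t)/K_{n,v})$, whose kernel is precisely $\prod_{v\in\Sigma_0(K_n)}\bigl(E(t)(K_{n,v})\otimes\Q_p/\Z_p\bigr)$ (the summand at a place away from $p$ being $0$). Applying $q$ to the displayed equality and using $q(\ker q)=0$ yields $\textup{Im}(\varphi_n)=q(\textup{Im}(\alpha_n))=\bigoplus_{v\in\Sigma_0(K_n)}J_v(E(t)/K_{n,v})$, which is the claim. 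The only slightly delicate point in this argument is the Poitou--Tate bookkeeping of the second step — translating $G_{\beta_n}=0$ into the statement that $G_{\alpha_n}$ exhausts the local cohomology, and matching $\ker q$ with the subgroup adjoined in the definition of $G_{\alpha_n}$; once these identifications are in place, everything else is formal.
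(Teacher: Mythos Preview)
Your proof is correct and follows essentially the same route as the paper: combine \cref{lem:Gbeta1,lem:Gbeta2} to conclude $G_{\beta_n}=0$, pass by duality to $G_{\alpha_n}=\prod_{v\in\Sigma_0(K_n)}H^1(K_{n,v},E(t)[p^\infty])$, and read off surjectivity of $\varphi_n$ from the definition of $G_{\alpha_n}$. The only difference is that you spell out explicitly the factoring $\varphi_n=q\circ\alpha_n$ and the identification of $\ker q$, whereas the paper simply says ``by definition''.
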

\begin{proof}
    By \cref{lem:Gbeta2} we know that $\beta_n^{-1}(G_{\beta_n})$ is trivial. By \cref{lem:Gbeta1} $G_{\beta_n}\subset \textup{Im}(\beta_n)$. Thus, $G_{\beta_n}$ has to be trivial, hence its complement is $G_{\alpha_n}=\prod_{v\in \Sigma_0(K_n)}H^1(K_{n,v},E(t)[p^\infty])$, which in turn implies that $\varphi_n$ has to be surjective by definition.
\end{proof}
\begin{prop} \label{cokernel-of-psi-cofree}
    Assume that $\Sel^{\vec s}(E/K_\infty)$ is $\Lambda$-cotorsion.
    Let $\psi_n^{\vec s}\colon H^1(G_\Sigma(K_\infty),E(t)[p^\infty])^{\Gamma_n}\to \left(\prod_{v\in \Sigma(K_\infty)}J_v(E(t)/K_\infty)\right)^{\Gamma_n}$. Then the cokernel of $\psi_n^{\vec s}$ is $\Z_p$-cofree for all $n$ large enough.
\end{prop}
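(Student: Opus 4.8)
The plan is to reduce the statement to the surjectivity of the finite-level map $\varphi_n$ established in \cref{cor:phi_n-surjective}, after splitting the local term $\bigoplus_w J_w$ along the supersingular places of $\Sigma$: the supersingular part of the target will be $\Z_p$-cofree, and the rest will be hit. To set this up, observe first that since $\Sel^{\vec s}(E/K_\infty)$ is $\Lambda$-cotorsion, so is $\Sel^{\vec s}(E(t)/K_\infty)=\Sel^{\vec s}(E/K_\infty)(t)$; hence $X^{\vec s}(E(t)/K_\infty)$ is $\Lambda$-torsion, and \cref{Sel-H1-J-exact-sequence} applied to $E(t)$ gives a short exact sequence
\[0\longrightarrow\Sel^{\vec s}(E(t)/K_\infty)\longrightarrow H^1(G_\Sigma(K_\infty),E(t)[p^\infty])\xrightarrow{\ \Psi\ }\bigoplus_{w\in\Sigma(K_\infty)}J_w(E(t)/K_\infty)\longrightarrow 0\]
(here $\Sigma(K_\infty)$ is finite, so the product in the statement agrees with this direct sum), with $\psi_n^{\vec s}=\Psi^{\Gamma_n}$. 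I would then write the target as $T_1'\oplus T_2'$, where $T_1'=\bigoplus_{w\in\Sigma_0(K_\infty)}J_w(E(t)/K_\infty)$ and $T_2'=\bigoplus_{w\in\Sigma_p^{\ss}(K_\infty)}J_w(E(t)/K_\infty)$ (recall $\Sigma\setminus\Sigma_0=\Sigma_p^{\ss}$), put $T_i=(T_i')^{\Gamma_n}$, and decompose $\psi_n^{\vec s}=(\psi_n^{\Sigma_0},\psi_n^{\ss})$. If $\psi_n^{\Sigma_0}\colon H^1(G_\Sigma(K_\infty),E(t)[p^\infty])^{\Gamma_n}\to T_1$ is surjective, then $\Image(\psi_n^{\vec s})+(0\oplus T_2)=T_1\oplus T_2$, so $\coker(\psi_n^{\vec s})$ is a homomorphic image of $T_2$; as a quotient of a $\Z_p$-cofree module is $\Z_p$-cofree, it is enough to show \emph{(a)} $T_2$ is $\Z_p$-cofree and \emph{(b)} $\psi_n^{\Sigma_0}$ is surjective.

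For \emph{(a)}: for $w\in\Sigma_p^{\ss}(K_\infty)$ lying over $u$, one has $J_w(E(t)/K_\infty)=H^1(K_{\infty,w},E(t)[p^\infty])\big/\bigl(\widehat E^{s_w}(K_{\infty,w})\otimes\Q_p/\Z_p\bigr)$, and (S3.i) gives $p^2-1\nmid e(K_w/\Q_p)$. The plan is to use the twisted version of \cref{lem:subersingular-local-term-div}: by \cref{hpm-free}, $\Hpm(K_w,T)$ is $\Lambda$-free, hence so is the twist $\Hpm(K_w,T(t))\cong\Hpm(K_w,T)(t)$ (twisting by $\kappa^t$ is pullback along a ring automorphism of $\Lambda$), so $\Hpm(K_w,T(t))_{\Gamma_n}$ is $\Z_p$-free; dualising exactly as in the proof of \cref{lem:subersingular-local-term-div} shows $\bigl(J_w(E(t)/K_\infty)\bigr)^{\Gamma_n}$ is $\Z_p$-divisible, hence (being cofinitely generated over $\Z_p$) $\Z_p$-cofree, and therefore so is the finite sum $T_2$.

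For \emph{(b)} I would compare with the finite level. Inflation--restriction gives an isomorphism $H^1(G_\Sigma(K_n),E(t)[p^\infty])\xrightarrow{\ \sim\ }H^1(G_\Sigma(K_\infty),E(t)[p^\infty])^{\Gamma_n}$, since $H^0(G_\Sigma(K_\infty),E(t)[p^\infty])=E(K_\infty)[p^\infty]=0$ (a supersingular place $w$ exists by (S2), and there $E(K_{\infty,w})[p^\infty]=0$ by \cref{lem:torsion-freeness}). Under this identification $\psi_n^{\Sigma_0}=c\circ\varphi_n$, where $\varphi_n$ is the surjection of \cref{lem:finiite-cokernel-selz,cor:phi_n-surjective} and $c=\bigoplus_{v\in\Sigma_0(K_n)}c_v$ with $c_v\colon J_v(E(t)/K_{n,v})\to\bigl(\bigoplus_{w\mid v}J_w(E(t)/K_\infty)\bigr)^{\Gamma_n}$ induced by restriction. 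As $\varphi_n$ is surjective, $\coker(\psi_n^{\Sigma_0})=\coker(c)=\bigoplus_v\coker(c_v)$, so it suffices to show each $c_v$ is surjective. The places of $K_\infty$ above $v$ form a single $\Gamma_n$-orbit, so by Shapiro's lemma $\bigl(\bigoplus_{w\mid v}J_w(E(t)/K_\infty)\bigr)^{\Gamma_n}\cong J_{w_0}(E(t)/K_\infty)^{D}$, with $D=\Gal(K_{\infty,w_0}/K_{n,v})$; since $D$ is a closed subgroup of $\Gamma_n\cong\Z_p$, it has cohomological dimension $\le 1$, and the inflation--restriction sequence then forces the relevant restriction map, and hence $c_v$, to be surjective: for $v\in\Sigma_1$ with coefficient module $E(t)[p^\infty]$, and for ordinary $v\mid p$ with coefficient module $E(t)$, after identifying $J_v$ and $J_{w_0}$ with the $p$-primary parts of $H^1(-,E(t))$ via Kummer theory and using that $H^1$ of a local field with coefficients in an abelian variety is torsion.

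The main obstacle is organisational rather than conceptual: one has to dovetail the finite-level surjectivity of $\varphi_n$ -- which is exactly where the delicate choice of the twist $t$ enters, through \cref{lem:finiite-cokernel-selz} -- with the $\Gamma_n$-invariants of the infinite-level local conditions, keeping careful track of the several different shapes the groups $J_v$ take at finite versus infinite level and at ordinary, supersingular and non-$p$-adic places. The one genuinely new ingredient, the twisted form of \cref{lem:subersingular-local-term-div}, is immediate from \cref{hpm-free}; everything else assembles results already proved in this section.
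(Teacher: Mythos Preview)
Your argument follows the paper's proof essentially verbatim: split the target along $\Sigma=\Sigma_0\sqcup\Sigma_p^{\ss}$, show via inflation--restriction and \cref{cor:phi_n-surjective} that the composite $\psi_n^{\Sigma_0}$ surjects onto the $\Sigma_0$-component, and use divisibility of the supersingular invariants (the twisted form of \cref{lem:subersingular-local-term-div}) to conclude that $\coker(\psi_n^{\vec s})$ is a quotient of a divisible group, hence divisible. You are in fact more careful than the paper on one point---you observe that \cref{lem:subersingular-local-term-div} is stated for $E$ rather than $E(t)$, and that the twisted version follows from $\Lambda$-freeness in \cref{hpm-free} because twisting by $\kappa^t$ is pullback along a ring automorphism of $\Lambda$.

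One quibble in step \emph{(b)}: at ordinary $v\mid p$ you write $H^1(-,E(t))$ and appeal to ``Kummer theory'' and ``$H^1$ of an abelian variety'', but the twist $E(t)$ is by the $\Z_p^\times$-valued character $\kappa^t$ and is \emph{not} an abelian variety over $K_n$, so the symbol $H^1(K_{n,v},E(t))$ has no literal meaning. The clean justification uses the ordinary filtration instead: by Coates--Greenberg one identifies $J_{w_0}(E(t)/K_\infty)$ with $H^1\bigl(K_{\infty,w_0},\widetilde E[p^\infty]\bigr)(t)$, and then inflation--restriction for the module $\widetilde E[p^\infty](t)$ together with $\cd_p D\le 1$ gives the surjectivity of $c_v$. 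The paper's own proof glosses over this verification entirely, simply asserting the surjectivity of the infinite-level map as a consequence of ``the inflation--restriction exact sequence together with \cref{cor:phi_n-surjective}''.
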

\begin{proof}
    The inflation--restriction exact sequence together with \cref{cor:phi_n-surjective} implies that we have a surjection
    \[\varphi^{\Gamma_n}_n\colon H^1(G_\Sigma(K_\infty),E(t)[p^\infty])^{\Gamma_n}\to \left(\bigoplus_{v\in \Sigma_0(K_\infty)}J_v(E(t)/K_\infty)\right)^{\Gamma_n}.\]
    {Indeed, for $n\gg 0$ we have 
    \begin{align*}
        H^1(G_\Sigma(K_\infty),E[p^\infty])&=H^1(G_\Sigma(K_n),E[p^\infty])^{\Gamma_n}\\
        H^1(K_{n,v},E[p^\infty]) &\to \bigoplus_{w\mid v}(H^1(K_{\infty,n},E[p^\infty]))^{\Gamma_n} \quad \forall v\in \Sigma_0\\
       \widehat{E}(K_{n,v})\otimes \Q_p/\Z_p&=(\bigoplus_{w\mid v}K_{\infty,v}\otimes \Q_p/\Z_p))^{\Gamma_n} \quad \forall v\in \Sigma_0\setminus \Sigma_1
    \end{align*}
    We obtain the following commutative diagram}
    \[\begin{tikzcd}[font=\small, column sep=1em, row sep=1em]
        H^1(G_\Sigma(K_n),E[p^\infty])\arrow[rr,"\varphi_n"]\arrow[d]& & \bigoplus_{v\in \Sigma_0(K_n)} J_v(E(t)/K_{n,v})\arrow[d]\\
        H^1(G_\Sigma(K_\infty),E[p^\infty])^{\Gamma_n}\arrow[rr,"\varphi_n^{\Gamma_n}"]& &\left(\bigoplus_{v\in \Sigma_0(K_\infty)}J_v(E(t)/K_\infty)\right)^{\Gamma_n}
    \end{tikzcd}
    \] {The two vertical arrows and the upper horizontal one are surjective. Thus, the lower vertical map is surjective. }
    We have a tautological exact sequence 
    \[0\to\left(\bigoplus_{v\in \Sigma(K_\infty)\setminus \Sigma_0(K_\infty)}J_v(E(t)/K_\infty)\right)^{\Gamma_n} \to \left(\bigoplus_{v\in \Sigma(K_\infty)}J_v(E(t)/K_\infty)\right)^{\Gamma_n} \to \left(\bigoplus_{v\in \Sigma_0(K_\infty)}J_v(E(t)/K_\infty)\right)^{\Gamma_n}\to 0.\]
    By \cref{lem:subersingular-local-term-div}, the leftmost term is divisible for all $n$. As $\varphi^{\Gamma_n}_n$ is surjective, {we obtain a surjection
\[\left(\bigoplus_{v\in \Sigma(K_\infty)\setminus \Sigma_0(K_\infty)}J_v(E(t)/K_\infty)\right)^{\Gamma_n}\to  \coker(\psi_n^{\vec s}).\]
As the image of a divisible module is divisible}, we obtain a surjection
    
    \[\left(\bigoplus_{v\in \Sigma(K_\infty)}J_v(E(t)/K_\infty)\right)^{\Gamma_n}_{\textup{div}}\to \coker(\psi_n^{\vec s}),\]
    where $M_{\textup{div}}$ denotes the maximal divisible submodule. 
    As the quotient of divisible submodules is again divisible, the desired claim follows. 
\end{proof}
\begin{prop} \label{no-nonzero-finite-submodules}
    If $X^{\vec s}(E/K_\infty)$ is torsion over $\Lambda$, then it has no nonzero finite $\Lambda$-submodules.
\end{prop}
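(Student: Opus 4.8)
The plan is to run Greenberg's twisting argument -- carried out by Lim in the unramified case, cf.\ \cite{mengfai} -- feeding in \cref{Sel-H1-J-exact-sequence,cokernel-of-psi-cofree}. Since $M\mapsto M(t)=M\otimes\kappa^t$ is an exact autoequivalence of the category of $\Lambda(\G)$-modules, it preserves $\Lambda$-torsionness and carries the maximal finite $\Lambda$-submodule of $X^{\vec s}(E/K_\infty)$ onto that of $X^{\vec s}(E(t)/K_\infty)=X^{\vec s}(E/K_\infty)(t)$; so it suffices to show that $X^{\vec s}(E(t)/K_\infty)$ has no nonzero finite $\Lambda$-submodule, for the twist $t$ fixed above. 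Such a submodule is killed by some $\omega_n$, hence lies in $X^{\vec s}(E(t)/K_\infty)^{\Gamma_n}$; and by the choice of $t$ the group $(\Sel^{\vec s}(E/K_\infty)\otimes\kappa^t)^{\Gamma_n}=\Sel^{\vec s}(E(t)/K_\infty)^{\Gamma_n}$ has vanishing $\Z_p$-corank, so its Pontryagin dual $X^{\vec s}(E(t)/K_\infty)_{\Gamma_n}$ is finite, and hence so is $X^{\vec s}(E(t)/K_\infty)^{\Gamma_n}$ -- it has the same $\Z_p$-rank, as one sees from multiplication by $\omega_n$ on $X^{\vec s}(E(t)/K_\infty)$. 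As $X^{\vec s}(E(t)/K_\infty)^{\Gamma_n}\subseteq X^{\vec s}(E(t)/K_\infty)^{\Gamma_{n+1}}$, it is enough to prove $X^{\vec s}(E(t)/K_\infty)^{\Gamma_n}=0$ for all $n\gg0$; dually, $\Sel^{\vec s}(E(t)/K_\infty)_{\Gamma_n}=0$ for $n\gg0$.

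For this, take $\Gamma_n$-cohomology of the short exact sequence of \cref{Sel-H1-J-exact-sequence} applied to $E(t)$ (legitimate, as $X^{\vec s}(E(t)/K_\infty)$ is $\Lambda$-torsion). Using $\cd_p\Gamma_n=1$ one obtains an exact sequence
\[H^1(G_\Sigma(K_\infty),E(t)[p^\infty])^{\Gamma_n}\xrightarrow{\ \psi_n^{\vec s}\ }\Big(\bigoplus_{w\in\Sigma(K_\infty)}J_w(E(t)/K_\infty)\Big)^{\Gamma_n}\xrightarrow{\ \partial\ }\Sel^{\vec s}(E(t)/K_\infty)_{\Gamma_n}\to H^1\big(\Gamma_n,H^1(G_\Sigma(K_\infty),E(t)[p^\infty])\big),\]
where $\psi_n^{\vec s}$ is exactly the map of \cref{cokernel-of-psi-cofree}. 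That proposition gives that $\coker\psi_n^{\vec s}$ is $\Z_p$-cofree for $n\gg0$; but $\coker\psi_n^{\vec s}\cong\Image\partial$ is a subgroup of the finite group $\Sel^{\vec s}(E(t)/K_\infty)_{\Gamma_n}$, and a finite $\Z_p$-cofree group is trivial. Hence $\psi_n^{\vec s}$ is surjective, $\partial=0$, and $\Sel^{\vec s}(E(t)/K_\infty)_{\Gamma_n}$ embeds into $H^1\big(\Gamma_n,H^1(G_\Sigma(K_\infty),E(t)[p^\infty])\big)$.

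It remains to show the latter vanishes. The Hochschild--Serre spectral sequence for $1\to G_\Sigma(K_\infty)\to G_\Sigma(K_n)\to\Gamma_n\to1$, together with $H^0(G_\Sigma(K_\infty),E(t)[p^\infty])=0$ (\cref{lem:torsion-freeness}) and $H^2(G_\Sigma(K_\infty),E(t)[p^\infty])=0$ (\cref{Sel-H1-J-exact-sequence} for $E(t)$), yields $H^1\big(\Gamma_n,H^1(G_\Sigma(K_\infty),E(t)[p^\infty])\big)\cong H^2(G_\Sigma(K_n),E(t)[p^\infty])$. This group is finite by \eqref{eq:H2-finite}, and it is $p$-divisible: the Kummer sequence $0\to E(t)[p]\to E(t)[p^\infty]\xrightarrow{p}E(t)[p^\infty]\to0$ together with $\cd_p G_\Sigma(K_n)\le2$ makes multiplication by $p$ surjective on it. A finite $p$-divisible group is zero, so $H^2(G_\Sigma(K_n),E(t)[p^\infty])=0$; therefore $\Sel^{\vec s}(E(t)/K_\infty)_{\Gamma_n}=0$ for $n\gg0$, and undoing the twist completes the proof. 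The substance of the argument is entirely absorbed into \cref{cokernel-of-psi-cofree} -- which itself rests on the surjectivity \cref{cor:phi_n-surjective} of the global-to-local map at the non-supersingular places and the divisibility \cref{lem:subersingular-local-term-div} of the supersingular local terms -- so that the only delicate point left here is the bookkeeping with the twist, which is precisely what upgrades ``$\Z_p$-cofree'' to ``trivial''.
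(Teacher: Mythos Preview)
Your proof is correct and follows essentially the same route as the paper: twist, take $\Gamma_n$-cohomology of the short exact sequence from \cref{Sel-H1-J-exact-sequence}, feed in \cref{cokernel-of-psi-cofree}, and kill the outer term via Hochschild--Serre and the finiteness/divisibility of $H^2(G_\Sigma(K_n),E(t)[p^\infty])$. The only cosmetic difference is that you additionally use the finiteness of $\Sel^{\vec s}(E(t)/K_\infty)_{\Gamma_n}$ (from the choice of $t$) to upgrade ``$\Z_p$-cofree'' to ``zero'' directly, whereas the paper stops at cofreeness of $H^1(\Gamma_n,\Sel^{\vec s}(E(t)/K_\infty))$ and then invokes \cite[5.3.19(i)]{NSW}; both conclusions are equivalent here.
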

\begin{proof}
    Let $t$ be as above. According to \cref{cokernel-of-psi-cofree}, there is an exact sequence
    \[0 \to \left(\Sel^{\vec s}(E(t)/K_\infty)\right)^{\Gamma_n} \to {\left(H^1(G_\Sigma(K_\infty),E(t)[p^\infty])\right)^{\Gamma_n}} \xrightarrow{\psi^{\vec s}_n} \left(\displaystyle\bigoplus_{v\in\Sigma(K_\infty)} J_v(E(t)/K_\infty)\right)^{\Gamma_n}\]
    with $\Z_p$-cofree cokernel on the right for every $n$. For each $n$, the long exact cohomology sequence associated with taking $\Gamma_n$-invariants of the $t$-twist of the short exact sequence in \cref{Sel-H1-J-exact-sequence} shows that in particular, the following sequence is exact:
    \[0\to \coker(\psi^{\vec s}_n)\to H^1\left(\Gamma_n, \Sel^{\vec s}(E(t)/K_\infty)\right) \to H^1\left(\Gamma_n, H^1(G_\Sigma(K_\infty), {E(t)}[p^\infty])\right).\]
    The group on the right vanishes. Indeed, following Ahmed and Lim \cite[Lemma~2.5]{AhmedMFL2019}, we have that the Hochschild--Serre spectral sequence $H^i(\Gamma_n,H^j(G_\Sigma(K_\infty), E(t)[p^\infty]))\Rightarrow H^{i+j}(G_\Sigma(K_n), E(t)[p^\infty])$ implies, by the fact that $\cd_p\Gamma_n=1$, that the group in question embeds into $H^2(G_\Sigma(K_n),E(t)[p^\infty])$, which is finite by \eqref{eq:H2-finite}. Then as in \cite[114]{Greenberg1989-padic}, we have that $\cd_p G_\Sigma(K_n)=2$ implies that $H^2(G_\Sigma(K_n),E(t)[p^\infty])$ is also divisible, and thus it must be trivial.

    In particular, we have that $H^1\left(\Gamma_n, \Sel^{\vec s}(E(t)/K_\infty)\right) $ is $\Z_p$-cofree, so its {Pontryagin} dual $H^0\left(\Gamma_n, \Sel^{\vec s}(E(t)/K_\infty)^\lor\right) $ is $\Z_p$-free, which is equivalent to $\Sel^{\vec s}(E(t)/K_\infty)^\lor$ having no nonzero finite $\Lambda$-submodules by \cite[Proposition~5.3.19(i)]{NSW}. Therefore $\Sel^{\vec s}(E/K_\infty)^\vee=X^{\vec s}(E/K_\infty)$ has no nonzero finite $\Lambda$-submodules either.
\end{proof}
\begin{rem}
    The proof of \cref{no-nonzero-finite-submodules} follows that of \cite[Proposition~4.5]{mengfai}, which in turn relies on ideas of Greenberg \cite[Proposition~4.14]{Greenberg1999}. The salient difference in our approach is that the argument in \cite{mengfai} uses an argument involving a local norm compatible sequence and plus/minus Coleman maps constructed in the unramified case by Kim \cite[Lemma~3.9ff.]{BDK2013}, an analogue of which is not known for ramified extensions. 
    This necessitates establishing cofreeness in another way, namely through \cref{cokernel-of-psi-cofree}.
\end{rem}

\subsection{Non-primitive Selmer groups} \label{sec:non-primitive-Sel}
Let $\Sigma'\subseteq \Sigma_1$ be a subset of the non-$p$-adic places in $\Sigma$. As in \cref{sec:definition-of-signed-Sel}, let $H_L\le G$ be a finite subgroup with fixed field $L_\infty$.
We define the non-primitive signed Selmer group with respect to $\Sigma'$ over $L_n$ as
\[\Sel^{\vec s}_{\Sigma'}(E/L_n)\colonequals \ker\left(H^1(G_\Sigma(L_n), E[p^\infty]) \to \bigoplus_{w\in (\Sigma-\Sigma')(L_n)} J_w(E/L_n)\right).\]
On infinite level, we set $\Sel^{\vec s}_{\Sigma'}(E/L_\infty)\colonequals\varinjlim_n \Sel^{\vec s}_{\Sigma'}(E/L_n)$, so that the previous formula holds for all $n\le\infty$. We write $X^{\vec s}_{\Sigma'}(E/L_\infty)$ for the Pontryagin dual of $\Sel^{\vec s}_{\Sigma'}(E/L_\infty)$.

For a finitely generated $\Lambda$-module $M$, let $\theta(M)$ denote the maximal $p$-exponent in the decomposition into elementary modules. Following \cite[Proposition~4.6]{mengfai}, we have that non-primitive signed Selmer groups inherit the following properties of signed Selmer groups:
\begin{prop}
\label{pro-4.6}
    Assume that $\Sel^{\vec s}(E/K_\infty)$ is $\Lambda$-cotorsion.
    \begin{enumerate}[label=(\roman*)]
        \item \label{4.6.a1} $0\to \Sel^{\vec s}(E/K_\infty)\to \Sel^{\vec s}_{\Sigma'}(E/K_\infty)\to \displaystyle\bigoplus_{w\in \Sigma'(K_\infty)} H^1(K_{\infty,w}, E[p^\infty])\to 0$.
        \item \label{4.6.a2} $0\to \Sel^{\vec s}_{\Sigma'}(E/K_\infty) \to H^1(G_\Sigma(K_\infty), E[p^\infty])\to \displaystyle\bigoplus_{w\in (\Sigma-\Sigma')(K_\infty)} J_w(E/K_\infty)\to 0$.
        \item \label{4.6.b} $\theta(X^{\vec s}_{\Sigma'}(E/K_\infty)) = \theta(X^{\vec s}(E/K_\infty))$.
        \item \label{4.6.c} For every subgroup $H\le G$ and $i\ge1$, the group $H^i\left(H,\Sel^{\vec s}_{\Sigma'}(E/K_\infty)\right)$ is finite.
        \item \label{4.6.d} $X^{\vec s}_{\Sigma'}(E/K_\infty)$ has no nontrivial finite $\Lambda$-submodules.
    \end{enumerate}
\end{prop}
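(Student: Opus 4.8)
The plan is to follow the strategy of \cite[Proposition~4.6]{mengfai}, deriving each assertion from the corresponding statement for the primitive Selmer group together with local input at the places in $\Sigma'$. First I would establish the exact sequence~\ref{4.6.a1}: by definition $\Sel^{\vec s}(E/K_\infty)$ and $\Sel^{\vec s}_{\Sigma'}(E/K_\infty)$ are both kernels of maps out of $H^1(G_\Sigma(K_\infty),E[p^\infty])$, the latter imposing conditions at strictly fewer places. The snake lemma applied to the defining diagram shows that the quotient $\Sel^{\vec s}_{\Sigma'}(E/K_\infty)/\Sel^{\vec s}(E/K_\infty)$ injects into $\bigoplus_{w\in\Sigma'(K_\infty)} J_w(E/K_\infty)=\bigoplus_{w\in\Sigma'(K_\infty)} H^1(K_{\infty,w},E[p^\infty])$ (using $B_w=0$ for $w\in\Sigma_1$); surjectivity onto this sum follows from the surjectivity of the global-to-local map for the primitive Selmer group, i.e.\ from the short exact sequence~\eqref{eq:Sel-H1-J-ses} of \cref{Sel-H1-J-exact-sequence}, which holds because $X^{\vec s}(E/K_\infty)$ is $\Lambda$-torsion. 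The same torsion input immediately gives~\ref{4.6.a2}: compose the surjection in~\eqref{eq:Sel-H1-J-ses} with the projection $\bigoplus_{w\in\Sigma(K_\infty)}J_w\twoheadrightarrow\bigoplus_{w\in(\Sigma-\Sigma')(K_\infty)}J_w$ and identify the kernel with $\Sel^{\vec s}_{\Sigma'}(E/K_\infty)$ by definition.

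Next, for~\ref{4.6.b} I would use~\ref{4.6.a1}: dualising gives a short exact sequence of $\Lambda$-modules
\[0\to \bigoplus_{w\in\Sigma'(K_\infty)} H^1(K_{\infty,w},E[p^\infty])^\lor \to X^{\vec s}_{\Sigma'}(E/K_\infty)\to X^{\vec s}(E/K_\infty)\to 0.\]
Since each $w\in\Sigma'(K_\infty)$ is a non-$p$-adic place, $H^1(K_{\infty,w},E[p^\infty])$ has $\Lambda$-corank zero by \cite[Proposition~2]{Greenberg1989-padic}, and its dual is in fact a finitely generated torsion $\Lambda$-module whose $p$-torsion part has $\theta\le 1$ — this is where one must recall the structure of local cohomology at such places: $H^1(K_{\infty,w},E[p^\infty])^\lor$ is (up to pseudo-isomorphism) a sum of terms $\Lambda/(f_i)$ with the relevant $f_i$ having $p$-exponent at most $1$, as in \cite{HachimoriMatsuno} and \cite[Proposition~4.6]{mengfai}. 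The additivity of $\theta$ (equivalently $\mu$) in short exact sequences, combined with the fact that $\theta$ of the local contribution is $\le 1$, forces $\theta(X^{\vec s}_{\Sigma'}(E/K_\infty))=\theta(X^{\vec s}(E/K_\infty))$ once we know, e.g.\ from $\theta(X^{\vec s}(E/K_\infty))\le 1$ not being assumed here, that we only need the maximum — so in fact $\theta$ of a direct sum is the maximum of the $\theta$'s, giving the equality directly. I would spell this out carefully since it is the one place where the local structure at $\Sigma'$ enters quantitatively.

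For~\ref{4.6.c}, I would take $H$-cohomology of the short exact sequence in~\ref{4.6.a1}: the long exact sequence reads
\[\cdots\to H^{i-1}\!\Big(H,\bigoplus_{w\in\Sigma'(K_\infty)}\! H^1(K_{\infty,w},E[p^\infty])\Big)\to H^i\big(H,\Sel^{\vec s}(E/K_\infty)\big)\to H^i\big(H,\Sel^{\vec s}_{\Sigma'}(E/K_\infty)\big)\to H^i\!\Big(H,\bigoplus_{w\in\Sigma'(K_\infty)}\! H^1(K_{\infty,w},E[p^\infty])\Big)\to\cdots\]
The outer terms are finite: the local cohomology groups $H^i(H, H^1(K_{\infty,w},E[p^\infty]))$ at non-$p$-adic places are finite by \cite[\S4]{HachimoriMatsuno}, and $H^i(H,\Sel^{\vec s}(E/K_\infty))$ is finite for $i\ge1$ by \cref{Hi-H-Sel-finite}. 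Hence the middle term is finite. Finally, for~\ref{4.6.d}, I would argue as in \cref{no-nonzero-finite-submodules}: from~\ref{4.6.a1} it suffices to see that a finite $\Lambda$-submodule of $X^{\vec s}_{\Sigma'}(E/K_\infty)$ would have to map to a finite submodule of $X^{\vec s}(E/K_\infty)$ (trivial by \cref{no-nonzero-finite-submodules}) with image in the local part; but the dual of $\bigoplus_w H^1(K_{\infty,w},E[p^\infty])$ for $w$ non-$p$-adic has no nonzero finite $\Lambda$-submodule (its $\Lambda$-structure has no finite submodules, again by the local analysis of \cite{HachimoriMatsuno,Greenberg1999}), so the extension cannot acquire one either — more precisely, an extension of a module with no finite submodule by a module with no finite submodule has no finite submodule, provided one checks the connecting map behaves, which here is automatic because both outer modules are pure of the relevant kind. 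The main obstacle, such as it is, is bookkeeping the local cohomology at the places in $\Sigma'$: one must confirm that at split multiplicative and good-reduction primes the local $H^1$ over $K_{\infty,w}$ has exactly the finiteness, corank-zero, $\theta\le1$, and no-finite-submodule properties claimed, all of which is standard and contained in \cite[\S4]{HachimoriMatsuno}; no supersingular-specific input is needed for any part of this proposition.
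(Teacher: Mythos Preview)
Your arguments for \ref{4.6.a1}, \ref{4.6.a2}, and \ref{4.6.c} are essentially identical to the paper's: the same commutative diagram and snake lemma for the first two, and the same long exact sequence from \ref{4.6.a1} for the third.

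For \ref{4.6.b} your sketch is in the right direction but the phrasing ``additivity of $\theta$ (equivalently $\mu$)'' is confused: $\theta$ is a maximum, not additive, and the sequence is an extension, not a direct sum. What actually makes the argument go through is that for $w\nmid p$ one has $\mathrm{cd}_p(G_{K_{\infty,w}})\le 1$, so $H^1(K_{\infty,w},E[p^\infty])$ is $p$-divisible and its dual $L$ is $\Z_p$-torsion-free; then $X^{\vec s}_{\Sigma'}[p^\infty]\hookrightarrow X^{\vec s}[p^\infty]$, and a $\mu$-count (which \emph{is} additive) forces the cokernel to be finite, hence the two $p$-power-torsion parts are pseudo-isomorphic and $\theta$ agrees. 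The paper itself simply cites \cite{mengfai} here.

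For \ref{4.6.d} you take a genuinely different route from the paper. You argue via the dual of \ref{4.6.a1}: a finite submodule of $X^{\vec s}_{\Sigma'}$ maps to zero in $X^{\vec s}$ by \cref{no-nonzero-finite-submodules}, hence lies in the local piece, which has no finite submodules because (as above) its dual is divisible. This is correct and pleasantly direct. The paper instead reruns the Greenberg twist argument: it applies \ref{4.6.a1} to $E(t)$, takes $\Gamma_n$-cohomology, and uses that $H^1(\Gamma_n,\Sel^{\vec s}(E(t)/K_\infty))$ is $\Z_p$-cofree (from the proof of \cref{no-nonzero-finite-submodules}) while $H^1(\Gamma_n,\bigoplus_w H^1(K_{\infty,w},E(t)[p^\infty]))=0$ via Hochschild--Serre and vanishing of local $H^2$. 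Your approach buys brevity and conceptual clarity; the paper's avoids isolating the no-finite-submodules property of the local term but at the cost of re-invoking the entire twist machinery.
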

\begin{proof}
    We have the following commutative diagram with exact rows; the top row is \cref{Sel-H1-J-exact-sequence}, and the bottom row is the definition of the non-primitive Selmer group.
    \[\begin{tikzcd}
    0 \arrow[r] & \Sel^{\vec s}(E/K_\infty) \arrow[r] \arrow[d] & {H^1(G_{\Sigma}(K_\infty),E[p^\infty])} \arrow[r, "\psi^{\vec s}"] \arrow[d, Rightarrow, no head] & {\displaystyle\bigoplus_{w\in \Sigma(K_\infty)} H^1(K_{\infty,w}, E[p^\infty])} \arrow[r] \arrow[d, two heads] & 0 \\
    0 \arrow[r] & \Sel^{\vec s}_{\Sigma'}(E/K_\infty) \arrow[r] & {H^1(G_{\Sigma}(K_\infty),E[p^\infty])} \arrow[r, "\psi^{\vec s}_{\Sigma'}"]  & {\displaystyle\bigoplus_{w\in (\Sigma-\Sigma')(K_\infty)} H^1(K_{\infty,w}, E[p^\infty])}           &  
    \end{tikzcd}\]
    The sequence \eqref{4.6.a1} follows from the snake lemma. Sequence \eqref{4.6.a2} is equivalent to surjectivity of $\psi^{\vec s}_{\Sigma'}$, which follows from the surjectivity of the other three maps in the right square and commutativity.

    Equation \eqref{4.6.b} has the same proof as in \cite{mengfai}.

    For \eqref{4.6.c}, we take $H$-invariants of \eqref{4.6.a1}, thus obtaining a long exact sequence
    \[\ldots\to H^i\left(H, \Sel^{\vec s}(E/K_\infty)\right) \to H^i\left(H,\Sel^{\vec s}_{\Sigma'}(E/K_\infty)\right) \to H^i\left(H,\displaystyle\bigoplus_{w\in \Sigma'(K_\infty)} H^1(K_{\infty,w}, E[p^\infty])\right)\to \ldots\]
    The first term here is finite by \cref{Hi-H-Sel-finite}, and the third one is also finite by \cite[\S4]{HachimoriMatsuno} (recall that $\Sigma'$ only contains non-$p$-adic places). The assertion follows.

    Let $t\in \Z$ satisfy the conditions in \cref{sec:finite-submodules}, and let $n$ be large enough as in the proof of \cref{no-nonzero-finite-submodules}. Then \eqref{4.6.a1} is applicable to the twist $E(t)$ of the elliptic curve $E$, and the long exact sequence associated with taking $\Gamma_n$-invariants reads
    \[\ldots\to H^i\left(\Gamma_n, \Sel^{\vec s}(E/K_\infty)\right) \to H^i\left(\Gamma_n,\Sel^{\vec s}_{\Sigma'}(E/K_\infty)\right) \to H^i\left(\Gamma_n,\displaystyle\bigoplus_{w\in \Sigma'(K_\infty)} H^1(K_{\infty,w}, E[p^\infty])\right)\to \ldots\]
    The first term is $\Z_p$-cofree, as seen in the last paragraph of the proof of \cref{no-nonzero-finite-submodules}, and the third term vanishes. (By the Hochschild-Serre spectral sequence,  the group in question embeds into $H^2(K_v,E[p^\infty])$ which is trivial by \cite[I.3.4]{milne2006}). Hence the middle term is also $\Z_p$-cofree, and the same argument as used in the end of the proof of \cref{no-nonzero-finite-submodules} shows the claim {(v)}.
\end{proof}
\section{Projectivity results} \label{sec:projectivity}
Let 
\[\Phi\colonequals\{v\in \Sigma_1\mid \textup{the inertia degree of $v$ in $K/F$ is divisible by $p$}\}.\]
\begin{thm}
Assume that $\Phi\subset \Sigma'$, and that $X^{\vec s}(E/K_\infty)$ is $\Lambda$-torsion with $\theta(X^{\vec s}(E/K_\infty))\le 1$. Then $X_{\Sigma'}^{\vec s}(E/K_\infty)/X^{\vec s}_{\Sigma'}(E/K_\infty)[p]$ is quasi-projective as $\Z_p[G]$-module. 
\end{thm}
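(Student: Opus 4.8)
The plan is to apply \cref{criterion-quasi-projective} to the discrete $\Lambda(\G)$-module $S\colonequals\Sel^{\vec s}_{\Sigma'}(E/K_\infty)$, whose Pontryagin dual is $X\colonequals X^{\vec s}_{\Sigma'}(E/K_\infty)$: this will give that $X/X(p)$ is quasi-projective over $\Z_p[G]$, after which it remains to pass from $X(p)$ to $X[p]$. First I would check the structural hypotheses. The module $X^{\vec s}(E/K_\infty)$ is finitely generated and $\Lambda$-torsion by assumption; by part~\ref{4.6.a1} of \cref{pro-4.6}, $X$ is an extension of $X^{\vec s}(E/K_\infty)$ by the Pontryagin duals of $H^1(K_{\infty,w},E[p^\infty])$ for $w\in\Sigma'(K_\infty)$, and since $\Sigma'$ consists of non-$p$-adic places these duals have $\Lambda$-corank zero by \cite[Proposition~2]{Greenberg1989-padic}; hence $X$ is finitely generated and $\Lambda$-torsion. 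We have $\theta(X)=\theta(X^{\vec s}(E/K_\infty))\le1$ by \cref{pro-4.6}, part~\ref{4.6.b}, and for every cyclic subgroup $C=PQ\le G$ with $P$ a $p$-group and $Q$ of order prime to $p$, the groups $H^i(P,X)$ are finite for all $i\ge1$ by \cref{pro-4.6}, part~\ref{4.6.c} (since $P$ is finite cyclic, the $P$-cohomology of $\Sel^{\vec s}_{\Sigma'}(E/K_\infty)$ and of its dual $X$ agree up to Pontryagin duality and a degree shift, so finiteness transfers).

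The remaining, and crucial, point is that $h_P(S^\varepsilon)=1$ for every one-dimensional character $\varepsilon$ of $Q$. Since $|Q|$ is prime to $p$ and $\mathcal{O}_{\Q_p(\mu_{|Q|})}$ is $\Z_p$-flat, the functor $Y\mapsto Y^\varepsilon$ is exact and commutes with $H^i(P,-)$. Applying it to the short exact sequence of \cref{pro-4.6}, part~\ref{4.6.a2}, gives
\[0\to S^\varepsilon\to H^1(G_\Sigma(K_\infty),E[p^\infty])^\varepsilon\to\bigoplus_{w\in(\Sigma-\Sigma')(K_\infty)}J_w(E/K_\infty)^\varepsilon\to0,\]
and once each term is seen to have finite $P$-cohomology, multiplicativity of the Herbrand quotient yields
\[h_P(S^\varepsilon)=h_P\!\left(H^1(G_\Sigma(K_\infty),E[p^\infty])^\varepsilon\right)\cdot\prod_{w}h_P\!\left(J_w(E/K_\infty)^\varepsilon\right)^{-1}.\]
The middle factor is $1$ because $H^i\big(P,H^1(G_\Sigma(K_\infty),E[p^\infty])\big)=0$ for $i\ge1$ by \eqref{eq:Hi-H1-GSigma-Epinfty}, and hence the same holds after taking $\varepsilon$-parts. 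For a supersingular $p$-adic place $w$, I would write $\bigoplus_{w'\mid v}J_{w'}(E/K_\infty)$ as a module induced from the decomposition group and use Shapiro's lemma to reduce to the local situation, where \cref{supersingular-herbrand} gives $h_P(J_w(E/K_\infty)^\varepsilon)=1$ and \cref{MFL3.9} gives the vanishing of the relevant higher $P$-cohomology. For an ordinary $p$-adic place $w$, and for a non-$p$-adic place $w\in(\Sigma-\Sigma')(K_\infty)$ --- where the assumption $\Phi\subset\Sigma'$ forces the inertia degree to be prime to $p$ --- finiteness of $H^i(P,J_w(E/K_\infty))$ for $i\ge1$ and triviality of $h_P(J_w(E/K_\infty)^\varepsilon)$ follow from the local computations of \cite[\S4]{HachimoriMatsuno}, using $\theta(X)\le1$ where needed. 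Thus $h_P(S^\varepsilon)=1$ and \cref{criterion-quasi-projective} applies, giving that $X/X(p)$ is quasi-projective over $\Z_p[G]$.

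It remains to deduce the statement for $X/X[p]$. Because $\theta(X)\le1$, the submodule $X(p)$ is pseudo-isomorphic to a direct sum of copies of $\Lambda/p\Lambda$, so $pX(p)$, and therefore $X(p)/X[p]\cong pX(p)$, is finite. A finite $\Z_p[G]$-module is strictly quasi-projective (the zero map onto the projective module $0$ is a pseudo-isomorphism), and an extension of a quasi-projective module by a strictly quasi-projective one is again quasi-projective: form the pullback of a presentation $0\to X_1\to X_2\to X/X(p)\to0$ along the surjection $X/X[p]\twoheadrightarrow X/X(p)$ and check that the two resulting short exact sequences exhibit $X/X[p]$ as a quotient of strictly quasi-projective modules. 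Applied to $0\to X(p)/X[p]\to X/X[p]\to X/X(p)\to0$, this shows that $X^{\vec s}_{\Sigma'}(E/K_\infty)/X^{\vec s}_{\Sigma'}(E/K_\infty)[p]$ is quasi-projective over $\Z_p[G]$.

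The hard part is the Herbrand quotient computation: one has to reconcile the local supersingular results of \cref{supersingular-herbrand,MFL3.9}, which are phrased over a base field $\cK/\Q_p$, with the global arithmetic at the several places of $K_\infty$ lying over a supersingular prime of $F$ --- carefully tracking decomposition groups and the semidirect (rather than direct) structure of $\G$ when invoking Shapiro's lemma --- and one must confirm that the ordinary $p$-adic local terms contribute a trivial Herbrand quotient without any non-anomaly hypothesis, relying purely on the cohomological finiteness afforded by $\theta(X)\le1$ together with Hachimori--Matsuno's local analysis.
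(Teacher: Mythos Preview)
Your proof is correct and follows essentially the same route as the paper: verify the hypotheses of \cref{criterion-quasi-projective} for $S=\Sel^{\vec s}_{\Sigma'}(E/K_\infty)$ by taking $\varepsilon$-parts of the short exact sequence of \cref{pro-4.6}\ref{4.6.a2} and computing each Herbrand quotient (the paper cites \cite[3.2A, 3.2B]{Greenberg1999} where you invoke \eqref{eq:Hi-H1-GSigma-Epinfty} and \cite[\S4]{HachimoriMatsuno}, but these are equivalent inputs). The one place where you do more work than the paper is the passage from $X/X(p)$ to $X/X[p]$: your pullback argument is fine, but in fact $X(p)=X[p]$ here, since $\theta(X)\le 1$ forces $pX(p)$ to be a finite $\Lambda$-submodule of $X$, and \cref{pro-4.6}\ref{4.6.d} says $X$ has none---so the paper's ``the claim now follows'' is already complete.
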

\begin{proof}
This proof follows \cite[Theorem 4.7]{mengfai}. By \cref{pro-4.6}\ref{4.6.c} $H^i(H,\Sel_{\Sigma'}^{\vec s}(E/K_\infty))$ is finite for all subgroups $H\le G$ and for all $i\ge 1$.
     In particular, let $C=PQ$ be a cyclic subgroup of $G$, and $\varepsilon$ a character of $Q$. By \cref{pro-4.6}\ref{4.6.a2} and the fact that $\vert Q\vert $ is coprime to $p$, we have a short exact sequence
    \[0\to \Sel^{\vec s}_{\Sigma'}(E/K_\infty)^{\varepsilon}\to H^1(G_\Sigma(K_\infty),E[p^\infty])^\varepsilon\to\left( \bigoplus_{v\in (\Sigma-\Sigma')(K_\infty)}J_w(E/K_\infty)\right)^\varepsilon\to 0.\]
    For the middle term, using \cite[3.2 A and 3.2 B]{Greenberg1999}, we have
    \[h_P(H^1(G_\Sigma(K_\infty),E[p^\infty])^\varepsilon)=h_P\left(\left( \bigoplus_{v\in (\Sigma_1\cup\Sigma_{\ord}-\Sigma')(K_\infty)}J_w(E/K_\infty)\right)^\varepsilon\right)=1.\]
    For the rightmost term, \cref{supersingular-herbrand} shows
    \[h_P\left(\left(\bigoplus_{v\in \Sigma_{\ss}(K_\infty)}J_v(E/K_\infty)\right)^\varepsilon\right)=1\]
    Consequently,
    \[h_P(\Sel_{\Sigma'}^{\vec s}(E/K_\infty)^\varepsilon)=1.\]
    The claim now follows from \cref{criterion-quasi-projective}. 
\end{proof}

We recall the following cohomological criterion of Greenberg \cite[Proposition~2.4.1]{Greenberg2011} for checking whether an Iwasawa module admits a free resolution of length $1$. See also \cite[\S4]{NichiforPalvannan} and \cite[Proposition~2.14]{mengfai}.
\begin{prop} \label{Greenbergs-criterion}
    Let $Y$ be a finitely generated $\Lambda(\G)$-module that is torsion over $\Lambda$ and contains no nonzero finite $\Lambda$-submodules. Then $Y$ admits a free resolution of length $1$ of $\Lambda(\G)$-modules if for all subgroups $H\le G$, the cohomology groups $H^1(H,Y^\lor)$ and $H^2(H,Y^\lor)$ vanish.
\end{prop}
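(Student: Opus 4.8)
The plan is to establish $\operatorname{pd}_{\Lambda(\G)}Y\le1$; writing $\Omega\colonequals\Lambda(\G)$ (a Noetherian ring) and choosing a surjection $\pi\colon\Omega^b\twoheadrightarrow Y$ with $b<\infty$, Schanuel's lemma reduces this to showing that the first syzygy $N\colonequals\ker\pi$ is $\Omega$-projective. That the resulting projective resolution may then be taken with free terms is a routine addendum handled as in \cite{Greenberg2011}, and I will not dwell on it.

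First I would record that $N$ is \emph{free over $\Lambda$}. Since $\Lambda\cong\Z_p\llbracket X\rrbracket$ is regular local of Krull dimension $2$, the Auslander--Buchsbaum formula shows that a finitely generated $\Lambda$-module has projective dimension $\le1$ exactly when it has positive depth, i.e.\ no nonzero finite submodule; hence the hypotheses on $Y$ give $\operatorname{pd}_\Lambda Y\le1$. Reading $0\to N\to\Omega^b\to Y\to0$ over $\Lambda$, over which $\Omega$ is finite free, then exhibits $N$ as a first $\Lambda$-syzygy of a module of projective dimension $\le1$, so $N$ is finitely generated projective over the local ring $\Lambda$, hence free.

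Next I would translate the cohomological hypotheses into cohomological triviality of $N$ over $G$. Every finite subgroup of $\G=G\rtimes\Gamma$ lies in $G$ (as $\Gamma\cong\Z_p$ is torsion-free), and for $H\le G$ the module $\Omega=\Z_p\llbracket\G\rrbracket$ is $\Z_p[H]$-free, so $\widehat H^i(H,\Omega^b)=0$ for all $i$; dimension-shifting in $0\to N\to\Omega^b\to Y\to0$ gives $\widehat H^i(H,N)\cong\widehat H^{i-1}(H,Y)$. Together with the Tate--Pontryagin duality $\widehat H^j(H,Y)^\lor\cong\widehat H^{-1-j}(H,Y^\lor)$ for the finite group $H$, the assumed vanishing of $H^1(H,Y^\lor)$ and $H^2(H,Y^\lor)$ forces $\widehat H^{-1}(H,N)=\widehat H^{-2}(H,N)=0$ for every $H\le G$. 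Vanishing of two consecutive Tate cohomology groups over all subgroups implies, by Nakayama's criterion for cohomological triviality (see e.g.\ \cite[Ch.~IX]{serre-local-fields}), that $N$ is a cohomologically trivial $\Z_p[G]$-module.

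The remaining input — and the step I expect to be the main obstacle — is the ring-theoretic fact that a finitely generated $\Omega$-module that is $\Lambda$-free and cohomologically trivial over $G$ is $\Omega$-projective. My approach would be: pick an open subgroup $\Gamma_0\le\Gamma$ central in $\G$ (e.g.\ $\Gamma_0=\Gamma\cap\ker(\G\to\operatorname{Aut}(G))$), so $\Lambda_0\colonequals\Z_p\llbracket\Gamma_0\rrbracket$ is central in $\Omega$ and $\Omega$ is finite free over it; with $\mathfrak m_0$ the maximal ideal of $\Lambda_0$, the quotient $\overline\Omega\colonequals\Omega/\mathfrak m_0\Omega\cong\F_p[\G/\Gamma_0]$ is a finite $\F_p$-algebra, $\Omega$ is $\mathfrak m_0\Omega$-adically complete, and (using that $N$ is $\Lambda_0$-free) lifting of projective modules reduces $\Omega$-projectivity of $N$ to $\overline\Omega$-projectivity of $\overline N\colonequals N/\mathfrak m_0N$, i.e.\ to cohomological triviality of $\overline N$ over every subgroup of $\G/\Gamma_0$. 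Two facts are then available: $\overline N$ is cohomologically trivial over the image of $G$ (this descends from $N$ by shifting modulo $p$ and then modulo $\gamma_0-1$), and $\overline N$ is \emph{free} over the subalgebra $\F_p[\Gamma/\Gamma_0]$ attached to the complement (since $N$ is $\Lambda$-free and $\Lambda/(p,\gamma_0-1)\Lambda\cong\F_p[\Gamma/\Gamma_0]$). The hard part is to combine these into cohomological triviality of $\overline N$ over \emph{all} of $\G/\Gamma_0$, controlling the subgroups transverse to $G$; this is precisely Greenberg's \cite[Proposition~2.4.1]{Greenberg2011} (see also \cite[\S4]{NichiforPalvannan} and \cite[Proposition~2.14]{mengfai}). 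Granting it, $N$ is $\Omega$-projective, whence $\operatorname{pd}_\Omega Y\le1$ and $Y$ admits the asserted free resolution of length one.
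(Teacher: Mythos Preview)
The paper does not give its own proof of this proposition: it is stated as a recalled result, with the one-line justification ``This is \cite[Proposition~2.4.1]{Greenberg2011}; see also \cite[\S4]{NichiforPalvannan} and \cite[Proposition~2.14]{mengfai}.'' So there is no argument in the paper to compare your proposal against beyond a bare citation.

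Your steps are a reasonable unpacking of what lies behind that citation. The reduction to projectivity of the first syzygy $N$, the verification that $N$ is $\Lambda$-free via Auslander--Buchsbaum and the hypothesis on finite submodules, and the dimension-shift plus Tate--Pontryagin duality turning the vanishing of $H^1(H,Y^\lor)$ and $H^2(H,Y^\lor)$ into cohomological triviality of $N$ over $G$, are all correct and are essentially the skeleton of Greenberg's argument.

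The one point to flag is your final paragraph. You reduce to showing that a $\Lambda$-free, $G$-cohomologically-trivial $\Omega$-module is $\Omega$-projective, and then write that the hard part ``is precisely Greenberg's \cite[Proposition~2.4.1]{Greenberg2011}'' and proceed by ``granting it''. As written this is circular: Proposition~2.4.1 \emph{is} the statement you are asked to prove. If what you mean is that the passage from cohomological triviality over $G$ (plus $\Lambda$-freeness) to cohomological triviality over all of $\G/\Gamma_0$ is carried out \emph{inside Greenberg's proof} of 2.4.1, then say so explicitly and point to the relevant step there; otherwise the proposal, like the paper, amounts to a citation of Greenberg rather than an independent proof.
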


\begin{prop} \label{prop:free-resolution}
    Assume that $\Phi\subset\Sigma'$, that $X^{\vec s}(E/K_\infty)$ is $\Lambda$-torsion, and that every ordinary $p$-adic place $v\in\Sigma^{\ord}_p$ is either non-anomalous (i.e. if $w\mid v$ for $w$ a place of $K$, then $p\nmid|\tilde E(k_w)|$) or ramifies tamely in $K/F$. Then $X^{\vec s}_{\Sigma'}(E/K_\infty)$ admits a free resolution of $\Lambda(\G)$-modules of length $1$.
\end{prop}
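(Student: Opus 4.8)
The plan is to apply Greenberg's criterion \cref{Greenbergs-criterion} to $Y\colonequals X^{\vec s}_{\Sigma'}(E/K_\infty)$. I would check three things: that $Y$ is finitely generated over $\Lambda(\G)$ and torsion over $\Lambda$; that $Y$ has no nonzero finite $\Lambda$-submodule; and that $H^1(H,Y^\lor)=H^2(H,Y^\lor)=0$ for every subgroup $H\le G$, where $Y^\lor=\Sel^{\vec s}_{\Sigma'}(E/K_\infty)$. Finite generation over $\Lambda(\G)$ is automatic, and $\Lambda$-torsionness follows from \cref{pro-4.6}\ref{4.6.a1} once one observes that the local terms $H^1(K_{\infty,w},E[p^\infty])$ for $w\in\Sigma'(K_\infty)$ have $\Lambda$-corank zero by \cite[Proposition~2]{Greenberg1989-padic}, so that $X^{\vec s}(E/K_\infty)$ being $\Lambda$-torsion forces $X^{\vec s}_{\Sigma'}(E/K_\infty)$ to be too. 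The second point is precisely \cref{pro-4.6}\ref{4.6.d}. Thus the entire content of the proposition is the cohomological vanishing, which I address next.

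To establish it, fix $H\le G$ and put $L_\infty\colonequals K_\infty^H$. Taking $H$-cohomology of the fundamental exact sequence \cref{pro-4.6}\ref{4.6.a2} and using \eqref{eq:Hi-H1-GSigma-Epinfty}, which gives $H^i(H,H^1(G_\Sigma(K_\infty),E[p^\infty]))=0$ for $i\ge1$ and $=H^1(G_\Sigma(L_\infty),E[p^\infty])$ for $i=0$, one obtains
\[H^2(H,\Sel^{\vec s}_{\Sigma'}(E/K_\infty))\cong H^1\Big(H,\textstyle\bigoplus_{w\in(\Sigma-\Sigma')(K_\infty)}J_w(E/K_\infty)\Big)\]
and
\[H^1(H,\Sel^{\vec s}_{\Sigma'}(E/K_\infty))=\coker\Big(H^1(G_\Sigma(L_\infty),E[p^\infty])\to \big(\textstyle\bigoplus_{w}J_w(E/K_\infty)\big)^H\Big).\]
Since $X^{\vec s}(E/L_\infty)$ is $\Lambda$-torsion by \cref{MFL4.1}, the sequence \cref{pro-4.6}\ref{4.6.a2} holds with $L_\infty$ in place of $K_\infty$, so the global-to-local map $H^1(G_\Sigma(L_\infty),E[p^\infty])\to\bigoplus_{u\in(\Sigma-\Sigma')(L_\infty)}J_u(E/L_\infty)$ is surjective; composing with the evident restriction maps and invoking Shapiro's lemma to reduce $\big(\bigoplus_{w\mid u}J_w(E/K_\infty)\big)^H$ and its $H$-cohomology to the decomposition group $H_u\le H$ of a fixed place $w_u\mid u$, the two displays reduce the claim to showing, for every $u\in(\Sigma-\Sigma')(L_\infty)$:
\[\text{(a)}\quad J_u(E/L_\infty)\to J_{w_u}(E/K_\infty)^{H_u}\text{ is surjective;}\qquad\text{(b)}\quad H^1\big(H_u,J_{w_u}(E/K_\infty)\big)=0.\]

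It remains to verify (a) and (b) place by place. For a supersingular $p$-adic $u$, both are immediate from \cref{MFL3.9}: the case $i=0$ shows that (a) is even an isomorphism, and the case $i\ge1$ gives (b). For an ordinary $p$-adic $u$ — which has good reduction by (S1), so $\widehat E$ has height $1$ there — I would use the hypothesis that $u$ is non-anomalous or that $K/F$ ramifies tamely at it: in the tamely ramified case, $\widehat E(K_{\infty,w_u})$ is $H_u$-cohomologically trivial by the local computations of \cite{EllerbrockNickel} and \cite[Theorem~3.1]{coates-greenberg}, whence so is $J_{w_u}(E/K_\infty)\simeq H^1(K_{\infty,w_u},E)[p^\infty]$ by \eqref{eq:J-ordinary}; in the non-anomalous case the reduction $\widetilde E$ carries no $p$-power torsion along the tower, and (a), (b) follow from the cohomology computations in \cite[\S4]{HachimoriMatsuno} and \cite[\S4]{mengfai}. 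Finally, for a non-$p$-adic $u$ the hypothesis $\Phi\subset\Sigma'$ forces the inertia degree of the place of $F$ below $u$ to be prime to $p$, and (a), (b) then follow from the analysis of bad and tamely ramified local terms in \cite[\S4]{HachimoriMatsuno} and \cite[\S4]{mengfai}.

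I expect the ordinary $p$-adic case to be the main obstacle: in Lim's unramified setting the relevant local module is cohomologically trivial by classical reasoning, whereas here one must prove triviality while allowing tame ramification, and since the formal group has height $1$ the surjectivity argument of \cref{trace surjective} (which needs height $\ge2$) does not apply directly — this is exactly why the non-anomalous alternative is imposed. Everything else is handled either by the already-established \cref{MFL3.9} (supersingular $p$-adic) or by the known behaviour of local terms at bad and non-$p$-adic places together with the hypothesis $\Phi\subset\Sigma'$, with the remaining steps being formal homological algebra.
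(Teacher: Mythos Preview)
Your approach is essentially the same as the paper's: apply Greenberg's criterion, take $H$-cohomology of the fundamental exact sequence \cref{pro-4.6}\ref{4.6.a2}, use \eqref{eq:Hi-H1-GSigma-Epinfty} to kill the middle terms, and reduce the vanishing of $H^1$ and $H^2$ of the Selmer group to the two local statements you label (a) and (b). The paper packages this as a single commutative diagram (top row the $L_\infty$-sequence, bottom row the $H_L$-invariants of the $K_\infty$-sequence continued to a long exact sequence) rather than writing out the cokernel and $H^1$ identifications separately, but the content is identical.

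The only substantive difference is in how the ordinary $p$-adic and non-$p$-adic local vanishing is justified. You sketch an argument via cohomological triviality of $\widehat E(K_{\infty,w})$ and then pass to $J_w$ by duality, splitting into the tamely ramified and non-anomalous subcases; the paper instead dispatches both the ordinary $p$-adic and the non-$p$-adic cases in one stroke by citing \cite[Proposition~3.1.1]{Greenberg2011}, which gives $H^1(H_L,J_w(E/K_\infty))=0$ and surjectivity of $\ell_u$ directly under exactly the hypotheses ($\Phi\subset\Sigma'$, non-anomalous or tamely ramified). Your diagnosis that the ordinary case is the delicate one is correct, and your piecemeal argument would work, but the step ``$\widehat E(K_{\infty,w_u})$ cohomologically trivial $\Rightarrow$ $J_{w_u}$ cohomologically trivial'' as written is a little brisk (it goes through local duality, not just \eqref{eq:J-ordinary}); citing Greenberg's monograph is the cleaner route.
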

\begin{proof}
    The proof consists of verifying the condition in Greenberg's criterion in the same fashion as in \cite[Theorem~4.8]{mengfai}. Let $H_L\le G$ be a finite subgroup with fixed field $L_\infty$. As in the proof of \cref{Hi-H-Sel-finite}, the short exact sequence of \cref{Sel-H1-J-exact-sequence} induces a commutative diagram with exact rows:
    \[
    \begin{tikzcd}[font=\small, column sep=1em, row sep=1em]
    0 \arrow[r] & \Sel^{\vec s}_{\Sigma'}(E/L_\infty) \arrow[r] \arrow[d] & {H^1(G_\Sigma(L_\infty),E[p^\infty])} \arrow[r] \arrow[d, "\res"]  & \displaystyle\bigoplus_{u\in(\Sigma-\Sigma')(L_\infty)}J_u(E/L_\infty) \arrow[r] \arrow[d, "\ell=\oplus\ell_u"] & 0 \\
    0 \arrow[r] & \Sel^{\vec s}_{\Sigma'}(E/K_\infty)^{H_L} \arrow[r] & {H^1(G_\Sigma(K_\infty),E[p^\infty])^{H_L}} \arrow[r]  \ar[draw=none]{d}[name=X, anchor=center]{}  & \left(\displaystyle\bigoplus_{w\in(\Sigma-\Sigma')(K_\infty)}J_w(E/K_\infty)\right)^{H_L}  \ar[rounded corners,
            to path={ -- ([xshift=4ex]\tikztostart.east)
                      |- (X.center) \tikztonodes
                      -| ([xshift=-4ex]\tikztotarget.west)
                      -- (\tikztotarget)}]{dll}[at end]{}  \\
    & {H^1\left(H_L,\Sel^{\vec s}_{\Sigma'}(E/K_\infty)\right)} \arrow[r] & 0 \arrow[r] \ar[draw=none]{d}[name=Y, anchor=center]{} & H^1\left(H_L,\displaystyle\bigoplus_{w\in(\Sigma-\Sigma')(K_\infty)}J_w(E/K_\infty)\right) \ar[rounded corners,
            to path={ -- ([xshift=4ex]\tikztostart.east)
                      |- (Y.center) \tikztonodes
                      -| ([xshift=-4ex]\tikztotarget.west)
                      -- (\tikztotarget)}]{dll}[at end]{} \\
    & {H^2\left(H_L,\Sel^{\vec s}_{\Sigma'}(E/K_\infty)\right)} \arrow[r] & 0
    \end{tikzcd}
    \]
    The middle terms in the long exact sequence vanish by \eqref{eq:Hi-H1-GSigma-Epinfty}. 

    We have $H^1\left(H_L,J_w(E/K_\infty)\right)=0$: for good ordinary $p$-adic places and for non-$p$-adic places, this is \cite[Proposition~3.1.1]{Greenberg2011}, and for supersingular $p$-adic places, this was shown in \cref{MFL3.9}. Hence $H^2\left(H_L,\Sel^{\vec s}_{\Sigma'}(E/K_\infty)\right)=0$. The cited statements also show that $\ell_u$ is surjective for all $u$, and thus $0=\coker (\ell)$. Commutativity of the diagram shows $\coker(\ell)=H^1\left(H_L,\Sel^{\vec s}_{\Sigma'}(E/K_\infty)\right)$. Hence Greenberg's criterion applies.
\end{proof}

\section{Kida's formula} \label{sec:Kida}
Let $L$ be a number field contained in $K$ such that the extensions $L/F$ and $K/F$ satisfy conditions (S1)-(S3) of \cref{sec:global-cons}. Assume furthermore that $K/L$ is Galois and that $\Gal(K/L)$ is a $p$-group. Let $\Sigma'$ be the set of all places $v\nmid p$ in $L$ such that the ramification index of $v$ in $K/L$ is divisible by $p$.
\begin{lemma}
    Let $v\in \Sigma'$. Then $\mu_p\subset L_v$. 
\end{lemma}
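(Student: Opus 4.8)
The plan is to argue purely locally. I would fix a place $w$ of $K$ above $v$ and set $G_w\colonequals\Gal(K_w/L_v)$; as a subgroup of $\Gal(K/L)$ this is a $p$-group. Let $\ell$ be the residue characteristic at $v$. Since $v\nmid p$ we have $\ell\neq p$, so the wild inertia subgroup of $G_w$, being pro-$\ell$, is trivial; hence $K_w/L_v$ is at most tamely ramified and its inertia subgroup $I_w\trianglelefteq G_w$ is cyclic, of order $e\colonequals e_v(K/L)$, which is divisible by $p$ by the definition of $\Sigma'$.

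The heart of the argument is to exhibit the unique subgroup $P\le I_w$ of order $p$ as a central subgroup of $G_w$. Since $I_w$ is cyclic, $P$ is characteristic in $I_w$, and since the inertia subgroup is normal in $G_w$, we get $P\trianglelefteq G_w$. Conjugation then gives a homomorphism $G_w\to\textup{Aut}(P)\cong(\Z/p\Z)^\times$ from a $p$-group to a group of order $p-1$, which must be trivial; thus $P$ is central in $G_w$. On the other hand, the structure of the tame quotient says that a Frobenius lift $\sigma\in G_w$ and a generator $\tau$ of $I_w$ satisfy $\sigma\tau\sigma^{-1}=\tau^{q_v}$, where $q_v\colonequals\#k_v$ is the cardinality of the residue field of $L_v$. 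Applying this to the generator $\tau^{e/p}$ of $P$ and using that $\sigma$ centralises $P$ yields $\tau^{e/p}=\tau^{q_v e/p}$, and since $\tau$ has order $e$ this forces $q_v\equiv1\pmod p$.

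It remains to deduce $\mu_p\subset L_v$ from $q_v\equiv 1\pmod p$. The multiplicative group $k_v^\times$ is cyclic of order $q_v-1$, so $\mu_p\subseteq k_v^\times$; and since $p\neq\ell$, the polynomial $X^p-1$ has unit derivative over $\Ok_{L_v}$ and splits completely modulo the maximal ideal, so Hensel's lemma lifts these roots of unity to $L_v$, giving $\mu_p\subset L_v$. The only point that requires care is the appeal to the tame ramification structure of $K_w/L_v$ — namely, that the extension is tamely ramified with cyclic inertia and that Frobenius acts on inertia by raising to the $q_v$-th power — but both are standard once one observes that $G_w$ is a $p$-group of order prime to the residue characteristic. (Alternatively, one could note that the totally ramified subextension $K_w/K_w^{I_w}$ is Galois and cyclic of degree $e$, so the residue field of $K_w^{I_w}$ contains $\mu_e$; as $[K_w^{I_w}:L_v]$ is a power of $p$ and $p\mid e$, this gives $q_v^{p^k}\equiv1\pmod p$ for some $k$, and $\gcd(p^k,p-1)=1$ again yields $q_v\equiv1\pmod p$.)
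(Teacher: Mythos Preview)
Your proof is correct and is the standard local argument; the paper does not write out its own proof but simply cites \cite[Lemma~5.1]{mengfai}, and your approach is essentially what that reference does. One could streamline slightly by noting that any normal subgroup of order $p$ in a $p$-group is automatically central, but your $\textup{Aut}(P)$ argument is equivalent and perfectly fine.
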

\begin{proof}
   The proof is the same as \cite[Lemma 5.1]{mengfai}.
\end{proof}
\begin{prop}\label{kida-for-non-primitive} Assume that $\Sel^{\vec s}(E/K_\infty)$ is $\Lambda$-cotorsion. 
    Assume that $\theta(X_{\Sigma'}^{\vec s}(E/K_\infty))\le 1$. Then we have
    \begin{align*}
        \lambda(X_{\Sigma'}^{\vec s}(E/K_\infty))&=[K_\infty:L_\infty]\cdot\lambda(X_{\Sigma'}^{\vec s}(E/L_\infty))\\
        \mu(X_{\Sigma'}^{\vec s}(E/K_\infty))&=[K_\infty:L_\infty]\cdot\mu(X_{\Sigma'}^{\vec s}(E/L_\infty)).
    \end{align*}
\end{prop}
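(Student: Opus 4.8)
The plan is to run Iwasawa's Herbrand-quotient argument (\cite[\S9]{IwasawaRH}, \cite{HachimoriMatsuno}, \cite{hachimori-sharifi}) as in the unramified case \cite[Proposition~5.2]{mengfai}, the only novelty being the local input at supersingular primes. First I would reduce to the case $[K_\infty:L_\infty]=p$. As $\Gal(K/L)$, hence also $H_L\colonequals\Gal(K_\infty/L_\infty)$, is a finite $p$-group, it admits a normal series $H_L=C_0\supsetneq C_1\supsetneq\dots\supsetneq C_k=1$ with $C_i\triangleleft C_{i-1}$ of index $p$; the fixed fields $M^{(i)}\colonequals K_\infty^{C_i}$ form a tower $L_\infty=M^{(0)}\subset M^{(1)}\subset\dots\subset M^{(k)}=K_\infty$ of cyclic degree-$p$ extensions. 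All standing hypotheses descend along it: conditions (S1)--(S3) because $e_u(M^{(i)}/F')\mid e_u(K/F')$ is again not divisible by $p^2-1$ and an intermediate local field still lies inside the relevant compositum with $\Q_{p,\infty}$; $\Lambda$-cotorsion of $\Sel^{\vec s}(E/M^{(i)})$ by the analogue of \cref{MFL4.1} for non-primitive Selmer groups (which holds by the same proof); finiteness of $H^j(H,\Sel^{\vec s}_{\Sigma'})$ by \cref{pro-4.6}\ref{4.6.c}; and $\theta\le1$ because, writing $0\to X[p^\infty]\to X\to Z\to0$ with $Z=X/X[p^\infty]$ finitely generated over $\Z_p$, the coinvariants $X_C$ (for $C\cong\Z/p$) form an extension of the $\Z_p$-module $Z_C$ by a quotient of $(X[p^\infty])_C$, which $p$ kills up to a finite group. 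Keeping $\Sigma'$ fixed at every level, the asserted equalities telescope, so I assume $H_L\cong\Z/p$ from now on and write $X\colonequals X^{\vec s}_{\Sigma'}(E/K_\infty)$.

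Dualising the non-primitive analogue of \cref{MFL4.1} yields a surjection $X_{H_L}\twoheadrightarrow X^{\vec s}_{\Sigma'}(E/L_\infty)$ with finite kernel, so these two $\Lambda$-modules share the same $\mu$- and $\lambda$-invariants. For the $\mu$-statement: $X$ is $\Lambda$-torsion (\cref{pro-4.6}) with $\theta(X)\le1$ by hypothesis, and $H^j(H_L,X)$ is finitely generated over $\Z_p$ for $j\ge1$ by \cref{pro-4.6}\ref{4.6.c}, so \cref{hachimorio-sharifi-2} gives both $h_{H_L}(X[p^\infty])=1$ and $\mu(X)=p\,\mu(X_{H_L})=p\,\mu(X^{\vec s}_{\Sigma'}(E/L_\infty))$, as required.

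For the $\lambda$-statement I compute the Herbrand quotient $h_{H_L}(X)$. Applying multiplicativity of Herbrand quotients to the Pontryagin dual of \cref{pro-4.6}\ref{4.6.a2}, $h_{H_L}(X)$ equals $h_{H_L}\bigl(H^1(G_\Sigma(K_\infty),E[p^\infty])\bigr)$ divided by $\prod_{w\in(\Sigma-\Sigma')(K_\infty)}h_{H_L}\bigl(J_w(E/K_\infty)\bigr)$. The first factor is $1$ by Greenberg's global Euler-characteristic computation \cite[\S3.2]{Greenberg1999}, using $H^0(G_\Sigma(K_\infty),E[p^\infty])=H^2(G_\Sigma(K_\infty),E[p^\infty])=0$; each local factor is also $1$: at supersingular $p$-adic places by \cref{supersingular-herbrand} with trivial character, and at the remaining places by \cite[\S3.2]{Greenberg1999} together with \cite[\S4]{HachimoriMatsuno} --- here one uses that a place in $\Sigma-\Sigma'$ has ramification index in $K/L$ prime to $p$ by the definition of $\Sigma'$, so $H_L$ is unramified there. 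Hence $h_{H_L}(X)=1$, and therefore $h_{H_L}(Z)=h_{H_L}(X)\big/h_{H_L}(X[p^\infty])=1$ for the $\Z_p$-free quotient $Z$. Since $Z$ is a finitely generated $\Z_p$-free $\Z_p[\Z/p]$-module, writing $Z\otimes\Q_p\cong\Q_p^{\,a}\oplus\Q_p(\zeta_p)^{\,b}$ one computes $h_{H_L}(Z)=p^{\,a-b}$, so $a=b$. Using that the $\lambda$-invariant of a torsion $\Lambda$-module is the $\Z_p$-rank of its maximal $\Z_p$-free quotient, and that $(X[p^\infty])_{H_L}$ is $\Z_p$-torsion, I conclude $\lambda(X)=\Z_p\textup{-rank}(Z)=a+(p-1)b=pa=p\,\Z_p\textup{-rank}(Z_{H_L})=p\,\lambda(X_{H_L})=p\,\lambda(X^{\vec s}_{\Sigma'}(E/L_\infty))$.

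The main obstacle is the exact evaluation of the local Herbrand quotients $h_{H_L}(J_w(E/K_\infty))$: the argument succeeds only because each of them is exactly $1$, which is precisely why the wildly ramified non-$p$-adic places must be collected into $\Sigma'$ and one passes to the non-primitive Selmer group (in the primitive setting these places produce the $(e_v-1)$-terms of the Kida formula). The genuinely new ingredient beyond \cite{mengfai} is the supersingular local computation \cref{supersingular-herbrand}, which itself rests on the $\Lambda[G]$-freeness of plus/minus Iwasawa cohomology (\cref{cor:Lambda-g}) proved without a norm-compatible system of points. A more routine point demanding care is the descent of all hypotheses --- especially $\theta\le1$ --- along the d\'evissage tower, as sketched above.
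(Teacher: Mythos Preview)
Your proof is correct and follows essentially the same approach as the paper: reduce to the case $[K_\infty:L_\infty]=p$, compare $X_{H_L}$ with $X^{\vec s}_{\Sigma'}(E/L_\infty)$ via the non-primitive control diagram, obtain the $\mu$-formula from \cref{hachimorio-sharifi-2}, and deduce the $\lambda$-formula by showing $h_{H_L}(X)=1$ through \cref{pro-4.6}\ref{4.6.a2}, Greenberg's global and local computations, and \cref{supersingular-herbrand}. You are more explicit than the paper about the d\'evissage step and the descent of the hypotheses along the tower (the paper simply asserts ``it suffices'' without comment), and your final $\Z_p[\Z/p]$-module count unpacks the formula the paper cites from \cite{hachimori-sharifi}.
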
\begin{proof}
The proof of this proposition is analogous to \cite[Theorem 2.1]{hachimori-sharifi}. Note that it suffices to prove the proposition for the case that $[K_\infty:L_\infty]=p$. \Cref{pro-4.6}\ref{4.6.a1} implies that $\Sel_{\Sigma'}^{\vec s}(E/K_\infty)$ is $\Lambda$-cotorsion.
As before, let $H_L\le G$ be a finite subgroup with fixed field $L_\infty$, and consider the following commutative diagram:
\[
    \begin{tikzcd}
    0 \arrow[r] & \Sel_{\Sigma'}^{\vec s}(E/L_\infty) \arrow[r] \arrow[d]           & {H^1(G_\Sigma(L_\infty),E[p^\infty])} \arrow[r] \arrow[d, "\res"]  & \displaystyle\bigoplus_{u\in(\Sigma-\Sigma')(L_\infty)}J_u(E/L_\infty) \arrow[r] \arrow[d, "\ell=\oplus\ell_u"] & 0 \\
    0 \arrow[r] & \Sel_{\Sigma'}^{\vec s}(E/K_\infty)^{H_L} \arrow[r]                     & {H^1(G_\Sigma(K_\infty),E[p^\infty])^{H_L}} \arrow[r]&\displaystyle\bigoplus_{u\in(\Sigma-\Sigma')(K_\infty)}J_u(E/K_\infty)^{H_L}
    \end{tikzcd}
    \]
Note that the top row is exact by \cref{pro-4.6}\ref{4.6.a2}. As the middle vertical map is an isomorphism and the right vertical map is surjective (compare with the proof of \cref{prop:free-resolution}), the restriction 
\[\Sel^{\vec s}_{\Sigma'}(E/L_\infty)\to \Sel^{\vec s}_{\Sigma'}(E/K_\infty)^{H_L}\]
is injective with finite cokernel. In particular,
\[\lambda((X^{\vec s}_{\Sigma'}(E/K_\infty))^{H_L})=\lambda(X_{\Sigma'}^{\vec s}(E/L_\infty)),\quad \mu((X^{\vec s}_{\Sigma'}(E/K_\infty))^{H_L})=\mu(X_{\Sigma'}^{\vec s}(E/L_\infty)).\]
\Cref{pro-4.6}\ref{4.6.b} implies that $H^i(H_L,\Sel_{\Sigma'}^{\vec s}(E/K_\infty))$ is finite for all $i>0$. Therefore, $H^i(H_L,X_{\Sigma'}^{\vec s}(E/K_\infty))$ is finite for all $i>0$. In particular it has vanishing $\mu$-invariant. We can now apply \cref{hachimorio-sharifi-2} to conclude that
\begin{align}\label{herbrand-p-tor}h(X^{\vec s}_{\Sigma'}(E/K_\infty){[p^\infty]})=1, \quad \mu(X_{\Sigma'}^{\vec s}(E/K_\infty))=p\mu(X_{\Sigma'}^{\vec s}(E/L_\infty))\end{align}
which proves the claim on $\mu$-invariants.

It remains to show the claim on $\lambda$-invariants. Using again \cref{pro-4.6}\ref{4.6.a2} we deduce that 
\[h(\Sel_{\Sigma'}^{\vec s}(E/K_\infty))=\frac{h(H^1(G_\Sigma(K_\infty),E[p^\infty]))}{h\left(\bigoplus_{v\in (\Sigma-\Sigma')(K_\infty)}J_v(E/K_\infty)\right)}\]
By \cite[3.2 A und B]{Greenberg1999}, $h(H^1(G_\Sigma(K_\infty),E[p^\infty]])=h(J_v(E/K_\infty))=1$ for all places in $(\Sigma_1-\Sigma')(K_\infty)$ and for all ordinary primes above $p$. For the supersingular primes $h(J_v(E/K_\infty))=1$ by \cref{supersingular-herbrand}. Thus,
\begin{equation} \label{eq:herbrand-X}
    1=h(X_{\Sigma'}^{\vec s}(E/K_\infty)).
\end{equation}

Let $Z=X_{\Sigma'}^{\vec s}(E/K_\infty)/X_{\Sigma'}^{\vec s}(E/K_\infty){[p^\infty]}$. Then $Z$ is $\Z_p$-free and $\lambda(Z)=\lambda(X_{\Sigma'}^{\vec s}(E/K_\infty))$. For the $H_L$-coinvariants $Z_{H_L}$, we have $\lambda(Z_{H_L})=\lambda(X_{\Sigma'}^{\vec s}(E/L_\infty))$ and $h(Z)=1$ by \eqref{herbrand-p-tor} and \eqref{eq:herbrand-X}. We can now conclude as in \cite{hachimori-sharifi} that 
\[\lambda(Z)=p(\lambda(Z_{H_L})-v_p(h(Z)))+v_p(h(Z))=p\lambda(Z_{H_L}),\]
which concludes the proof.
\end{proof}
\begin{thm} \label{thm:Kida}
    Assume that $\Sel^{\vec s}(E/K_\infty)$ is $\Lambda$-cotorsion and that $\theta(X^{\vec s}(E/K_\infty))\le 1$. 
    Let $P_1\subset \Sigma'$ be the primes where $E$ has split multiplicative reduction and let $P_2$ be the set of primes in $\Sigma'$ where $E$ has good reduction and $E(K)[p]\neq 0$. Then we have
    \[\lambda(X^{\vec s}(E/K_\infty))=[K_\infty:L_\infty]\lambda(X^{\vec s}(E/L_\infty))+\sum_{v\in P_1}(e_v-1)+2\sum_{w\in P_2}(e_v-1)\] and
    \[\mu(X^{\vec s}(E/K_\infty))=[K_\infty:L_\infty]\mu(X^{\vec s}(E/L_\infty)).\]
\end{thm}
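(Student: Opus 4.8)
The plan is to deduce the theorem from the non-primitive Kida formula \cref{kida-for-non-primitive} together with the comparison between primitive and non-primitive Selmer groups in \cref{pro-4.6}\ref{4.6.a1}, so that the whole statement reduces to a computation of the $\lambda$- and $\mu$-invariants of the local terms $H^1(\bullet_{\infty,w},E[p^\infty])$ at the places $w\in\Sigma'$. First I would check that the hypotheses descend to $L$: since $\Sel^{\vec s}(E/K_\infty)$ is $\Lambda$-cotorsion, so is $\Sel^{\vec s}(E/L_\infty)$ by \cref{MFL4.1}, and $\theta(X^{\vec s}_{\Sigma'}(E/K_\infty))=\theta(X^{\vec s}(E/K_\infty))\le 1$ by \cref{pro-4.6}\ref{4.6.b}. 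Thus \cref{pro-4.6} is available over both $K_\infty$ and $L_\infty$, and \cref{kida-for-non-primitive} supplies the $\lambda$- and $\mu$-identities for the non-primitive modules $X^{\vec s}_{\Sigma'}(E/K_\infty)$ and $X^{\vec s}_{\Sigma'}(E/L_\infty)$.

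Next I would dualise the short exact sequence of \cref{pro-4.6}\ref{4.6.a1}, over $K_\infty$ and over $L_\infty$. The local terms $H^1(\bullet_{\infty,w},E[p^\infty])$ with $w\nmid p$ have vanishing $\Lambda$-corank by \cite[Proposition~2]{Greenberg1989-padic}, so these are short exact sequences of finitely generated $\Lambda$-torsion modules, and additivity of the Iwasawa invariants writes $\lambda(X^{\vec s}_{\Sigma'}(E/\bullet_\infty))$, resp. $\mu$, as $\lambda(X^{\vec s}(E/\bullet_\infty))$, resp. $\mu$, plus the sum of the corresponding local invariants over $\Sigma'(\bullet_\infty)$. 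Substituting the two non-primitive identities and cancelling the $X^{\vec s}_{\Sigma'}$-terms leaves
\[\lambda(X^{\vec s}(E/K_\infty))=[K_\infty:L_\infty]\,\lambda(X^{\vec s}(E/L_\infty))+[K_\infty:L_\infty]\,C_L-C_K,\]
where $C_\bullet=\sum_{w\in\Sigma'(\bullet_\infty)}\lambda(H^1(\bullet_{\infty,w},E[p^\infty])^\lor)$, and the analogous formula with $\lambda$ replaced by $\mu$ and $C_\bullet$ by $D_\bullet=\sum_{w\in\Sigma'(\bullet_\infty)}\mu(H^1(\bullet_{\infty,w},E[p^\infty])^\lor)$. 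Since every $w\in\Sigma'$ is non-$p$-adic, $H^1(\bullet_{\infty,w},E[p^\infty])$ is cofinitely generated over $\Z_p$ (compare \cite[\S4]{HachimoriMatsuno}), so $D_K=D_L=0$; this already establishes the asserted $\mu$-formula.

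For the $\lambda$-invariant it remains to identify $[K_\infty:L_\infty]\,C_L-C_K$ with $\sum_{v\in P_1}(e_v-1)+2\sum_{v\in P_2}(e_v-1)$. This is a purely local computation at the non-$p$-adic places of $\Sigma'$, where the supersingular signed conditions play no role; it is therefore the same as in Hachimori--Matsuno's Kida formula for the good ordinary case. Concretely, for $v\in\Sigma'$ one has $\mu_p\subset L_v$, and since $v$ is unramified in the cyclotomic $\Z_p$-direction, $e_v=e_v(K/L)=e_v(K_\infty/L_\infty)$; the $\lambda$-invariant of $H^1(K_{\infty,w},E[p^\infty])^\lor$ for $w\mid v$ vanishes unless $E$ has split multiplicative reduction at $w$ or good reduction at $w$ with a point of order $p$, and summing over the primes of $K_\infty$ above $v$ and subtracting $[K_\infty:L_\infty]$ times the corresponding sum over $L_\infty$ contributes $e_v-1$ if $v\in P_1$, $2(e_v-1)$ if $v\in P_2$, and $0$ otherwise; I would cite \cite{HachimoriMatsuno} for the details. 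The only genuinely delicate step, and hence the main obstacle, is exactly this local bookkeeping of split primes, residue degrees and ramification indices in $K_\infty/L_\infty$; but since it is insensitive to the places above $p$, it carries over verbatim from loc.\,cit., so the overall argument is short.
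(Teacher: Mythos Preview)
Your proposal is correct and follows essentially the same route as the paper: both pass to the non-primitive Selmer group via \cref{pro-4.6}\ref{4.6.a1}, invoke \cref{pro-4.6}\ref{4.6.b} to transfer the $\theta\le 1$ hypothesis, apply \cref{kida-for-non-primitive}, and then reduce to the local $\lambda$- and $\mu$-computations at the non-$p$-adic places in $\Sigma'$, which are handled exactly as in \cite{HachimoriMatsuno} and \cite[Proposition~5.2]{mengfai}. If anything, your write-up is slightly more explicit than the paper's about why the cotorsion hypothesis descends to $L_\infty$ (via \cref{MFL4.1}) and why the local $\mu$-contributions vanish.
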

\begin{proof}
    By \cref{pro-4.6}\ref{4.6.a1} we have that $\Sel_{\Sigma'}^{\vec s}(E/K_\infty)$ is $\Lambda$-cotorsion and that 
    \[\lambda(X^{\vec s}(E/K_\infty))=\lambda(X_{\Sigma'}^{\vec s}(E/K_\infty))-\sum_{v\in \Sigma'(K_\infty)}\lambda(J_v(E/K_\infty)) \]
    and 
 \[\mu(X^{\vec s}(E/K_\infty))=\mu(X_{\Sigma'}^{\vec s}(E/K_\infty))-\sum_{v\in \Sigma'(K_\infty)}\mu(J_v(E/K_\infty)). \]
 By \cref{pro-4.6}\ref{4.6.b} we can apply \cref{kida-for-non-primitive} and obtain
 \begin{align*}\lambda(X^{\vec s}(E/K_\infty))&=[K_\infty:L_\infty]\lambda(X^{\vec s}(E/L_\infty))\\&+[K_\infty:L_\infty]\sum_{v\in \Sigma'(L_\infty)}\lambda(J_v(E/L_\infty))-\sum_{v\in \Sigma'(K_\infty)}\lambda(J_v(E/K_\infty)) \end{align*}
 and
  \begin{align*}\mu(X^{\vec s}(E/K_\infty))&=[K_\infty:L_\infty]\mu(X^{\vec s}(E/L_\infty))\\&+[K_\infty:L_\infty]\sum_{v\in \Sigma'(L_\infty)}\mu(J_v(E/L_\infty))-\sum_{v\in \Sigma'(K_\infty)}\mu(J_v(E/K_\infty)). \end{align*}
  The claim now follows by analysing the terms $\lambda(J_v(E/K_\infty))$ as in \cite[proof of Proposition~5.2]{mengfai} using \cite{Greenberg1989-padic} and \cite{HachimoriMatsuno} and from the fact that $\mu(J_w(E/K_\infty))=0$ \cite[Proposition 2]{Greenberg1999}.
\end{proof}

\section{Integrality of characteristic elements} \label{sec:integrality}
In this section, we generalise Lim's integrality results on characteristic elements of signed Selmer groups \cite[\S5.2]{mengfai}.

Let $\mathcal Q(\G)$ denote the total ring of quotients of the Iwasawa algebra $\Lambda(\G)$.
Let $\partial:K_1(\mathcal Q(\G))\to K_0(\Lambda(\G),\mathcal Q(\G))$ denote the connecting homomorphism in the localisation exact sequence of relative $K$-theory; for details, we refer to \cite[p.~29ff.]{Sujatha}.
For a finitely generated $\Lambda(\G)$-module $Y$ that is torsion over $\Lambda$ and has projective dimension $\mathrm{pd}_{\Lambda(\G)} Y\le1$, a characteristic element is an element $\xi_Y\in K_1(\mathcal Q(\G))$ whose image $\partial(\xi_Y)\in K_0(\Lambda(\G),\mathcal Q(\G))$ agrees with the class of $Y$ in the relative $K_0$-group.

Let $n_0$ be a large enough integer such that $\Gamma_0\colonequals\Gamma^{p^{n_0}}$ is central in $\G$, and let $\Lambda(\Gamma_0)\colonequals\Z_p\llbracket\Gamma_0\rrbracket \subset\Lambda(\G)$ denote the corresponding Iwasawa algebra.
{Recall that a $\Lambda(\Gamma_0)$-order $\mathfrak M$ in $\mathcal Q(\G)$ is called a graduated order if there exist orthogonal indecomposable idempotents $e_1,\ldots,e_t\in \mathfrak M$ such that $e_i\mathfrak M e_i$ is a maximal order in $e_i\mathcal Q(\G)e_i$ for each $i=1,\ldots t$. In particular, every maximal order is graduated \cite[Theorem~10.5.(i)]{MO}. Graduated orders over Iwasawa algebras have been studied in \cite{F-cond}.}

\begin{thm} \label{thm:integrality}
    Let $E$ be an elliptic curve satisfying (S1), (S2) and (S3), and suppose that the conditions of \cref{prop:free-resolution} hold. Let $\xi_{E,\Sigma'}$ denote a characteristic element of $X^{\vec s}_{\Sigma'}(E/K_\infty)$. Then for every {graduated} $\Lambda(\Gamma_0)$-order $\mathfrak M$ of $\mathcal Q(\G)$ containing $\Lambda(\G)$, we have
    \[\xi_{E,\Sigma'} \in \Image\left(\mathfrak M\cap \mathcal Q(\G)^\times\to K_1(\mathcal Q(\G))\right).\]
\end{thm}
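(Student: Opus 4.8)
## Proof Proposal

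The plan is to follow the strategy laid out by Lim in \cite[\S5.2]{mengfai}, which in turn rests on the algebraic machinery developed in \cite{NichiforPalvannan} and generalised to semidirect products in \cite{W,EpAC}. The starting point is \cref{prop:free-resolution}: under the stated hypotheses, $X^{\vec s}_{\Sigma'}(E/K_\infty)$ admits a free resolution of $\Lambda(\G)$-modules of length $1$, so in particular it has projective dimension at most $1$ and a characteristic element $\xi_{E,\Sigma'}\in K_1(\mathcal Q(\G))$ exists. The key structural input we need to feed into the integrality criterion is the quasi-projectivity over $\Z_p[G]$ of $X^{\vec s}_{\Sigma'}(E/K_\infty)/X^{\vec s}_{\Sigma'}(E/K_\infty)[p]$, established in \cref{sec:projectivity}, together with the absence of nonzero finite $\Lambda$-submodules from \cref{pro-4.6}\ref{4.6.d}.

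First I would recall the description of maximal $\Lambda(\Gamma_0)$-orders: since $\Gamma_0$ is central in $\G$ and $\G/\Gamma_0$ is finite, $\mathcal Q(\G)$ is a finite-dimensional semisimple algebra over the field $\mathcal Q(\Gamma_0)=\mathrm{Frac}(\Lambda(\Gamma_0))$, and a maximal $\Lambda(\Gamma_0)$-order $\mathfrak M$ containing $\Lambda(\G)$ decomposes along the Wedderburn components. The image of $\mathfrak M\cap\mathcal Q(\G)^\times$ in $K_1(\mathcal Q(\G))$ is then described componentwise via reduced norms. The task is to show that the class $\partial(\xi_{E,\Sigma'})=[X^{\vec s}_{\Sigma'}(E/K_\infty)]$ in $K_0(\Lambda(\G),\mathcal Q(\G))$ lifts to this image. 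Following \cite{W,EpAC}, this is equivalent to a condition on the $\varepsilon$-components of the module for characters $\varepsilon$ of the finite subquotients, together with a Herbrand-quotient/cohomological-triviality condition at the $p$-part of $G$ — precisely the kind of data furnished by \cref{criterion-quasi-projective} and its inputs \cref{supersingular-herbrand}, \cref{pro-4.6}\ref{4.6.c}.

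Concretely, I would proceed as follows. Step one: invoke \cref{prop:free-resolution} to get the length-$1$ free resolution and hence existence of $\xi_{E,\Sigma'}$, and note that by \cref{pro-4.6}\ref{4.6.d} the module has no nonzero finite $\Lambda$-submodule, so its $\mu$- and $\lambda$-data behave well under the relevant twists. Step two: apply the theorem from \cite{W,EpAC} (the semidirect-product generalisation of \cite[\S5]{NichiforPalvannan}), which reduces integrality at $\mathfrak M$ to: (a) for every cyclic subquotient $C=PQ$ of $G$ and every character $\varepsilon$ of $Q$, the Herbrand quotient $h_P(\Sel^{\vec s}_{\Sigma'}(E/K_\infty)^\varepsilon)=1$; and (b) the module $X^{\vec s}_{\Sigma'}(E/K_\infty)/X^{\vec s}_{\Sigma'}(E/K_\infty)[p]$ is quasi-projective over $\Z_p[G]$. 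Step three: verify (a) using the computation $h_P(\Sel^{\vec s}_{\Sigma'}(E/K_\infty)^\varepsilon)=1$ carried out in the proof of the quasi-projectivity theorem in \cref{sec:projectivity} (which combines the Herbrand-quotient computations of Greenberg \cite[3.2~A,~B]{Greenberg1999} at ordinary and archimedean places with \cref{supersingular-herbrand} at supersingular places and \cref{pro-4.6}\ref{4.6.c} for finiteness). Step four: verify (b), which is exactly the conclusion of the quasi-projectivity theorem in \cref{sec:projectivity}, applicable since $\theta(X^{\vec s}_{\Sigma'}(E/K_\infty))=\theta(X^{\vec s}(E/K_\infty))\le 1$ is not assumed here — and here I flag the one subtlety: the integrality theorem as stated does not assume $\theta\le 1$, so I would instead cite the version of the Nichifor--Palvannan/semidirect-product criterion that only requires quasi-projectivity of $X/X[p]$ together with the cohomological finiteness of \cref{pro-4.6}\ref{4.6.c} and \ref{4.6.d}, paralleling the logic of \cite[Theorem~5.7]{mengfai}. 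Step five: assemble the pieces: the criterion yields that $[X^{\vec s}_{\Sigma'}(E/K_\infty)]$ is in the image of $\partial$ restricted to $\Image(\mathfrak M\cap\mathcal Q(\G)^\times\to K_1(\mathcal Q(\G)))$, and since $\partial$ is injective on the relevant subgroup (or: any two lifts differ by an element of $K_1(\Lambda(\G))$, which is already in $\Image(\Lambda(\G)^\times)\subseteq\Image(\mathfrak M\cap\mathcal Q(\G)^\times)$), we conclude $\xi_{E,\Sigma'}\in\Image(\mathfrak M\cap\mathcal Q(\G)^\times\to K_1(\mathcal Q(\G)))$.

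The main obstacle I anticipate is bookkeeping around the semidirect-product structure: the results of \cite{NichiforPalvannan} are stated for $\G=\Gamma\times G$ a direct product, whereas here $\G=G\rtimes\Gamma$ only, and one must invoke the generalisation of \cite{W,EpAC} and check that all the hypotheses of that generalisation (centrality of $\Gamma_0$, the precise form of the quasi-projectivity and cohomological-triviality inputs) are met in our setting. A secondary point requiring care is that the Herbrand-quotient computations in \cref{sec:projectivity} were performed for $\Sel^{\vec s}_{\Sigma'}$ under the standing hypothesis $\Phi\subset\Sigma'$; one must confirm this matches the hypothesis in the statement that $\Sigma'$ contains all non-$p$-adic places of $\Sigma$ whose inertia degree in $K/F$ is divisible by $p$, i.e. $\Phi\subseteq\Sigma'$, which it does by definition of $\Phi$. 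Everything else is a matter of transcribing Lim's argument with our local computations (\cref{MFL3.9}, \cref{supersingular-herbrand}, \cref{cor:Lambda-g}) in place of the norm-compatible-point arguments available in the unramified case.
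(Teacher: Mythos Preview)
Your overall direction is right --- invoke \cref{prop:free-resolution} and feed the result into the Nichifor--Palvannan/\cite{W,EpAC} machinery --- but your Step~2 misidentifies what that machinery actually requires. The criterion in \cite{NichiforPalvannan} (and its semidirect-product generalisation in \cite{EpAC}) does \emph{not} reduce integrality to Herbrand-quotient conditions plus quasi-projectivity of $X/X[p]$ over $\Z_p[G]$; the sole structural input is the existence of a free $\Lambda(\G)$-resolution of length~$1$. Once you have $0\to\Lambda(\G)^n\xrightarrow{A}\Lambda(\G)^n\to X^{\vec s}_{\Sigma'}(E/K_\infty)\to 0$, the characteristic element is represented by the matrix $A$, and the dimension-reduction argument of \cite[Proposition~2.13]{NichiforPalvannan} (generalised to the semidirect-product setting in \cite[Proposition~6.2]{EpAC}, using the description of maximal orders from \cite[Corollary~4.15]{W}) shows directly that the class lands in $\Image(\mathfrak M\cap\mathcal Q(\G)^\times\to K_1(\mathcal Q(\G)))$, exactly as in \cite[Theorem~1]{NichiforPalvannan} or \cite[Theorem~7.4]{EpAC}. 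That is the entire proof in the paper.

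Your own flag in Step~4 --- that the theorem does not assume $\theta\le1$ --- should have told you that the quasi-projectivity route is a detour. The quasi-projectivity theorem in \cref{sec:projectivity} genuinely needs $\theta\le1$ and is a separate result; it is \emph{not} an input to \cref{thm:integrality}. Your Steps~3--4 (verifying Herbrand quotients $h_P(\Sel^{\vec s}_{\Sigma'}(E/K_\infty)^\varepsilon)=1$ and quasi-projectivity) are therefore unnecessary, and the vague workaround you propose in Step~4 is not needed because the correct argument never passes through quasi-projectivity in the first place. Drop everything from Step~2 onward except ``apply the argument of \cite[Theorem~1]{NichiforPalvannan}/\cite[Theorem~7.4]{EpAC} to the free resolution of length~$1$ supplied by \cref{prop:free-resolution}''.
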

\begin{proof}
    {The skew fields occurring in the Wedderburn decomposition of $\mathcal Q(\G)$ are given explicitly in \cite[Theorem~4.12]{W}. This shows that $\mathcal Q(\G)$ meets the conditions of \cite[Proposition~2.7]{F-cond}, which provides a description of graduated orders.}
    Moreover, it is shown in \cite[Proposition~6.2]{EpAC} that Nichifor--Palvannan's dimension reduction argument \cite[Proposition~2.13]{NichiforPalvannan} can be generalised to such rings.
    
    Since \cref{prop:free-resolution} shows that $X^{\vec s}_{\Sigma'}(E/K_\infty)$ admits a free resolution of length $1$, the assertion can be proven by the same argument as in {\cite[Corollary~4.3]{F-cond}, which follows along the lines of \cite[Theorem~1]{NichiforPalvannan} and \cite[Corollary~7.6]{EpAC}}.
\end{proof}

\begin{cor}
    Keep the assumptions of \cref{thm:integrality}, and further assume that $\Phi$ contains no places at which $E$ has either split multiplicative reduction or good reduction with $E(K_{\infty,w})[p]\ne 0$ (i.e. $P_1=P_2=\emptyset$ in the notation of \cref{thm:Kida}). Let $\xi_{E}$ denote a characteristic element of $X^{\vec s}(E/K_\infty)$. Then for every {graduated} $\Lambda(\Gamma_0)$-order $\mathfrak M$ containing $\Lambda(\G)$, we have
    \[\xi_{E} \in \Image\left(\mathfrak M\cap \mathcal Q(\G)^\times\to K_1(\mathcal Q(\G))\right).\] 
\end{cor}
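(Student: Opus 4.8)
The plan is to reduce the statement to \cref{thm:integrality} by showing that the additional hypothesis forces $X^{\vec s}(E/K_\infty)$ to be isomorphic, as a $\Lambda(\G)$-module, to the non-primitive dual Selmer group $X^{\vec s}_{\Sigma'}(E/K_\infty)$. Since the assumptions of \cref{thm:integrality} include those of \cref{prop:free-resolution}, in particular $\Sel^{\vec s}(E/K_\infty)$ is $\Lambda$-cotorsion, so \cref{pro-4.6} applies. By \cref{pro-4.6}\ref{4.6.a1} there is a short exact sequence
\[0\to \Sel^{\vec s}(E/K_\infty)\to \Sel^{\vec s}_{\Sigma'}(E/K_\infty)\to \bigoplus_{w\in\Sigma'(K_\infty)} H^1(K_{\infty,w},E[p^\infty])\to 0;\]
since $\Sigma'\subseteq\Sigma_1$ consists of non-$p$-adic places, we have $B_w=0$ and hence $H^1(K_{\infty,w},E[p^\infty])=J_w(E/K_\infty)$ for every $w\in\Sigma'(K_\infty)$. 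Applying Pontryagin duality yields an exact sequence
\[0\to N\to X^{\vec s}_{\Sigma'}(E/K_\infty)\to X^{\vec s}(E/K_\infty)\to 0,\qquad N\colonequals\bigoplus_{w\in\Sigma'(K_\infty)} J_w(E/K_\infty)^\lor,\]
so that $N$ is a $\Lambda$-submodule of $X^{\vec s}_{\Sigma'}(E/K_\infty)$.

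The next step is to prove that $N=0$. For $w\in\Sigma'(K_\infty)$, a non-$p$-adic place, the module $J_w(E/K_\infty)=H^1(K_{\infty,w},E[p^\infty])$ has $\Lambda$-corank zero by \cite[Proposition~2]{Greenberg1989-padic} and $\mu$-invariant zero by \cite[Proposition~2]{Greenberg1999}; moreover, the analysis of $\lambda(J_w(E/K_\infty))$ used in the proof of \cref{thm:Kida} (see \cite{HachimoriMatsuno,Greenberg1989-padic}) shows that $\lambda(J_w(E/K_\infty))=0$, precisely because the hypothesis $P_1=P_2=\emptyset$ means that $E$ has neither split multiplicative reduction at $w$ nor good reduction at $w$ with $E(K_{\infty,w})[p]\neq 0$. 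Thus each $J_w(E/K_\infty)$ is a finitely generated torsion $\Lambda$-module with vanishing $\mu$- and $\lambda$-invariants, hence pseudo-null, hence finite (as $\Lambda\cong\Z_p\llbracket X\rrbracket$ is a two-dimensional regular local ring); so $N$ is finite. But $X^{\vec s}_{\Sigma'}(E/K_\infty)$ has no nonzero finite $\Lambda$-submodule by \cref{pro-4.6}\ref{4.6.d}, and therefore $N=0$. Consequently the displayed sequence collapses to an isomorphism $X^{\vec s}_{\Sigma'}(E/K_\infty)\xrightarrow{\ \sim\ }X^{\vec s}(E/K_\infty)$.

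Finally I would conclude as follows. By \cref{prop:free-resolution} the module $X^{\vec s}_{\Sigma'}(E/K_\infty)$ admits a free resolution of $\Lambda(\G)$-modules of length $1$; transporting it along the isomorphism just obtained, so does $X^{\vec s}(E/K_\infty)$, so in particular this module has projective dimension at most $1$ over $\Lambda(\G)$ and a characteristic element $\xi_E$ exists. Since $X^{\vec s}(E/K_\infty)\cong X^{\vec s}_{\Sigma'}(E/K_\infty)$, the two modules define the same class in $K_0(\Lambda(\G),\mathcal Q(\G))$, so any characteristic element $\xi_E$ of $X^{\vec s}(E/K_\infty)$ is also a characteristic element of $X^{\vec s}_{\Sigma'}(E/K_\infty)$; the assertion then follows immediately from \cref{thm:integrality}. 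I do not expect a genuine obstacle here: the only non-formal ingredient is the local vanishing $\lambda(J_w(E/K_\infty))=0$ under the hypothesis $P_1=P_2=\emptyset$, which is exactly the reduction-type computation already performed for \cref{thm:Kida}, while the passage from ``$N$ finite'' to ``$N=0$'' via \cref{pro-4.6}\ref{4.6.d} and the identification of characteristic elements of isomorphic modules are purely formal.
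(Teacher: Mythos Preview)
Your argument is essentially the same as the paper's, and is correct once one point is made explicit. The paper's proof sets $\Sigma'\colonequals\Phi$ before invoking \cref{pro-4.6}\ref{4.6.a1}; you should do the same. The additional hypothesis in the corollary constrains the reduction type of $E$ only at places lying over $\Phi$, not at places lying over an arbitrary $\Sigma'\supseteq\Phi$ allowed by \cref{thm:integrality}. If $\Sigma'$ strictly contains $\Phi$, there may well be $w\in\Sigma'(K_\infty)$ with split multiplicative reduction or with $E(K_{\infty,w})[p]\neq 0$, so the step ``$\lambda(J_w(E/K_\infty))=0$ for all $w\in\Sigma'(K_\infty)$'' would fail. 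With the choice $\Sigma'=\Phi$ your computation goes through verbatim, and the rest of the argument (finiteness of $N$, then $N=0$ via \cref{pro-4.6}\ref{4.6.d}, then transport of the free resolution and of characteristic elements) is exactly the mechanism behind the paper's assertion that $[X^{\vec s}(E/K_\infty)]=[X^{\vec s}_{\Sigma'}(E/K_\infty)]$ in $K_0(\Lambda(\G),\mathcal Q(\G))$. In fact you obtain the slightly stronger statement that the two modules are isomorphic, not merely equal in $K_0$; this costs nothing extra and makes the deduction from \cref{thm:integrality} immediate.
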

\begin{proof}
    The proof is identical to \cite[Proposition~5.5]{mengfai}: indeed, the short exact sequence of \cref{pro-4.6}\ref{4.6.a1} combined with the corank analysis in the proof of \cite[Proposition~5.2]{mengfai} shows that
    \[\left[X^{\vec s}(E/K_\infty)\right]=\left[X^{\vec s}_{\Sigma'}(E/K_\infty)\right]\in K_0(\Lambda(\G),\mathcal Q(\G))\]
    under setting $\Sigma'\colonequals\Phi$. The claim now follows from \cref{thm:integrality}.
\end{proof}

\section{Behavior of Iwasawa invariants under congruences} \label{sec:cong}
Let $E_1$ and $E_2$ be elliptic curves defined over $F$ and assume that $E_1[p]\cong E_2[p]$ as $G_K$-modules. 
{Let $\Sigma$ be a finite set of places of $F$ satisfying \eqref{eq:Sigma-conditions}, containing the places at which $E_1$ or $E_2$ has bad reduction.}
Recall that $\Sigma_1=\Sigma-\Sigma_p$ is the set of non-$p$-adic places in $\Sigma$.
\begin{def1} \label{def:Sel-E-p}
Assume that $X^{\vec s}(E_i/K_\infty)$ is $\Lambda$-torsion for $1\le i\le 2$.
{For $E\in\{E_1,E_2\}$,} we define the $p$-primary signed Selmer group as
\[\Sel^{\vec s}(E[p]/K_\infty)=\ker\left(H^1(G_\Sigma(K_\infty),E[p])\to \bigoplus_{v\in \Sigma(K_\infty)}J_v(E[p]/K_\infty)\right),\]
where $J_v(E[p]/K_\infty)$ is defined by case distinction: 
\[J_v(E[p]/K_\infty) = \begin{cases}
    H^1(K_{\infty,v},E[p]) & \text{if } v\in \Sigma_1(K_\infty), \\
    \frac{H^1(K_{\infty,v},E[p])}{E(K_{\infty,v})/pE(K_{\infty,v})} & \text{if } v\in \Sigma^\ord(K_\infty), \\
    \frac{H^1(K_{\infty,v},E[p])} {\widehat{E}^\pm(K_{\infty,v})/p\widehat{E}^\pm(K_{\infty,v})} & \text{if } v\in \Sigma^\ss(K_\infty).
\end{cases}\]
We define the non-primitive version as
\[\Sel_{\Sigma_1}^{\vec s}(E[p]/K_\infty)=\ker\left(H^1(G_\Sigma(K_\infty),E[p])\to \bigoplus_{v\in (\Sigma-\Sigma_1)(K_\infty)}J_v(E[p]/K_\infty)\right),\]
\end{def1}
\begin{lemma}
\label{lemma-p-aussen-innen}
    Then there is a natural isomorphism
    \[\Sel_{\Sigma_1}^{\vec s}(E_i[p]/K_\infty)\cong \Sel^{\vec s}_{\Sigma_1}(E_i/K_\infty)[p].\]
\end{lemma}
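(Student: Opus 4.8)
The plan is to run the standard comparison $\Sel(E[p])\cong\Sel(E[p^\infty])[p]$ via the Kummer sequence $0\to E_i[p]\to E_i[p^\infty]\xrightarrow{p}E_i[p^\infty]\to0$, the only non-formal inputs being the global vanishing $H^0(G_\Sigma(K_\infty),E_i[p^\infty])=0$ and an injectivity statement for the local conditions at the $p$-adic places. The argument is insensitive to the index, so I fix $i$ and write $E=E_i$. \emph{Global step.} Taking $G_\Sigma(K_\infty)$-cohomology of the Kummer sequence and using that $H^0(G_\Sigma(K_\infty),E[p^\infty])=E(K_\infty)[p^\infty]=0$ — which holds because $E(K_\infty)[p^\infty]$ embeds into $E(K_{\infty,w})[p^\infty]$ for $w$ a supersingular $p$-adic place of $K_\infty$ (one exists by (S2)) and the latter vanishes by \cref{lem:torsion-freeness}, exactly as in the proof of \cref{Hi-H-Sel-finite} — one obtains a canonical isomorphism $H^1(G_\Sigma(K_\infty),E[p])\xrightarrow{\ \sim\ }H^1(G_\Sigma(K_\infty),E[p^\infty])[p]$.

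\emph{Local step.} Next I would compare the local conditions at the places $v\in(\Sigma-\Sigma_1)(K_\infty)$ that cut out the non-primitive Selmer groups, i.e. the $p$-adic places. For each such $v$ the coefficient map $E[p]\hookrightarrow E[p^\infty]$ carries the local condition defining $J_v(E[p]/K_\infty)$ into the one defining $J_v(E/K_\infty)$ and induces an injection $b_v\colon J_v(E[p]/K_\infty)\hookrightarrow J_v(E/K_\infty)$. For $v$ ordinary this is formal from the Kummer sequences: one has $J_v(E[p]/K_\infty)=H^1(K_{\infty,v},E[p])\big/\bigl(E(K_{\infty,v})/pE(K_{\infty,v})\bigr)\cong H^1(K_{\infty,v},E)[p]$, while $J_v(E/K_\infty)\cong H^1(K_{\infty,v},E)[p^\infty]$ by \eqref{eq:J-ordinary}, and under these identifications $b_v$ is the inclusion $H^1(K_{\infty,v},E)[p]\hookrightarrow H^1(K_{\infty,v},E)[p^\infty]$. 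For $v$ supersingular the desired injection is precisely \cref{ss-p-primary}, applicable since $p^2-1\nmid e(K_v/\Q_p)$ by (S3.i) and $K_{\infty,v}$ is the cyclotomic $\Z_p$-extension of $K_v$.

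\emph{Diagram chase.} Combining these two steps with the image under the left-exact functor $\Hom(\Z/p,-)$ of the defining sequence $0\to\Sel^{\vec s}_{\Sigma_1}(E/K_\infty)\to H^1(G_\Sigma(K_\infty),E[p^\infty])\to\bigoplus_vJ_v(E/K_\infty)$ yields a commutative diagram with exact rows
\[\begin{tikzcd}[column sep=scriptsize]
0\arrow[r] & \Sel^{\vec s}_{\Sigma_1}(E[p]/K_\infty)\arrow[r]\arrow[d,"c"] & H^1(G_\Sigma(K_\infty),E[p])\arrow[r]\arrow[d,"\wr"] & \displaystyle\bigoplus_{v\in(\Sigma-\Sigma_1)(K_\infty)}J_v(E[p]/K_\infty)\arrow[d,hook]\\
0\arrow[r] & \Sel^{\vec s}_{\Sigma_1}(E/K_\infty)[p]\arrow[r] & H^1(G_\Sigma(K_\infty),E[p^\infty])[p]\arrow[r] & \Bigl(\displaystyle\bigoplus_{v\in(\Sigma-\Sigma_1)(K_\infty)}J_v(E/K_\infty)\Bigr)[p]
\end{tikzcd}\]
in which the middle vertical map is the isomorphism of the global step and the right one is injective by the local step. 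A short diagram chase then gives that the induced map $c$ on kernels is an isomorphism: it is injective because the middle map is; and it is surjective because any class in $\Sel^{\vec s}_{\Sigma_1}(E/K_\infty)[p]$ lifts through the middle isomorphism to some $\tilde x\in H^1(G_\Sigma(K_\infty),E[p])$ whose image in $\bigoplus_vJ_v(E[p]/K_\infty)$ is killed by the injective right vertical map and hence already vanishes, so that $\tilde x\in\Sel^{\vec s}_{\Sigma_1}(E[p]/K_\infty)$ and $c(\tilde x)$ is the original class. All maps in sight are canonical, so $c$ is natural, and the argument applies verbatim to each $E_i$. The only genuinely non-formal ingredient is the local injectivity at supersingular $p$-adic places, which has already been isolated as \cref{ss-p-primary} and ultimately rests on the $p$-torsion freeness of \cref{lem:torsion-freeness}; everything else is routine snake-lemma bookkeeping, so I do not anticipate a serious obstacle.
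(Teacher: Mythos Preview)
Your proposal is correct and follows essentially the same approach as the paper's proof: set up the commutative diagram comparing the $E[p]$- and $E[p^\infty]$-level defining sequences, observe that the middle vertical map is an isomorphism (from $E(K_\infty)[p^\infty]=0$), that the right vertical map is injective (ordinary places via the Kummer identification, supersingular places via \cref{ss-p-primary}), and conclude by a diagram chase/snake lemma. Your write-up is in fact more explicit than the paper's—you spell out the global step and the ordinary local step, whereas the paper simply invokes \cref{pro-4.6}\ref{4.6.a2} for the diagram and \cite{GreenbergVatsal} for the ordinary injectivity—and you correctly observe that the $\Lambda$-torsion hypothesis plays no role in the actual argument.
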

\begin{proof}
    We have a commutative diagram
      \[\begin{tikzcd}
    0 \arrow[r] & \Sel_{\Sigma_1}^{\vec s}(E_i[p]/K_\infty) \arrow[r] \arrow[d] & {H^1(G_{\Sigma}(K_\infty),E_i[p])} \arrow[r] \arrow[d, Rightarrow, no head] & {\displaystyle\bigoplus_{v\in (\Sigma-\Sigma_1)(K_\infty)} J_v(E_i[p] / {K_{\infty}})} \arrow[d,"\oplus_vd_v"]  \\
    0 \arrow[r] & \Sel^{\vec s}_{\Sigma_1}(E_i/K_\infty)[p] \arrow[r] & {H^1(G_{\Sigma}(K_\infty),E_i[p^\infty])[p]} \arrow[r]  & {\displaystyle\bigoplus_{v\in (\Sigma-{\Sigma_1})(K_\infty)} J_v(E_i[p^\infty] / {K_{\infty}})}
    \end{tikzcd}\]
    Here $J_v(E_i[p^\infty] / K_{\infty,w})$ is defined analogously to \cref{def:Sel-E-p}.
    One can easily check that $d_v$ is injective for ordinary primes $v$ (see also \cite[proof of Proposition 2.8]{GreenbergVatsal}). For supersingular $v$, \cref{ss-p-primary} implies that $d_v$ is injective. Thus, the left vertical map is surjective by the snake Lemma.
\end{proof}
\begin{prop}
\label{formula-for-invariants-non-primitive}Assume that $E_1[p]\cong E_2[p]$ as $G_K$-modules.
    Assume that $X_{\Sigma_1}^{\vec s}(E_1/K_\infty)$ is $\Lambda$-torsion and finitely generated over $\Z_p$. Then the same is true for $X_{\Sigma_1}^{\vec s}(E_2/K_\infty)$ and the $\lambda$-invariants are the same. 
\end{prop}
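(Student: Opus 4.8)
\emph{Plan.} The plan is to run the standard congruence argument for Iwasawa $\lambda$-invariants: compare the two curves through their mod-$p$ (non-primitive) Selmer groups, which depend only on the common residual representation, and then translate finiteness and orders back into statements about $X^{\vec s}_{\Sigma_1}$. Throughout I would use that $E_2$, like $E_1$, satisfies (S1)--(S3), so that \cref{lem:torsion-freeness} and \cref{ss-p-primary} are available for it and in particular $E_2(K_{\infty,\Sigma})[p^\infty]=0$ (there is a supersingular place above $p$), and that the groups $\Sel^{\vec s}_{\Sigma_1}(E_i[p]/K_\infty)$ make sense unconditionally.

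First I would unwind the hypothesis on $E_1$. Since $\Sel^{\vec s}(E_1/K_\infty)$ is a submodule of $\Sel^{\vec s}_{\Sigma_1}(E_1/K_\infty)$, the fact that $X^{\vec s}_{\Sigma_1}(E_1/K_\infty)$ is $\Lambda$-torsion forces $X^{\vec s}(E_1/K_\infty)$ to be $\Lambda$-torsion too, so \cref{pro-4.6} applies to $E_1$. By \cref{pro-4.6}\ref{4.6.d} the module $X^{\vec s}_{\Sigma_1}(E_1/K_\infty)$ has no nonzero finite $\Lambda$-submodule; as it is also finitely generated over $\Z_p$, it is $\Z_p$-free of rank $\lambda\colonequals\lambda(X^{\vec s}_{\Sigma_1}(E_1/K_\infty))$. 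Hence $X^{\vec s}_{\Sigma_1}(E_1/K_\infty)/p$ has order $p^\lambda$, and by \cref{lemma-p-aussen-innen} we get $\#\Sel^{\vec s}_{\Sigma_1}(E_1[p]/K_\infty)=\#\Sel^{\vec s}_{\Sigma_1}(E_1/K_\infty)[p]=p^\lambda<\infty$.

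The heart of the argument is then to observe that $\Sel^{\vec s}_{\Sigma_1}(E[p]/K_\infty)$ depends only on $E[p]$ as a $G_K$-module. The ambient group $H^1(G_\Sigma(K_\infty),E[p])$ obviously does, and the defining local conditions live only at the places of $\Sigma\setminus\Sigma_1=\Sigma_p$. At an ordinary place $v\mid p$ the mod-$p$ condition inside $H^1(K_{\infty,v},E[p])$ is the image of $H^1(K_{\infty,v},\widehat E[p])$, and $\widehat E[p]$ is the canonical $G_{K_v}$-submodule of $E[p]$ on which inertia acts through the mod-$p$ cyclotomic character -- hence intrinsic to $E[p]\vert_{G_{K_v}}$. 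At a supersingular place $v\mid p$ one must check the analogous statement for $\widehat E^{s_v}(K_{\infty,v})/p\widehat E^{s_v}(K_{\infty,v})$; this follows from the fact that the mod-$p$ plus/minus subspaces are cut out by Kobayashi-type trace (jump) conditions involving only the Galois action on $E[p]$. Granting this, the given isomorphism $E_1[p]\cong E_2[p]$ of $G_K$-modules induces $\Sel^{\vec s}_{\Sigma_1}(E_1[p]/K_\infty)\cong\Sel^{\vec s}_{\Sigma_1}(E_2[p]/K_\infty)$, which is therefore finite of order $p^\lambda$.

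Finally I would transfer this back to $E_2$. The diagram chase in the proof of \cref{lemma-p-aussen-innen} -- which, once (S1)--(S3) are known for $E_2$, uses only $E_2(K_{\infty,\Sigma})[p^\infty]=0$ together with the injectivity of the local maps (\cref{ss-p-primary} at supersingular places), and not any a priori $\Lambda$-torsion of $X^{\vec s}(E_2/K_\infty)$ -- shows $\Sel^{\vec s}_{\Sigma_1}(E_2/K_\infty)[p]\cong\Sel^{\vec s}_{\Sigma_1}(E_2[p]/K_\infty)$. Thus $X^{\vec s}_{\Sigma_1}(E_2/K_\infty)/p$ is finite, so $X^{\vec s}_{\Sigma_1}(E_2/K_\infty)$ is finitely generated over $\Z_p$ and in particular $\Lambda$-torsion; then so is its quotient $X^{\vec s}(E_2/K_\infty)$, \cref{pro-4.6} applies to $E_2$, and \cref{pro-4.6}\ref{4.6.d} makes $X^{\vec s}_{\Sigma_1}(E_2/K_\infty)$ $\Z_p$-free. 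Comparing orders once more, $p^{\lambda(X^{\vec s}_{\Sigma_1}(E_2/K_\infty))}=\#\big(X^{\vec s}_{\Sigma_1}(E_2/K_\infty)/p\big)=\#\Sel^{\vec s}_{\Sigma_1}(E_2[p]/K_\infty)=p^\lambda$, which gives the equality of $\lambda$-invariants. The main obstacle I anticipate is the supersingular case in the third paragraph: verifying that $\widehat E^\pm(K_{\infty,v})/p\widehat E^\pm(K_{\infty,v})$, viewed inside $H^1(K_{\infty,v},E[p])$, is genuinely determined by the $G_{K_v}$-module $E[p]$ -- there is no reduction filtration to lean on as in the ordinary case ($\widehat E[p]$ is all of $E[p]$ there), so one has to argue directly with the mod-$p$ incarnations of the plus/minus norm groups.
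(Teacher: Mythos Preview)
Your argument is essentially the paper's: identify the mod-$p$ non-primitive Selmer groups via $E_1[p]\cong E_2[p]$, apply \cref{lemma-p-aussen-innen} to get $X^{\vec s}_{\Sigma_1}(E_i/K_\infty)/p\cong X^{\vec s}_{\Sigma_1}(E_i[p]/K_\infty)$, and then use \cref{pro-4.6}\ref{4.6.d} to read off $\lambda$ from $|X/pX|$. You are in fact more careful than the paper on two points it glosses over---you explain why the diagram behind \cref{lemma-p-aussen-innen} applies to $E_2$ without first knowing $X^{\vec s}(E_2/K_\infty)$ is $\Lambda$-torsion (avoiding the apparent circularity in invoking \cref{pro-4.6}\ref{4.6.a2}), and you flag that the supersingular local mod-$p$ condition must be shown to depend only on $E[p]$; the paper simply asserts the isomorphism $\Sel^{\vec s}_{\Sigma_1}(E_1[p]/K_\infty)\cong\Sel^{\vec s}_{\Sigma_1}(E_2[p]/K_\infty)$ without comment on this last point.
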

\begin{proof}
As $E_1[p]\cong E_2[p]$, we obtain an isomorphism 
\[X_{\Sigma_1}^{\vec s}(E_1[p]/K_\infty)\cong X_{\Sigma_1}^{\vec s}(E_2[p]/K_\infty).\]
By \cref{lemma-p-aussen-innen} we have isomorphisms
\[X_{\Sigma_1}^{\vec s}(E_i[p]/K_\infty)\cong X_{\Sigma_1}^{\vec s}(E_i/K_\infty)\big/pX^{\vec s}_{\Sigma_1}(E_i/K_\infty)\]
for $1\le i\le 2$. Thus, if $X_{\Sigma_1}^{\vec s}(E_1/K_\infty)$ is finitely generated over $\Z_p$, the same is true for $X_{\Sigma_1}^{\vec s}(E_2/K_\infty)$. 

It remains to prove the claim concerning $\lambda$-invariants. By \cref{pro-4.6}\ref{4.6.d} $X_{\Sigma_1}^{\vec s}(E_i/K_\infty)$ does not contain a non-trivial finite submodule. Thus, if it is finitely generated over $\Z_p$, we have
\[\lambda(X_{\Sigma_1}^{\vec s}(E/K_\infty))=v_p\left(\left|X_{\Sigma_1}^{\vec s}(E/K_\infty)\big/pX_{\Sigma_1}^{\vec s}(E/K_\infty)\right|\right).\]
The claim now follows from the two isomorphisms above.
\end{proof}
\begin{thm} \label{thm:congruent}
    Assume that $E_1[p]\cong E_2[p]$ as $G_K$-modules.
    Assume that $X^{\vec s}(E_1/K_\infty)$ is $\Lambda$-torsion and finitely generated over $\Z_p$. Then the same is true for $X^{\vec s}(E_2/K_\infty)$ and we get the following equality of $\lambda$-invariants
    \[\lambda(X^{\vec s}(E_1/K_\infty))+\sum_{v\in \Sigma_1}\lambda(J_v(E_1/K_\infty)^\vee )=\lambda(X^{\vec s}(E_2/K_\infty))+\sum_{v\in \Sigma_1}\lambda(J_v(E_2/K_\infty)^\vee).\]
\end{thm}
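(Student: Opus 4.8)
The plan is to deduce this identity from its non-primitive analogue, \cref{formula-for-invariants-non-primitive}, by moving between $X^{\vec s}$ and $X^{\vec s}_{\Sigma_1}$ via the tautological exact sequence of \cref{pro-4.6}\ref{4.6.a1}. Since $X^{\vec s}(E_1/K_\infty)$ is $\Lambda$-torsion, $\Sel^{\vec s}(E_1/K_\infty)$ is $\Lambda$-cotorsion, so \cref{pro-4.6} is available for $E_1$. Applying part \ref{4.6.a1} with $\Sigma'=\Sigma_1$ and taking Pontryagin duals (noting that $J_w(E_1/K_\infty)=H^1(K_{\infty,w},E_1[p^\infty])$ for $w\in\Sigma_1(K_\infty)$) yields a short exact sequence of $\Lambda$-modules
\[0\to \bigoplus_{w\in\Sigma_1(K_\infty)} J_w(E_1/K_\infty)^\vee \to X^{\vec s}_{\Sigma_1}(E_1/K_\infty)\to X^{\vec s}(E_1/K_\infty)\to 0.\]
By \cite[Proposition~2]{Greenberg1989-padic} each summand $J_w(E_1/K_\infty)^\vee$ is $\Lambda$-torsion, and it has vanishing $\mu$-invariant by \cite[Proposition~2]{Greenberg1999}, hence is finitely generated over $\Z_p$; together with the hypothesis on $X^{\vec s}(E_1/K_\infty)$ this shows that $X^{\vec s}_{\Sigma_1}(E_1/K_\infty)$ is $\Lambda$-torsion and finitely generated over $\Z_p$, and additivity of $\lambda$-invariants along the sequence gives $\lambda(X^{\vec s}_{\Sigma_1}(E_1/K_\infty)) = \lambda(X^{\vec s}(E_1/K_\infty)) + \sum_{w\in\Sigma_1(K_\infty)}\lambda(J_w(E_1/K_\infty)^\vee)$, which is exactly the left-hand side of the asserted equality (after regrouping the places of $K_\infty$ over those of $F$).

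Next I would feed this into \cref{formula-for-invariants-non-primitive}, which then tells us that $X^{\vec s}_{\Sigma_1}(E_2/K_\infty)$ is likewise $\Lambda$-torsion and finitely generated over $\Z_p$ and satisfies $\lambda(X^{\vec s}_{\Sigma_1}(E_2/K_\infty)) = \lambda(X^{\vec s}_{\Sigma_1}(E_1/K_\infty))$. Running the same exact sequence for $E_2$ then exhibits $X^{\vec s}(E_2/K_\infty)$ as a quotient of $X^{\vec s}_{\Sigma_1}(E_2/K_\infty)$, so it is finitely generated over $\Z_p$, and the identical additivity computation yields $\lambda(X^{\vec s}_{\Sigma_1}(E_2/K_\infty)) = \lambda(X^{\vec s}(E_2/K_\infty)) + \sum_{w\in\Sigma_1(K_\infty)}\lambda(J_w(E_2/K_\infty)^\vee)$. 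Concatenating the three displayed $\lambda$-identities gives the theorem.

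In this argument essentially all the substance is front-loaded into \cref{formula-for-invariants-non-primitive} (and, through it, \cref{lemma-p-aussen-innen} and \cref{ss-p-primary}), which is where the congruence $E_1[p]\cong E_2[p]$ actually intervenes: the comparison $\Sel^{\vec s}_{\Sigma_1}(E_i[p]/K_\infty)\cong \Sel^{\vec s}_{\Sigma_1}(E_i/K_\infty)[p]$ hinges on injectivity of the local maps $d_w$, which at supersingular places is precisely \cref{ss-p-primary} in the weakly ramified setting. The one point demanding care beyond bookkeeping is ensuring that \cref{pro-4.6} — and hence \cref{formula-for-invariants-non-primitive} — is legitimately available for $E_2$ as well, i.e.\ that $X^{\vec s}(E_2/K_\infty)$ is $\Lambda$-torsion: this is in force through the standing hypotheses of \cref{sec:cong} (cf.\ \cref{def:Sel-E-p}), so the genuinely new content of the theorem is the propagation of finite generation over $\Z_p$ and the $\lambda$-identity itself.
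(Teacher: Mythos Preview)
Your proposal is correct and follows essentially the same route as the paper's proof: both use \cref{pro-4.6}\ref{4.6.a1} with $\Sigma'=\Sigma_1$ together with Greenberg's results on the local terms to pass between $X^{\vec s}$ and $X^{\vec s}_{\Sigma_1}$, and then invoke \cref{formula-for-invariants-non-primitive}. Your observation that the $\Lambda$-torsionness of $X^{\vec s}(E_2/K_\infty)$ is a standing hypothesis of \cref{sec:cong} (via \cref{def:Sel-E-p}) rather than a conclusion is accurate and worth making explicit.
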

{Similar results have been achieved by a number of authors. 
Greenberg--Vatsal compared algebraic and analytic Iwasawa invariants of modular elliptic curves under the assumption that $E_1[p]\cong E_2[p]$ is irreducible \cite[Theorem~1.4]{GreenbergVatsal}. 
B.D.~Kim established equality of $\lambda$-invariants of non-primitive plus/minus Selmer groups of congruent elliptic curves over $\Q$ \cite[Corollary~2.13]{BDK-invariants}.
Ahmed--Aribam--Shekhar studied the parity of $\lambda$-invariants and root numbers of congruent elliptic curves defined over $\Q$ while assuming irreducibility \cite{AAS}. Our \cref{thm:congruent} can be seen as a generalisation of these results on algebraic Iwasawa invariants to our setting.}

{The question has also been studied for anticyclotomic Selmer groups by Hatley--Lei \cite[Proposition 5.4]{HL-I} \cite[Theorem 4.6]{HL-II}.
In the context of ordinary modular forms, a similar result is due to Emerton--Pollack--Weston \cite[Theorem 2]{EmertonPollackWeston}. In the supersingular case, Hatley--Lei compared $\lambda$-invariants of signed Selmer groups of two modular forms of the same even weight that are congruent modulo $p$ \cite[Theorem 4.6]{HatleyLei}.
}

\begin{proof}[{Proof of \cref{thm:congruent}}]
By \cite[Proposition 2]{Greenberg1989-padic} the modules $J_v(E_i/K_\infty)^\vee$ are finitely generated over $\Z_p$. Therefore \cref{pro-4.6}\ref{4.6.a1} implies that $X_{\Sigma_1}^{\vec s}(E_i/K_\infty)$ is finitely generated over $\Z_p$ if and only if the same is true for $X^{\vec s}(E_i/K_\infty)$. In this case we obtain the following equality of Iwasawa invariants:
\[\lambda(X^{\vec s}(E_i/K_\infty))+\sum_{v\in \Sigma_1}\lambda(J_v(E_i/K_\infty)^\vee )=\lambda(X_{\Sigma_1}^{\vec s}(E_i/K_\infty)).\]
The desired claim now follows from \cref{formula-for-invariants-non-primitive}.
\end{proof}

\printbibliography

\end{document}